\definecolor{winered}{rgb}{0.7,0,0}
\definecolor{lessblue}{rgb}{0,0,0.7}
\newcommand{\myitem}[3]{\item[#2]\def\@currentlabel{#3}\label{#1}}
\def\@tocline#1#2#3#4#5#6#7{
\begingroup
  \par
    \parindent\z@ \leftskip#3 \relax \advance\leftskip\@tempdima\relax
                  \rightskip\@pnumwidth plus 4em \parfillskip-\@pnumwidth
    % extra indent
    \ifcase #1 % sections
       \vskip 0.6em \hskip 0em % add a little vspace before
       \or
       \or \hskip 0em % subsections
       \or \hskip 1em % subsubsections
    \fi%
    %
    % write content line
    #6
    %
    % dots
    \nobreak\relax{\leavevmode\leaders\hbox{\,.}\hfill}
    \hbox to\@pnumwidth {\@tocpagenum{#7}}
  \par
\endgroup
}
 \def\l@section{\@tocline{0}{0pt}{0pc}{}{}}
\renewcommand{\tocsection}[3]{%
  \indentlabel{\@ifnotempty{#2}{ % for numbered sections
    \ignorespaces\bfseries{#2. #3}}}
  \indentlabel{\@ifempty{#2}{\ignorespaces\bfseries{#3}}{}} % for unnumbered sections
    \vspace{1.5pt}}
\renewcommand{\tocsubsection}[3]{%
  \indentlabel{\@ifnotempty{#2}{
    \ignorespaces#2. #3}}
  \indentlabel{\@ifempty{#2}{\ignorespaces #3}{}}
    \vspace{1.5pt}}
\renewcommand{\tocsubsubsection}[3]{%
  \indentlabel{\@ifnotempty{#2}{
    \ignorespaces#2. #3}}
  \indentlabel{\@ifempty{#2}{\ignorespaces #3}{}}
    \vspace{1.5pt}}
\def\@nomenstarted{0}
\newlength{\@nomenoldtabcolsep}
\newcommand{\nomenstart}
  {%
    \def\@nomenstarted{1}%
    \setlength{\@nomenoldtabcolsep}{\tabcolsep}%
    \setlength{\tabcolsep}{3.5pt}%
    \begin{longtable}{p{0.11\textwidth} p{0.86\textwidth}}%found by hand
  }
\newcommand{\nomenitem}[2]{%
    \ifcase\@nomenstarted%
      \or % if nomenstarted=1, do nothing
      \or \\ % if nomenstarted=2, add newline to previous one
    \fi%
    #1\,{\leavevmode\leaders\hbox{\,.}\hfill} & #2%
    \def\@nomenstarted{2}%
  }%
\newcommand{\nomenend}
  {\\%
      \end{longtable}%
      \setlength{\tabcolsep}{\@nomenoldtabcolsep}%
      \def\@nomenstarted{0}%
  }
\newcommand{\vast}{\bBigg@{4}}
\newcommand{\Vast}{\bBigg@{5}}
\newcommand{\VAST}{\bBigg@{6}}
\numberwithin{equation}{section}
\numberwithin{figure}{section}
\newtheorem{thm}{Theorem}[section]
\newtheorem{prop}[thm]{Proposition}
\newtheorem{lemma}[thm]{Lemma}
\newtheorem{cor}[thm]{Corollary}
\newtheorem*{thm*}{Theorem}
\newtheorem*{prop*}{Proposition}
\newtheorem*{cor*}{Corollary}
\newtheorem*{conj*}{Conjecture}
\theoremstyle{definition}
\newtheorem{definition}[thm]{Definition}
\theoremstyle{remark}
\newtheorem{rmk}[thm]{Remark}
\newcommand{\mc}{\mathcal}
\newcommand{\cA}{\mc A}
\newcommand{\cC}{\mc C}
\newcommand{\cE}{\mc E}
\newcommand{\cF}{\mc F}
\newcommand{\cH}{\mc H}
\newcommand{\cK}{\mc K}
\newcommand{\cO}{\mc O}
\newcommand{\cV}{\mc V}
\newcommand{\cY}{\mc Y}
\newcommand{\ms}{\mathscr}
\newcommand{\scri}{\ms I}
\newcommand{\sS}{\ms S}
\newcommand{\C}{\mathbb{C}}
\newcommand{\N}{\mathbb{N}}
\newcommand{\R}{\mathbb{R}}
\newcommand{\Sph}{\mathbb{S}}
\newcommand{\bfa}{\mathbf{a}}
\newcommand{\fm}{\mathfrak{m}}
\newcommand{\ft}{\mathfrak{t}}
\newcommand{\slg}{\slashed{g}{}}
\newcommand{\slDelta}{\slashed{\Delta}{}}
\newcommand{\vol}{\operatorname{vol}}
\renewcommand{\Re}{\operatorname{Re}}
\renewcommand{\Im}{\operatorname{Im}}
\newcommand{\mathspan}{\operatorname{span}}
\newcommand{\supp}{\operatorname{supp}}
\newcommand{\eps}{\epsilon}
\newcommand{\la}{\langle}
\newcommand{\ol}{\overline}
\newcommand{\pa}{\partial}
\newcommand{\ra}{\rangle}
\newcommand{\wh}{\widehat}
\newcommand{\wt}{\widetilde}
\newcommand{\xra}{\xrightarrow}
\newcommand{\pfstep}[1]{$\bullet$\ \underline{\textit{#1}}}
\newcommand{\pfsubstep}[1]{$-$\ \textit{#1}}
\newcommand{\bop}{{\mathrm{b}}}
\newcommand{\bface}{{\mathrm{bf}}}
\newcommand{\tface}{{\mathrm{tf}}}
\newcommand{\zface}{{\mathrm{zf}}}
\newcommand{\cp}{{\mathrm{c}}}
\newcommand{\scl}{{\mathrm{sc}}}
\newcommand{\Diff}{\mathrm{Diff}}
\newcommand{\Vb}{\cV_\bop}
\newcommand{\Diffb}{\Diff_\bop}
\newcommand{\Tsc}{{}^{\scl}T}
\newcommand{\half}{{\tfrac{1}{2}}}
\newcommand{\loc}{{\mathrm{loc}}}
\newcommand{\CI}{\cC^\infty}
\newcommand{\CIc}{\cC^\infty_\cp}
\newcommand{\Hb}{H_{\bop}}
\newcommand{\Hbext}{\bar H_{\bop}}
\newcommand{\Hbsupp}{\dot H_{\bop}}
\newcommand{\Hbh}{H_{\bop,h}}
\newcommand{\Hbhext}{\bar H_{\bop,h}}
\newcommand{\bhm}{\fm}
\newcommand{\bha}{\bfa}
\newcommand{\openbigpmatrix}[1]
  {%
    \def\@bigpmatrixsize{#1}%
    \addtolength{\arraycolsep}{-#1}%
    \begin{pmatrix}%
  }
\newcommand{\closebigpmatrix}
  {%
    \end{pmatrix}%
    \addtolength{\arraycolsep}{\@bigpmatrixsize}%
  }
\newlength{\enummargin}\setlength{\enummargin}{1.5em}
\newcommand{\usref}[1]{{\upshape\ref{#1}}}
\newcommand{\fakephantomsection}{%
  \Hy@MakeCurrentHref{\@currenvir.\the\Hy@linkcounter}
  \Hy@raisedlink{\hyper@anchorstart{\@currentHref}\hyper@anchorend}%
}
\newcommand*{\fwbw}[1]{\expandafter\@fwbw\csname c@#1\endcsname}
\newcommand*{\@fwbw}[1]{\ifcase #1 \or {\rm fw}\or {\rm bw}\fi}
\AddEnumerateCounter{\fwbw}{\@fwbw}
\begin{document}

%%%%%%%%%%%%%%%%%%%%%%%%%%%%%%%%%%%%%%%%%%%%%%%%%%%%%%%%%%%%%%%%%%%%%%
% title page
\title[Price's law]{A sharp version of Price's law for wave decay on asymptotically flat spacetimes}

\author{Peter Hintz}
\address{Department of Mathematics, Massachusetts Institute of Technology, Cambridge, Massachusetts 02139-4307, USA}
\email{phintz@mit.edu}

% 58J50: spectral problems, spectral geometry, scattering theory
% 83C57: black holes
% 35L05: wave equation
% 35C20: asymptotic expansions
\subjclass[2010]{Primary 58J50, Secondary 83C57, 35L05, 35C20}

\begin{abstract}
  We prove Price's law with an explicit leading order term for solutions $\phi(t,x)$ of the scalar wave equation on a class of stationary asymptotically flat $(3+1)$-dimensional spacetimes including subextremal Kerr black holes. Our precise asymptotics in the full forward causal cone imply in particular that $\phi(t,x)=c t^{-3}+\cO(t^{-4+})$ for bounded $|x|$, where $c\in\C$ is an explicit constant. This decay also holds along the event horizon on Kerr spacetimes and thus renders a result by Luk--Sbierski on the linear scalar instability of the Cauchy horizon unconditional. We moreover prove inverse quadratic decay of the radiation field, with explicit leading order term. We establish analogous results for scattering by stationary potentials with inverse cubic spatial decay. On the Schwarzschild spacetime, we prove pointwise $t^{-2 l-3}$ decay for waves with angular frequency at least $l$, and $t^{-2 l-4}$ decay for waves which are in addition initially static. This definitively settles Price's law for linear scalar waves in full generality.
  
  The heart of the proof is the analysis of the resolvent at low energies. Rather than constructing its Schwartz kernel explicitly, we proceed more directly using the geometric microlocal approach to the limiting absorption principle pioneered by Melrose and recently extended to the zero energy limit by Vasy.
\end{abstract}

\date{\today. Original version: April 3, 2020.}

\maketitle

%%%%%%%%%%%%%%%%%%%%%%%%%%%%%%%%%%%%%%%%%%%%%%%%%%%%%%%%%%%%%%%%%%%%%%
\section{Introduction}
\label{SI}

The Schwarzschild spacetime \cite{SchwarzschildPaper} with mass $\bhm>0$ is a spherically symmetric solution of the Einstein vacuum equation given by
\begin{equation}
\label{EqISchw}
  g = -\Bigl(1-\frac{2\bhm}{r}\Bigr)d t^2 + \Bigl(1-\frac{2\bhm}{r}\Bigr)^{-1}d r^2 + r^2\slg
\end{equation}
on $\R_t\times(2\bhm,\infty)_r\times\Sph^2$, where $\slg$ is the standard metric on $\Sph^2$. To describe our main result in a simple setting, we consider the initial value problem
\begin{equation}
\label{EqIIVP}
  \Box_g\phi = -|g|^{-1/2}\pa_\mu(|g|^{1/2}g^{\mu\nu}\pa_\nu\phi)=0,\quad
  \phi(0,x)=\phi_0(x),\quad
  \pa_t\phi(0,x)=\phi_1(x),
\end{equation}
with compactly supported and smooth initial data $\phi_0,\phi_1\in\CIc((2\bhm,\infty)\times\Sph^2)$.

\begin{thm}[Price's law on the Schwarzschild spacetime]
\label{ThmI}
  Let $\phi$ denote the solution of equation~\eqref{EqIIVP}. Fix a compact subset $K\Subset(2\bhm,\infty)\times\Sph^2\subset\R^3_x$.
  \begin{enumerate}
  \item\label{ItILOT} There exists a constant $c\in\C$ so that $\phi(t,x)$ decays according to
  \begin{equation}
  \label{EqILOT}
    |\phi(t,x)-c t^{-3}| \leq C_\eps t^{-4+\eps},\quad x\in K,
  \end{equation}
  for any $\eps>0$. Derivatives of $\phi-c t^{-3}$ along any finite number of the vector fields $t\pa_t$ and $\pa_x$ satisfy the same estimate (with different $C_\eps$). Explicitly, $c$ is given by
  \begin{equation}
  \label{EqILOTConst}
    c = -\frac{2\bhm}{\pi}\iiint_{r>2\bhm} \Bigl(1-\frac{2\bhm}{r}\Bigr)^{-1}\phi_1(r,\theta,\varphi)\,r^2\sin\theta\,d r\,d\theta\,d\varphi.
  \end{equation}
  \item\label{ItIPrice} If $l\in\N_0$ and $\phi_0,\phi_1$ are supported in angular frequencies $\geq l$ (meaning that for all $r$, $\phi_j(r,-)\in\CI(\Sph^2)$ is orthogonal to the eigenspaces of $\Delta_\slg$ with eigenvalues $k(k+1)$ for $k=0,\ldots,l-1$), then
  \begin{equation}
  \label{EqIPrice}
    |\phi(t,x)|\leq C t^{-2 l-3},\quad x\in K,
  \end{equation}
  and the same decay holds for derivatives of $\phi$ along $t\pa_t$ and $\pa_x$. This decay rate is generically sharp.
  \item\label{ItIStatic} In both cases, if $\phi$ is initially static, i.e.\ $\phi_1\equiv 0$, then the decay rate of $\phi$ is faster by one power of $t^{-1}$.
  \end{enumerate}
\end{thm}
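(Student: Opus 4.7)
The plan is to reduce the analysis of the long-time behavior of $\phi$ to the low-energy structure of the stationary resolvent. Taking the Fourier transform in $t$ and using the Cauchy data, one writes, for $t > 0$,
\begin{equation*}
  \phi(t,x) = \frac{1}{2\pi}\int_{\Im\sigma = -C} e^{-i\sigma t}\, R(\sigma)\bigl((-2 i\sigma)\phi_0 + \phi_1\bigr)(x)\,d\sigma,
\end{equation*}
where $R(\sigma) = \hat\Box_g(\sigma)^{-1}$ is the outgoing resolvent of the operator obtained from $\Box_g$ by replacing $\pa_t$ with $-i\sigma$. On Schwarzschild the spectrum of the Killing-energy operator is real and $R(\sigma)$ extends continuously from the upper half plane to $\R\setminus\{0\}$ (mode stability away from $\sigma=0$), so I would shift the contour down to the real axis. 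The $t\to\infty$ asymptotics are then determined by the low-frequency structure of $R(\sigma)$ near $\sigma=0$ when applied to compactly supported data.

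The technical core is a precise low-energy limiting absorption principle for $\hat\Box_g(\sigma)$. I would follow the geometric microlocal approach introduced by Melrose and extended to the zero energy regime by Vasy: build a compactification of $[0,1)_\sigma\times\R^3_x$ in which $\sigma=0$, $r=\infty$, and the corner $\sigma r = \text{const}$ are separated by appropriate blow-ups; on this resolved space, $R(\sigma)$ is polyhomogeneous, with index sets at the various faces dictated by (i) the triviality of generalized zero-energy modes of $\hat\Box_g(0)$ in suitable weighted spaces, and (ii) the asymptotically Minkowski structure of $\hat\Box_g(\sigma)$ at infinity. Applied to compactly supported $f$ and evaluated at bounded $x$, this yields an expansion
\begin{equation*}
  R(\sigma)f(x) = a_0(x) + \sigma\, a_1(x) + \sigma^2 a_2(x) + \sigma^2\log\sigma\cdot b_2(x) + O(\sigma^3\log\sigma),
\end{equation*}
in which the $\sigma^2\log\sigma$ correction---absent for the flat d'Alembertian on $\R^{3+1}$, consistently with strong Huygens---is generated by the mass $m>0$.

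Polynomial terms in $\sigma$ Fourier-transform to derivatives of $\delta(t)$ and are invisible for large $t$, while the Fourier transform of $\chi(\sigma)\sigma^2\log\sigma$ gives a tail of size $t^{-3}$, so the leading asymptotic of $\phi(t,x)$ comes precisely from $b_2(x)$ applied to $\phi_1$. By the short-range structure of $\hat\Box_g(0)$ at spatial infinity (where it agrees with the flat Laplacian on $\R^3$ up to a $1/r$-type perturbation), $b_2(x)$ is constant in $x$ for $x$ in any compact set and equals a universal multiple of the integral of $\phi_1$ against the density induced by the coefficient of $\pa_t^2$ in $|g|^{-1/2}\pa_\mu(|g|^{1/2}g^{\mu\nu}\pa_\nu\,\cdot\,)$, which on Schwarzschild equals $-(1-2m/r)^{-1}\cdot r^2\sin\theta$. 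Tracking the explicit constant in the Fourier pair $\sigma^2\log\sigma \leftrightarrow t^{-3}$ together with the normalizations from the low-energy calculus yields the formula~\eqref{EqILOTConst}, with the overall prefactor $-2m/\pi$ arising as the product of these numerical factors; the contribution of $(-2i\sigma)\phi_0$ sits one power of $\sigma$ higher and contributes only at order $t^{-4}$.

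Parts~\eqref{ItIPrice} and~\eqref{ItIStatic} then follow within the same framework. For data in angular frequencies $\geq l$, spherical symmetry allows restriction to a single spherical harmonic sector, in which the zero-energy radial operator has centrifugal potential $l(l+1)/r^2$, its Green's function at infinity vanishes to order $r^{-l-1}$, and the first non-polynomial term in the resolvent expansion is correspondingly shifted to $\sigma^{2l+2}\log\sigma$, producing $t^{-2l-3}$ decay. For~\eqref{ItIStatic}, the leading $\sigma^{2l+2}\log\sigma$ contribution involves only $\phi_1$; if $\phi_1 \equiv 0$, this term vanishes, the remaining $(-2i\sigma)\phi_0$ piece is one power of $\sigma$ higher, and one gains an additional power of $t^{-1}$. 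The main obstacle will be the explicit identification of $c$ in~\eqref{EqILOTConst}: while conormality of $R(\sigma)$ at $\sigma=0$ directly yields the decay rate $t^{-3}$, pinning down the precise numerical constant requires careful bookkeeping through the multiple blow-ups of the low-energy b-scattering calculus of the Green's function of $\hat\Box_g(0)$ and its interaction with the asymptotically Euclidean spatial geometry.
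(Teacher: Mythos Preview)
Your proposal is essentially correct and follows the same route as the paper: Fourier transform in time, contour shift to the real axis using mode stability and high energy bounds, and a low-energy resolvent expansion on a resolved space (the blow-up of $\{\sigma=0,\ r=\infty\}$) whose leading singular term $\sigma^2\log\sigma$ (shifted to $\sigma^{2l+2}\log\sigma$ on the $l$-th spherical harmonic sector) produces the $t^{-3}$ (resp.\ $t^{-2l-3}$) tail upon inverse Fourier transform; the initially static improvement comes exactly as you say, from the extra factor of $\sigma$ carried by $\phi_0$.

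Two small points where the paper differs from your sketch. First, the paper does \emph{not} establish full polyhomogeneity of $R(\sigma)$ on the resolved space; instead it runs an explicit finite iteration $u_0+\sigma u_1+\sigma^2\wh\Box(\sigma)^{-1}f_2$ (with $u_k=\wh\Box(0)^{-1}f_k$, $f_{k+1}=-\sigma^{-1}(\wh\Box(\sigma)-\wh\Box(0))u_k$), stopping precisely when $f_2$ has borderline $r^{-2}$ decay, and then solves a single explicit model problem at the front face (the spectral family of exact Minkowski at frequency $1$) to extract the $\sigma^2\log\sigma$ term and its coefficient. This is what makes the constant in~\eqref{EqILOTConst} computable without tracking index sets through a full parametrix construction. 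Second, your contour should start at $\Im\sigma=+C$ (not $-C$) for the forward solution with the convention $\hat\phi(\sigma)=\int e^{i\sigma t}\phi\,dt$, and the polynomial-in-$\sigma$ terms do not literally Fourier-transform to delta distributions (they are cut off near $\sigma=0$); rather, the point is that the \emph{smooth} compactly supported part of the low-energy resolvent has rapidly decaying inverse Fourier transform.
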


We describe a more general result momentarily. Price \cite{PriceLawI,PriceLawII}, as clarified by Price and Burko \cite{PriceBurkoLaw}, conjectured these decay rates in the 1970s. Pointwise $t^{-3}$ decay was proved by Donninger--Schlag--Soffer \cite{DonningerSchlagSofferSchwarzschild} for Schwarzschild spacetimes and by Tataru \cite{TataruDecayAsympFlat} on a general class of stationary asymptotically flat spacetimes which includes Schwarzschild and subextremal Kerr spacetimes \cite{KerrKerr}. Parts~\eqref{ItIPrice}--\eqref{ItIStatic} (see Corollary~\ref{CorPF}) constitute the definitive resolution of Price's conjecture for linear scalar waves; they improve on the pointwise $t^{-2 l-2}$ decay ($t^{-2 l-3}$ for initially static perturbations) established by Donninger--Schlag--Soffer \cite{DonningerSchlagSofferPrice} by one power of $t^{-1}$; in fact, we control the infinite sum over all spherical harmonic modes with frequency at least $l$, rather than merely individual modes. (See also \cite{LeaverSchwarzschild} for a heuristic description of the full time evolution.) Angelopoulos--Aretakis--Gajic \cite{AngelopoulosAretakisGajicLate} gave the first rigorous derivation of the leading order term in~\eqref{EqILOTConst} on a class of spherically symmetric, stationary, and asymptotically flat spacetimes including Schwarzschild and subextremal Reissner--Nordstr\"om spacetimes.

Theorem~\ref{ThmI}\eqref{ItILOT} is a consequence of a partial expansion of the resolvent $\wh{\Box_g}(\sigma)^{-1}$ at $\sigma=0$. Using a novel systematic and, to a large degree, algorithmic method, we show, roughly speaking, that the strongest singularity of $\wh{\Box_g}(\sigma)^{-1}$, acting on inputs with compact support (or more generally satisfying almost-sharp decay assumptions), is $\sigma^2\log(\sigma+i 0)$, and we compute its coefficient explicitly; see~\S\ref{SsIP} and Theorem~\ref{ThmPRes} for further details. The study of the low energy resolvent has a long history, starting with the work by Jensen--Kato \cite{JensenKatoResolvent} on Euclidean space. Recent works describe qualitative \cite{BonyHaefnerResolvent,VasyWunschMorawetz} and quantitative bounds \cite{RodnianskiTaoResolvent} as well as Hahn-meromorphic properties \cite{MullerStrohmaierResolvent} of the resolvent, and the connection between the low energy resolvent behavior and the cohomology of the spatial manifold \cite{StrohmaierWatersHodge}. Here, we adopt Vasy's perspective \cite{VasyLowEnergyLag,VasyLAPLag} and obtain the resolvent expansion in a direct manner, rather than by adapting the Schwartz kernel constructions of Guillarmou--Hassell and Sikora \cite{GuillarmouHassellResI,GuillarmouHassellResII,GuillarmouHassellSikoraResIII} (discussed further below).

%%%%%%%%%%%%%%%%%%%%%%%%%%%%%%%%%%%%%%%%%%%%%%%%%%
\subsection{Sharp asymptotics on subextremal Kerr spacetimes}

In order to describe radiation falling into the black hole or escaping to infinity, it is convenient to introduce a new time coordinate $t_*$ whose level sets are transversal to the future event horizon and to future null infinity. Indeed, one can choose $t_*$ to be roughly equal to $t+r_*$ near the event horizon $r=2\bhm$ and $t-r_*$ for large $r$, where $r_*=r+2\bhm\log(r-2\bhm)$ is the Regge--Wheeler tortoise coordinate. The Schwarzschild metric $g$, expressed using $t_*$ instead of $t$, can then be extended smoothly across the event horizon, and is a stationary Lorentzian metric on\footnote{The sphere $r=\bhm$ inside the black hole where we stop keeping track of waves is chosen arbitrarily.}
\[
  \R_{t_*}\times X^\circ,\qquad X^\circ = [\bhm,\infty)_r\times\Sph^2\subset\R^3.
\]
See Figure~\ref{FigI2}. The family of subextremal Kerr metrics, described in~\S\ref{SK}, generalizes the Schwarzschild metric and describes rotating black holes with angular momentum $\bha\in(-\bhm,\bhm)$ and event horizon at $r=r_{\bhm,\bha}:=\bhm+\sqrt{\bhm^2-\bha^2}>\bhm$.

\begin{thm}[Price's law with leading order term on subextremal Kerr spacetimes]
\label{ThmI2}
  Let $g$ be a subextremal Kerr metric. Consider compactly supported initial data $\phi_0,\phi_1\in\CIc(X^\circ)$. Then, for a constant $c$, explicitly computable in terms of $\phi_0,\phi_1$, the solution $\phi$ of the initial value problem $\Box_g\phi=0$, $\phi|_{t_*=0}=\phi_0$, $\pa_{t_*}\phi|_{t_*=0}=\phi_1$, decays according to\footnote{The factor $\frac{t_*}{t_*+r}$ vanishes simply at null infinity but is positive in timelike cones $r<(1-\delta)t$, $\delta>0$.}
  \begin{equation}
  \label{EqI2Phi}
    \Bigl| \phi - c\frac{t_*+r}{t_*^2(t_*+2 r)^2} \Bigr| \leq C_\eps t_*^{-4+\eps}\frac{t_*}{t_*+r}.
  \end{equation}
  In particular, the radiation field\footnote{Some authors define the radiation field as the $t_*$-derivative of $F$. We follow the original \cite{FriedlanderRadiationOrig}.} $F(t_*,\omega) := \lim_{r\to\infty} r\phi(t_*,r,\omega)$ has a leading order term with remainder,
  \begin{equation}
  \label{EqI2Rad}
    |F - \tfrac14 c t_*^{-2}| \leq C_\eps t_*^{-3+\eps}.
  \end{equation}
  The decay rates in~\eqref{EqI2Phi} and \eqref{EqI2Rad} hold for all derivatives of $\phi-c\frac{t_*+r}{t_*^2(t_*+2 r)^2}$ and $F-\tfrac14 c t_*^{-2}$ along any finite number of the vector fields $t_*\pa_{t_*}$, rotation vector fields on $\Sph^2$, and $r\pa_r$ in case of~\eqref{EqI2Phi}.
\end{thm}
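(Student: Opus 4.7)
\emph{Overall strategy.} The plan is to represent the forward solution via a contour integral of the resolvent $\wh{\Box_g}(\sigma)^{-1}$, extract the leading contribution from a partial expansion at $\sigma=0$, and upgrade the resulting bound uniformly across the forward causal cone by tracking polyhomogeneous regularity on a suitable resolution of spacetime.

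\emph{Reduction to the resolvent.} Cut off to the future by $\chi(t_*)\in\CI(\R)$ with $\chi\equiv 0$ on $t_*\leq 0$ and $\chi\equiv 1$ on $t_*\geq 1$, so that $\wt\phi := \chi\phi$ satisfies $\Box_g\wt\phi = f$ with $f$ compactly supported in $t_*\in[0,1]$ and depending linearly on $(\phi_0,\phi_1)$. For $t_*\geq 1$ this gives
\begin{equation*}
  \phi(t_*,\cdot) = \frac{1}{2\pi}\int_{\Im\sigma=C}e^{-i\sigma t_*}\,\wh{\Box_g}(\sigma)^{-1}\hat f(\sigma)\,d\sigma,\quad C>0.
\end{equation*}
Mode stability for subextremal Kerr together with the low-energy resolvent analysis (Theorem~\ref{ThmPRes}) allows the contour to be deformed down to $\Im\sigma=0$, with the resolvent regular on $\R\setminus\{0\}$ and admitting near $\sigma=0$ a partial expansion of the schematic form
\begin{equation*}
  \wh{\Box_g}(\sigma)^{-1} = R_0 + \sigma R_1 + \sigma^2 R_2 + \sigma^2\log(\sigma+i 0)\,L + E(\sigma),
\end{equation*}
in appropriate operator-weighted norms, where $R_0,R_1,R_2$ are smooth at $\sigma=0$, $L$ is a finite-rank operator with explicitly computable kernel, and $E(\sigma)=O(\sigma^{3-\epsilon})$.

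\emph{Leading order term and globalization.} The $R_j$-contributions produce rapidly decaying integrals via integration by parts in $\sigma$, and $E(\sigma)$ yields the $O(t_*^{-4+\epsilon})$ remainder in~\eqref{EqI2Phi}. The full leading asymptotic comes from $\sigma^2\log(\sigma+i0)L$: pairing the compactly supported source $\hat f(0)$ with $L$ by a formula analogous to~\eqref{EqILOTConst} gives the explicit constant $c$, and the distributional Fourier integral of $\sigma^2\log(\sigma+i0)$ is proportional to $t_*^{-3}$ for $t_*>0$, which proves the theorem at bounded $r$. The uniform profile $c\,\frac{t_*+r}{t_*^2(t_*+2 r)^2}$ across the entire forward cone is then read off from the outgoing-wave normal form of $L\hat f(0)$ at spatial infinity (as furnished by the scattering calculus at zero energy): the $e^{i\sigma r}$-type oscillatory factor carried by $L\hat f(0)$ combines with the $e^{-i\sigma t_*}$ phase in the contour integral to produce an $e^{-i\sigma(t_*-r_*)}$ phase at null infinity, so that the $t_*^{-3}$ behavior at bounded $r$ is promoted to the stated $(t_*,r)$-profile interpolating the $t_*^{-3}$ decay in timelike cones with the $(4 r t_*^2)^{-1}$ behavior consistent with a finite radiation-field limit. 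The bound~\eqref{EqI2Rad} then follows by multiplying by $r$ and sending $r\to\infty$.

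\emph{Main obstacle.} The genuinely delicate point is not the pointwise $t_*^{-3}$ decay at bounded $r$, which is a formal consequence of the partial expansion and a standard Fourier computation, but the uniform estimate~\eqref{EqI2Phi} throughout the forward cone, including up to null infinity and along the event horizon. This requires the partial expansion of $\wh{\Box_g}(\sigma)^{-1}$ to be established in function spaces adapted to a blow-up of spacetime on which spatial infinity, null infinity, the event horizon, and future timelike infinity appear as distinct boundary hypersurfaces, and for $L$ and $E(\sigma)$ to exhibit prescribed polyhomogeneous behavior at each of these faces. Constructing these spaces and propagating the conormal/polyhomogeneous regularity through the low-energy portion of the contour integral is the technical heart of the argument and the point where Vasy's microlocal approach to the limiting absorption principle, extended to $\sigma=0$, is essential.
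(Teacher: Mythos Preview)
Your overall strategy---reduce to a forcing problem, expand the resolvent near $\sigma=0$, and read off the $t_*^{-3}$ leading term from the $\sigma^2\log(\sigma+i 0)$ singularity---matches the paper, and the identification of the global estimate as the genuine difficulty is correct. However, your account of \emph{how} the uniform profile $c\,\frac{t_*+r}{t_*^2(t_*+2r)^2}$ arises is off, and this is not a cosmetic issue.

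You attribute the profile to an oscillatory factor $e^{i\sigma r}$ in $L\hat f(0)$ combining with $e^{-i\sigma t_*}$ to produce an $e^{-i\sigma(t_*-r_*)}$ phase. But in the paper's coordinates the time function $t_*$ is already null at infinity, so outgoing solutions produced by $\wh{\Box_g}(\sigma)^{-1}$ are \emph{nonoscillatory}: they have conormal $r^{-1}$ behavior with no $e^{i\sigma r}$ factor (this is precisely the point of using $t_*$ rather than static $t$; see the discussion around \eqref{EqIPFT}). There is thus no oscillatory phase to combine with. The actual mechanism is different: one works on the resolved space $X_{\rm res}^+=[X\times[0,1)_\sigma;\partial X\times\{0\}]$, on which the singular part of the resolvent has an explicit leading term at the front face $\tface$ given by the solution $\tilde u^{(2)}(\hat r)$, $\hat r=\sigma r$, of a model problem for $\wt\Box(1)$ (the spectral family of exact Minkowski space at frequency $1$). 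The change of variables $\sigma=\rho\hat r$ turns the inverse Fourier transform in $\sigma$ into one in $\hat r$ evaluated at $v=\rho t_*$, and the explicit integral $\int_0^\infty e^{-i\hat r v}\hat r^2\tilde u^{(2)}(\hat r)\,d\hat r$ produces $u^+(v)=\frac{v(v+1)}{(v+2)^2}$; see \eqref{EqPWOscInt} and Remark~\ref{RmkAModelExpl}. This is where the specific rational profile comes from, and your proposal has no analogue of this computation.

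Two smaller points: the smooth $R_j$ terms are only rapidly decaying after inverse Fourier transform \emph{at bounded $r$}; globally they live on $X_{\rm res}^+$ and must be tracked as conormal functions there (this is the content of \eqref{EqPWGlobalPhi0Reg2Space}--\eqref{EqPWGlobalHatrFT}). And the radiation field asymptotics \eqref{EqI2Rad} do not follow merely by multiplying \eqref{EqI2Phi} by $r$ and letting $r\to\infty$ (the remainder bound degenerates there); the paper establishes them by a separate integration-along-characteristics argument at $\scri^+$ (end of the proof of Theorem~\ref{ThmPWGlobal}).
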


See~Theorem~\ref{ThmKW}. Figure~\ref{FigI2} shows the Penrose diagram and a resolution (blow-up) well-adapted to the description of global asymptotics (described in detail in Definition~\ref{DefPWComp}).

\begin{figure}[!ht]
\centering
\includegraphics{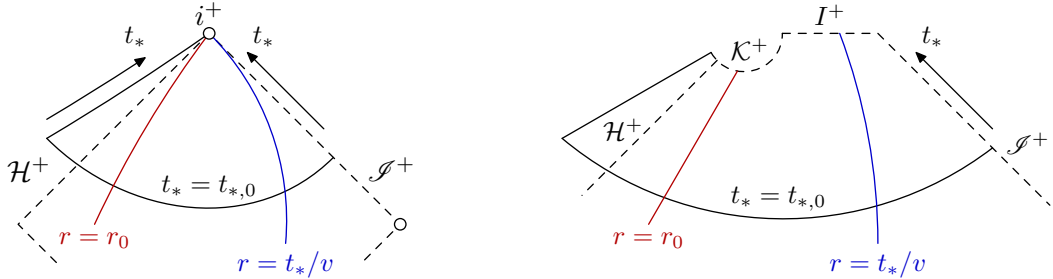}
\caption{\textit{On the left:} Penrose diagram of a Schwarzschild or subextremal Kerr spacetime, with level sets of $t_*$ (black), $r$ (red), and $t_*/r$ (blue) indicated. Also shown are $\scri^+$ (future null infinity), $\cH^+$ (the event horizon), and $i^+$ (future timelike infinity). A hypersurface $t_*/r=v\in(0,\infty)$ is a timelike cone asymptotic to the cone $r=\frac{1}{1+v}t$. \textit{On the right:} Resolution of the Penrose diagram obtained by first blowing up $i^+$ (obtaining $\cK^+$) and then the lift of the future boundary of $\scri^+$ (obtaining $I^+$). The asymptotics $c t_*^{-3}$ govern decay at $\cK^+$, while the profile~\eqref{EqI2Phi} gives asymptotics at $I^+$ (which match $c t_*^{-3}$ at $\cK^+\cap I^+$).}
\label{FigI2}
\end{figure}

\begin{rmk}
  For initial data supported away from the event horizon, a simple explicit expression for the constant $c$ using Boyer--Lindquist coordinates is given in Corollary~\ref{CorKWExpl}.
\end{rmk}

\begin{rmk}
  In any region $r_{\bhm,\bha}<r_0<r<(1-\delta)t$ away from the event horizon and away from null infinity, we can replace $t_*$ in~\eqref{EqI2Phi} by $t-r$ up to $t^{-4+\eps}$ errors and thus obtain
  \[
    \Bigl|\phi - c\frac{t}{(t^2-r^2)^2}\Bigr| \lesssim t^{-4+\eps},\qquad r_0<r<(1-\delta)t.
  \]
  We note that $t/(t^2-r^2)^2$ is an exact solution of the wave equation of Minkowski space. For more details, see Remark~\ref{RmkKWBL}.
\end{rmk}

\begin{rmk}
  The constant $C_\eps$ in~\eqref{EqI2Phi} is bounded by a universal constant times $\|\phi_0\|_{H^N}+\|\phi_1\|_{H^{N-1}}$ for some large $N$ as long as $\phi_0,\phi_1$ have support in a fixed compact subset of $X^\circ$. In order to simplify the exposition, we do not keep track of the number of derivatives or the precise decay assumptions (except for forcing problems, see Theorems~\ref{ThmPW}, \ref{ThmPWGlobal}, \ref{ThmKW}). The interested reader can find a concrete value for $N$ by carefully revisiting our arguments.
\end{rmk}

\begin{rmk}
\label{RmkIPriceL}
  In the context of part~\eqref{ItIPrice} of Theorem~\ref{ThmI}, we prove $t_*^{-l-3}$ decay of $\phi$ in future timelike cones, and $t_*^{-l-2}$ decay of the radiation field of $\phi$; for initially static perturbations, the decay rates are improved by $1$. See Theorem~\ref{ThmPF}. Generalizations of such $l$-dependent decay rates to Kerr spacetimes have been discussed in the physics literature \cite{GleiserPricePullinKerrTails,BurkoKhannaKerrTails}.
\end{rmk}

The constant $c$ in Theorem~\ref{ThmI2} only vanishes on a codimension $1$ subspace of initial data. Thus, the restriction of $|\pa_{t_*}\phi|^2$ to the event horizon of the black hole generically obeys a pointwise \emph{lower} (and upper) bound of $t_*^{-6}$. This proves Conjecture~1.9 in the paper \cite{LukSbierskiKerr} by Luk--Sbierski and thus implies that generic smooth and compactly supported Cauchy data on subextremal Kerr spacetimes give rise to solutions of the scalar wave equation for which the nondegenerate energy on any spacelike hypersurface transversal to the Cauchy horizon is infinite (see \cite[Conjecture~1.7]{LukSbierskiKerr}). Indeed, the upper and lower bounds in assumptions (i) and (ii) of their main theorem, \cite[Theorem~3.2]{LukSbierskiKerr}, hold for $q=5$, $\delta=0$.

The asymptotic behavior~\eqref{EqI2Phi} holds more generally on any stationary and asymptotically flat (with mass $\bhm\in\R$) spacetime, a notion we introduce in~\S\ref{SA}. Roughly speaking, these are spacetimes whose metrics have a $2\bhm/r$ long range term just like the Schwarzschild metric $g_\bhm$, plus lower order perturbations which decay at a rate of at least $r^{-2}$. We need to assume the absence of zero energy bound states or resonances (smooth stationary solutions of $\Box_g\phi=0$ with $|\phi|\lesssim r^{-1}$ for large $r$), the absence of nontrivial mode solutions which are purely oscillatory or exponentially growing as $t_*\to\infty$, and high energy estimates for the resolvent; in concrete situations, the latter can typically be proved easily using microlocal methods. See Definitions~\ref{DefAF} and \ref{DefASAssm} and the results in \S\ref{SsPW}.

\begin{rmk}
\label{RmkIDyn}
  Waves on dynamical spacetimes which merely \emph{settle down} to a stationary spacetime can not be described in one fell swoop using spectral methods. Rather, as demonstrated on asymptotically de~Sitter \cite{HintzVasySemilinear,HintzQuasilinearDS} and Kerr--de~Sitter spacetimes \cite{HintzVasyQuasilinearKdS}, in particular in the proof of the nonlinear stability of slowly rotating Kerr--de~Sitter black holes \cite{HintzVasyKdSStability}, the analysis of the stationary wave equation (in these settings based on \cite{SaBarretoZworskiResonances,BonyHaefnerResolvent,MelroseSaBarretoVasySdS,DyatlovQNM,VasyMicroKerrdS,HintzPsdoInner}) is one step in a two-step analysis. Namely, microlocal methods allow the control of high frequencies of waves on dynamical spacetimes, while their decay is controlled using precise decay results on the stationary model spacetime (typically with a loss of regularity); together, this controls waves up to \emph{compact} error terms (on a scale of weighted Sobolev spaces) which can then be dropped in perturbative regimes. Details of this approach on asymptotically flat spacetimes will be given in future work.
\end{rmk}

In order to put Theorems~\ref{ThmI} and \ref{ThmI2} into context, we recall that Angelopoulos, Aretakis, and Gajic are pursuing a program aimed at a detailed asymptotic description of waves on spherically symmetric spacetimes, including both subextremal and extremal black hole spacetimes. In particular, in \cite{AngelopoulosAretakisGajicVF}, they prove almost sharp inverse polynomial decay on spherically symmetric, stationary, asymptotically flat spacetimes using energy estimate and vector field methods. In the aforementioned companion paper \cite{AngelopoulosAretakisGajicLate}, they give the first rigorous proof of a $t_*^{-3}$ leading order term in compact spatial regions, a $t_*^{-2}$ leading order term for the radiation field (confirming predictions of Gundlach--Price--Pullin \cite{GundlachPricePullin}), and the asymptotic profile in the full forward light cone; key to their arguments are certain conservation laws at null infinity. Our results on Kerr (or more general) spacetimes remove the assumption that the underlying spacetime be spherically symmetric; as we shall discuss in~\S\ref{SsIV} below, we also allow the coupling of scalar waves to stationary potentials. On the other hand, unlike \cite{AngelopoulosAretakisGajicLate}, we do not keep careful track of the number of derivatives used. The subsequent work \cite{AngelopoulosAretakisGajicLog} goes one step further and computes the first \emph{subleading} $t_*^{-3}\log t_*$ term of the radiation field for spherically symmetric waves, confirming heuristic arguments by G\'omez--Winicour--Schmidt \cite{GomezWinicourSchmidtTails}. These leading and subleading terms are the first two terms of a (conjectural) full polyhomogeneous expansion of linear scalar waves $\phi$ on Kerr (or more general) spacetimes.

\begin{rmk}
\label{RmkIPhg}
  In~\S\ref{SssPWGlobal}, we define a compactification of $[0,\infty)_{t_*}\times X^\circ$ to a manifold with corners on which we conjecture $\phi$ to be polyhomogeneous; see Figures~\ref{FigI2} and \ref{FigPWCpt}.
\end{rmk}

On asymptotically Minkowski spacetimes, Baskin--Vasy--Wunsch \cite{BaskinVasyWunschRadMink,BaskinVasyWunschRadMink2} show the polyhomogeneity of scalar waves on a resolution of the radial compactification of $\R^4$ at the boundaries at infinity of the future and past light cones. The spacetimes under consideration are required to be well-behaved with respect to the dilation action $(t,x)\mapsto(\lambda t,\lambda x)$; in particular, stationary perturbations are not permitted. Baskin--Marzuola \cite{BaskinMarzuolaCone} (see also \cite{BaskinMarzuolaComp}) extended these results to allow for conic singularities of the metric on a cross section of the dilation action. This is directly related to the profile appearing in~\eqref{EqI2Phi}: in the terminology of \cite{BaskinVasyWunschRadMink,BaskinVasyWunschRadMink2,BaskinMarzuolaCone}, this profile is, under suitable identifications, a resonant state of exact hyperbolic space with a conic singularity at $r=0$; and indeed $I^+$ is equal to the blow-up of the `north cap', denoted $C_+$ in the references, at the `north pole'.

Guillarmou--Hassell and Sikora \cite{GuillarmouHassellResI,GuillarmouHassellResII,GuillarmouHassellSikoraResIII} give a complete description of the Schwartz kernel of the low energy resolvent $(-\sigma^2+\Delta_g+V)^{-1}$ for potential scattering on asymptotically conic (or flat) spaces as a polyhomogeneous distribution on a suitable resolved space (which includes $(0,1)_\sigma\times X^\circ\times X^\circ$ as an open dense submanifold). Via the inverse Fourier transform, this (together with bounds for bounded and high energies) gives full polyhomogeneous expansions of linear waves on a compactification of the spacetime. Their setup does not directly apply to Schwarzschild or Kerr spacetimes but, in concert with \cite{HassellVasyResolvent} does cover wave equations on Riemannian manifolds whose metrics, in dimension $3$, have a long range mass term $2\bhm/r$ of the same type as the Schwarzschild metric.

The proofs in \cite{DonningerSchlagSofferPrice} of the first $l$-dependent pointwise decay rates $t^{-2 l-2}$ in the context of Theorem~\ref{ThmI}, as well as the subsequent \cite{DonningerSchlagSofferSchwarzschild}, control the spectral measure for low frequencies using separation of variables techniques. (See Finster--Kamran--Smoller--Yau \cite{FinsterKamranSmollerYauKerr} for a similar approach on Kerr spacetimes.) Tataru~\cite{TataruDecayAsympFlat} proves $t^{-3}$ decay in large generality on a class of asymptotically flat and stationary spacetimes under the assumption that local energy decay estimates hold; these estimates hold on subextremal Kerr spacetimes, as discussed below (see also \cite{MetcalfeSterbenzTataru}). The metric asymptotics assumed in \cite{TataruDecayAsympFlat} are quite weak: Tataru allows even the long range perturbations to be merely conormal, in contrast to our $2\bhm/r$ leading order term which, however, is key for getting the \emph{leading order term} rather than merely a $\cO(t_*^{-3})$ \emph{upper bound}. (Our assumptions on \emph{short} range perturbations in Definition~\ref{DefAF} can easily be relaxed to conormality, see Remark~\ref{RmkAConormal}.) His method allows for the coupling to stationary potentials with $r^{-3}$ decay; these fit into our framework as well, as discussed in~\S\ref{SsIV} below. Metcalfe--Tataru--Tohaneanu \cite{MetcalfeTataruTohaneanuPriceNonstationary} subsequently established Price's law on nonstationary spacetimes with suitable decay towards stationarity. Unlike \cite{TataruDecayAsympFlat} and the present paper, the proof in \cite{MetcalfeTataruTohaneanuPriceNonstationary} does not make use of the Fourier transform in time, but rather combines local energy decay with the explicit solution of the constant coefficient d'Alembertian. The same authors also prove $t^{-4}$ decay for the Maxwell equation on Schwarzschild spacetimes \cite{MetcalfeTataruTohaneanuMaxwellSchwarzschild}. In this case, there is a zero resonance, which gives rise to the stationary Coulomb solution (and is dealt with in an ad hoc manner). On the spectral side, this corresponds to a first order pole of the resolvent, the sharp analysis of which is beyond the scope of the present paper; see \cite{HaefnerHintzVasyKerr} for weaker results in a related context.

There is a large amount of literature on wave decay on perturbations of Minkowski space; besides the above references, we mention in particular the work by Christodoulou and Klainerman \cite{KlainermanGlobal,ChristodoulouGlobalSolutionsSmallData,ChristodoulouKlainermanStability}, Lindblad--Rodnianski \cite{LindbladRodnianskiGlobalStability}, and references therein.

Boundedness of linear waves on the Schwarzschild was first proved by Wald and Kay--Wald \cite{WaldSchwarzschild,KayWaldSchwarzschild}. A robust approach based on carefully chosen vector field multipliers and commutators was pioneered by Dafermos--Rodnianski \cite{DafermosRodnianskiRedShift}, with many subsequent improvements, see e.g.\ \cite{LukSchwarzschild,MoschidisRp}. Previously, Blue--Soffer \cite{BlueSofferSchwarzschildDecay,BlueSofferSchwarzschildSpin2} proved local decay estimates using Morawetz estimates generalizing \cite{MorawetzExteriorDecay}. Dafermos--Rodnianski \cite{DafermosRodnianskiPrice} proved sharp $t^{-3}$ decay in a nonlinear, albeit spherically symmetric setting.

Local energy decay estimates and pointwise decay of linear scalar waves on Kerr spacetimes were first obtained for very small angular momenta by Andersson--Blue \cite{AnderssonBlueHiddenKerr}, Dafermos--Rodnianski \cite{DafermosRodnianskiKerrDecaySmall}, and Tataru--Tohaneanu \cite{TataruTohaneanuKerrLocalEnergy}, and established in the full subextremal range by Dafermos--Rodnianski--Shlapentokh-Rothman \cite{DafermosRodnianskiShlapentokhRothmanDecay}; the spectral theoretic input is the mode stability proved by Whiting~\cite{WhitingKerrModeStability} and Shlapentokh-Rothman \cite{ShlapentokhRothmanModeStability}. Strichartz estimates were proved in \cite{MarzuolaMetcalfeTataruTohaneanuStrichartz,TohaneanuKerrStrichartz}. See Aretakis \cite{AretakisExtremalKerr} for the extremal case. Results for semilinear and quasilinear equations were proved by Luk \cite{LukKerrNonlinear}, Lindblad--Tohaneanu \cite{LindbladTohaneanuSchwarzschildQuasi,LindbladTohaneanuKerrQuasi}, and for the Einstein equation by Klainerman--Szeftel \cite{KlainermanSzeftelPolarized}. For further results on tensor-valued waves on Schwarzschild, we refer the reader to \cite{FinsterKamranSmollerYauDiracKerrNewman,SterbenzTataruMaxwellSchwarzschild,BlueMaxwellSchwarzschild,DafermosHolzegelRodnianskiSchwarzschildStability,JohnsonSchwarzschild,HungSchwarzschildOdd,HungSchwarzschildEven,PasqualottoMaxwell}; for subextremal Kerr spacetimes, see \cite{AnderssonBlueMaxwellKerr,DafermosHolzegelRodnianskiTeukolsky,AnderssonBackdahlBlueMaKerr,HaefnerHintzVasyKerr}.

%%%%%%%%%%%%%%%%%%%%%%%%%%%%%%%%%%%%%%%%%%%%%%%%%%
\subsection{Sharp asymptotics for wave equations with stationary potentials}
\label{SsIV}

On subextremal Kerr spacetimes (or generalizations as in~\S\ref{SA}), we can couple scalar waves to stationary complex-valued potentials $V$ with $r^{-3}$ decay at infinity under spectral conditions on $\Box_g+V$ as before (absence of bound states and nontrivial nondecaying mode solutions; high energy estimates). The asymptotics~\eqref{EqI2Phi} continue to hold in every cone $\delta t_*<r<(1-\delta)t_*$, $\delta>0$; however, the asymptotic behavior in compact spatial sets is modified: one has
\[
  |\phi(t_*,x) - u_{(0)}t_*^{-3}| \lesssim t_*^{-4+\eps}
\]
where $u_{(0)}$ is an `extended bound state': $u_{(0)}$ is the unique stationary solution of $(\Box_g+V)u_{(0)}=0$ which for large $r$ is equal to a constant $c$ plus $\cO(r^{-1+\eps})$ corrections. Here, $c$ is equal to the $L^2$ inner product of a linear combination of the initial data with an `extended dual bound state' $u_{(0)}^*$ which solves $(\Box_g+V)^*u_{(0)}^*=0$. We illustrate this on Minkowski space $\R_t\times\R^3_x$ with metric $-d t^2+d x^2$; even in this setting, the result appears to be new:

\begin{thm}[Sharp asymptotics for wave equations with stationary potentials in a simple special case]
\label{ThmIV}
  Let $V\in\CI(\R^3)$ be a potential which in $r>1$ is of the form $V(r,\omega)=r^{-3}W(r^{-1},\omega)$, where $W(\rho,\omega)\in\CI([0,1)\times\Sph^2)$. Define
  \[
    \bar V_0:=(4\pi)^{-1}\int_{\Sph^2}W(0,\omega)\,d\omega.
  \]
  Suppose that the resolvent\footnote{Here, we use the nonnegative Laplacian $\Delta_{\R^3}\geq 0$.} $(-\sigma^2+\Delta_{\R^3}+V)^{-1}\colon L^2(\R^3)\to L^2(\R^3)$ extends analytically from $\Im\sigma\gg 1$ to $\Im\sigma>0$, and is continuous down to $\R_\sigma$ as a map $\CIc(\R^3)\to\CI(\R^3)$. Define $u_{(0)}\in\CI(\R^3)$ as the (unique) solution of $(\Delta_{\R^3}+V)u_{(0)}=0$ such that $u_{(0)}\to 1$ as $r\to\infty$, and denote by $u_{(0)}^*=\ol{u_{(0)}}$ the corresponding solution of $(\Delta_{\R^3}+\bar V)u_{(0)}^*=0$. Given smooth, compactly supported initial data $\phi_0,\phi_1\in\CIc(\R^3)$, let $\phi(t,x)$ denote the solution of the initial value problem
  \begin{equation}
  \label{EqIVEqn}
    (-D_t^2+\Delta_{\R^3}+V)\phi=0,\quad
    \phi(0,x)=\phi_0(x),\quad
    \pa_t\phi(0,x)=\phi_1(x).
  \end{equation}
  Then, for $x$ restricted to any fixed compact subset of $\R^3$, we have
  \[
    |\phi(t,x) - c u_{(0)}t^{-3}| \leq C_\eps t^{-4+\eps},\qquad
    c = -\frac{\bar V_0}{\pi}\la\phi_1,u_{(0)}^*\ra_{L^2(\R^3)}.
  \]
\end{thm}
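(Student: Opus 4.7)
The plan is to use the low-energy spectral method from the main body of the paper. By the assumed continuous extension of $R_V(\sigma):=(\Delta_{\R^3}+V-\sigma^2)^{-1}$ down to $\R_\sigma$ and a standard microlocal high-energy resolvent bound (easily verified on $\R^3$), the Laplace/Fourier formula for the Cauchy problem~\eqref{EqIVEqn} reads, for compactly supported data,
\begin{equation*}
  \phi(t,x)=\frac{1}{2\pi}\int_\R e^{-i\sigma t}R_V(\sigma)(\phi_1-i\sigma\phi_0)(x)\,d\sigma.
\end{equation*}
Large-$\sigma$ contributions are $\cO(t^{-\infty})$ by the high-energy bound, and bounded but nonzero $\sigma$ also give $\cO(t^{-\infty})$ by non-stationary phase. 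Thus the large-$t$ asymptotics are encoded entirely in the structure of $R_V(\sigma)$ near $\sigma=0$.

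The crux is the low-energy expansion of $R_V(\sigma)$. Because $V(r,\omega)=\bar V_0\,r^{-3}+\cO(r^{-4})$ after spherical averaging, the generalized zero mode $u_{(0)}$ solving $(\Delta+V)u_{(0)}=0$ with $u_{(0)}\to 1$ at infinity satisfies
\begin{equation*}
  u_{(0)}(r,\omega)=1+\bar V_0\,\frac{\log r}{r}+\cO(r^{-1}),\quad r\to\infty,
\end{equation*}
via the identity $\Delta(r^{-1}\log r)=-r^{-3}$ in $\R^3$. I would then prove, for compactly supported $f$ and $x$ in a compact set, an expansion of the form
\begin{equation*}
  R_V(\sigma)f(x)=a_0(f,x)+\sigma\,a_1(f,x)+\sigma^2\log(\sigma+i0)\,B(f)\,u_{(0)}(x)+\cO(\sigma^{2-}),
\end{equation*}
where the logarithmic coefficient is given by pairing against the adjoint zero mode,
\begin{equation*}
  B(f)=C_0\,\langle f,u_{(0)}^*\rangle_{L^2(\R^3)},\quad u_{(0)}^*=\overline{u_{(0)}},
\end{equation*}
for an explicit constant $C_0$ proportional to $\bar V_0$. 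Both the $r^{-1}\log r$ tail of $u_{(0)}$ and the $\sigma^2\log(\sigma+i0)$ singularity of $R_V(\sigma)$ trace back to the same borderline integral: the formal identity $\int_{\R^3} V u_{(0)} u_{(0)}^*\,dx=0$ (formally obtained from $(\Delta+V)u_{(0)}=0$ via integration by parts) fails by a logarithmically divergent boundary contribution at infinity, and the coefficient of the divergence is the spherical average $\bar V_0$.

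Inserting the expansion into the Fourier integral and using the classical distributional identity
\begin{equation*}
  \frac{1}{2\pi}\int_\R e^{-i\sigma t}\chi(\sigma)\sigma^2\log(\sigma+i0)\,d\sigma=\frac{-1}{\pi i}\,t^{-3}+\cO(t^{-4+\eps}),\quad t\to+\infty,
\end{equation*}
(with a cutoff $\chi\equiv 1$ near $0$) yields $\phi(t,x)=c u_{(0)}(x)t^{-3}+\cO(t^{-4+\eps})$ from the $\phi_1$-term, the various constants combining into $c=-\tfrac{\bar V_0}{\pi}\langle\phi_1,u_{(0)}^*\rangle_{L^2}$; the $-i\sigma\phi_0$-term only contributes at the order $\sigma^3\log(\sigma+i0)$ after expansion, hence $\cO(t^{-4+\eps})$. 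The main obstacle is of course step~2: rigorously controlling the expansion of $R_V(\sigma)$ to order $\sigma^{2-}$ and identifying the coefficient $B(f)$. I would carry this out in the geometric-microlocal framework of Vasy invoked for Theorem~\ref{ThmPRes}: compactify $\R^3$ radially to $\overline{\R^3}$, regard $\Delta+V-\sigma^2$ as a parameter-dependent scattering-type operator with parameter $\sigma$, and construct its inverse iteratively at the zero-energy boundary face by matching powers of $\sigma$ and $\log\sigma$ against a polyhomogeneous ansatz. The logarithmic resonance then arises precisely from the indicial root at spatial infinity generated by the spherically symmetric $\bar V_0 r^{-3}$ tail of $V$, and matching against the polyhomogeneous expansion of $u_{(0)}$ and $u_{(0)}^*$ pins down the coefficient $B(f)$.
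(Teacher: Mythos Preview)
Your overall plan---Fourier representation, low-energy resolvent expansion in Vasy's framework, extraction of a $\sigma^2\log(\sigma+i0)$ singularity whose inverse Fourier transform gives the $t^{-3}$ leading term---is exactly the paper's route; what you sketch in your final paragraph is precisely how the paper proceeds (see \S\ref{SsPV}). Two points deserve correction, however.

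First, your heuristic for \emph{why} the logarithm appears is off. You attribute it to the $r^{-1}\log r$ tail of $u_{(0)}$ and the logarithmic divergence of $\int V u_{(0)}u_{(0)}^*\,dx$. But compare Schwarzschild: there $V=0$, $u_{(0)}\equiv 1$ has no log tail, the integral is trivially zero, yet the $\sigma^2\log\sigma$ singularity is still present. The paper's mechanism is different and uniform across both settings: one places the Minkowski metric into Definition~\ref{DefAF} with mass $\bhm=0$, and the long-range tail $\bar V_0 r^{-3}$ enters at the same level as a mass term, producing an \emph{effective mass} $\bhm(V)=\tfrac12\bar V_0$ (see the discussion following~\eqref{EqPVLongRange}). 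This effective mass is what feeds through the iteration of \S\ref{SsPR}: it appears in the subleading term $\bhm(V)c_{(0)}\rho^2$ of $u_0=\wh{P_V}(0)^{-1}f$ (Lemma~\ref{LemmaPL0inv4}), which after two more steps produces the borderline input $f_2=4\bhm(V)c_{(0)}\rho^2$ on which the resolvent develops its log singularity (Proposition~\ref{PropARhoSq}). The $r^{-1}\log r$ tail of $u_{(0)}$ is a separate (and, for the argument, irrelevant) phenomenon.

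Second, your Fourier transform constant is wrong: the paper computes $\cF^{-1}\bigl(\sigma^2\log(\sigma+i0)\bigr)=2\,t^{-3}$ for $t>0$ (see~\eqref{EqPWFT}--\eqref{EqPWi0}), not $-(\pi i)^{-1}t^{-3}$. Also, your remainder $\cO(\sigma^{2-})$ in the resolvent expansion is too weak to isolate the $\sigma^2\log\sigma$ term; it should be a smooth-in-$\sigma$ piece plus a conormal remainder of order $2+\alpha-$, as in Theorem~\ref{ThmPRes}.
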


See \S\ref{SsPV} for the general result, and the discussion following~\eqref{EqPVLongRange} for the proof of Theorem~\ref{ThmIV}; see Remark~\ref{RmkKWV} for the extension to Kerr spacetimes, and Remark~\ref{RmkPVConormal} regarding relaxed regularity requirements. The existence of a leading order term, and also its explicit form, can in principle also be obtained using the methods of \cite{GuillarmouHassellSikoraResIII} upon supplementing the reference with high energy resolvent estimates. The asymptotic behavior of solutions of~\eqref{EqIVEqn} for \emph{compactly supported} $V$ is drastically different (resonance expansions, exponential decay); for a detailed discussion, we refer to \cite[Chapter~3]{DyatlovZworskiBook} and references therein.

%%%%%%%%%%%%%%%%%%%%%%%%%%%%%%%%%%%%%%%%%%%%%%%%%%
\subsection{Method of proof; outlook}
\label{SsIP}

We work almost entirely on the spectral side and solve forward problems for forced waves,
\begin{equation}
\label{EqIPForc}
  \Box_g \phi = f \in \CIc(\R_{t_*}\times X^\circ),
\end{equation}
by means of the Fourier transform, given by $\hat\phi(\sigma,x)=\int e^{i\sigma t_*}\phi(t_*,x)\,d t_*$. Thus,
\begin{equation}
\label{EqIPFT}
  \phi(t_*,x) = \frac{1}{2\pi} \int_\R e^{-i\sigma t_*}\wh{\Box_g}(\sigma)^{-1}\hat f(\sigma,x)\,d\sigma,
\end{equation}
where $\wh{\Box_g}(\sigma)$ is defined in terms of $\Box_g$ by replacing all $\pa_{t_*}$ derivatives by multiplication by $-i\sigma$. The integral is well-defined and produces the forward solution of~\eqref{EqIPForc}; we refer the reader to the discussion in \cite[\S1.1]{HaefnerHintzVasyKerr} for details, and only briefly recall the main aspects here. Roughly speaking, if one integrates over the contour $\Im\sigma=C\gg 1$, one does obtain the forward solution by the Paley--Wiener theorem. One can then shift the contour down to the real axis using the mode stability assumption and the absence of zero energy resonances; high energy estimates (polynomial bounds on $\wh{\Box_g}(\sigma)^{-1}$ acting on suitable Sobolev spaces) justify the contour shifting. As mentioned before, mode stability is known on subextremal Kerr spacetimes \cite{WhitingKerrModeStability,ShlapentokhRothmanModeStability}; high energy estimates on the other hand are known to hold using semiclassical microlocal techniques, combining radial point estimates at the event horizons \cite{VasyMicroKerrdS} (see also \cite{ZworskiRevisitVasy} and \cite[Appendix~E]{DyatlovZworskiBook} for streamlined presentations), estimates at the normally hyperbolic trapped set \cite{WunschZworskiNormHypResolvent,DyatlovSpectralGaps,DyatlovWaveAsymptotics}, and radial point (microlocal Mourre) estimates at spatial infinity \cite{MelroseEuclideanSpectralTheory,VasyLAPLag}. See\footnote{The assumption of very small angular momenta, which is used in the precise low energy resolvent analysis of the reference, is \emph{not} used in the proof of high energy estimates.} \cite[Theorem~4.3]{HaefnerHintzVasyKerr}.

As in~\cite{HaefnerHintzVasyKerr}, the main task is thus to control the regularity of $\wh{\Box_g}(\sigma)^{-1}$ at $\sigma=0$; higher regularity means faster temporal decay of $\phi$. A key aspect of our analysis is that we use the Fourier transform in the coordinate $t_*$ (whose level sets are transversal to null infinity), rather than the `usual' time coordinate $t$ (which is not as well-suited to scattering theoretic considerations in the context of wave equations) as for example in \cite{BonyHaefnerResolvent,VasyWunschMorawetz,VasyLowEnergy}. The advantage of this point of view was pointed out by Vasy \cite{VasyLAPLag,VasyLowEnergyLag}. Namely, the limiting resolvent $\wh{\Box_g}(\sigma)^{-1}$ for nonzero real $\sigma$ produces outgoing solutions; working on the Minkowski spacetime and setting $t_*=t-r$ for concreteness, this corresponds to solutions with time dependence $e^{-i\sigma t}$ and leading order spatial dependence $r^{-1}e^{i\sigma r}$, thus an overall $e^{-i\sigma t_*}r^{-1}$; therefore, the `outgoing' condition for the output of $\wh{\Box_g}(\sigma)^{-1}$ in~\eqref{EqIPFT} means \emph{nonoscillatory, $\sigma$-independent} $r^{-1}$ behavior at infinity. In fact, the output is \emph{conormal}, i.e.\ has $r^{-1}$ decay upon repeated application of $r\pa_r$ and rotation vector fields.

As shown in \cite{VasyLowEnergyLag}, one can then analyze the low energy resolvent $\wh{\Box_g}(\sigma)^{-1}$ uniformly as $\sigma\to 0$ on spaces $\cA^\alpha$ of conormal functions with suitable decay rates $\alpha$ at infinity, corresponding to $r^{-\alpha}$ decay. Besides needing to allow outgoing $r^{-1}$ asymptotics in the target space, the decay rate $\alpha$ of the target space need to be chosen to ensure the invertibility also of the zero energy operator. The model is the Euclidean Laplacian $\Delta_{\R^3}$, which is invertible with domain given by functions decaying faster than $r^0$ (avoiding the nullspace given by constants) and less than $r^{-1}$ (since $\Delta_{\R^3}^{-1}$ with Schwartz kernel $(4\pi|x-y|)^{-1}$ typically does produce $r^{-1}$ asymptotics). Overall, one expects invertibility
\begin{equation}
\label{EqIPBox0}
  \wh{\Box_g}(0)^{-1} \colon \cA^{2+\alpha} \to \cA^\alpha,\quad \alpha\in(0,1),
\end{equation}
and thus uniform bounds for $\wh{\Box_g}(\sigma)^{-1}\colon\cA^{2+\alpha}\to\cA^\alpha$ for small $\sigma$.

We can now illustrate the basic mechanism underlying the proof of our main theorems. On the Schwarzschild spacetime, $\wh{\Box_g}(\sigma)$ is, to the relevant precision, equal to
\[
  L(\sigma) = -2 i\sigma r^{-1}(r\pa_r+1) + \bigl(\Delta_{\R^3} + 2\bhm r^{-3}(r\pa_r)^2\bigr)
\]
for large $r$. In terms of weights, the first term typically only gains one order of decay (mapping $r^\alpha$ to $r^{\alpha-1}$), the second term (which is $L(0)$) two. Let us now formally expand the resolvent near zero frequency by writing
\begin{align*}
  L(\sigma)^{-1}f &= L(0)^{-1}f + (L(\sigma)^{-1}-L(0)^{-1})f \\
    &= u_0 + \sigma L(\sigma)^{-1}f_1,
\end{align*}
where we set
\[
  u_0 = L(0)^{-1}f,\quad
  f_1 = -\sigma^{-1}\bigl(L(\sigma)-L(0)\bigr)u_0.
\]
The first term, $u_0$, is $\sigma$-independent. In the second term, we gain a factor of $\sigma$ (thus suggesting this is a more regular term); however, $L(0)^{-1}$ loses two orders of decay, while $\sigma^{-1}(L(\sigma)-L(0))$ only gains back one order, thus $f_1$ has one order of decay less than $f$. That is, \emph{the $\sigma$-gain comes at the cost of losing one order of decay} in the argument of $L(\sigma)^{-1}$.

One would like to iterate
\begin{equation}
\label{EqIPfIt}
  u_k = L(0)^{-1}f_k,\quad
  f_{k+1}=-\sigma^{-1}\bigl(L(\sigma)-L(0)\bigr)u_k,
\end{equation}
as often as possible while remaining in the invertible range~\eqref{EqIPBox0}; the resolvent expansion is then $u_0+\sigma u_1+\cdots+\sigma^k u_k+\cdots$. However, one cannot continue the iteration once $f_k$ only has $r^{-2}$ decay or less: the correction term
\begin{equation}
\label{EqIPfk}
  \sigma^k L(\sigma)^{-1}f_k,\qquad f_k\gtrsim r^{-2},
\end{equation}
in the expansion is then typically no longer uniformly bounded as $\sigma\to 0$. In fact, we show that if $f_k$ has borderline $r^{-2}$ decay, then $L(\sigma)^{-1}r^{-2}$ has a logarithmic singularity at $\sigma=0$ with explicit coefficient, hence~\eqref{EqIPfk} is $\sigma^k\log(\sigma+i 0)$. Upon taking the inverse Fourier transform in~\eqref{EqIPFT}, this singular term gives rise to a $t_*^{-k-1}$ leading order term\footnote{The absence of $\log t_*$ factor here is a consequence of the calculations~\eqref{EqPWFT}--\eqref{EqPWi0}.} of the solution of the wave equation $\Box_g\phi=f$. On the $(3+1)$-dimensional stationary and asymptotically flat spacetimes under consideration here, we obtain a borderline term~\eqref{EqIPfk} for $k=2$, giving the desired $t_*^{-3}$ decay; see the beginning of~\S\ref{SsPR} for a brief sketch of why it is indeed $f_2$ that has borderline $r^{-2}$ decay, and how the mass term $\bhm$ enters.

The precise analysis of a borderline term~\eqref{EqIPfk} is accomplished by geometric microlocal means: one constructs an approximate solution of $L(\sigma)\tilde u=f_k$ on a resolution $X_{\rm res}^+$ of the total space $[0,1)_\sigma\times([0,1)_\rho\times\Sph^2)$, $\rho=r^{-1}$, obtained by blowing up $\sigma=\rho=0$ (thus separating the regimes $\sigma/\rho=\sigma r\gtrsim 1$ and $\sigma r\lesssim 1$); the resolved space $X_{\rm res}^+$ already played a prominent role in~\cite{VasyLowEnergyLag}. The model problem on the front face is the spectral family at frequency $1$ of a rescaled \emph{exact} Minkowski space and can be analyzed in detail; the desired approximate solution is shown to have a $\log(\sigma/\rho)$ singularity (see Lemma~\ref{LemmaAModel}). To find the true solution $\tilde u$, one merely needs to apply $L(\sigma)^{-1}$ to the remaining error which has a logarithmic singularity in $\sigma$ but better than $r^{-2}$ spatial decay. (Overall, the coefficient of the logarithmic term of $\tilde u$ is an element in $\ker L(0)$ of size $1$ for large $r$, thus equal to $1$ for wave equations, and equal to $u_{(0)}$ as in Theorem~\ref{ThmIV} in the presence of potentials; see Proposition~\ref{PropARhoSq}.) The leading order term in the full forward light cone asserted in~\eqref{EqI2Phi} drops out of this construction as well, via the inverse Fourier transform (in $\sigma/\rho$) of the solution of the model problem (see equation~\eqref{EqPWOscInt} in the proof of Theorem~\ref{ThmPWGlobal}).

The proof of the full Price law~\eqref{EqIPrice} in~\S\ref{SPF} proceeds along the same lines; the higher regularity is mainly due to the fact that the invertibility~\eqref{EqIPBox0} holds for the larger range $\alpha\in(-l,l+1)$ of weights when restricting to angular frequency $l\in\N$, which allows for more iterations~\eqref{EqIPfIt}.

\begin{rmk}
  The regularity and pointwise estimates for $\phi$ in Theorems~\ref{ThmI} and \ref{ThmI2} are consequences of the conormal regularity at $\sigma=0$ of the resolvent (i.e.\ regularity of $L(\sigma)^{-1}f$ with respect to repeated application of $\sigma\pa_\sigma$) with values in appropriate conormal spatial function spaces.
\end{rmk}

Potential future extensions and applications of the methods developed here include:
\begin{enumerate}
\item a new proof of Morgan's results \cite{MorganDecay} on decay for stationary spacetimes asymptotic to Minkowski space at an inverse polynomial rate using the same approach (albeit possibly requiring more derivatives on the metric and the initial data);
\item a proof of the polyhomogeneity of $\phi$ on a compactification of the spacetime mentioned in Remark~\ref{RmkIPhg}. We expect that this can be done using same iteration~\eqref{EqIPfIt} upon keeping track of polyhomogeneous expansions of all the $u_k$, $f_k$, and extending the analysis of borderline (or worse) terms~\eqref{EqIPfk} so as to keep track of the polyhomogeneous expansion of the solution of the model problem, as well as of the remaining error term. The main ingredient is the analysis of $L(0)^{-1}$ on inputs which are polyhomogeneous on the resolved space $X_{\rm res}^+$;
\item an analysis of the effect of the angular momentum parameter $\bha$ of Kerr spacetimes. Here, $\bha$ enters at one order lower (in terms of $r$-decay) than the mass parameter $\bhm$, and destroys spherical symmetry, thus leading to the coupling of spherical harmonics when computing further terms (i.e.\ beyond what we do in the present paper) in the resolvent expansion. It would be interesting to see how the presence of nonzero angular momentum affects the full asymptotic expansion, in particular, whether there are extra logarithmic terms which are not present for $\bha=0$;
\item sharp asymptotics for equations with zero energy resonances or bound states. This requires significantly more work, as the resolvent now has strong singularities at $\sigma=0$; see \cite{HaefnerHintzVasyKerr}. Examples include Maxwell's equation or the equations of linearized gravity on Kerr spacetimes.
\end{enumerate}

%%%%%%%%%%%%%%%%%%%%%%%%%%%%%%%%%%%%%%%%%%%%%%%%%%
\subsection{Outline of the paper}

\begin{itemize}
\item In~\S\ref{SA}, we describe the geometry (\S\ref{SsAM}) and spectral theory (\S\ref{SsAS}) of the class of stationary and asymptotically flat spacetime under investigation. We give a detailed account of the regularity and mapping properties of the low energy resolvent (\S\S\ref{SssASLo}--\ref{SssASLoLarge}) as required for the precise analysis of the iteration~\eqref{EqIPfIt}.
\item In~\S\ref{SP}, we prove the main result giving the low energy resolvent expansion (\S\ref{SsPR}) and use it to prove Price's law with leading order term (\S\ref{SsPW}). The modifications required for stationary potentials and Theorem~\ref{ThmIV} are described in~\S\ref{SsPV}.
\item In~\S\ref{SK}, subextremal Kerr spacetimes are placed into our general framework.
\item Finally, in~\S\ref{SPF}, we prove the full Price law stated in parts~\eqref{ItIPrice}--\eqref{ItIStatic} of Theorem~\ref{ThmI}.
\end{itemize}

%%%%%%%%%%%%%%%%%%%%%%%%%%%%%%%%%%%%%%%%%%%%%%%%%%
\subsection*{Acknowledgments}

This project arose out of an ongoing collaboration with Andr\'as Vasy, and I would like to thank him for many valuable insights. I am very grateful to Jared Wunsch for an in-depth discussion and many detailed comments and suggestions. Part of this research was conducted during the period I served as a Clay Research Fellow. This material is based upon work supported by the National Science Foundation under Grant No.\ DMS-1440140 while I was in residence at the Mathematical Sciences Research Institute in Berkeley, California, during the Fall 2019 semester.

%%%%%%%%%%%%%%%%%%%%%%%%%%%%%%%%%%%%%%%%%%%%%%%%%%%%%%%%%%%%%%%%%%%%%%
\section{Asymptotically flat spacetimes}
\label{SA}

%%%%%%%%%%%%%%%%%%%%%%%%%%%%%%%%%%%%%%%%%%%%%%%%%%
\subsection{Metrics and wave operators}
\label{SsAM}

The model for the large scale behavior of the spacetimes we have in mind here is the Schwarzschild spacetime: given the mass $\bhm>0$, it has the metric
\begin{equation}
\label{EqAMSchw}
  g_\bhm = -\Bigl(1-\frac{2\bhm}{r}\Bigr)d t^2 + \Bigl(1-\frac{2\bhm}{r}\Bigr)^{-1}d r^2 + r^2\slg,
\end{equation}
where $\slg=d\theta^2+\sin^2\theta\,d\varphi^2$ is the standard metric on $\Sph^2$. Denote by $r_*=r+2\bhm\log(r-2\bhm)$ the tortoise coordinate, and put $t_*=t-r_*$; then
\[
  g_\bhm = -\Bigl(1-\frac{2\bhm}{r}\Bigr)d t_*^2 - 2 d t_* d r + r^2\slg,\quad
  g_\bhm^{-1} = -2\pa_{t_*}\pa_r + \Bigl(1-\frac{2\bhm}{r}\Bigr)\pa_r^2 + r^{-2}\slg^{-1}.
\]
The spacetimes we consider here are `short range' perturbations of this. To capture the asymptotics in a compact fashion, we define:

\begin{definition}
\label{DefACompact}
  The compactified spatial manifold $X$ is the radial compactification
  \[
    X := \ol{\R^3}
  \]
  of $\R^3$, defined as $(\R^3\sqcup([0,\infty)_\rho\times\Sph^2))/\sim$ where $\sim$ identifies points $r\omega\in\R^3$ for $r>0$, $\omega\in\Sph^2$ with $(\rho,\omega)$, $\rho=r^{-1}$.
\end{definition}

Thus, smooth functions on $X$ are precisely those smooth functions on $\R^3$ which in $r>1$ are smooth functions of $r^{-1}$ and the spherical variables. Near $\pa X=\rho^{-1}(0)$, we shall work in the collar neighborhood $[0,\eps)_\rho\times\Sph^2$.

\begin{definition}
\label{DefATsc}
  The \emph{scattering tangent bundle}
  \[
    \Tsc X \to X
  \]
  is the unique vector bundle for which the space of smooth sections consists of all smooth vector fields $V$ on $X^\circ$ which for $r>1$ are of the form $V=a\pa_r+\sum_{j=1}^3 b_j\rho\Omega_j$, where $a,b_j\in\CI(X)$, and $\Omega_1,\Omega_2,\Omega_3\in\cV(\Sph^2)$ are the rotation vector fields.\footnote{The point is that for each $p\in\Sph^2$, the tangent space $T_p\Sph^2$ is spanned by $\{\Omega_j(p)\colon j=1,2,3\}$.} In local coordinates $(\theta,\varphi)$ on $\Sph^2$, this means $V=a\pa_r+r^{-1}\tilde b_1\pa_\theta+r^{-1}\tilde b_2\pa_\varphi$ with $\tilde b_j\in\CI(X)$.
\end{definition}

One can check that the coordinate vector fields $\pa_{x^1},\pa_{x^2},\pa_{x^3}$ on $\R^3$ form a basis of $\Tsc X$ \emph{down to} $\pa X$. For example, the restriction of $g_\bhm^{-1}$ on $S^2 T^*X$, $(1-2\bhm\rho)\pa_r^2+r^{-2}(\pa_\theta^2+\sin^{-2}\theta\,\pa_\varphi^2)$, lies in $\CI(X;S^2\,\Tsc X)$ upon cutting it off to a neighborhood of $\rho=0$.

\begin{definition}
\label{DefAF}
  We call a smooth Lorentzian\footnote{Our signature convention is ${-}{+}{+}{+}$.} metric $g$ on $M^\circ=\R_{t_*}\times X^\circ$ \emph{stationary and asymptotically flat (with mass $\bhm\in\R$)} if $\pa_{t_*}$ is a Killing vector field, and if moreover
  \begin{enumerate}
  \item\label{ItAFdt} $d t_*$ is everywhere future timelike, i.e.\ $g^{0 0}<0$;
  \item\label{ItAFAsy} the coefficients of the dual metric
    \[
      g^{-1} = g^{0 0}\pa_{t_*}^2 + 2\pa_{t_*}\otimes_s g^{0 X} + g^{X X}
    \]
    satisfy
    \begin{align*}
      g^{0 0}&\in\rho^2\CI(X), \\
      g^{0 X}&\in-\pa_r+\rho^2\CI(X;\Tsc X), \\
      g^{X X}&\in (1-2\bhm\rho)\pa_r^2 + r^{-2}\slg^{-1} + \rho^2\CI(X;S^2\,\Tsc X).
    \end{align*}
  \end{enumerate}
\end{definition}

\begin{rmk}
  The function $t_*$ defined previously on the Schwarzschild spacetime does not satisfy~\eqref{ItAFdt}, but a small modification does; see~\S\ref{SK}. Even then, this definition \emph{excludes} Schwarzschild and Kerr metrics due to the existence of horizons. Since the low energy behavior of the resolvent is only sensitive to large end of the spacetime however, the adaptations to deal with Kerr are small and will be discussed in~\S\ref{SK}.
\end{rmk}

\begin{rmk}
\label{RmkAConormal}
  It suffices to assume that $g^{0 0}$ and the $\rho^2\CI$ error terms of $g^{0 X},g^{X X}$ are merely conormal, i.e.\ of class $\cA^2(X)$ in the notation of Definition~\ref{DefASSpaces}, in order for all arguments in this paper to apply unchanged. One can further relax their decay to $\cA^{1+\beta}(X)$ for $\beta>0$, though this \emph{does} affect the spatial decay rate and the $\sigma$-regularity of various terms in the resolvent expansion.
\end{rmk}

The wave operator $\Box_g=-|g|^{-1/2}\pa_\mu(|g|^{1/2}g^{\mu\nu}\pa_\nu) \in \Diff^2(M^\circ)$ of such a metric $g$ is invariant under time translations. We compute its form in the following terms:

\begin{definition}
\label{DefADiffb}
  The space $\Vb(X)$ of \emph{b-vector fields} on $X$ consists of all smooth vector fields $V$ on $X$ which are tangent to $\pa X$. For $r>1$, this means $V=a\rho\pa_\rho+\sum_{j=1}^3 b_j\Omega_j$ with $a,b_j\in\CI(X)$. For $m\in\N$, the space $\Diffb^m(X)$ of $m$-th order b-differential operators consists of all finite sums of up to $m$-fold products of b-vector fields. Finally, $\rho^\ell\Diffb^m(X)=\{\rho^\ell A\colon A\in\Diffb^m(X)\}$.
\end{definition}

\begin{lemma}
\label{LemmaABox}
  The wave operator $\Box_g$ of an admissible metric is given by
  \[
    \Box_g = -2\rho\pa_{t_*}Q + \wh{\Box_g}(0) - g^{0 0}\pa_{t_*}^2,
  \]
  where $Q\in\Diffb^1(X)$ and $\wh{\Box_g}(0)\in\rho^2\Diffb^2(X)$; near $\pa X$, they are of the form
  \begin{alignat}{2}
    Q &= Q_0 + \tilde Q, &\quad \tilde Q\in\rho^2\Diffb^1(X), \nonumber\\
  \label{EqABoxPieces0}
    \rho^{-2}\wh{\Box_g}(0) &= L_0 + \rho L_1 + \tilde L, &\quad \tilde L\in\rho^2\Diffb^2(X),
  \end{alignat}
  where the dilation-invariant (in $\rho$) operators $Q_0,L_0,L_1$ are given by
  \begin{equation}
  \label{EqABoxPieces}
  \begin{split}
    Q_0 &= \rho\pa_\rho-1, \\
    L_0 &= -(\rho\pa_\rho)^2+\rho\pa_\rho+\slDelta, \\
    L_1 &= 2\bhm(\rho\pa_\rho)^2,
  \end{split}
  \end{equation}
  where $\slDelta=-(\sin\theta)^{-1}\pa_\theta\sin\theta\,\pa_\theta-(\sin\theta)^{-2}\pa_\varphi^2$ is the (nonnegative) spherical Laplacian.
\end{lemma}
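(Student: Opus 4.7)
The plan is a direct computation starting from $\Box_g\phi = -|g|^{-1/2}\pa_\mu(|g|^{1/2}g^{\mu\nu}\pa_\nu\phi)$. Since $\pa_{t_*}$ is Killing, both $g^{\mu\nu}$ and $|g|^{1/2}$ are $t_*$-independent, so every $\pa_{t_*}$ can be commuted past the spatial derivatives. Splitting the sum according to whether each index is temporal or spatial, and combining the symmetric $(0,i)$ and $(i,0)$ cross terms, yields
\[
\Box_g = -g^{0 0}\pa_{t_*}^2 - 2\Bigl[g^{0 i}\pa_i + \tfrac{1}{2}|g|^{-1/2}\pa_i(|g|^{1/2}g^{0 i})\Bigr]\pa_{t_*} - |g|^{-1/2}\pa_i(|g|^{1/2}g^{i j}\pa_j).
\]
I identify the three summands respectively with $-g^{0 0}\pa_{t_*}^2$, $-2\rho Q\pa_{t_*} = -2\rho\pa_{t_*}Q$ (using that $Q$ is time-independent, hence commutes with $\pa_{t_*}$), and $\wh{\Box_g}(0)$. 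It then remains to check the asserted leading asymptotics and b-regularity of $Q$ and $\rho^{-2}\wh{\Box_g}(0)$.

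The key auxiliary input is $|g|^{1/2}$. Using Definition~\ref{DefAF}, the $(t_*,r)$ block of the dual metric is $\left(\begin{smallmatrix}g^{0 0}&-1\\-1&1-2\bhm\rho\end{smallmatrix}\right)$ up to $\rho^2\CI$ corrections, whose determinant equals $-1$ exactly at orders $\rho^0$ and $\rho^1$ (the cross entry dominates). The angular block contributes $r^{-4}\sin^{-2}\theta$ up to $O(\rho^2)$, giving $|g|^{1/2} = r^2\sin\theta\cdot(1+\rho^2\CI(X))$. The absence of a $\rho$-correction here is essential for the clean matching of $L_1$ below.

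For $Q$, the leading part of $g^{0 i}\pa_i$ is $-\pa_r$, which via $\pa_r = -\rho^2\pa_\rho$ equals $\rho\cdot(\rho\pa_\rho)$; while to leading order $\tfrac{1}{2}|g|^{-1/2}\pa_i(|g|^{1/2}g^{0 i}) = \tfrac{1}{2}(r^2\sin\theta)^{-1}\pa_r(-r^2\sin\theta) = -r^{-1} = -\rho$. Division by $\rho$ produces $Q_0 = \rho\pa_\rho - 1$. For the remainder, I use the elementary fact that $\CI(X;\Tsc X)\subset \rho\Vb(X)$ (both $\pa_r = -\rho(\rho\pa_\rho)$ and the basis sections $\rho\Omega_j$ lie in $\rho\Vb$), so that the $\rho^2\CI(X;\Tsc X)$ corrections to $g^{0 X}$ and the $\rho^2\CI$ corrections to $|g|^{1/2}$, after one division by $\rho$ in the definition of $Q$, land in $\rho^2\Diffb^1(X)$; this is $\tilde Q$.

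Finally I compute $\wh{\Box_g}(0)$ modulo $\rho^4\Diffb^2$. The leading part of $g^{X X}$, namely $\pa_r^2 + r^{-2}\slg^{-1}$, combines with the leading $|g|^{1/2} = r^2\sin\theta$ to yield the flat positive Laplacian $\Delta_{\R^3} = -r^{-2}\pa_r(r^2\pa_r) + r^{-2}\slDelta$, which a direct substitution using $\rho\pa_\rho = -r\pa_r$ and $(\rho\pa_\rho)^2 = \rho\pa_\rho + \rho^2\pa_\rho^2$ rewrites as $\rho^2[-(\rho\pa_\rho)^2 + \rho\pa_\rho + \slDelta] = \rho^2 L_0$. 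The $-2\bhm\rho\pa_r^2$ piece of $g^{X X}$ contributes $-(r^2\sin\theta)^{-1}\pa_r(-2\bhm\rho\cdot r^2\sin\theta\,\pa_r) = 2\bhm r^{-3}(r\pa_r)^2 = \rho^3 L_1$, using the identity $(r\pa_r)^2 = r\pa_r + r^2\pa_r^2$. The remaining $\rho^2\CI(X;S^2\Tsc X)$ corrections in $g^{X X}$, together with the $\rho^2$ corrections to $|g|^{1/2}$, produce—by the same scattering-to-b calculus noted above, applied now to a second-order operator—contributions in $\rho^4\Diffb^2(X) = \rho^2\cdot\rho^2\Diffb^2(X)$, accounting for $\rho^2\tilde L$.

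The main obstacle is not conceptual but rather the careful bookkeeping needed to convert between the scattering structure encoded in $\Tsc X$ (in which the metric coefficients are naturally expanded) and the b-structure in which $Q$, $L_0$, $L_1$, and the remainders are expressed. Once the single elementary inclusion $\CI(X;\Tsc X)\subset \rho\Vb(X)$ is invoked, together with the exact computation of $|g|^{1/2}$ modulo $\rho^2$, the identifications of $Q_0$, $L_0$, and $L_1$ emerge automatically and the residual terms fall into the required weighted b-differential spaces.
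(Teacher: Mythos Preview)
Your proof is correct and follows essentially the same direct computation as the paper's: both compute $|g|^{1/2}\in r^2\sin\theta\,(1+\rho^2\CI)$, then read off the leading parts $Q_0$, $L_0$, $L_1$ from the explicit form of the dual metric (using $\pa_r=-\rho^2\pa_\rho$ and the identities for $(r\pa_r)^2$, $(\rho\pa_\rho)^2$), and finally absorb the $\rho^2$-corrections into $\tilde Q$, $\tilde L$ via the inclusion $\CI(X;\Tsc X)\subset\rho\Vb(X)$. Your organization---first splitting $\Box_g$ into its three $t_*$-homogeneous pieces and then analyzing each---is slightly more systematic than the paper's, which identifies the second-order coefficients from the principal symbol first and then computes the lower-order terms, but the content is the same.
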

\begin{proof}
  The coefficients of second order derivatives are of course equal to (minus) the coefficients of the dual metric function; noting that $\pa_r=-\rho^2\pa_\rho$, this verifies the first order term of $Q_0$ and the second order terms of $L_0,L_1$. To compute the lower order terms, note that in polar coordinates $(\theta,\varphi)$ on $\Sph^2$ and up to an overall sign, we have
  \[
    |g|^{\pm 1/2} \in \rho^{\mp 2}(\sin\theta)^{\pm 1}(1+\rho^2\CI).
  \]
  Thus, up to $\pa_{t_*}\circ\rho^3\Diffb^1$ error terms (captured by $\rho\tilde Q$), the $t_*$-$X$-cross terms are given by $-\pa_{t_*}(-\pa_r) + \rho^2\cdot\rho^2\pa_\rho\bigl(\rho^{-2}(-\pa_{t_*})\bigr)$, which upon using $[\rho\pa_\rho,\rho^{-2}]=-2\rho^{-2}$ gives~\eqref{EqABoxPieces}.

  For the zero energy operator, the $\pa_r^2$ term of the dual metric gives, modulo $\rho^2\Diffb^2$ and using that $\rho^2\pa_\rho^2=(\rho\pa_\rho)^2-\rho\pa_\rho$,
  \[
    -\rho^2\rho^2\pa_\rho\bigl[\rho^{-2}(1-2\bhm\rho)\rho^2\pa_\rho\bigr] = \rho^2\bigl[-(\rho\pa_\rho)^2+\rho\pa_\rho + 2\bhm\rho(\rho\pa_\rho)^2 \bigr]
  \]
  which gives the terms in $L_j$ involving $\pa_\rho$. The $\pa_\theta^2$ term gives $-\rho^2(\sin\theta)^{-1}\pa_\theta\bigl(\sin\theta\,\pa_\theta)$; and the $\pa_{\varphi_*}^2$ coefficient finally gives ${-}(\sin\theta)^{-2}\rho^2\pa_{\varphi_*}^2$. The $\rho^2\CI$ error terms in $g^{X X}$ contribute to the $\rho^2\tilde L\in\rho^2\cdot\rho^2\Diffb^2$ error terms of $\wh{\Box_g}(0)$.
\end{proof}

%%%%%%%%%%%%%%%%%%%%%%%%%%%%%%%%%%%%%%%%%%%%%%%%%%
\subsection{Spectral theory}
\label{SsAS}

We fix a stationary and asymptotically flat metric $g$ with mass $\bhm$. We denote the spectral family of $\Box_g$ by
\begin{equation}
\label{EqASBoxFam}
  \wh\Box(\sigma) := e^{i\sigma t_*}\Box e^{-i\sigma t_*} = 2 i\sigma\rho Q + \wh\Box(0) + \sigma^2 g^{0 0} \in \rho\Diffb^2(X),\qquad
  \Box\equiv\Box_g.
\end{equation}
We equip $X$ with the volume density
\begin{equation}
\label{EqASDensity}
  |d g_X|
\end{equation}
defined via $|d g|=|d t_*||d g_X|$, where $|d g|$ is the volume density of $g$. (Thus, in local coordinates on $X^\circ$, $|d g_X|=|\det g(t_*,x)|^{1/2}|d x|$, with the determinant independent of $t_*$.) We write $L^2(X):=L^2(X;|d g_X|)$; formal $L^2$ adjoints of differential operators on $X$ shall always be with respect to this $L^2$ space.

\begin{definition}
\label{DefASSpaces}
  On $X$ as in Definition~\ref{DefACompact}, we define the following function spaces.
  \begin{enumerate}
  \item For $s\in\N_0$ and $\ell\in\R$, we define the \emph{weighted b-Sobolev space} $\Hb^{s,\ell}(X)=\rho^\ell\Hb^s(X)$ for $\ell=s=0$ by $\Hb^0(X)=L^2(X)$, while $\Hb^s(X)$ for $s\in\N$ consists of all $u\in L^2(X)$ such that $A u\in L^2(X)$ for all $A\in\Diffb^s(X)$. The space $\Hb^{s,\ell}(X)$ is a Hilbert space with norm
  \[
    \|u\|_{\Hb^{s,\ell}(X)}^2 := \sum_{j=0}^s \sum_k \|A_{j k}u\|_{L^2(X)}^2,
  \]
  where the inner sum is over a finite set of operators $A_{j k}$ which span $\Diffb^j(X)$ over $\CI(X)$.\footnote{In the concrete setting at hand, we can take the operators $A_{j k}$ to be all up to $j$-fold compositions of the b-vector fields $\pa_{x^i}$ and $x^i\pa_{x^j}$ for $i,j=1,2,3$, since these vector fields span $\Vb(X)$ over $\CI(X)$.} The spaces $\Hb^s(X)$ for $s\in\R$ are defined by duality and interpolation.
  \item For $s,\ell\in\R$ and $h>0$, the \emph{semiclassical Sobolev space} $\Hbh^{s,\ell}(X)=\rho^\ell\Hbh^s(X)$ is equal to $\Hb^{s,\ell}(X)$ as a vector space, but with norm given by
  \[
    \|u\|_{\Hbh^{s,\ell}(X)}^2 := \sum_{j=0}^s \sum_k \|h^j A_{j k}u\|_{L^2(X)}^2.
  \]
  That is, each b-derivative comes with an extra factor of $h$.
  \item For $\alpha\in\R$, we define the \emph{conormal space} $\cA^\alpha(X)=\rho^\alpha\cA^0(X)$ to consist of all $u\in\rho^\alpha L^\infty(X)$ (i.e.\ $\rho^{-\alpha}u\in L^\infty(X)$) so that $A u\in\rho^\alpha L^\infty(X)$ for all $A\in\Diffb(X)$ (b-differential operators of any order).
  \end{enumerate}
\end{definition}

Sobolev embedding implies the inclusions
\begin{equation}
\label{EqASobolevEmb}
  \Hb^{\infty,\ell}(X) \subset \cA^{\ell+3/2}(X) \subset \Hb^{\infty,\ell-}(X);
\end{equation}
the `$+\tfrac32$' is due to $r^2\,d r|d\slg|=(r^{3/2})^2\frac{d r}{r}|d\slg|$ being a weighted b-density. Here, we define
\[
  \Hb^{s,\ell-}(X) := \bigcup_{\eps>0} \Hb^{s,\ell-\eps}(X),\quad
  \cA^{\alpha-}(X) := \bigcup_{\eps>0} \cA^{\alpha-\eps}(X).
\]
From~\eqref{EqASBoxFam}, we see that $\wh\Box(\sigma)\colon\cA^{1+\alpha}(X)\to\cA^\alpha(X)$ for all $\sigma\in\C$, with the zero energy operator being special in that $\wh\Box(0)\colon\cA^{2+\alpha}(X)\to\cA^\alpha(X)$.

\begin{definition}
\label{DefASAssm}
  The metric $g$ is \emph{spectrally admissible} if the following conditions are satisfied:
  \begin{enumerate}
  \item\label{ItASAssmBound} (Absence of bound states.) The nullspace of $\wh\Box(0)$ on $\cA^1(X)$ is trivial.
  \item\label{ItASAssmMS} (Mode stability.) The nullspace of $\wh\Box(\sigma)$ on $\cA^1(X)$ is trivial for all $\sigma\in\C$, $\Im\sigma\geq 0$.
  \item\label{ItASAssmHi} (High energy estimates.) There exists $\delta\in\R$ such that for $s\in\R$, $\ell<-\half$, $s+\ell>-\half$, there exists $C>0$ such that the estimate
  \begin{equation}
  \label{EqASAssmHi}
    \|u\|_{H_{\bop,|\sigma|^{-1}}^{s,\ell}(X)} \leq C|\sigma|^{-1+\delta}\|\wh\Box(\sigma)u\|_{H_{\bop,|\sigma|^{-1}}^{s,\ell+1}(X)},\qquad \Im\sigma\in[0,1),\ |\Re\sigma|\geq C,
  \end{equation}
  holds for all $u$ for which the norms on both sides are finite.
  \end{enumerate}
\end{definition}

Typically, the estimate~\eqref{EqASAssmHi} follows from assumptions on the dynamics of the null-geodesic flow on $(M^\circ,g)$: if there is no trapping, one can take $\delta=0$; if there is normally hyperbolic trapping, one needs to take $\delta>0$ though it can be arbitrarily small.

%%%%%%%%%%%%%%%%%%%%%%%%%%%%%%
\subsubsection{Resolvent regularity at nonzero frequencies}

We briefly discuss the behavior of $\wh\Box(\sigma)^{-1}$ for $\sigma$ away from $0$. For any fixed $0<R_0<R_1$, and $s,\ell$ with $\ell<-\half$, $s+\ell>-\half$, assumption~\eqref{ItASAssmMS} implies the quantitative estimate
\begin{equation}
\label{EqASAssmMed}
  \|u\|_{\Hb^{s,\ell}(X)} \leq C\|\wh\Box(\sigma)u\|_{\Hb^{s,\ell+1}(X)},\quad
  \Im\sigma\geq 0,\ R_0<|\sigma|<R_1
\end{equation}
for a constant $C$ depending on $R_0,R_1,s,\ell$, see \cite[Theorem~1.1]{VasyLAPLag}. We then record:

\begin{lemma}
\label{LemmaASHiReg}
  Let $m\in\N_0$, then $\pa_\sigma^m\wh\Box(\sigma)^{-1}\colon\Hb^{s,\ell+1}(X)\to\Hb^{s-m,\ell}$ is bounded for $\sigma,s,\ell$ as in~\eqref{EqASAssmMed}. Similarly, the operator
  \[
    \pa_\sigma^m\wh\Box(\sigma)^{-1}\colon H_{\bop,|\sigma|^{-1}}^{s,\ell+1}(X)\to |\sigma|^{-1+(m+1)\delta} H_{\bop,|\sigma|^{-1}}^{s-m,\ell}(X)
  \]
  is uniformly bounded for $s,\ell,\sigma$ as in~\eqref{EqASAssmHi}.\footnote{By this, we mean that $\|\pa_\sigma^m\wh\Box(\sigma)^{-1}f\|_{H_{\bop,|\sigma|^{-1}}^{s-m,\ell}(X)}\leq C|\sigma|^{-1+(m+1)\delta}\|f\|_{H_{\bop,|\sigma|^{-1}}^{s,\ell+1}(X)}$.}
\end{lemma}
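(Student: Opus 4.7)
The starting observation is that by~\eqref{EqASBoxFam}, $\sigma\mapsto\wh\Box(\sigma)$ is polynomial of degree $2$ in $\sigma$, so
\[
  \pa_\sigma\wh\Box(\sigma) = 2i\rho Q + 2\sigma g^{0 0}, \qquad \pa_\sigma^2\wh\Box(\sigma) = 2 g^{0 0}, \qquad \pa_\sigma^k\wh\Box(\sigma)=0\ (k\geq 3).
\]
Differentiating the identity $I=\wh\Box(\sigma)\wh\Box(\sigma)^{-1}$ once yields $\pa_\sigma\wh\Box(\sigma)^{-1} = -\wh\Box(\sigma)^{-1}(\pa_\sigma\wh\Box(\sigma))\wh\Box(\sigma)^{-1}$, and iterating Leibniz expresses $\pa_\sigma^m\wh\Box(\sigma)^{-1}$ as a finite linear combination of terms of the form
\begin{equation*}
  \wh\Box(\sigma)^{-1}\,B_1\,\wh\Box(\sigma)^{-1}\,B_2\cdots B_j\,\wh\Box(\sigma)^{-1},
\end{equation*}
where each $B_i\in\{\pa_\sigma\wh\Box(\sigma),\,\pa_\sigma^2\wh\Box(\sigma)\}$; letting $k_1,k_2$ count the two types, one has $k_1+2k_2=m$, $j=k_1+k_2$, and there are $j+1$ resolvent factors. (Differentiability of $\sigma\mapsto\wh\Box(\sigma)^{-1}$ in the relevant operator norm is justified by a standard difference-quotient argument using \eqref{EqASAssmMed}, \eqref{EqASAssmHi}.)

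The mapping properties of the elementary factors, uniform in $\sigma$ in the respective regime, are as follows: by \eqref{EqASAssmMed} (resp.\ \eqref{EqASAssmHi}), $\wh\Box(\sigma)^{-1}$ preserves b-regularity, shifts the weight by $-1$, and contributes a factor $1$ (resp.\ $|\sigma|^{-1+\delta}$) in operator norm; $\rho Q\in\rho\Diffb^1(X)$ loses $1$ order of b-regularity, gains $1$ order of weight, and semiclassically costs a factor $h^{-1}=|\sigma|$ since a full b-derivative equals $h^{-1}$ times a semiclassically bounded one; $\sigma g^{0 0}$ preserves regularity, gains $2$ orders of weight, and costs $|\sigma|$; while $g^{0 0}\in\rho^2\CI(X)$ preserves regularity and gains $2$ orders of weight without any $|\sigma|$ cost. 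Composing along a term of type $(k_1,k_2)$, we get a map $\Hb^{s,\ell+1}\to\Hb^{s-k_1,\ell+k_2}\subset\Hb^{s-m,\ell}$, which yields the first (medium-energy) assertion.

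For the high-energy assertion, the total operator-norm factor from a term of type $(k_1,k_2)$ is
\begin{equation*}
  |\sigma|^{k_1}\cdot|\sigma|^{(-1+\delta)(k_1+k_2+1)} \;=\; |\sigma|^{-k_2-1+(m-k_2+1)\delta},
\end{equation*}
using $k_1=m-2k_2$. Viewed as a function of $k_2\in\{0,1,\dots,\lfloor m/2\rfloor\}$, the exponent decreases by $1+\delta$ as $k_2$ increases by $1$, so its maximum is attained at $k_2=0$, $k_1=m$, with value $-1+(m+1)\delta$; this gives exactly the claimed bound.

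\textbf{Main (minor) difficulty.} The one piece of care required is bookkeeping: at each intermediate application of $\wh\Box(\sigma)^{-1}$, the Sobolev order and weight must lie in the validity range $\ell<-\tfrac12$, $s'+\ell>-\tfrac12$ of \eqref{EqASAssmMed}, \eqref{EqASAssmHi}. Since the intermediate indices interpolate between the source $(s,\ell+1)$ and the target $(s-m,\ell)$, this is automatic once both endpoints are admissible, which is implicit in the requirement that the source and target norms in the statement be meaningful.
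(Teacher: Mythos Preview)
Your proof is correct and follows essentially the same route as the paper's: differentiate the resolvent identity, use that $\pa_\sigma\wh\Box(\sigma)\in\rho\Diffb^1+\sigma\rho^2\CI$ and $\pa_\sigma^2\wh\Box(\sigma)\in\rho^2\CI$, and track the mapping properties of each factor. The paper simply spells out the $m=1$ composition $H_{\bop,|\sigma|^{-1}}^{s,\ell+1}\to|\sigma|^{-1+\delta}H_{\bop,|\sigma|^{-1}}^{s,\ell}\to|\sigma|^{\delta}H_{\bop,|\sigma|^{-1}}^{s-1,\ell+1}\to|\sigma|^{-1+2\delta}H_{\bop,|\sigma|^{-1}}^{s-1,\ell}$ and then says ``an inductive argument proves the lemma,'' whereas you write out the full Leibniz expansion with the bookkeeping parameters $(k_1,k_2)$ and identify the dominant term $k_2=0$ directly; the content is the same.
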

\begin{proof}
  The argument is identical to \cite[Proposition~12.10]{HaefnerHintzVasyKerr}. In brief, using $\pa_\sigma\wh\Box(\sigma)\in\rho\Diffb^1+\sigma\rho^2\CI$, one sees that $\pa_\sigma\wh\Box(\sigma)^{-1}=-\wh\Box(\sigma)^{-1}\circ\pa_\sigma\wh\Box(\sigma)\circ\wh\Box(\sigma)^{-1}$ maps
  \[
    H_{\bop,|\sigma|^{-1}}^{s,\ell+1} \xra{\wh\Box(\sigma)^{-1}} |\sigma|^{-1+\delta}H_{\bop,|\sigma|^{-1}}^{s,\ell} \xra{\pa_\sigma\wh\Box(\sigma)} |\sigma|^\delta H_{\bop,|\sigma|^{-1}}^{s-1,\ell+1} \xra{\wh\Box(\sigma)^{-1}} |\sigma|^{-1+2\delta}H_{\bop,|\sigma|^{-1}}^{s-1,\ell}.
  \]
  An inductive argument proves the lemma.
\end{proof}

%%%%%%%%%%%%%%%%%%%%%%%%%%%%%%
\subsubsection{Mapping properties of the low energy resolvent}
\label{SssASLo}

Of primary interest for us is the low energy behavior of $\wh\Box(\sigma)^{-1}$. We recall from \cite[Theorem~1.1]{VasyLowEnergyLag}:

\begin{thm}
\label{ThmALo}
  Under assumption~\eqref{ItASAssmBound} of Definition~\usref{DefASAssm}, and for $s,\ell,\nu\in\R$ with $\ell<-\half$, $s+\ell>-\half$, $\ell-\nu\in(-\tfrac32,-\half)$,\footnote{The conditions arise from \begin{enumerate*} \item allowing the outgoing behavior of $u$, which means $r^{-1}e^{i\sigma (r-t)}$ behavior on spacetime for $r\gg 1$, and thus $r^{-1}$ on the spectral side, with $r^{-1}\in\Hb^{\infty,\ell}$ precisely for $\ell<-\half$; \item enforcing the absence of ingoing spherical waves $r^{-1}e^{i\sigma(-r-t)}$ on spacetime, thus $r^{-1}e^{-2 i\sigma r}$ on the spectral side, which is accomplished by requiring a decay order $s+\ell>-\half$ for nonzero oscillations at $r=\infty$; \item working in a range of weights on which $\wh\Box(0)$ is invertible, which gives the range of weights $(-\tfrac32,-\half)$ relative to $L^2(X)$.\end{enumerate*}} the bound
  \[
    \|(\rho+|\sigma|)^\nu u\|_{\Hb^{s,\ell}(X)} \leq C\|(\rho+|\sigma|)^{\nu-1}\wh\Box(\sigma)u\|_{\Hb^{s,\ell+1}(X)}
  \]
  holds for $\Im\sigma\geq 0$ with $|\sigma|\leq\sigma_0\ll 1$.
\end{thm}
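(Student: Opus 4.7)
The plan is to follow the geometric microlocal approach pioneered in \cite{VasyLowEnergyLag}, working on a single resolved total space on which $\wh\Box(\sigma)$ becomes a smooth family of b-type operators uniformly down to $\sigma=0$. Concretely, let $X_{\mathrm{res}}^+$ denote the blow-up of $[0,\sigma_0)_{|\sigma|}\times X$ at the corner $\{|\sigma|=0\}\cap\partial X$. On $X_{\mathrm{res}}^+$, $\rho+|\sigma|$ is a total boundary defining function of the new front face $\mathrm{ff}$, while the lifts of $\{|\sigma|=0\}\times X$ and of $[0,\sigma_0)\times\partial X$ give two further boundary hypersurfaces $\mathrm{zf}$ and $\mathrm{scf}$. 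By Lemma \ref{LemmaABox}, near $\partial X$ we have $\wh\Box(\sigma)=-2i\sigma\rho Q+\wh\Box(0)+\sigma^2 g^{00}$ with $Q\in\Diff_\bop^1(X)$, $\wh\Box(0)\in\rho^2\Diff_\bop^2(X)$, and $g^{00}\in\rho^2\CI(X)$, so that the rescaled family $P(\sigma):=(\rho+|\sigma|)^{-2}\wh\Box(\sigma)$ extends to a smooth family of b-differential operators of order $2$ on $X_{\mathrm{res}}^+$. The theorem is then equivalent to the uniform invertibility of $P(\sigma)$ between two natural weighted b-Sobolev spaces on $X_{\mathrm{res}}^+$.

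The two model problems are the normal operators of $P(\sigma)$ at $\mathrm{zf}$ and at $\mathrm{ff}$. At $\mathrm{zf}$, one recovers $\rho^{-2}\wh\Box(0)$ as a b-operator on $X$; the indicial roots of its leading part $L_0$ at $\partial X$ are $0$ and $1$ (see Lemma \ref{LemmaABox}), so the hypothesis $\ell-\nu\in(-\tfrac32,-\tfrac12)$ places the target weight strictly between them, and b-Fredholm theory together with assumption \eqref{ItASAssmBound} of Definition \ref{DefASAssm} yields invertibility. At $\mathrm{ff}$, introducing $\hat\sigma:=\sigma/\rho$ as an angular coordinate reduces $P(\sigma)$ to the leading form $L_0-2i\hat\sigma Q_0+\hat\sigma^2 c$, where $c$ is the boundary value of $\rho^{-2}g^{00}$; this is a rescaled Helmholtz-type operator at frequency $\hat\sigma$ on the asymptotic Euclidean model, whose invertibility on outgoing scattering spaces for $\Im\hat\sigma\geq 0$ is the classical limiting absorption principle. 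The thresholds $\ell<-\tfrac12$ and $s+\ell>-\tfrac12$ correspond exactly to the decay and regularity conditions at the two radial sets (outgoing and incoming) of this model.

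With both models invertible, the final step is to glue them via microlocal propagation on $X_{\mathrm{res}}^+$: b-elliptic regularity in the interior, scattering-b radial point estimates at $\mathrm{scf}$, and b-radial point estimates at $\mathrm{scf}\cap\mathrm{zf}$. These combine to give an a priori estimate
\[
  \|(\rho+|\sigma|)^\nu u\|_{\Hb^{s,\ell}(X)}\leq C\bigl(\|(\rho+|\sigma|)^{\nu-1}\wh\Box(\sigma)u\|_{\Hb^{s,\ell+1}(X)}+\|(\rho+|\sigma|)^\nu u\|_{\Hb^{s',\ell'}(X)}\bigr)
\]
with $s'<s$, $\ell'<\ell$, and $\ell'-\nu$ still in the indicial window, in which the remainder is relatively compact uniformly in $\sigma$. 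To eliminate it, assume the full estimate fails along a sequence $(u_j,\sigma_j)$ and extract weak limits on appropriately rescaled spaces: depending on whether $\sigma_j/\rho$ tends to $0$ or to a positive limit along the concentration region, the limit is either a nontrivial element of $\ker\wh\Box(0)\cap\cA^1(X)$, contradicting \eqref{ItASAssmBound}, or a nontrivial outgoing mode of the Euclidean Helmholtz operator at positive frequency, contradicting the classical LAP. The principal difficulty is the uniform microlocal analysis across $\mathrm{ff}$, where the b-radial points of $\wh\Box(0)$ in the b-cotangent bundle must be matched with the scattering radial points of the positive-frequency model in the scattering cotangent bundle; carrying this out, together with the precise indicial/threshold numerology in the hypotheses, is exactly the content of the scattering-b calculus developed in \cite{VasyLowEnergyLag,VasyLAPLag}, which I would invoke rather than redevelop.
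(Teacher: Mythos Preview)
The paper does not prove this theorem; it is stated as a quotation of \cite[Theorem~1.1]{VasyLowEnergyLag} (see the sentence immediately preceding the statement). Your proposal correctly identifies this and sketches the strategy of Vasy's proof---the resolved space, the two normal operators at $\mathrm{zf}$ and $\mathrm{ff}$ (the latter being the model $\wt\Box(1)$ of Definition~\ref{DefAModel} and Theorem~\ref{ThmALAP} here), and the gluing via uniform microlocal estimates---before explicitly deferring to \cite{VasyLowEnergyLag,VasyLAPLag}; this is exactly what the paper does, so there is nothing to compare.
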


As a consequence of the Sobolev embedding~\eqref{EqASobolevEmb}, we have
\begin{equation}
\label{EqAZeroMapping}
  \wh\Box(0)^{-1} \colon \cA^{2+\alpha}(X) \to \cA^{\alpha-}(X),\quad \alpha\in(0,1).
\end{equation}
In order to capture the output of the resolvent $\wh\Box(\sigma)^{-1}$ precisely near $\rho=\sigma=0$, we work on a resolved space:

\begin{definition}
\label{DefAResolved}
  The \emph{resolved space} (for positive frequencies) $X^+_{\rm res}$ is the blow-up
  \[
    X^+_{\rm res} := \bigl[ X \times [0,1)_\sigma; \pa X\times\{0\} \bigr].
  \]
  Denote the blow-down map by $\beta\colon X^+_{\rm res}\to [0,1)\times X$. The boundary hypersurfaces are denoted as follows:
  \begin{itemize}
  \item $\tface$: the front face;
  \item $\bface$ (`b-face'): the lift of $[0,1)\times\pa X$, i.e.\ the closure of $\beta^{-1}((0,1)\times\pa X)$;
  \item $\zface$ (`zero face'): the lift of $\{0\}\times X$, i.e.\ the closure of $\beta^{-1}(\{0\}\times X^\circ)$.
  \end{itemize}
\end{definition}

See Figure~\ref{FigARes}. In $\rho<1$, the functions
\[
  \rho_\bface := \frac{\rho}{\rho+\sigma},\quad
  \rho_\tface := \rho+\sigma,\quad
  \rho_\zface := \frac{\sigma}{\rho+\sigma}
\]
are smooth defining functions of the respective boundary hypersurfaces. Away from $\zface$, it is more convenient to work with the local defining functions $\hat\rho=\rho/\sigma=(\sigma r)^{-1}$ and $\sigma$, and away from $\bface$ one can take $\rho=r^{-1}$ and $\hat r=\sigma/\rho=\sigma r$. Thus, $\tface$ captures the transition from the regime $\sigma r\lesssim 1$ to $\sigma r\gtrsim 1$. (This is related to \cite[\S\S4--5]{DonningerSchlagSofferPrice}.)

\begin{figure}[!ht]
\centering
\includegraphics{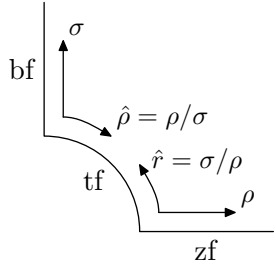}
\caption{The resolved space $X_{\rm res}^+$, together with useful local coordinates.}
\label{FigARes}
\end{figure}

On the manifold with corners $X^+_{\rm res}$, we consider conormal spaces
\[
  \cA^{\alpha,\beta,\gamma}(X^+_{\rm res})=\rho_\bface^\alpha\rho_\tface^\beta\rho_\zface^\gamma\cA^{0,0,0}(X^+_{\rm res}),
\]
with $\cA^{0,0,0}(X^+_{\rm res})$ consisting of all locally bounded functions that remain such upon application of any finite number of vector fields tangent to all boundary hypersurfaces of $X^+_{\rm res}$. We also need more precise function spaces capturing partial polyhomogeneous expansions. Recall that an \emph{index set} is a subset $\cE\subset\C\times\N_0$ such that the number of $(z,k)\in\cE$ with $\Re z<C$ is finite for any fixed $C\in\R$, and so that $(z,k)\in\cE$ implies $(z+1,k)\in\cE$ and, if $k\geq 1$, $(z,k-1)\in\cE$. We let $\inf\Re\cE$ denote the smallest value of $\Re z$ among all $(z,k)\in\cE$.

\begin{definition}
\fakephantomsection\label{DefAPhg}
  \begin{enumerate}
  \item Let $\cE$ be an index set, $\alpha_0:=\inf\Re\cE$, and $\alpha\in\R$; put $\beta=\min(\alpha_0,\alpha)$. Then the space $\cA^{(\cE,\alpha)}(X)\subset\cA^{\beta-}(X)$ consists of all $u$ which are smooth in $X^\circ$ and which near $\pa X$ have a partial expansion
  \[
    u - \sum_{\genfrac{}{}{0pt}{}{(z,k)\in\cE}{\Re z\leq\alpha}} u_{{z,k}}(\omega)\rho^z(\log\rho)^k \in \cA^\alpha(X)
  \]
  for some $u_{{z,k}}\in\CI(\pa X)$.
  \item Let $\cE_\bface,\cE_\tface,\cE_\zface$ denote three index sets, $\alpha_{0,\bullet}=\inf\Re\cE_\bullet$, and $\alpha_\bullet\in\R$ for $\bullet=\bface,\tface,\zface$. Put $\beta_\bullet=\min(\alpha_{0,\bullet},\alpha_\bullet)$. Then
  \[
    \cA^{(\cE_\bface,\alpha_\bface),(\cE_\tface,\alpha_\tface),(\cE_\zface,\alpha_\zface)}(X^+_{\rm res})
  \]
  consists of all $u\in\cA^{\beta_\bface-,\beta_\tface-,\beta_\zface-}(X^+_{\rm res})$ which have partial expansions with conormal remainders at all boundary hypersurfaces. That is, in a collar neighborhood $[0,\eps)_{\rho_\zface}\times\zface$ of the zero face $\zface\cong X$, there exist $u_{\zface,(z,k)}\in\cA^{(\cE_\tface,\alpha_\tface)}(\zface)$ such that
  \[
    u - \sum_{\genfrac{}{}{0pt}{}{(z,k)\in\cE_\zface}{\Re z\leq\alpha_\zface}} u_{\zface,(z,k)}(x) \rho_\zface^z(\log\rho_\zface)^k \in \cA^{\beta_\tface-,\alpha_\zface}([0,\eps)\times\zface),
  \]
  where the exponents on the right are, in this order, the weights at $[0,\eps)\times\pa\zface$ and $\{0\}\times\zface$. Likewise, $u$ has partial expansions at the remaining two boundary hypersurfaces $\tface,\bface$.
  \item Partially polyhomogeneous spaces such as $\cA^{\alpha_\bface,\alpha_\tface,(\cE_\zface,\alpha_\zface)}(X^+_{\rm res})$ have partial expansions only at the boundary hypersurfaces at which an index set is given.
  \end{enumerate}
\end{definition}

A typical index set is
\[
  (z_0,k_0) := \{ (z,k)\in\C\times\N_0 \colon k\leq k_0,\ z-z_0\in\N_0 \};
\]
for instance, $\cA^{(z_0,0)}(X)=\rho^{z_0}\CI(X)$ and $\cA^{(0,1)}(X)=\CI(X)+(\log\rho)\CI(X)$. The regularity of the low energy resolvent is then as follows; this is similar to \cite[Propositions~12.4 and 12.12]{HaefnerHintzVasyKerr}, though here we do not keep track of the number of derivatives used.

\begin{prop}
\label{PropALoSmall}
  Let $\alpha\in(0,1)$ and $f\in\cA^{2+\alpha}(X)$. Then\footnote{The order at $\bface$ can be improved to $1-$ by taking $\ell<-\half$ close to $-\half$ in the application of Theorem~\ref{ThmALo} in the proof below, though we do not need this precision here.}
  \begin{equation}
  \label{EqALoResRegInput}
    \wh\Box(\sigma)^{-1}f \in \cA^{\alpha-,\alpha-,((0,0),\alpha-)}(X^+_{\rm res}).
  \end{equation}
  For $\sigma$-dependent inputs $f\in\cA^\beta([0,1)_\sigma;\cA^{2+\alpha}(X))$ with $\beta\in\R$, we have
  \begin{equation}
  \label{EqALoResConInput}
    \wh\Box(\sigma)^{-1}f \in \cA^{\alpha-,\alpha+\beta-,\beta}(X^+_{\rm res}).
  \end{equation}
\end{prop}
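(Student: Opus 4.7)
My plan is to prove \eqref{EqALoResRegInput} via the decomposition $\wh\Box(\sigma)^{-1}f = u_0 + v$, where $u_0 := \wh\Box(0)^{-1}f \in \cA^{\alpha-}(X)$ by \eqref{EqAZeroMapping}. Pulled back to $X^+_{\rm res}$ as a $\sigma$-independent function, $u_0$ supplies the leading $(0,0)$-term at $\zface$ together with $\rho_\tface^{\alpha-}$ and $\rho_\bface^{\alpha-}$ behavior at the other boundary hypersurfaces (via $\rho = \rho_\tface\rho_\bface$). It then remains to show $v \in \cA^{\alpha-,\alpha-,\alpha-}(X^+_{\rm res})$.

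By Lemma~\ref{LemmaABox}, $v$ satisfies
\[
\wh\Box(\sigma) v = -(\wh\Box(\sigma)-\wh\Box(0))u_0 = -\sigma(2i\rho Q + \sigma g^{00})u_0 \in \sigma\cA^{1+\alpha-}(X)
\]
uniformly in $\sigma$. I would apply Theorem~\ref{ThmALo} with $\ell := \alpha - \tfrac32 - \eps$ (which lies in $(-\tfrac32,-\tfrac12)$ precisely because $\alpha \in (0,1)$) and $\nu$ slightly below $\alpha - \eps$, so that $\ell - \nu \in (-\tfrac32,-\tfrac12)$. The key input on the right-hand side is a H\"older-type splitting
\[
\sigma \leq \sigma^{\alpha-\eps}(\rho+|\sigma|)^{1-\alpha+\eps},
\]
which extracts a clean factor $\sigma^{\alpha-\eps}$ from the source while leaving a nearly trivial $(\rho+|\sigma|)^{-\eps_1}$ remainder; combined with $\tilde g_\sigma\in\cA^{1+\alpha-}(X)\subset\Hb^{\infty,\alpha-\tfrac12-}(X)$ this yields $\|(\rho+|\sigma|)^{\nu-1}\wh\Box(\sigma) v\|_{\Hb^{s,\ell+1}(X)} \leq C\sigma^{\alpha-\eps}$. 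Theorem~\ref{ThmALo} then delivers $\|(\rho+|\sigma|)^\nu v\|_{\Hb^{s,\ell}(X)} \leq C\sigma^{\alpha-\eps}$, and the Sobolev embedding \eqref{EqASobolevEmb}, together with the identifications $\sigma = \rho_\tface\rho_\zface$ and $(\rho+|\sigma|) = \rho_\tface$ on $X^+_{\rm res}$, converts this to the pointwise bound $|v| \lesssim \rho_\bface^{\alpha-\eps}\rho_\tface^{\alpha-\eps}\rho_\zface^{\alpha-\eps}$, which is exactly the remainder part of the claimed structure.

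Upgrading this pointwise estimate to full conormality on $X^+_{\rm res}$ proceeds by commutator arguments: the b-vector fields $\rho_\tface\pa_{\rho_\tface}$, $\rho_\zface\pa_{\rho_\zface}$, $\rho_\bface\pa_{\rho_\bface}$, and the spherical rotations commute through $\wh\Box(\sigma)$ up to operators in the same class, and iterating the same estimate on the resulting equations produces the desired regularity. For \eqref{EqALoResConInput}, I would write $f = \sigma^\beta\tilde f$ with $\tilde f$ uniformly in $\cA^{2+\alpha}(X)$, so that $\wh\Box(\sigma)^{-1}f = \sigma^\beta\wh\Box(\sigma)^{-1}\tilde f$; applying the same recipe to $\wh\Box(\sigma)^{-1}\tilde f$ (now without any leading-term extraction, since no distinguished $\sigma \to 0$ limit is present) with $\nu > 0$ small yields $\wh\Box(\sigma)^{-1}\tilde f \in \cA^{\alpha-,\alpha-,0}(X^+_{\rm res})$, and multiplication by $\sigma^\beta = \rho_\tface^\beta\rho_\zface^\beta$ contributes precisely the stated extra $\beta$ orders at $\tface$ and $\zface$.

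The main obstacle is the bookkeeping translating Theorem~\ref{ThmALo}'s $(\rho+|\sigma|)^\nu$-weighted b-Sobolev estimates on $X$ into the sharp conormal weights on the three boundary hypersurfaces of $X^+_{\rm res}$. In particular, capturing the sharp $\rho_\zface^{\alpha-}$ remainder at $\zface$ hinges on the H\"older-type splitting of $\sigma$ above: a direct use of $\sigma \leq \rho + |\sigma|$ would absorb the $\sigma$ factor into the $\tface$-weight $\rho_\tface^\nu$ and erase all $\zface$-decay information. The commutator argument likewise depends on choosing vector fields that are well-adapted to the front face $\tface$ and on keeping their commutators with $\wh\Box(\sigma)$ under control inside the same low-energy functional framework.
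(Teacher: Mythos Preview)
Your proof is correct and rests on the same essential ingredients as the paper's---Theorem~\ref{ThmALo}, the Sobolev embedding~\eqref{EqASobolevEmb}, a H\"older-type interpolation between $\sigma$ and $\rho+|\sigma|$, and an iteration for conormality---but organizes them differently. You explicitly subtract the leading term $u_0=\wh\Box(0)^{-1}f$ at the outset and then show the remainder $v$ lies in $\cA^{\alpha-,\alpha-,\alpha-}(X^+_{\rm res})$. The paper instead works with $u=\wh\Box(\sigma)^{-1}f$ throughout: it first proves $u\in\cA^0([0,1)_\sigma;\cA^{\alpha-}(X))$ by iterating the resolvent identity $\sigma\pa_\sigma\wh\Box(\sigma)^{-1}=-\wh\Box(\sigma)^{-1}(\sigma\pa_\sigma\wh\Box(\sigma))\wh\Box(\sigma)^{-1}$, then sharpens this to $\sigma\pa_\sigma u\in\cA^{\alpha-\delta}([0,1);\cA^{\delta-}(X))$ via the dual form of your splitting, $(\rho+|\sigma|)^{-1}\le|\sigma|^{-1+\alpha-\delta}\rho^{-\alpha+\delta}$, and finally integrates the regular singular ODE in $\sigma\pa_\sigma$ near $\zface$ to produce the $(0,0)$ leading term. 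Your route is more direct for identifying the leading term and avoids the ODE step; the paper's route keeps all bookkeeping on the product $[0,1)_\sigma\times X$ (embedding into $X^+_{\rm res}$ only at the end), which makes the conormality argument a one-line resolvent identity rather than a commutator computation in which one must verify that $(\sigma\pa_\sigma\wh\Box(\sigma))v$ can be fed back into the right-hand side of Theorem~\ref{ThmALo}. For~\eqref{EqALoResConInput} the two arguments coincide.
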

\begin{proof}
  By Theorem~\ref{ThmALo} and using the Sobolev embedding~\eqref{EqASobolevEmb}, $u:=\wh\Box(\sigma)^{-1}f$ is bounded in $\sigma$ with values in $\cA^{\alpha-}(X)$; the conormality of $u$ at $\sigma=0$ is a consequence of $\sigma\pa_\sigma\wh\Box(\sigma)^{-1}=-\wh\Box(\sigma)^{-1}\circ\sigma\pa_\sigma\wh\Box(\sigma)\circ\wh\Box(\sigma)^{-1}$, which maps
  \begin{equation}
  \label{EqALoResConormal}
    \cA^{2+\alpha}(X) \xra{\wh\Box(\sigma)^{-1}} \cA^{\alpha-}(X) \xra{\sigma\pa_\sigma\wh\Box(\sigma)} |\sigma|\cA^{\alpha+1-}(X) \xra{\wh\Box(\sigma)^{-1}} \cA^{\alpha-}(X);
  \end{equation}
  in the final mapping step, we use Theorem~\ref{ThmALo} with $\ell=-3/2+\alpha-\delta$, $\nu=0$, with $0<\delta\leq\alpha$ arbitrary, and estimate $(\rho+|\sigma|)^{-1}\leq|\sigma|^{-1}$. Higher order derivatives along $\sigma\pa_\sigma$ are handled iteratively. Thus,
  \begin{equation}
  \label{EqALoResCon1}
    u\in\cA^0([0,1)_\sigma;\cA^{\alpha-}(X))\subset\cA^{\alpha-,\alpha-,0}(X^+_{\rm res}).
  \end{equation}

  To improve regularity at $\zface$, we apply Theorem~\ref{ThmALo} in the same fashion, but now estimate $(\rho+|\sigma|)^{-1}\leq|\sigma|^{-1+\alpha-\delta}\rho^{-\alpha+\delta}$ with $0<\delta<\alpha$ to see that $\sigma\pa_\sigma\wh\Box(\sigma)^{-1}$ maps
  \[
    \cA^{2+\alpha}(X) \xra{\wh\Box(\sigma)^{-1}} \cA^{\alpha-}(X) \xra{\sigma\pa_\sigma\wh\Box(\sigma)} |\sigma|\cA^{\alpha+1-}(X) \xra{\wh\Box(\sigma)^{-1}} |\sigma|^{\alpha-\delta}\cA^{\delta-}(X).
  \]
  Applying further $\sigma\pa_\sigma$ derivatives using~\eqref{EqALoResConormal} shows that
  \[
    \sigma\pa_\sigma u\in\cA^{\alpha-\delta}([0,1)_\sigma;\cA^{\delta-}(X))\subset\cA^{\delta-,\alpha-,\alpha-\delta}(X^+_{\rm res}).
  \]

  Inverting the regular singular ODE $\sigma\pa_\sigma u\in\rho_\zface^{\alpha-\delta}\cA^{\alpha-}(\zface)$ near $\zface$ gives the desired leading order term at $\rho_\zface=0$, i.e.\ $u\in\cA^{\alpha-,(0,0)+\alpha-}([0,\eps)_{\rho_\zface}\times\zface)$ near $\zface$. Combining this, using a partition of unity, with~\eqref{EqALoResCon1} proves~\eqref{EqALoResRegInput}.

  The claim~\eqref{EqALoResConInput} follows directly from $\wh\Box(\sigma)^{-1}f\in\cA^\beta([0,1);\cA^{\alpha-}(X))$; the latter is proved by a simple adaptation of~\eqref{EqALoResConormal}.
\end{proof}

\begin{rmk}
\label{RmkALoRes}
  Upon taking the inverse Fourier transform in $\sigma$, this already suffices to show $t_*^{-1-\alpha}$ decay of forward solutions of $\Box\phi\in\CIc(\R_{t_*};\cA^{2+\alpha}(X))$.
\end{rmk}

For inputs living on the resolved space, we record:
\begin{lemma}
\label{LemmaALoResolvedInput}
  Let $\alpha\in(0,1)$ and $f\in\cA^{2+\alpha,2+\alpha,\alpha}(X^+_{\rm res})$. Then
  \begin{equation}
  \label{EqALoResInput}
    \wh\Box(\sigma)^{-1}f \in \cA^{\alpha-,\alpha-,\alpha-}(X^+_{\rm res}).
  \end{equation}
  For the $\sigma$-independent operator $\wh\Box(0)^{-1}$, we have $\wh\Box(0)^{-1}f\in\cA^{\alpha-,\alpha-,\alpha-}(X^+_{\rm res})$.
\end{lemma}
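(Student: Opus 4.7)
The proof adapts the two-step strategy from the proof of Proposition~\ref{PropALoSmall}: first a coarse uniform bound on $u:=\wh\Box(\sigma)^{-1}f$ via Theorem~\ref{ThmALo}, then upgrading the $\zface$ weight using the commutator identity
\[
  \sigma\pa_\sigma u = \wh\Box(\sigma)^{-1}\sigma\pa_\sigma f - \wh\Box(\sigma)^{-1}(\sigma\pa_\sigma\wh\Box)u
\]
together with integration of a regular singular ODE near the interior of $\zface$. The hypothesis $f\in\cA^{2+\alpha,2+\alpha,\alpha}(X^+_{\rm res})$ translates pointwise to $|f|\lesssim \rho^{2+\alpha}(\rho+\sigma)^{-\alpha}\sigma^\alpha\leq \rho^{2+\alpha}$ (using $\rho_\zface\leq 1$), so $f\in L^\infty([0,1)_\sigma;\cA^{2+\alpha}(X))$ uniformly in $\sigma$. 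Theorem~\ref{ThmALo} with $\nu=0$ and $\ell=-\tfrac32+\alpha-\delta$, together with~\eqref{EqASobolevEmb}, then gives $u\in L^\infty([0,1)_\sigma;\cA^{\alpha-}(X))$; iterating $\sigma\pa_\sigma$ as in the proof of Proposition~\ref{PropALoSmall} produces conormality on $X^+_{\rm res}$, establishing $u\in\cA^{\alpha-,\alpha-,0}(X^+_{\rm res})$.

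To upgrade the weight at $\zface$, I bound each term on the right of the commutator identity using Theorem~\ref{ThmALo}. The term $\wh\Box(\sigma)^{-1}(\sigma\pa_\sigma\wh\Box)u$ is handled verbatim as in Proposition~\ref{PropALoSmall}: since $(\sigma\pa_\sigma\wh\Box)u\in|\sigma|\cA^{\alpha+1-}(X)$ and $(\rho+\sigma)^{-1}\leq \sigma^{-1+\alpha-\delta}\rho^{-\alpha+\delta}$, the result lies in $|\sigma|^{\alpha-\delta}\cA^{\delta-}(X)$. For the new term $\wh\Box(\sigma)^{-1}\sigma\pa_\sigma f$, using the pointwise bound for $f$ together with the analogous rearrangement of $(\rho+\sigma)^{-1}\sigma^\alpha$ to extract a factor $\sigma^{\alpha-\delta}$, one obtains membership in $|\sigma|^{\alpha-\delta}\cA^{\delta-}(X)$ as well. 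Thus $\sigma\pa_\sigma u\in\cA^{\delta-,\alpha-,\alpha-\delta}(X^+_{\rm res})$. Near the interior of $\zface$, $\sigma\pa_\sigma=\rho_\zface\pa_{\rho_\zface}$, so integrating outward from $\rho_\zface=0$ gives $u=u|_\zface + O(\rho_\zface^{\alpha-\delta})$; the boundary value $u|_\zface\in\cA^{\alpha-}(X)$ satisfies $\wh\Box(0)u|_\zface=f|_\zface=0$, and the injectivity of $\wh\Box(0)$ on $\cA^{\alpha-}(X)$ (implied by the invertibility~\eqref{EqAZeroMapping}, ultimately by assumption~\eqref{ItASAssmBound} of Definition~\ref{DefASAssm}) forces $u|_\zface=0$. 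Taking $\delta\to 0$ yields $u\in\cA^{\alpha-,\alpha-,\alpha-}(X^+_{\rm res})$.

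For the $\wh\Box(0)^{-1}$ statement, the $\sigma$-independence of the operator lets one apply~\eqref{EqAZeroMapping} slice by slice, giving $\wh\Box(0)^{-1}f\in L^\infty([0,1)_\sigma;\cA^{\alpha-}(X))\subset\cA^{\alpha-,\alpha-,0}(X^+_{\rm res})$, with conormality on $X^+_{\rm res}$ coming from the commutation of $\sigma\pa_\sigma$ with $\wh\Box(0)^{-1}$. Since $f|_\zface=0$, so is $\wh\Box(0)^{-1}f|_\zface$, and the same ODE integration (here particularly clean, as $\sigma\pa_\sigma$ and $\wh\Box(0)^{-1}$ commute) improves the $\zface$ weight to $\alpha-$. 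The main technical subtlety lies in the application of Theorem~\ref{ThmALo} to $\sigma\pa_\sigma f$: one must simultaneously exploit the $\sigma^\alpha$ decay and absorb the $(\rho+\sigma)^{-\alpha}$ growth while respecting the constraint $\ell-\nu\in(-\tfrac32,-\tfrac12)$, which requires a fine-tuned choice of the weight exponent in the splitting of $(\rho+\sigma)^{-1}$.
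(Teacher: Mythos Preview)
Your proof is correct, but it takes a more laborious route than the paper's. The paper's argument is a two-line interpolation: one observes that the hypothesis $f\in\cA^{2+\alpha,2+\alpha,\alpha}(X^+_{\rm res})$ implies simultaneously
\[
  f\in\cA^0\bigl([0,1)_\sigma;\cA^{2+\alpha}(X)\bigr)\ \ \text{and}\ \ f\in\cA^{\alpha-\delta}\bigl([0,1)_\sigma;\cA^{2+\delta}(X)\bigr)\quad\text{for all }0<\delta<\alpha,
\]
the second inclusion following from $\rho^{\alpha-\delta}\sigma^\delta\leq(\rho+\sigma)^\alpha$. One then applies the already-proved mapping property~\eqref{EqALoResConInput} from Proposition~\ref{PropALoSmall} to each of these, obtaining $\wh\Box(\sigma)^{-1}f\in\cA^{\alpha-,\alpha-,0}(X^+_{\rm res})\cap\cA^{\delta-,\alpha-,\alpha-\delta}(X^+_{\rm res})$; since $\bface\cap\zface=\emptyset$, the intersection is $\cA^{\alpha-,\alpha-,\alpha-\delta}(X^+_{\rm res})$, and letting $\delta\searrow 0$ finishes. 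The same works verbatim for $\wh\Box(0)^{-1}$.

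By contrast, you re-derive the improvement at $\zface$ by adapting the \emph{proof} of Proposition~\ref{PropALoSmall} rather than applying its \emph{conclusion}: commutator identity, ODE integration near $\zface$, and then an injectivity argument to kill the boundary value $u|_\zface$. This is valid, but the injectivity step deserves a word of care: the mapping~\eqref{EqAZeroMapping} does not by itself assert injectivity of $\wh\Box(0)$ on all of $\cA^{\alpha-}(X)$. What actually gives it is that by Lemma~\ref{LemmaAExtState} the kernel of $\wh\Box(0)$ on $\cA^0(X)$ is $\C u_{(0)}$ with $u_{(0)}=1$, and $1\notin\cA^{\alpha-}(X)$ for $\alpha>0$. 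The paper's route sidesteps this entirely, and also avoids the somewhat vague ``fine-tuned splitting'' you flag in your final paragraph, since the interpolation $\rho^{\alpha-\delta}\sigma^\delta\leq(\rho+\sigma)^\alpha$ handles the $(\rho+\sigma)^{-\alpha}$ factor in one stroke.
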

\begin{proof}
  Since $f\in\cA^0([0,1)_\sigma;\cA^{2+\alpha}(X))\cap\cA^{\alpha-\delta}([0,1)_\sigma;\cA^{2+\delta}(X))$ for all $0<\delta<\alpha$, we have
  \begin{align*}
    \wh\Box(\sigma)^{-1}f &\in \cA^0([0,1);\cA^{\alpha-}(X)) \cap \cA^{\alpha-\delta}([0,1);\cA^{\delta-}(X)) \\
      &\subset \cA^{\alpha-,\alpha-,0}(X^+_{\rm res}) \cap \cA^{\delta-,\alpha-,\alpha-\delta}(X^+_{\rm res}),
  \end{align*}
  proving~\eqref{EqALoResInput}. These arguments apply verbatim also to $\wh\Box(0)^{-1}f$ in view of~\eqref{EqAZeroMapping}.
\end{proof}

%%%%%%%%%%%%%%%%%%%%%%%%%%%%%%
\subsubsection{Action of the resolvent on large inputs}
\label{SssASLoLarge}

We first record a simple estimate for less decaying inputs which will be used to estimate error terms in resolvent expansions later on:
\begin{lemma}
\label{LemmaALoLargeInput}
  Let $\alpha\in(0,1)$. Suppose $f\in\cA^{2-\alpha}(X)$. Then
  \[
    \wh\Box(\sigma)^{-1}f \in \cA^{1-\alpha-,-\alpha-,-\alpha-}.
  \]
  The same conclusion holds if, more generally, $f\in\cA^0([0,1),\cA^{2-\alpha}(X))$.
\end{lemma}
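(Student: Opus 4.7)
The plan is to adapt the proof of Proposition~\ref{PropALoSmall} to the less decaying input $f\in\cA^{2-\alpha}$, accepting a $\sigma$-dependent blow-up at $\zface$ rather than the uniform bound obtained there. I would apply Theorem~\ref{ThmALo} with
\[
  \nu = 1-\alpha-\delta', \qquad \ell = -\tfrac12-\alpha-\delta,
\]
for small $0<\delta<\delta'<1+\delta$, so that $\ell<-\tfrac12$ and $\ell-\nu = -\tfrac32+(\delta'-\delta)\in(-\tfrac32,-\tfrac12)$ as required. The key input is the estimate of the right-hand side of the theorem: using $|f|\lesssim\rho^{2-\alpha}$ and the b-density $|d g_X|\sim\rho^{-3}\,\tfrac{d\rho}{\rho}\,d\slg$ near $\pa X$, and noting that with our choice of $\ell$ the spatial integrand reduces to $(\rho+|\sigma|)^{2\nu-2}\rho^{2\delta-1}\,d\rho$, a direct split of the integral at $\rho=|\sigma|$ yields
\[
  \bigl\|(\rho+|\sigma|)^{\nu-1}f\bigr\|_{\Hb^{\infty,\ell+1}} \leq C|\sigma|^{\nu-1+\delta}.
\]

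The Sobolev embedding~\eqref{EqASobolevEmb} applied to Theorem~\ref{ThmALo} then gives the pointwise bound $|u| \leq C|\sigma|^{\nu-1+\delta}(\rho+|\sigma|)^{-\nu}\rho^{\ell+3/2-}$. Using $\rho=\rho_\bface\rho_\tface$, $\rho+|\sigma|\simeq\rho_\tface$, and $|\sigma|\simeq\rho_\tface\rho_\zface$ on $X^+_{\rm res}$, this translates to
\[
  |u| \leq C\rho_\bface^{\ell+3/2-}\rho_\tface^{\ell+1/2+\delta-}\rho_\zface^{\nu-1+\delta},
\]
with exponents $(1-\alpha-\delta,\,-\alpha,\,-\alpha-(\delta'-\delta))$ at $(\bface,\tface,\zface)$; letting $\delta,\delta'\to 0^+$ with $\delta'>\delta$ yields the claimed weights $(1-\alpha-,\,-\alpha-,\,-\alpha-)$. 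Higher spatial regularity for the conormal spaces follows by taking arbitrary $s$ in Theorem~\ref{ThmALo}.

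Conormality at $\zface$ is then obtained from the commutator identity $\sigma\pa_\sigma\wh\Box(\sigma)^{-1}=-\wh\Box(\sigma)^{-1}[\sigma\pa_\sigma\wh\Box(\sigma)]\wh\Box(\sigma)^{-1}$, as in the proof of Proposition~\ref{PropALoSmall}: since $\sigma\pa_\sigma\wh\Box(\sigma)=2 i\sigma\rho Q+2\sigma^2 g^{0 0}$ carries weights at least $(1,2,1)$ at $(\bface,\tface,\zface)$ on top of its action in $\Diffb$, the product $[\sigma\pa_\sigma\wh\Box(\sigma)]u$ has weights no worse than the original $f$ (and strictly better at $\zface$), so a further application of $\wh\Box(\sigma)^{-1}$ recovers the same weight structure for $\sigma\pa_\sigma u$; iteration yields full conormal regularity on $X^+_{\rm res}$. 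For the $\sigma$-dependent case $f\in\cA^0([0,1)_\sigma;\cA^{2-\alpha}(X))$, the argument is identical since the RHS estimate only uses the $\sigma$-uniform pointwise bound on $\rho^{\alpha-2}f$; the commutator step now also includes the harmless term $\wh\Box(\sigma)^{-1}[\sigma\pa_\sigma f]$, absorbed in the same manner.

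The principal technical obstacle is the simultaneous balancing of all three boundary weights against the constraint $\ell-\nu\in(-\tfrac32,-\tfrac12)$ of Theorem~\ref{ThmALo}: the blow-up factor $|\sigma|^{\nu-1+\delta}$ from the a priori estimate must be redistributed onto the weights at $\tface$ and $\zface$ via the identification $|\sigma|\simeq\rho_\tface\rho_\zface$, and only with the above specific choice of $(\nu,\ell)$ do all three weights line up with the claim.
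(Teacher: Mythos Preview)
Your argument is correct, but it takes a somewhat different route from the paper. The paper applies Theorem~\ref{ThmALo} \emph{twice} with two simpler parameter choices and then intersects the results: first with $(\ell,\nu)=(-\tfrac12-\alpha,0)$ and the crude bound $(\rho+|\sigma|)^{-1}\leq|\sigma|^{-1}$, obtaining $u\in\cA^{-1}([0,1);\cA^{1-\alpha-}(X))\subset\cA^{1-\alpha-,-\alpha-,-1}(X^+_{\rm res})$; second with $(\ell,\nu)=(-\tfrac32-\delta,-2\delta)$ and the bound $(\rho+|\sigma|)^{-1-2\delta}\leq|\sigma|^{-\alpha-\delta}\rho^{-1+\alpha-\delta}$, obtaining $u\in\cA^{-\alpha-}([0,1);\cA^{0-}(X))\subset\cA^{0-,-\alpha-,-\alpha-}(X^+_{\rm res})$. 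The intersection of these two memberships is exactly $\cA^{1-\alpha-,-\alpha-,-\alpha-}(X^+_{\rm res})$.

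By contrast, you use a \emph{single} application with the more refined choice $(\ell,\nu)=(-\tfrac12-\alpha-\delta,\,1-\alpha-\delta')$ and compute the weighted $L^2$-norm of $(\rho+|\sigma|)^{\nu-1}f$ directly by splitting at $\rho=|\sigma|$, extracting the blow-up $|\sigma|^{\nu-1+\delta}$ and then redistributing it via $|\sigma|\simeq\rho_\tface\rho_\zface$. Your approach gets all three weights in one shot, at the price of a slightly more delicate integral estimate; the paper's approach avoids that integral by accepting a weaker $\zface$-weight in the first pass and a weaker $\bface$-weight in the second, relying on the intersection to recover the sharp result. Both the conormality argument via the $\sigma\pa_\sigma$ commutator and the extension to $\sigma$-dependent $f$ are handled the same way in both proofs. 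One minor point: in your list of exponents $(1-\alpha-\delta,\,-\alpha,\,-\alpha-(\delta'-\delta))$ you dropped the $\epsilon$-loss from Sobolev embedding, but since you immediately let $\delta,\delta'\to 0^+$ this is harmless.
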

\begin{proof}
  By Theorem~\ref{ThmALo} with $l=-\half-\alpha$ and $\nu=0$, which implies that $\wh\Box(\sigma)^{-1}\colon\cA^{2-\alpha}(X)\to\cA^{1-\alpha-}(X)$ is bounded by $|\sigma|^{-1}$, we have $\wh\Box(\sigma)^{-1}f\in\cA^{-1}([0,1);\cA^{1-\alpha}(X))$. (Conormal regularity in $\sigma$ is proved as usual.)
  
  On the other hand, applying Theorem~\ref{ThmALo} with $l=-\tfrac32-\delta$ and $\nu=-2\delta$ for small $0<\delta<1-\alpha$ allows us to estimate
  \begin{align*}
    \|u\|_{\Hb^{s,-3/2-\delta}} &\lesssim \|(\rho+|\sigma|)^{-2\delta}u\|_{\Hb^{s,-3/2-\delta}} \lesssim \|(\rho+|\sigma|)^{-1-2\delta}\wh\Box(\sigma)u\|_{\Hb^{s,-1/2-\delta}} \\
      &\lesssim |\sigma|^{-\alpha-\delta}\|\wh\Box(\sigma)u\|_{\Hb^{s,1/2-\alpha}},
  \end{align*}
  hence (upon increasing $\alpha$ by any small positive amount) $|\sigma|^{\alpha+\delta+}\wh\Box(\sigma)^{-1}f$ is bounded in $\cA^{-\delta}(X)$. We thus obtain $\wh\Box(\sigma)^{-1}f\in\cA^{-\alpha-}([0,1);\cA^{-0}(X))$, giving the improvement at $\zface$ and proving the lemma.
\end{proof}

\begin{rmk}
  Following the general strategy outlined in \S\ref{SsIP}, this lemma can also be proved more systematically by solving a model problem involving $\wt\Box(1)$ and applying the standard resolvent to the remaining error term which has better decay.
\end{rmk}

The key technical result for obtaining the precise nature of the first singular term of the resolvent concerns the regularity of $\wh\Box(\sigma)^{-1}$ acting on borderline $\rho^2\CI(X)$ input, see Proposition~\ref{PropARhoSq} below. To set this up, we first show:

\begin{lemma}
\label{LemmaAExtState}
  We have
  \[
    \cA^0(X)\cap\ker\wh\Box(0) = \C u_{(0)},\qquad
    \cA^0(X)\cap\ker\wh\Box(0)^* = \C u_{(0)}^*
  \]
  for states $u_{(0)},u_{(0)}^*\in\cA^{((0,0),1-)}(X)$ which are uniquely determined by their leading order behavior $u_{(0)},u_{(0)}^*\in 1+\cA^{1-}(X)$.
\end{lemma}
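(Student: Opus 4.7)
The plan is to construct $u_{(0)}$ by solving $\wh\Box(0)u_{(0)}=0$ as a correction to a cutoff of the constant $1$, and to establish uniqueness via the indicial analysis of the operator $L_0$ together with the no-bound-state assumption~\eqref{ItASAssmBound}. The statement for $u^*_{(0)}$ is handled by running the identical argument for $\wh\Box(0)^*$.

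For the existence of $u_{(0)}$, let $\chi\in\CI(X)$ satisfy $\chi\equiv 1$ near $\pa X$. Lemma~\usref{LemmaABox}, combined with the observations that $\rho\pa_\rho$ and $\slDelta$ annihilate constants, gives $L_0\cdot 1 = L_1\cdot 1 = 0$, and hence $\wh\Box(0)(1)=\rho^2\tilde L(1)\in\rho^4\CI(X)$ near $\pa X$ (since $\tilde L\in\rho^2\Diffb^2$ and any b-differential operator applied to $1$ lies in $\CI(X)$); including the compactly supported contributions from derivatives of $\chi$ on the transition region, one has $\wh\Box(0)\chi\in\cA^{2+\alpha}(X)$ for every $\alpha\in(0,1)$. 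The invertibility~\eqref{EqAZeroMapping} then produces $v\in\cA^{1-}(X)$ with $\wh\Box(0) v=-\wh\Box(0)\chi$, and $u_{(0)}:=\chi+v\in 1+\cA^{1-}(X)\subset\cA^{((0,0),1-)}(X)$ satisfies $\wh\Box(0) u_{(0)}=0$.

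For uniqueness, let $u\in\cA^0(X)\cap\ker\wh\Box(0)$. Because $\wh\Box(0)\in\rho^2\Diffb^2(X)$ with indicial operator $L_0$ whose indicial roots on the spherical harmonic $Y_l$ are $z=l+1$ and $z=-l$, the conormal kernel element $u$ admits a polyhomogeneous partial expansion at $\pa X$ whose exponents lie among these roots. Only roots with $\Re z\geq 0$ contribute to $u\in\cA^0$, so the leading $\rho^0$ coefficient must be the $l=0$, $z=0$ constant $c\in\C$. The difference $u':=u-cu_{(0)}\in\cA^0\cap\ker\wh\Box(0)$ has vanishing leading coefficient. The only way for $u'$ to fall outside $\cA^1(X)$ would be a $\rho\log\rho$ term at $l=0$ forced by the indicial resonance between $z=0$ and $z=1$; but a direct computation using Lemma~\usref{LemmaABox} gives $\wh\Box(0)(1)\in\rho^4\CI$ and $\wh\Box(0)(\rho)\in 2\bhm\rho^4+\rho^5\CI$ while $\wh\Box(0)(\rho\log\rho)=-\rho^3+\cO(\rho^4\log\rho)$, so the only way for $\wh\Box(0) u'$ to have a vanishing $\rho^3$ coefficient is for the $\rho\log\rho$ coefficient in $u'$ itself to vanish. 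Thus $u'\in\cA^1(X)$, and assumption~\eqref{ItASAssmBound} forces $u'=0$, i.e., $u=cu_{(0)}$.

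The analogous statements for $u^*_{(0)}$ follow from the identical argument applied to $\wh\Box(0)^*$, whose indicial operator at $\pa X$ agrees with $L_0$ (since $\wh\Box(0)$ is a real b-operator, its formal adjoint differs only by lower-order b-terms), and whose no-bound-state property on $\cA^1(X)$ stems from~\eqref{ItASAssmBound} together with the formal identity $\wh\Box(\sigma)^*=\wh\Box(\bar\sigma)$ coming from self-adjointness of $\Box$ on $L^2(M,|dg|)$. The main technical step is the fine indicial analysis excluding a $\rho\log\rho$ term in $u'$; this rests on the algebraic identities $L_0\cdot 1=L_1\cdot 1=0$ of Lemma~\usref{LemmaABox}, so that the long-range mass term $2\bhm\rho$ does not couple to the constant leading behavior at the subleading order where a log resonance could have been activated.
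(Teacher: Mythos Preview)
Your argument is correct and follows the same strategy as the paper: construct $u_{(0)}$ as a cutoff of the constant $1$ plus a correction obtained from the invertibility~\eqref{EqAZeroMapping}, and deduce uniqueness from injectivity of $\wh\Box(0)$ on decaying conormal functions. You are in fact more thorough than the paper on the dimension count: the paper only checks uniqueness among states in $1+\cA^{1-}(X)$, whereas you carry out the indicial analysis showing that any $u\in\cA^0(X)\cap\ker\wh\Box(0)$ differs from a scalar multiple of $u_{(0)}$ by an element of $\cA^1(X)$; your explicit exclusion of a $\rho\log\rho$ term is a harmless overcaution, since once the $\rho^0$ coefficient vanishes the leading indicial root at $z=1$ is simple and no forcing at that order is present (precisely because $L_1\cdot 1=0$, as you note).
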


In the present setting we simply have
\[
  u_{(0)} = 1,\qquad
  u_{(0)}^* = 1.
\]
However, we keep the notation more general in order for our derivation of Price's law to apply unchanged to more general situations such as Kerr or wave equations with potential, see \S\S\ref{SsPV} and \ref{SK}. Correspondingly, the proof of this lemma will only use the structures which are present in these more general situations.

\begin{proof}[Proof of Lemma~\usref{LemmaAExtState}]
  First, we prove the existence of $u_{(0)}$. Let $\chi_\pa\in\CI(X)$ denote a cutoff to a neighborhood of $\pa X$. Since the normal operator of $\rho^{-2}\wh\Box(0)$, given as $L_0=-(\rho\pa_\rho)^2+\rho\pa_\rho+\slDelta$ by Lemma~\ref{LemmaABox}, annihilates constants, we have $e:=-\wh\Box(0)(\chi_\pa)\in\rho^3\CI(X)$, which is one order of improvement relative to the usual mapping property $\wh\Box(0)\colon\CI(X)\to\rho^2\CI(X)$ of $\wh\Box(0)$. But since $\wh\Box(0)\colon\cA^{1-}(X)\to\cA^{3-}(X)$ is surjective, there exists a unique $\tilde u\in\cA^{1-}(X)$ with $\wh\Box(0)\tilde u=e$. We can then put $u_{(0)}=\chi_\pa+\tilde u$.

  For any other extended zero energy state $\tilde u_{(0)}\in 1+\cA^{1-}(X)$, we have $u_{(0)}-\tilde u_{(0)}\in\cA^{1-}(X)\cap\ker\wh\Box(0)=\{0\}$ since $\wh\Box(0)$ is injective on $\cA^{1-}(X)$; this gives uniqueness.

  The arguments for $u_{(0)}^*$ are completely analogous.
\end{proof}

The analysis of $\wh\Box(\sigma)^{-1}f$, $f\in\rho^2\CI(X)$, proceeds by constructing an approximate solution of $\wh\Box(\sigma)u=f$ near $\rho=\sigma=0$ explicitly, and then correcting it to a true solution using $\wh\Box(\sigma)^{-1}$ acting on a function space with more decay. The relevant model problem already prominently featured in \cite[\S5]{VasyLowEnergyLag} in the context of the proof of Theorem~\ref{ThmALo}.

\begin{definition}
\label{DefAModel}
  Let $\hat\rho:=\rho/\sigma$. The \emph{model operator} at $\rho=\sigma=0$ is
  \[
    \wt\Box(1) = 2 i\hat\rho(\hat\rho\pa_{\hat\rho}-1)+\hat\rho^2\bigl(-(\hat\rho\pa_{\hat\rho})^2+\hat\rho\pa_{\hat\rho}+\slDelta\bigr) \in \Diff^2((0,\infty)_{\hat\rho}\times\Sph^2).
  \]
\end{definition}

Letting $\hat r=\hat\rho^{-1}=\sigma/\rho=\sigma r$, we recognize this as the spectral family of the wave operator on Minkowski space $\R_{\hat t_*}\times(0,\infty)_{\hat r}\times\Sph^2$ with metric $-d\hat t_*-2 d\hat t_*\,d\hat r+\hat r^2\slg$ at frequency $1$, though we regard the `origin' $\hat r=0$ as a (singular) conic point.

We regard $\wt\Box(1)$ as a differential operator on $\tface$; note that $\tface\setminus(\tface\cap\zface)=[0,\infty)_{\hat\rho}\times\pa X$, and $\hat\rho$ (which is smooth on $X^+_{\rm res}\setminus\zface$) is a defining function of $\bface$ away from $\zface$. Changing variables, we see that
\[
  \wt\Box(1) = -2 i\hat r^{-1}(\hat r\pa_{\hat r}+1) + \hat r^{-2}\bigl(-(\hat r\pa_{\hat r})^2-\hat r\pa_{\hat r}+\slDelta\bigr).
\]
Thus, $\wt\Box(1)\in\rho_\bface\rho_\zface^{-2}\Diffb^2(\tface)$. The importance of $\wt\Box(1)$ in relation to $\wh\Box(\sigma)$ stems from the following calculation:

\begin{lemma}
\label{LemmaAResolvedOp}
  The operator $\wh\Box(\sigma)$, as a second order differential operator on $X^+_{\rm res}$, is a b-differential operator of class $\wh\Box(\sigma)\in\rho_\bop\rho_\tface^2\Diffb^2(X^+_{\rm res})$. Its b-normal operators are:
  \begin{itemize}
  \item $2 i\sigma^2\hat\rho(\hat\rho\pa_{\hat\rho}-1)$ at $\bface$, i.e.\ $\wh\Box(\sigma)$ differs from this by an element of $\rho_\bop^2\rho_\tface^2\Diffb^2$;
  \item $\sigma^2\wt\Box(1)$ at $\tface$, i.e.\ $\wh\Box(\sigma)-\sigma^2\wt\Box(1)\in\rho_\bop\rho_\tface^3\Diffb^2$;
  \item $\wh\Box(0)$ at $\zface$, i.e.\ $\wh\Box(\sigma)-\wh\Box(0)\in\rho_\bop\rho_\tface^2\rho_\zface\Diffb^2$.
  \end{itemize}
  In fact, $\wh\Box(\sigma)-\sigma^2\wt\Box(1)\in\rho_\bop^2\rho_\tface^3\Diffb^2$.
\end{lemma}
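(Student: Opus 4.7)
The strategy is to insert the structure of $\wh\Box(\sigma)$ from equation (\ref{EqASBoxFam}) together with the normal-operator decomposition from Lemma~\ref{LemmaABox}, and then simply track the weights of each term at the three boundary hypersurfaces of $X^+_{\rm res}$. The key geometric inputs are: (a) the smooth defining functions satisfy $\rho_\bop\rho_\tface=\rho$ and $\rho_\tface\rho_\zface=\sigma$ on $X^+_{\rm res}$; (b) under the blow-down $\beta\colon X^+_{\rm res}\to[0,1)_\sigma\times X$, b-vector fields on $X$ lift to b-vector fields on $X^+_{\rm res}$ — indeed, in the near-$\bface$ chart $(\hat\rho,\sigma,\omega)$ one has $\rho\pa_\rho=\hat\rho\pa_{\hat\rho}$, and in the near-$\zface$ chart $(\rho,\hat r,\omega)$ one has $\rho\pa_\rho=\rho\pa_\rho|_{\hat r}-\hat r\pa_{\hat r}$, both of which are manifestly elements of $\Vb(X^+_{\rm res})$, while the spherical vector fields lift trivially. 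Hence $\Diffb^m(X)\hookrightarrow\Diffb^m(X^+_{\rm res})$.

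With that in hand, I would split $\wh\Box(\sigma)=2i\sigma\rho Q_0+\rho^2 L_0+E$ where
$$E:=2i\sigma\rho\tilde Q+\rho^3 L_1+\rho^2\tilde L+\sigma^2 g^{0 0},$$
and verify the weights term by term: $2i\sigma\rho Q_0\in\rho_\bop\rho_\tface^2\rho_\zface\Diffb^1$, $\rho^2 L_0\in\rho_\bop^2\rho_\tface^2\Diffb^2$, while every constituent of $E$ contains at least two extra powers of $\rho$ or of $\sigma$ relative to the leading pair and thus lies in $\rho_\bop^2\rho_\tface^3\Diffb^2$. Adding these inclusions gives the membership $\wh\Box(\sigma)\in\rho_\bop\rho_\tface^2\Diffb^2(X^+_{\rm res})$.

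To identify the normal operators I would proceed hypersurface by hypersurface. At $\tface$, the crucial algebraic identity is that, in the chart $(\hat\rho,\sigma,\omega)$ with $\rho=\hat\rho\sigma$ and $\rho\pa_\rho=\hat\rho\pa_{\hat\rho}$,
$$2i\sigma\rho Q_0+\rho^2 L_0=2i\sigma^2\hat\rho(\hat\rho\pa_{\hat\rho}-1)+\sigma^2\hat\rho^2\bigl[-(\hat\rho\pa_{\hat\rho})^2+\hat\rho\pa_{\hat\rho}+\slDelta\bigr]=\sigma^2\wt\Box(1),$$
matching Definition~\ref{DefAModel}. Combining this with $E\in\rho_\bop^2\rho_\tface^3\Diffb^2$ yields $\wh\Box(\sigma)-\sigma^2\wt\Box(1)\in\rho_\bop^2\rho_\tface^3\Diffb^2$, which gives both the $\tface$ normal operator statement and the stronger claim at the end of the lemma. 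At $\bface$, I note that all four error terms in $E$ already have $\rho_\bop$-weight $\geq 2$ (hence lie in $\rho_\bop^2\rho_\tface^2\Diffb^2$), and that the $L_0$-contribution to $\sigma^2\wt\Box(1)$ likewise has $\rho_\bop^2$-weight; only $2i\sigma\rho Q_0=2i\sigma^2\hat\rho(\hat\rho\pa_{\hat\rho}-1)$ survives modulo $\rho_\bop^2\rho_\tface^2\Diffb^2$. At $\zface$, one has $\wh\Box(\sigma)-\wh\Box(0)=2i\sigma\rho Q+\sigma^2 g^{0 0}$, whose dominant piece $2i\sigma\rho Q_0$ sits in $\rho_\bop\rho_\tface^2\rho_\zface\Diffb^1$ while the remainder lies in $\rho_\bop\rho_\tface^2\rho_\zface^2\Diffb^2$, giving the stated order of vanishing.

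There is really no serious obstacle here; the content is essentially bookkeeping once one has the dictionary between the $\rho,\sigma$ coordinates and the defining functions $\rho_\bop,\rho_\tface,\rho_\zface$. The only mildly delicate point is to check that the lower-order pieces of $Q$, $\wh\Box(0)$, and the $\sigma^2 g^{00}$ term each have enough vanishing, which has to be done uniformly across both coordinate charts on the front face so that the conclusion is a genuine statement about the global operator class $\rho_\bop\rho_\tface^2\Diffb^2(X^+_{\rm res})$ rather than a chart-by-chart observation.
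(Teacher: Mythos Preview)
Your proposal is correct and follows essentially the same approach as the paper: decompose $\wh\Box(\sigma)$ using Lemma~\ref{LemmaABox}, observe that the leading pieces $2i\sigma\rho Q_0+\rho^2 L_0$ assemble into $\sigma^2\wt\Box(1)$, and use the lifting rule $\sigma^l\rho^k\Diffb^m(X)\hookrightarrow\rho_\bface^k\rho_\tface^{k+l}\rho_\zface^l\Diffb^m(X^+_{\rm res})$ to check that the remaining terms $\tilde Q$, $\rho L_1+\tilde L$, $\sigma^2 g^{00}$ each land in $\rho_\bface^2\rho_\tface^3\Diffb^2$. The paper's proof is terser but identical in content.
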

\begin{proof}
  Working near $\bface\subset X^+_{\rm res}$ with coordinates $\sigma\geq 0$, $\hat\rho\in[0,\infty)$, and a factor of $\pa X$, consider the form~\eqref{EqASBoxFam} of $\wh\Box(\sigma)$ in the notation of Lemma~\ref{LemmaABox}: the terms $Q_0$ and $L_0$ give rise to $\wt\Box(1)$. On the other hand, elements of $\sigma^l\rho^k\Diffb^m(X)$ lift along the stretched projection $X^+_{\rm res}\to X$ to elements of $\rho_\bface^k\rho_\tface^{l+k}\rho_\zface^l\Diffb^m(X^+_{\rm res})$; hence $\tilde Q$ and $\rho L_1+\tilde L$ (as well as $g^{0 0}\sigma^2$) lift to b-differential operators on $X^+_{\rm res}$ which vanish cubically at $\tface$ and quadratically at $\bface$.

  Near $\zface\subset X^+_{\rm res}$ on the other hand, all terms with a factor of $\sigma$ vanish at $\zface$, hence the b-normal operator at $\zface$ is $\wh\Box(0)$ as claimed.
\end{proof}

The limiting absorption principle for $\wt\Box(1)$ is stated in terms of the function spaces
\[
  \Hb^{s,l,\nu}(\tface)=\rho_\bface^l\rho_\zface^\nu\Hb^s(\tface),\quad \cA^{\beta,\gamma}(\tface)=\rho_\bface^\beta\rho_\zface^\gamma\cA^{0,0}(\tface).
\]
The b-Sobolev space is defined as usual, with the convention that $\Hb^0(\tface)=L^2(\tface;\hat r^2\,|d\hat r\,d\slg|)$ is the Euclidean $L^2$ space. The following result is proved in \cite[Proposition~5.4]{VasyLowEnergyLag}:

\begin{thm}
\label{ThmALAP}
  For $s\in\R$, $l<-\half$, $s+l>-\half$, and $\nu\in(\half,\tfrac32)$, the operator
  \[
    \tilde\Box(1) \colon \bigl\{ u\in\Hb^{s,l,\nu}(\tface) \colon \tilde\Box(1)u\in\Hb^{s,l+1,\nu-2}(\tface) \bigr\} \to \Hb^{s,l+1,\nu-2}
  \]
  is invertible. In particular, $\tilde\Box(1) \colon \cA^{\beta,\gamma}(\tface) \to \cA^{\beta+1,\gamma-2}(\tface)$ is an isomorphism for $\beta<1$ and $\gamma\in(-1,0)$.
\end{thm}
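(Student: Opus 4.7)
\medskip

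\textbf{Proposal.} The plan is to recognize $\wt\Box(1)$ as (essentially) the frequency-$1$ spectral family of the scalar wave operator on exact Minkowski space in retarded coordinates, with a conic singularity artificially inserted at $\hat r=0$, and to prove invertibility by the standard symbiosis of b-geometry at the cone point with scattering geometry at infinity. Indeed, rewriting
\[
  \wt\Box(1)=-2i\hat r^{-1}(\hat r\pa_{\hat r}+1)+\hat r^{-2}\bigl(-(\hat r\pa_{\hat r})^2-\hat r\pa_{\hat r}+\slDelta\bigr),
\]
one reads off that $\wt\Box(1)\in\rho_\bface\rho_\zface^{-2}\Diffb^2(\tface)$, that its b-normal operator at $\zface$ (the cone point $\hat r=0$) has indicial family $-\lambda^2-\lambda+\slDelta$, with indicial roots $\lambda=l,-l-1$ on spherical harmonics of degree $l$, and that after conjugation by a scattering defining function $\wt\Box(1)$ is a short-range perturbation of $-\Delta_{\R^3}-1$ at $\bface$ (the sphere at infinity $\hat r=\infty$), with the first term encoding the outgoing oscillation.

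The proof of the Hilbert-space statement has three microlocal ingredients, following Vasy's framework used already in the context of \cite{VasyLowEnergyLag} (as we are in effect revisiting \cite[Proposition~5.4]{VasyLowEnergyLag}). First, ellipticity and b-propagation away from the scattering radial set yield interior regularity. Second, at the cone point $\zface$, a b-normal-operator argument: after Mellin transformation in $\hat r$, the indicial family $\lambda\mapsto-\lambda^2-\lambda+\slDelta$ is invertible on the line $\Re\lambda=\nu-\tfrac{3}{2}$ precisely when $\nu-\tfrac32\in(-1,0)$, i.e.\ $\nu\in(\tfrac12,\tfrac32)$; this is what forces the stated weight interval at $\zface$ and gives the b-Fredholm estimate there. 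Third, at $\bface$, the scattering-type radial point estimates of Melrose for outgoing solutions at the characteristic set of $-\Delta_{\R^3}-1$: the threshold weight is $l=-\tfrac12$, so demanding $l<-\tfrac12$ at the outgoing sink is what implements the outgoing boundary condition; the opposite inequality $s+l>-\tfrac12$ rules out the incoming oscillatory asymptotics. Combining all three estimates yields a Fredholm estimate
\[
  \|u\|_{\Hb^{s,l,\nu}(\tface)}\leq C\bigl(\|\wt\Box(1)u\|_{\Hb^{s,l+1,\nu-2}(\tface)}+\|u\|_{\Hb^{-N,-N,\nu}(\tface)}\bigr),
\]
and the dual estimate on the adjoint $\wt\Box(1)^*$ with dual weights.

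Injectivity is then the only genuinely nontrivial input: any $u\in\Hb^{s,l,\nu}(\tface)$ with $\wt\Box(1)u=0$ is, by elliptic/propagation regularity, polyhomogeneous with outgoing asymptotics at $\hat r=\infty$ and at most $\hat r^0$ (actually $\hat r^{1-}$) growth at $\hat r=0$. A boundary pairing between $u$ and $\bar u$ at $\bface$ (Green's identity in the wavefront-set sense, as in Melrose's scattering theory / Vasy's LAP papers) shows that the $L^2(\Sph^2)$-norm of the outgoing amplitude vanishes, so $u$ decays at infinity; together with the absence of $L^2$ zero-energy eigenfunctions at the cone point (the weight is within the invertibility window of the cone normal operator), this forces $u=0$. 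The adjoint, which has the same form but with incoming condition, is treated identically after complex conjugation, so the cokernel is trivial as well; Fredholmness upgrades to invertibility. The conormal-space statement then follows by elementary means: $\wt\Box(1)$ commutes, modulo elements of $\rho_\bface\rho_\zface^{-2}\Diffb$ of the same structure, with $\hat r\pa_{\hat r}$ and the rotation vector fields, so iterating commutators and feeding the result back into the Hilbert-space bound propagates conormal regularity up to both boundaries, whereupon Sobolev embedding converts the weighted b-Sobolev bound into the weighted conormal bound with $\beta<1$ at $\bface$ and $\gamma\in(-1,0)$ at $\zface$.

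The step I expect to be technically most delicate is the correct bookkeeping at the corner $\bface\cap\zface$, where one must transition from b-type radial-point-style estimates (in the dilation $\hat r\pa_{\hat r}$) to scattering-type ones; ensuring the module-regularity hypotheses match so that the two Fredholm packages glue without shrinking the weight interval is the main subtlety, and is exactly where the explicit interval $\nu\in(\tfrac12,\tfrac32)$ combined with $l<-\tfrac12$, $s+l>-\tfrac12$ comes out sharply.
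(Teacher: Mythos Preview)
Your proposal is correct and is precisely the argument of \cite[Proposition~5.4]{VasyLowEnergyLag}, which the paper simply cites rather than reproving; your identification of the indicial roots at $\zface$, the threshold weights at the scattering radial set over $\bface$, and the boundary-pairing injectivity argument are exactly the ingredients used there. Since the paper offers no independent proof, there is nothing further to compare.
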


We are now prepared to study the model problem $\sigma^2\wt\Box(1)\tilde u=\rho^2 f$:

\begin{lemma}
\label{LemmaAModel}
  Let $\tilde f\in\CI(\pa X)$. The unique solution $\tilde u\in\cA^{1-,0-}(\tface)$ of
  \[
    \wt\Box(1)\tilde u=\hat\rho^2\tilde f \in \cA^{2,-2}(\tface)
  \]
  lies in the space $\cA^{1-,((0,1)+1-)}(\tface)$; the leading term at $\zface$ is ${-}\frac{1}{4\pi}(\int_{\pa X}\tilde f\,|d\slg|)\log\hat r$.
\end{lemma}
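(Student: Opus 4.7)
My plan is to decompose the source in spherical harmonics and reduce the logarithmic behavior to explicit ODE analysis of the spherically symmetric ($l=0$) component; the higher modes will contribute only a conormal remainder at $\zface$.

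Write $\tilde f = \tilde f_0 + \tilde f_{\geq 1}$, where $\tilde f_0 := \frac{1}{4\pi}\int_{\pa X}\tilde f\,|d\slg|$ is the spherical mean and $\tilde f_{\geq 1}$ has vanishing spherical mean. Since $\wt\Box(1)$ commutes with spherical harmonic projections, I seek $\tilde u = \tilde u_0 + \tilde u_{\geq 1}$ in a matching decomposition. For the $l\geq 1$ part, Theorem~\ref{ThmALAP} produces a solution $\tilde u_{\geq 1}\in\cA^{1-,0-}(\tface)$; an indicial-root analysis of the $\zface$ normal operator $-(\hat r\pa_{\hat r})^2-\hat r\pa_{\hat r}+l(l+1)$ on the $l$-th eigenspace (roots $l+1$ and $-l$, both nonzero when $l\geq 1$) then improves this to $\tilde u_{\geq 1}\in\cA^{1-,1-}(\tface)$, which is absorbed into the conormal remainder at $\zface$ and in particular contributes no logarithm.

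For the zero mode, the substitution $v(\hat r):=\hat r\,\tilde u_0(\hat r)$ collapses $\wt\Box(1)\tilde u_0=\hat r^{-2}\tilde f_0$ to the constant-coefficient ODE
\[
v''(\hat r) + 2 i v'(\hat r) = -\tilde f_0/\hat r.
\]
The homogeneous solutions $1$ and $e^{-2i\hat r}$ give rise to $\tilde u_0$-contributions $\hat r^{-1}$ and $e^{-2i\hat r}/\hat r = \hat r^{-1}+\cO(1)$ at $\hat r\to 0$; both are forbidden by the weight $\cA^{0-}$ at $\zface$, which simultaneously forces uniqueness in $\cA^{1-,0-}(\tface)$ and selects the particular solution
\[
v'(\hat r) = \tilde f_0\,e^{-2i\hat r}\!\int_{\hat r}^{\infty} \frac{e^{2is}}{s}\,ds, \qquad v(0) = 0,
\]
where the upper limit $\infty$ (a convergent oscillatory integral) enforces outgoingness at $\bface$ by killing the $e^{-2i\hat r}$ mode at $\hat r\to\infty$.

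It remains to extract asymptotics. Near $\zface$, I split $\int_{\hat r}^{\infty} = \int_0^{\infty} - \int_0^{\hat r}$ and use $e^{2is}/s = s^{-1} + $ (smooth in $s$), obtaining $v'(\hat r) = -\tilde f_0\log\hat r + \cO(1)$, hence $v(\hat r) = -\tilde f_0\,\hat r\log\hat r + \cO(\hat r)$ and therefore
\[
\tilde u_0(\hat r) = -\tilde f_0\log\hat r + \cO(1), \qquad \hat r\to 0,
\]
which matches the asserted leading coefficient $-\frac{1}{4\pi}(\int_{\pa X}\tilde f\,|d\slg|)\log\hat r$. Iterating the ODE keeps all exponents integer and log-powers at most one (the inhomogeneity is polyhomogeneous with a single $\hat r^{-1}$ term, and the two homogeneous solutions are smooth at $0$), yielding the full $\zface$-expansion within the index set $(0,1)+1-$. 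Near $\bface$, oscillatory integration by parts in $\int_{\hat r}^\infty e^{2is}s^{-1}\,ds$ gives $v'(\hat r) = \cO(\hat r^{-1})$, hence $v(\hat r) = V_\infty + \cO(\log\hat r)$ and $\tilde u_0 = \cO(\hat r^{-1}\log\hat r) \in \cA^{1-}$ at $\bface$. The only real subtlety is the polyhomogeneous bookkeeping at $\zface$, which reduces entirely to the smoothness of $e^{\pm 2i\hat r}$ at the origin combined with the single $\hat r^{-1}$ in the inhomogeneity.
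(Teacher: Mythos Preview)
Your approach---splitting off the spherical mean and solving the radial ODE for $l=0$ explicitly via $v=\hat r\tilde u_0$---is sound and yields the correct logarithmic leading term; this explicit integration is essentially what the paper carries out in Remark~\ref{RmkAModelExpl}. The paper's own proof instead localizes near $\zface$, reduces to the normal operator $L_0=-(\hat r\pa_{\hat r})^2-\hat r\pa_{\hat r}+\slDelta$, and uses a Mellin transform plus contour shift across $\xi=0$ to read off the double pole on $\cY_0$ (giving the $\log\hat r$ term) versus the simple pole on $\cY_l$, $l\geq 1$ (giving only an $\hat r^0$ term). Your direct ODE method is more elementary for the zero mode.

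There is, however, a genuine slip in your $l\geq 1$ step. First a minor point: the indicial roots of $-(\hat r\pa_{\hat r})^2-\hat r\pa_{\hat r}+l(l+1)$ are $l$ and $-l-1$, not $l+1$ and $-l$ (you may be thinking in the $\hat\rho$ variable). More importantly, the conclusion $\tilde u_{\geq 1}\in\cA^{1-,1-}(\tface)$ is false. The source at $\zface$ has a nonzero $\hat r^0$ coefficient $\tilde f_{\geq 1}$, and since $0$ is \emph{not} an indicial root for $l\geq 1$, the normal operator is \emph{invertible} at that order: it produces an $\hat r^0$ response $\tfrac{1}{l(l+1)}\tilde f_l$ in mode $l$, not an improvement in decay. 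The correct statement is $\tilde u_{\geq 1}\in\cA^{1-,((0,0),1-)}(\tface)$: a constant-in-$\hat r$ leading term but no logarithm. This does not affect the lemma's conclusion, since the index set $(0,1)$ at $\zface$ already accommodates $\hat r^0$ terms and your claim that the higher modes contribute no $\log\hat r$ is correct---but the argument as written needs this fix.
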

\begin{proof}
  We only need to analyze $\tilde u$ at $\tface\cap\zface$, i.e.\ near $\hat r=0$. Let $\psi=\psi(\hat r)\in\CIc([0,\half))$ be identically $1$ near $\hat r=0$, and put $v=\psi\tilde u$, then, on $[0,1)_{\hat r}\times\pa X$,
  \[
    \wt\Box(1)v = \hat r^{-2}\psi\tilde f + [\wt\Box(1),\psi]\tilde u \in \hat r^{-2}\CI.
  \]
  The b-normal operator of $\wt\Box(1)$ is $\hat r^{-2}L_0$ in the notation of~\eqref{EqABoxPieces}, hence $v\in\cA^{0-}$ solves
  \begin{equation}
  \label{EqAModelZf}
    L_0 v = h,\quad h\in\psi\tilde f+\cA^{1-},\qquad L_0=-(\hat r\pa_{\hat r})^2-\hat r\pa_{\hat r}+\slDelta.
  \end{equation}
  We analyze this using a typical b-normal operator and contour shifting argument using the Mellin transform in $\hat r$, defined by
  \[
    \hat v(\xi) := \int_0^\infty \hat r^{-i\xi}v(\hat r)\frac{d\hat r}{\hat r}
  \]
  (dropping the dependence on the variables in $\pa X$ from the notation); note that $\wh{\hat r\pa_{\hat r}v}(\xi)=i\xi\hat v(\xi)$. Thus, equation~\eqref{EqAModelZf} becomes
  \[
    \wh{L_0}(\xi)\hat v(\xi) = \hat h(\xi),\qquad
    \wh{L_0}(\xi) := -(i\xi)^2-i\xi+\slDelta;
  \]
  at this point, we only know that $\hat v(\xi)$ is holomorphic in $\Im\xi>0$ and satisfies estimates
  \[
    \|\hat v(\xi)\|_{H^s(\pa X)} \leq C_{s N\eps}\la\xi\ra^{-N},\qquad
    \eps<\Im\xi<1,
  \]
  for all $s,N\in\R$ and $\eps>0$; the same estimate holds for $\hat h(\xi)$ but in the larger region $-1<\Im\xi<1$, $|\xi|>\eps$, except $\hat h(\xi)$ has a simple pole at $\xi=0$ with residue a constant multiple of $h(0)=\tilde f$.
  
  Expand $v$ and $h$ into spherical harmonics $Y_{l m}$, $l=0,1,2,\dots$, $|m|\leq l$, with $\slDelta Y_{l m}=l(l+1)$, and let $\cY_l:=\mathspan\{Y_{l m}\colon m=-l,\dots,l\}$. Restricted to $\cY_l$, the inverse $\wh{L_0}(\xi)^{-1}|_{\cY_l}=(-(i\xi)^2-i\xi+l(l+1))^{-1}$ is meromorphic with simple poles precisely at $i\xi=-l-1,l$. Thus, writing $h=h_0+h'$ with $h'$ orthogonal to $Y_0$ (i.e.\ with vanishing spherical average), then $\wh{L_0}(\xi)^{-1}\wh{h_0}(\xi)$, resp.\ $\wh{L_0}(\xi)^{-1}\wh{h'}(\xi)$, is meromorphic in $\Im\xi>-1$ with (at most) a double, resp.\ simple pole at $\xi=0$. In the inverse Mellin transform
  \[
    v(\hat r) = \frac{1}{2\pi}\int_{\Im\xi=\eps} \hat r^{i\xi} \wh{L_0}(\xi)^{-1}\hat h(\xi)\,d\xi,
  \]
  we can then shift the contour to $\Im\xi=-1+\eps$; the residue theorem gives the expansion
  \[
    v(\hat r) = c(\log\hat r) + \tilde v + \cA^{1-}
  \]
  for some $c\in Y_0=\C$ and $\tilde v\in\CI(\pa X)$. The value of $c$ can be determined from this, or directly by noting that $L_0(\log\hat r)=-1$, hence $c=-h_0(0)=-(4\pi)^{-1}\int_{\pa X}\tilde f\,|d\slg|$.
\end{proof}

These types of arguments are frequently formulated in the opposite order: one first explicitly solves away the leading term (here the spherically symmetric part of $\tilde f$) using the $\log\hat r$ term, and then solves away the remaining error term, acting on which $L_0^{-1}$ does not produce any logarithmic singularities anymore (i.e.\ poles on the Mellin transform side).

The precise behavior of $\wh\Box(\sigma)^{-1}$ on spherically symmetric $\rho^2\CI(X)$ inputs is then:\footnote{Inputs whose leading order term at $\pa X$ has vanishing average do not produce singularities at $\zface$. Since such inputs do not arise in our application, we do not state a result for $\rho^2\CI(X)$ inputs here.}
\begin{prop}
\label{PropARhoSq}
  Let $u_{(0)}\in\cA^0(X)$ with $\wh\Box(0)u_{(0)}=0$ be as in Lemma~\usref{LemmaAExtState} above. Let $\tilde u^{(2)}:=\wt\Box(1)^{-1}(\hat\rho^2)$, as computed by Lemma~\usref{LemmaAModel}. Then
  \[
    \wh\Box(\sigma)^{-1}\rho^2 \in \cA^{1-,((0,0),1-),((0,1),1-)}(X^+_{\rm res}).
  \]
  The leading order term at $\tface$ is equal to $\tilde u^{(2)}$, and the leading order term at $\zface$ is equal to $-(\log\tfrac{\sigma}{\rho})u_{(0)}$.
\end{prop}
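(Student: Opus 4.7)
The plan is to construct an explicit approximate solution $u_{\rm ap}$ carrying the claimed leading behavior at $\tface$ and $\zface$, verify that the residual $e:=\wh\Box(\sigma)u_{\rm ap}-\rho^2$ lies in a space handled by Lemma~\ref{LemmaALoResolvedInput}, and then set $u:=u_{\rm ap}-\wh\Box(\sigma)^{-1}e$.

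For the approximation, I would patch two pieces on $X^+_{\rm res}$ via a partition of unity: near $\tface\cup\bface$, cut off away from $\zface$, take the extension $\tilde u^{(2)}(\hat\rho)$ of the model solution (valid since $\hat\rho=\rho/\sigma$ is smooth away from $\zface$); near $\zface$, cut off away from $\bface$, take $-\log(\sigma/\rho)u_{(0)}$. The two expressions agree to leading order in the overlap near the corner $\tface\cap\zface$: by Lemma~\ref{LemmaAModel} with $\tilde f=1$, the leading term of $\tilde u^{(2)}$ as $\hat r=\sigma/\rho\to 0$ is $-\log\hat r$, while $-\log(\sigma/\rho)u_{(0)}|_\tface=-\log\hat r$ because $u_{(0)}|_{\pa X}=1$ by Lemma~\ref{LemmaAExtState}. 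This yields $u_{\rm ap}\in\cA^{1-,((0,0),1-),((0,1),1-)}(X^+_{\rm res})$ with the stated leading behaviors.

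I would next verify $e\in\cA^{2+\alpha,2+\alpha,\alpha}(X^+_{\rm res})$ for some $\alpha\in(0,1)$. By Lemma~\ref{LemmaAResolvedOp}, $\wh\Box(\sigma)-\sigma^2\wt\Box(1)\in\rho_\bface^2\rho_\tface^3\Diffb^2$ and $\wh\Box(\sigma)-\wh\Box(0)\in\rho_\bface\rho_\tface^2\rho_\zface\Diffb^2$, so only the normal-operator analyses at $\tface$ and $\zface$ are delicate. At $\tface$ the identity $\sigma^2\wt\Box(1)\tilde u^{(2)}=\sigma^2\hat\rho^2=\rho^2$ produces the required leading-order cancellation. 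At $\zface$, using $\wh\Box(0)u_{(0)}=0$, $\wh\Box(0)=\rho^2 L_0$ modulo $\rho^3\Diffb^2$, and the commutator identity $[L_0,\log\rho]=-2\rho\pa_\rho+1$ (immediate from \eqref{EqABoxPieces}), a direct computation yields
\[
\wh\Box(0)\bigl(-\log(\sigma/\rho)u_{(0)}\bigr) = [\wh\Box(0),\log\rho]u_{(0)} = \rho^2(-2\rho\pa_\rho+1)u_{(0)}+O(\rho^{3-}) = \rho^2+O(\rho^{3-}),
\]
since $u_{(0)}-1\in\cA^{1-}(X)$. Errors introduced by the cutoffs are supported either away from $\tface\cup\zface$ or in the matching region where the two pieces agree to leading order, so they are absorbed into the claimed orders.

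Lemma~\ref{LemmaALoResolvedInput} then gives $v:=\wh\Box(\sigma)^{-1}e\in\cA^{\alpha-,\alpha-,\alpha-}(X^+_{\rm res})$; taking $\alpha$ arbitrarily close to $1$ shows that $v$ is subsumed into the remainder orders of $u_{\rm ap}$, so $u:=u_{\rm ap}-v$ solves $\wh\Box(\sigma)u=\rho^2$ with the prescribed leading behavior, and by uniqueness within this conormal class coincides with $\wh\Box(\sigma)^{-1}\rho^2$. The main obstacle will be the sharp cancellation at $\zface$: the commutator computation must produce exactly $\rho^2$ (with coefficient $+1$) in order to match the right-hand side, which simultaneously pins down the sign and magnitude of the log coefficient and forces the interpretation of $u_{(0)}$ as the normalized extended zero mode from Lemma~\ref{LemmaAExtState}. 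A secondary concern is that the gluing cutoffs must not degrade the leading asymptotics at $\tface\cap\zface$; this is controlled by the fact that Lemma~\ref{LemmaAModel} furnishes $\tilde u^{(2)}+\log\hat r$ with a classical (index set $(0,0)$) expansion at $\hat r=0$.
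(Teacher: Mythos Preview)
Your approach has a genuine gap: the error $e=\wh\Box(\sigma)u_{\rm ap}-\rho^2$ does \emph{not} lie in $\cA^{2+\alpha,2+\alpha,\alpha}(X^+_{\rm res})$, so Lemma~\ref{LemmaALoResolvedInput} cannot be invoked. The problem shows up at both $\tface$ and $\zface$.

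At $\tface$: with your partition of unity, the restriction of $u_{\rm ap}$ to $\tface$ is $\chi_1\tilde u^{(2)}+\chi_2(-\log\hat r)$ (since $u_{(0)}|_{\pa X}=1$), which is \emph{not} $\tilde u^{(2)}$. A direct computation gives $\wt\Box(1)(-\log\hat r)=\hat\rho^2+2 i\hat\rho(1-\log\hat\rho)$, so the $\tface$-normal operator applied to $u_{\rm ap}|_\tface$ misses $\hat\rho^2$ by $\chi_2\cdot 2 i\hat\rho(1-\log\hat\rho)+[\wt\Box(1),\chi_1](\tilde u^{(2)}+\log\hat r)$, which is generically nonzero in the overlap region. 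Hence $e$ has $\tface$-weight only $2$, not $2+\alpha$. You only verified cancellation for the $\chi_1\tilde u^{(2)}$ piece at $\tface$; the $\chi_2$ piece does not solve the $\tface$ model problem.

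At $\zface$: your own computation shows $\wh\Box(0)(-\log(\sigma/\rho)u_{(0)})=[\wh\Box(0),\log\rho]u_{(0)}=\rho^2+\cA^{3-}(X)$, and the $\cA^{3-}$ remainder is $\sigma$-independent, hence has $\zface$-weight $0$, not $\alpha>0$. (It is nonzero: for a general metric in Definition~\ref{DefAF} the $\tilde L$ term contributes, and in the potential setting of \S\ref{SsPV} the $\cA^{1-}$ tail of $u_{(0)}$ contributes.) So $e|_{\zface}\neq 0$ and Lemma~\ref{LemmaALoResolvedInput} does not apply.

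The paper avoids both issues by taking only $\chi_\pa\tilde u^{(2)}$ (with $\chi_\pa$ a cutoff near $\pa X$, not a partition of unity on $X^+_{\rm res}$) as the ansatz. This solves the $\tface$ model problem exactly, giving an error $\tilde e\in\cA^{3-,3-,((0,1),1-)}(X^+_{\rm res})$ with a genuine $\log\hat r$ singularity at $\zface$. The paper then writes $\wh\Box(\sigma)^{-1}\tilde e=\wh\Box(0)^{-1}\tilde e-\sigma\wh\Box(\sigma)^{-1}[\sigma^{-1}(\wh\Box(\sigma)-\wh\Box(0))]\wh\Box(0)^{-1}\tilde e$: the first term is analyzed via Lemma~\ref{LemmaALoResolvedInput} applied to $(\sigma\pa_\sigma)^2\tilde e$ (which kills the log and gains $\rho_\zface$), and its $\log\hat r$ coefficient is $\wh\Box(0)^{-1}(\wh\Box(0)\chi_\pa)=\chi_\pa-u_{(0)}$, which combines with the $-\chi_\pa\log\hat r$ from the ansatz to give exactly $-u_{(0)}\log\hat r$; the second term is disposed of by Lemma~\ref{LemmaALoLargeInput}. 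In short, the $u_{(0)}$ in the $\zface$ leading term emerges from inverting $\wh\Box(0)$ on the residual error, not from an a priori ansatz.
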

\begin{proof}
  For $\chi_\pa\in\CI(X)$ identically $1$ near $\pa X$, we write
  \begin{equation}
  \label{EqARhoSqErr}
    \wh\Box(\sigma)^{-1}\rho^2 = \chi_\pa\tilde u^{(2)} + \wh\Box(\sigma)^{-1}\tilde e,\qquad
    \tilde e := \rho^2 - \wh\Box(\sigma)(\chi_\pa\tilde u^{(2)}).
  \end{equation}
  Note here that $\chi_\pa\tilde u^{(2)}\in\cA^{1-,(0,0),(0,1)+1-}(X^+_{\rm res})$. The key point here is that $\tilde e$ has an extra order of spatial decay; indeed, $\tilde e \in \cA^{3-,3,0-}(X^+_{\rm res})$, improving over $\rho^2\in\cA^{2,2,0}(X^+_{\rm res})$ at the expense of a singularity at $\zface$. More precisely, $\tilde e$ is polyhomogeneous on $X^+_{\rm res}$: using Lemma~\ref{LemmaAResolvedOp},
  \[
    \tilde e\in\cA^{3-,3-,(0,1)+1-}(X^+_{\rm res}),
  \]
  where the $\log\hat r$ leading term at $\zface$ is given by $-(\log\hat r)\wh\Box(0)(-\chi_\pa)=(\log\hat r)\wh\Box(0)(\chi_\pa)$. We then write
  \begin{equation}
  \label{EqARhoSqErrExp}
    \wh\Box(\sigma)^{-1}\tilde e = \tilde v - \sigma\wh\Box(\sigma)^{-1}\bigl[\sigma^{-1}(\wh\Box(\sigma)-\wh\Box(0))\bigr]\tilde v, \qquad
    \tilde v:=\wh\Box(0)^{-1}\tilde e.
  \end{equation}

  Now, since $(\sigma\pa_\sigma)^2\tilde e\in\cA^{3-,3-,1-}(X^+_{\rm res})$, Lemma~\ref{LemmaALoResolvedInput} implies
  \begin{equation}
  \label{EqARhoSqErrVder}
    (\sigma\pa_\sigma)^2\tilde v\in\cA^{1-,1-,1-}(X^+_{\rm res}).
  \end{equation}
  On the other hand, viewing $\tilde e\in\cA^{3-,3-,0-}(X^+_{\rm res})\subset\cA^{0-}([0,1);\cA^{3-}(X))$, Proposition~\ref{PropALoSmall} gives $\tilde v\in\cA^{0-}([0,1);\cA^{1-}(X))\subset\cA^{1-,1-,0-}(X^+_{\rm res})$. Integrating~\eqref{EqARhoSqErrVder} thus implies
  \[
    \tilde v \in \cA^{1-,1-,(0,1)+1-}(X^+_{\rm res}).
  \]
  Using Lemma~\ref{LemmaAResolvedOp}, the logarithmic term of $\tilde v$ at $\zface$ is $(\log\hat r)\wh\Box(0)^{-1}\bigl(\wh\Box(0)(\chi_\pa)\bigr)$, with $\wh\Box(0)^{-1}$ mapping into $\cA^{1-}(X)$; but the unique $\tilde v_{0,1}\in\cA^{1-}(X)$ satisfying the equation $\wh\Box(0)\tilde v_{0,1}=\wh\Box(0)(\chi_\pa)$ is $\tilde v_{0,1}=\chi_\pa-u_{(0)}$. Plugging this into~\eqref{EqARhoSqErr}, the total logarithmic term of $\wh\Box(\sigma)^{-1}\rho^2$ at $\zface$ is therefore
  \begin{equation}
  \label{EqARhoSqLog}
    (\log\hat r)(-\chi_\pa) + (\log\hat r)(\chi_\pa-u_{(0)}) = -(\log\hat r)u_{(0)},
  \end{equation}
  the first term coming from $\chi_\pa\tilde u^{(2)}$, the second from $(\log\hat r)\tilde v_{0,1}$.

  It remains to prove that the second term in~\eqref{EqARhoSqErrExp} merely contributes an error term in $\cA^{1-,1-,1-}(X_{\rm res}^+)$. But by~\eqref{EqASBoxFam} and Lemma~\ref{LemmaABox}, the operator $\sigma^{-1}(\wh\Box(\sigma)-\wh\Box(0))$ maps $\tilde v\in\cA^{1-,1-,0-}(X^+_{\rm res})$ into $\cA^{2-,2-,0-}(X^+_{\rm res})\subset|\sigma|^{-\delta}\cA^0([0,1),\cA^{2-\delta-})$ for any $\delta>0$; by Lemma~\ref{LemmaALoLargeInput}, this in turn mapped by $\wh\Box(\sigma)^{-1}$ into $|\sigma|^{-\delta}\cA^{1-\delta-,-\delta-,-\delta-}(X^+_{\rm res})$. Since $\delta>0$ was arbitrary, multiplication by $\sigma$ produces an element of $\cA^{1-,1-,1-}(X^+_{\rm res})$, as desired.
\end{proof}

\begin{rmk}
\label{RmkAModelExpl}
  We can explicitly compute
  \begin{equation}
  \label{EqAModelExpl}
    \tilde u^{(2)} = \wt\Box(1)^{-1}\hat r^{-2} = i\hat r^{-1}\Bigl(\frac{2\gamma+\log 4-i\pi}{4}+\int_0^\infty e^{-2 t}\log(\hat r+i t)\,d t\Bigr),
  \end{equation}
  where $\gamma$ is the Euler--Mascheroni constant; near $\hat r=0$, we have $\tilde u^{(2)}=-\log\hat r+\frac{i\pi}{2}+c+\cA^{1-}$ with $c\in\R$. This implies that in Proposition~\ref{PropARhoSq},
  \begin{equation}
  \label{EqAModelExplr0}
    \text{the imaginary part of the $\hat r^0$ coefficient of $\wh\Box(\sigma)^{-1}\rho^2$ at $\zface$ is $\frac{\pi}{2}u_{(0)}$.}
  \end{equation}
  (Indeed, the $\hat r^0$ term of $\tilde v$ in~\eqref{EqARhoSqErrExp} is $\wh\Box(0)^{-1}(\rho^2-[\wh\Box(0),\log\hat r](-\chi_\pa)-\frac{i\pi}{2}\wh\Box(0)(\chi_\pa))$, the imaginary part of which is $\frac{\pi}{2}(u_{(0)}-u_\pa)$; this gives the overall stated imaginary part of $\wh\Box(\sigma)^{-1}$ when plugged into~\eqref{EqARhoSqErr}.)

  The explicit solution~\eqref{EqAModelExpl} can be found as follows: the radial part $R$ of $\wt\Box(1)$ can be factored, $R=-\hat r^{-1}(\pa_{\hat r}+2 i)(\hat r\pa_{\hat r}+1)$. The equation $R\tilde u^{(2)}=\hat r^{-2}$ thus becomes $\pa_{\hat r}e^{2 i\hat r}v=-e^{2 i\hat r}\hat r^{-1}$ where $v=\pa_{\hat r}\hat r\tilde u^{(2)}$; this gives $v=e^{-2 i\hat r}\int_{\hat r}^\infty e^{2 i s}s^{-1}\,d s$. The constant of integration is absent to ensure the outgoing condition at $\hat r=\infty$. Indeed, deforming the integration contour to $\{\hat r+i\hat r t\colon t\in[0,\infty)\}$, gives $v=\int_0^\infty e^{-2 t\hat r}(t-i)^{-1}\,d t$; repeated integration by parts in $t$ using $(-2\hat r)^{-1}\pa_t e^{-2 t\hat r}=e^{-2 t\hat r}$ then shows that $v\in\hat\rho\CI([0,1)_{\hat\rho})$ is outgoing (meaning: conormal at $\hat r=0$). Now, $v=\int_0^\infty e^{-2 t}(t-i\hat r)^{-1}\,d t$ and $\int(t-i\hat r)^{-1}\,d\hat r=i c'+i\log(\hat r+i t)$ imply $\tilde u^{(2)} = i\hat r^{-1}(-c' + \int_0^\infty e^{-2 t}\log(\hat r+i t)\,d t)$. Requiring $\tilde u^{(2)}\in\cA^{-1+}$ near $\hat r=0$ forces $c'\in\C$ to be equal to the constant term of the integral at $\hat r=0$, giving~\eqref{EqAModelExpl}.
  
  The imaginary part of the constant term $\zeta$ of $\tilde u^{(2)}$ is equal to real part of the $\cO(\hat r)$ term of $I(\hat r):=\int_0^\infty e^{-2 t}\log(\hat r+i t)\,d t$. The proof of Lemma~\ref{LemmaAModel} gives the structure of the expansion and the coefficient of the logarithmic term in $I(\hat r)=-c'+i\hat r\log\hat r+\zeta\hat r+\cA^{2-}$. Thus, $\Re\zeta=\pa_{\hat r}\Re I(\hat r)|_{\hat r=0}=\lim_{\hat r\to 0}\int_0^\infty e^{-2 t}\frac{\hat r}{\hat r^2+t^2}\,d t=\frac{\pi}{2}$ upon substituting $t=\hat r y$.
\end{rmk}

%%%%%%%%%%%%%%%%%%%%%%%%%%%%%%%%%%%%%%%%%%%%%%%%%%%%%%%%%%%%%%%%%%%%%%
\section{Price's law with a leading order term}
\label{SP}

We fix a stationary and asymptotically flat (with mass $\bhm$) metric $g$ (see Definition~\ref{DefAF}), which we moreover assume is spectrally admissible (see Definition~\ref{DefASAssm}). We abbreviate
\[
  \Box := \Box_g,\quad
  \la-,-\ra := \la-,-\ra_{L^2(X;|d g_X|)},
\]
where the density $|d g_X|$ is defined after~\eqref{EqASDensity}.

%%%%%%%%%%%%%%%%%%%%%%%%%%%%%%%%%%%%%%%%%%%%%%%%%%
\subsection{Resolvent expansion}
\label{SsPR}

The key result of the paper is:

\begin{thm}
\label{ThmPRes}
  Let $f=f(x)\in\cA^{4+\alpha}(X)$. For positive frequencies, the resolvent acting on $f$ is then the form
  \begin{align*}
    &\wh\Box(\sigma)^{-1}f = u_{\rm sing}(\sigma) + u_{\rm reg}(\sigma), \\
    &\qquad u_{\rm sing}(\sigma) \in \sigma^2\cA^{1-,((0,0),1-),((0,1),1-)}(X^+_{\rm res}), \\
    &\qquad u_{\rm reg}(\sigma) \in \CI([0,1)_\sigma;\cA^{1-}(X)) + \cA^{\alpha-,2+\alpha-,((2,0),2+\alpha-)}(X^+_{\rm res}),
  \end{align*}
  where the leading terms of $\sigma^{-2}u_{\rm sing}(\sigma)$ at $\zface$ and $\tface$ are, respectively, $-(\log\tfrac{\sigma}{\rho})c_X(f)u_{(0)}$ and $c_X(f)\tilde u^{(2)}$ with $\tilde u^{(2)}=\wt\Box(1)^{-1}(\hat\rho^2)$ (see Proposition~\usref{PropARhoSq}); here,
  \[
    c_X(f) = \frac{\bhm}{\pi}\la f,u_{(0)}^*\ra.
  \]
\end{thm}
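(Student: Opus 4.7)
The strategy is to iterate the resolvent identity from \S\ref{SsIP} twice, reducing the singular behavior of $\wh\Box(\sigma)^{-1}f$ to Proposition~\ref{PropARhoSq}. Setting
\[
  u_0 := \wh\Box(0)^{-1}f,\qquad u_1 := \wh\Box(0)^{-1}(-2i\rho Q u_0)
\]
and using $\wh\Box(\sigma) = \wh\Box(0) + 2i\sigma\rho Q + \sigma^2 g^{0 0}$ from \eqref{EqASBoxFam}, a direct computation gives
\[
  \wh\Box(\sigma)^{-1}f = u_0 + \sigma u_1 + \sigma^2\wh\Box(\sigma)^{-1}f_2,\qquad f_2 := -g^{0 0}u_0 - 2i\rho Q u_1 - \sigma g^{0 0}u_1.
\]
The program is to identify $f_2 = c\chi_\pa\rho^2 + \tilde f_2$ with $c = c_X(f)$ and $\tilde f_2 \in \cA^{2+\alpha-}(X) + \sigma\cA^0([0,1);\cA^{3-}(X))$, so that $\sigma^2\wh\Box(\sigma)^{-1}(c\chi_\pa\rho^2)$ produces $u_{\rm sing}$ via Proposition~\ref{PropARhoSq}, while $u_0 + \sigma u_1 + \sigma^2\wh\Box(\sigma)^{-1}\tilde f_2$ fits into $u_{\rm reg}$ using Lemma~\ref{LemmaALoLargeInput} and Proposition~\ref{PropALoSmall}.

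Indicial-root analysis of $\rho^{-2}\wh\Box(0) = L_0 + \rho L_1 + \tilde L$ (Lemma~\ref{LemmaABox}) shows that $u_0 = a_1\rho + \bhm a_1\rho^2 + \cA^{2+\alpha-}(X)$ in the $l = 0$ sector, with $a_1\in\C$: the indicial roots $\{0,1\}$ of $L_0$ together with $u_0\in\cA^{\alpha-}(X)$ exclude the $\rho^0$ mode, while $L_1 = 2\bhm(\rho\pa_\rho)^2$ contributes to the $\rho^2$ equation $(\slDelta - 2)a_2^{(0)} = -2\bhm a_1$, yielding $a_2^{(0)} = \bhm a_1$ (the first appearance of $\bhm$). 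A Green's-identity argument---integrate $\wh\Box(0)u_0 \cdot u_{(0)}^*$ against a cutoff $\chi_R$ on $\{r<R\}$ and send $R\to\infty$; the commutator term $[\wh\Box(0)^*,\chi_R]u_{(0)}^*$ picks up the leading $\rho$-coefficient of $u_0$---identifies $a_1 = \frac{1}{4\pi}\la f, u_{(0)}^*\ra$. The cancellation $Q_0(a_1\rho) = (\rho\pa_\rho - 1)(a_1\rho) = 0$ then produces $-2i\rho Q u_0 = -2i\bhm a_1\rho^3 + \cA^{3+\alpha-}(X)$.

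The crucial subtlety is that solvability of $\wh\Box(0)u_1 = -2i\rho Q u_0$ in the naive space $\cA^{1-}(X)$ requires the Fredholm condition $\la -2i\rho Q u_0, u_{(0)}^*\ra = 0$, which by the same cutoff computation fails generically (it equals a nonzero multiple of $\bhm a_1$); consequently $u_1$ must acquire a $\rho\log\rho$ leading term. Using $L_0(\rho\log\rho) = -\rho$, one finds $u_1 = 2i\bhm a_1\,\rho\log\rho + c_1\rho + \cA^{1+\alpha-}(X)$ in the $l = 0$ sector. The second cancellation $Q_0(\rho\log\rho) = \rho$ then yields $-2i\rho Q u_1 = 4\bhm a_1\,\rho^2 + \cA^{2+\alpha-}(X)$, and combined with $-g^{0 0}u_0 \in \cA^{3-}(X)$ (no $\rho^2$ contribution), this gives $c = 4\bhm a_1 = \frac{\bhm}{\pi}\la f, u_{(0)}^*\ra = c_X(f)$. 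The spherical-averaging factor $\frac{1}{4\pi}\int_{\pa X}\,|d\slg|$ from Lemma~\ref{LemmaAModel}, applied to the spherically symmetric $\rho^2$-coefficient $c$, then produces the log coefficient $-c_X(f)u_{(0)}$ at $\zface$ and $c_X(f)\tilde u^{(2)}$ at $\tface$, matching the theorem.

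The main obstacle is anticipating and correctly incorporating the $\rho\log\rho$ term of $u_1$: without it, $f_2$ would have only $\rho^3$ leading behavior and no logarithmic singularity would appear, contradicting the theorem. The iteration must therefore be carried out in polyhomogeneous rather than purely conormal spaces at $\pa X$, with index sets tracked through each indicial-root calculation. The generalizations to stationary potentials (\S\ref{SsPV}) and Kerr spacetimes (\S\ref{SK}) use the same mechanism with the constant $1$ replaced by the appropriate extended state $u_{(0)}$ from Lemma~\ref{LemmaAExtState}; the factor of $\bhm$ (or its analogue) always enters through a single contribution of $L_1$ to the normal-operator expansion.
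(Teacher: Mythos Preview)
Your proof plan is correct and follows essentially the same route as the paper: two iterations of the resolvent identity, normal-operator analysis of $u_0$ and $u_1$, and then Proposition~\ref{PropARhoSq} applied to the borderline $\rho^2$ term in $f_2$. Your grouping differs cosmetically from \eqref{EqPExp3} (you keep $-g^{0 0}u_0$ inside $f_2$ rather than splitting off $\sigma^2 u_1'$), but this is immaterial.

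One conceptual correction: the $\rho\log\rho$ term in $u_1$ does \emph{not} arise from a Fredholm obstruction. The equation $\wh\Box(0)u_1=-2i\rho Q u_0\in\cA^{3-}(X)$ is solvable in $\cA^{1-}(X)$ without any condition (take $\alpha$ close to $1$ in \eqref{EqAZeroMapping}; note that $\rho\log\rho\in\cA^{1-}$). Moreover, the pairing $\la -2i\rho Q u_0,u_{(0)}^*\ra$ you invoke is not even absolutely convergent: near $\pa X$ the integrand behaves like $\rho^3\cdot 1\cdot\rho^{-4}\,d\rho=\rho^{-1}d\rho$. The logarithm appears for a different reason, namely that $\rho^{-2}(-2i\rho Q u_0)$ has a nonzero $\rho^1$ coefficient (equal to $-2i\bhm a_1$) in the $l=0$ sector, and $\rho$ lies in $\ker L_0|_{l=0}$; on the Mellin side this produces a double pole at $i\xi=1$. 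Your identity $L_0(\rho\log\rho)=-\rho$ is precisely the right computation for the log coefficient---the Fredholm sentence preceding it is spurious and should be replaced by this indicial-root argument (cf.\ the proof of Lemma~\ref{LemmaPL0inv3}). You should also track the $l=1$ contribution $\rho^2 Y_{(1)}$ to $u_0$ (Lemma~\ref{LemmaPL0inv4}) through $f_1$ and $u_1$, observing that it is killed at the last step since $Q_0\rho=0$.
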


The subscript `sing' refers to the fact that $u_{\rm sing}$ captures the most singular ($\sigma^2\log\sigma$) behavior of the resolvent at $\zface$ (i.e.\ as $\sigma\to 0$ in $X^\circ$), while $u_{\rm reg}$ collects those terms in the resolvent expansion which are smooth down to $\sigma=0$ or at least more regular than $u_{\rm sing}$.

As already used in the proof of Proposition~\ref{PropARhoSq}, the strategy is to write
\begin{equation}
\label{EqPRewrite1}
\begin{split}
  \wh\Box(\sigma)^{-1}f &= \wh\Box(0)^{-1}f + (\wh\Box(\sigma)^{-1}-\wh\Box(0)^{-1})f \\
    &= \wh\Box(0)^{-1}f + \sigma \wh\Box(\sigma)^{-1}\bigl[\sigma^{-1}(\wh\Box(\sigma)-\wh\Box(0))\wh\Box(0)^{-1}f\bigr].
\end{split}
\end{equation}
In the second term, we gain a power of $\sigma$ due to $\wh\Box(\sigma)-\wh\Box(0)\in\sigma\rho\Diffb^1(X)$; however, $\wh\Box(0)^{-1}$ typically loses (at least) two orders of decay, while $\wh\Box(\sigma)-\wh\Box(0)$ typically only gains back one order, thus the $\sigma$-gain comes at the cost of reducing the decay of the argument of $\wh\Box(\sigma)^{-1}$. One can iterate the rewriting~\eqref{EqPRewrite1} while keeping track of the precise decay of the terms on which $\wh\Box(\sigma)^{-1}$ on the right in~\eqref{EqPRewrite1} acts. The outline of the proof of Theorem~\ref{ThmPRes} is then:
\begin{enumerate}
\item[(1.1)] First iteration (\S\ref{SssP1}): $u_0:=\wh\Box(0)^{-1}f=c_{(0)}\rho+\bhm c_{(0)}\rho^2+\dots$, $c_{(0)}=(4\pi)^{-1}\la f,u_{(0)}^*\ra$.
\item[(1.2)] Input of second term in~\eqref{EqPRewrite1}: $f_1:=-\sigma^{-1}(\wh\Box(\sigma)-\wh\Box(0))u_0=-2 i\bhm c_{(0)}\rho^3+\dots$
\item[(2.1)] Second iteration (\S\ref{SssP2}): $u_1:=\wh\Box(0)^{-1}f_1=2 i\bhm c_{(0)}\rho\log\rho+\dots$
\item[(2.2)] Input of next term in expansion: $f_2:=-\sigma^{-1}(\wh\Box(\sigma)-\wh\Box(0))u_1=4\bhm c_{(0)}\rho^2$.
\item[(3)] $\wh\Box(\sigma)^{-1}f_2$ is logarithmically divergent as $\sigma\to 0$ since $f_2$ (barely) fails to have sufficient decay, cf.\ Proposition~\ref{PropARhoSq}. This produces $u_{\rm sing}(\sigma)$.
\end{enumerate}

The total resolvent expansion being
\[
  \wh\Box(\sigma)^{-1}f = u_0 + \sigma u_1 + \sigma^2\wh\Box(\sigma)^{-1}f_2,
\]
step~(3) above provides the main contribution to the singular term $u_{\rm sing}(\sigma)$. We remark that the importance of the $\cO(\rho^2)$ subleading term of $u_0$ is also explained in the discussion of \cite[Proposition~6.14]{TataruDecayAsympFlat}.

%%%%%%%%%%%%%%%%%%%%%%%%%%%%%%%%%%%%%%%%%%%%%%%%%%
\subsubsection{First iteration}
\label{SssP1}

We begin by analyzing the first term in~\eqref{EqPRewrite1} in some detail:

\begin{lemma}
\label{LemmaPL0inv4}
  For $f\in\cA^{4+\alpha}(X)$, we have
  \[
    \wh\Box(0)^{-1}f = c_{(0)}\rho + \bhm c_{(0)}\rho^2 + \rho^2 Y_{(1)} + \tilde u,
  \]
  where $c_{(0)}=(4\pi)^{-1}\la f,u_{(0)}^*\ra$ (with $u_{(0)}^*$ given by Lemma~\usref{LemmaAExtState}), $Y_{(1)}\in\cY_1=\mathspan\{Y_{1 m}\colon m=-1,0,1\}$, and $\tilde u\in\cA^{2+\alpha-}(X)$.
\end{lemma}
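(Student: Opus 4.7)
The plan is to determine the asymptotic expansion of $u := \wh\Box(0)^{-1}f$ at $\pa X$ in two stages: a \emph{structural} indicial-matching stage that identifies the \emph{form} of the $\rho$ and $\rho^2$ coefficients of $u$, and an \emph{identification} stage using a Green's flux identity against $u_{(0)}^*$ to pin down the \emph{value} of $c_{(0)}$. The starting point is the a priori regularity $u \in \cA^{\alpha-}(X)$, which follows at once from~\eqref{EqAZeroMapping} since $f \in \cA^{4+\alpha}(X) \subset \cA^{2+\alpha}(X)$.

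For the structural stage, I would use the decomposition $\rho^{-2}\wh\Box(0) = L_0 + \rho L_1 + \tilde L$ of Lemma~\ref{LemmaABox} together with the identity $L_0(\phi\rho^z) = (\slDelta - z(z-1))\phi \cdot \rho^z$ on each spherical sector. This gives $L_0\rho = 0$ (the $l=0$, $z=1$ indicial root), $L_0\rho^2 = -2\rho^2$ on the spherical component, and $L_0(Y\rho^2) = 0$ for $Y \in \cY_1$ (the $l=1$, $z=2$ indicial root); combined with $\rho L_1\rho = 2\bhm\rho^2$ and $\tilde L \in \rho^2\Diffb^2(X)$, a direct computation shows that for any $c_0 \in \C$ and $Y \in \cY_1$, the $\rho^2$-terms in $\rho^{-2}\wh\Box(0)v$ cancel between $L_0(\bhm c_0\rho^2)$ and $\rho L_1(c_0\rho)$, so that
\[
  \wh\Box(0)\bigl( c_0\rho + (\bhm c_0 + Y)\rho^2 \bigr) \in \cA^5(X) \subset \cA^{4+\alpha}(X).
\]
Hence $\tilde u := u - c_0\rho - (\bhm c_0 + Y)\rho^2 \in \cA^{\alpha-}(X)$ satisfies $\wh\Box(0)\tilde u \in \cA^{4+\alpha}(X)$ for \emph{any} such choice of $c_0, Y$.

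To fix $c_0 = c_{(0)}$, I would pair $\wh\Box(0)u = f$ against $u_{(0)}^*$ with respect to $|dg_X|$, integrate by parts on $\{r \leq R\}$, and invoke $(\wh\Box(0))^* u_{(0)}^* = 0$ from Lemma~\ref{LemmaAExtState}, so that only a boundary flux at $\{r = R\}$ survives. As $R \to \infty$, the leading behavior $u \sim c_{(0)}/r$ together with $u_{(0)}^* \sim 1$, and the asymptotic identification of $\wh\Box(0)$ with the nonnegative Euclidean Laplacian $\Delta_{\R^3}$ at $\pa X$ (and $|dg_X| \sim r^2\,dr\,d\slg$), reduce the limit to the classical Newtonian flux computation $-\lim_{R \to \infty}\int_{S_R}\pa_r u\,dA = 4\pi c_{(0)}$, giving the stated formula $\la f, u_{(0)}^*\ra = 4\pi c_{(0)}$. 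The angular coefficient $Y_{(1)} \in \cY_1$ is then pinned down implicitly by the requirement that the matching procedure eliminate the $\cY_1\rho^2$ indicial contribution.

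The main obstacle is closing the argument by upgrading $\tilde u \in \cA^{\alpha-}(X)$ to the claimed $\cA^{2+\alpha-}(X)$. This requires a b-calculus / indicial-root argument: using invertibility of the angular operator family $\slDelta - z(z-1)$ for $z \notin \{-l, l+1\}_{l \in \N_0}$ to extract partial polyhomogeneous expansions of $\tilde u$ at $\pa X$ from the improved equation $\wh\Box(0)\tilde u \in \cA^{4+\alpha}$, and combining this with the uniqueness provided by the absence-of-bound-states assumption~\eqref{ItASAssmBound} (Definition~\ref{DefASAssm}) to ensure that the explicit choices of $c_{(0)}$ and $Y_{(1)}$ genuinely eliminate the $\rho^1$ and $\rho^2$ indicial terms of $\tilde u$, so that the next possible contribution lies beyond these roots at the target order $\cA^{2+\alpha-}$.
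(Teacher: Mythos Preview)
Your proposal is correct and rests on the same two ingredients as the paper: an indicial-root (Mellin transform) argument to extract the form of the expansion, and a pairing against $u_{(0)}^*$ to compute the spherically symmetric leading coefficient. The paper runs them in the opposite order: it first establishes $u \in c\rho + \cA^{2-}$ for \emph{some} $c$ via the Mellin transform (starting from the sharper a priori bound $u \in \cA^{1-}$, obtained from~\eqref{EqAZeroMapping} with weight close to $1$), then identifies $c = c_{(0)}$ by a smooth-cutoff version of your boundary flux computation, and finally iterates once more to pick up the $\bhm c_{(0)}\rho^2$ and $\rho^2 Y_{(1)}$ terms. Your order---subtract an ansatz with free $c_0, Y$, compute the flux, then invoke b-calculus---has a small circularity: the flux step uses ``leading behavior $u \sim c_{(0)}/r$'', which is exactly what the deferred b-calculus step is needed to supply. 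Reordering fixes this. One minor correction: the uniqueness of $c_{(0)}, Y_{(1)}$ in the expansion follows simply from $\rho, \rho^2 Y_{1 m} \notin \cA^{2+\alpha-}$ and does not require the bound-state assumption~\eqref{ItASAssmBound}; that assumption was already spent on the well-definedness of $\wh\Box(0)^{-1}$.
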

\begin{proof}
  We have $u:=\wh\Box(0)^{-1}f\in\cA^{1-}(X)$. But then, in the notation~\eqref{EqABoxPieces},
  \begin{equation}
  \label{EqPL0invL0}
    L_0 u = \rho^{-2}f - (\rho L_1+\tilde L)u \in \cA^{2+\alpha} + \cA^{2-} = \cA^{2-}.
  \end{equation}
  Let $\chi_\pa\in\CI(X)$ denote a cutoff which is identically $1$ near $\pa X$ and supported in a slightly larger neighborhood of $\pa X$. Then, using Lemma~\ref{LemmaABox}, $u_\pa:=\chi_\pa u\in\cA^{1-}$ satisfies an equation
  \begin{equation}
  \label{EqPL0inv4L0}
    L_0(u_\pa) = f_\pa \in \cA^{2-}.
  \end{equation}
  We use this equation to establish better decay of $u_\pa$ using a b-normal operator argument similarly to the proof of Lemma~\ref{LemmaAModel}. Passing to the Mellin transform in $\rho$, defined by $\wh{u_\pa}(\xi):=\int \rho^{-i\xi}u_\pa(\rho)\,\frac{d\rho}{\rho}$ (dropping the dependence on the spherical variables from the notation), equation~\eqref{EqPL0inv4L0} becomes
  \[
    \wh{L_0}(\xi)\wh{u_\pa}(\xi)=\wh{f_\pa}(\xi),\qquad
    \wh{L_0}(\xi):=-(i\xi)^2+i\xi+\slDelta.
  \]
  (The sign switch of $\xi$ compared to the proof of Lemma~\ref{LemmaAModel} is due to the Mellin transform there being in the variable $\hat r\sim\rho^{-1}$.) We already know that $\wh{u_\pa}(\xi)$ is holomorphic in $\Im\xi>-1$ and satisfies estimates
  \[
    \|\wh{u_\pa}(\xi)\|_{H^s(\Sph^2)} \leq C_{s N \eps}\la\xi\ra^{-N},\ -1+\eps<\Im\xi<0.
  \]
  for all $s,N\in\R$ and $\eps>0$; the same estimate holds for $\wh{f_\pa}(\xi)$ but in the larger region $-2<\Im\xi<0$. Expanding $\wh{u_\pa}(\xi)$ into spherical harmonics, so $\wh{u_\pa}(\xi)=\sum u_{m l}(\xi)Y_{m l}$ and $\wh{f_\pa}(\xi)=\sum f_{m l}(\xi)Y_{m l}$, and noting that $\wh{L_0}(\xi)|_{\cY_l}=-(i\xi)^2+i\xi+l(l+1)$ (with $\cY_l=\mathspan\{Y_{l m}\}$) is invertible for $i\xi\neq -l,l+1$, we conclude that $u_{m l}(\xi)=(\wh{L_0}(\xi)|_{\cY_l})^{-1}f_{m l}(\xi)$ is holomorphic in $\Im\xi>-1$ for all $m,l$ except possibly for $l=0$ where it has a simple pole. Therefore, in the inverse Mellin transform
  \[
    u_\pa(\rho) = \frac{1}{2\pi}\int_{\Im\xi=-1+\eps} \rho^{i\xi} \wh{L_0}(\xi)^{-1}\wh{f_\pa}(\xi)\,d\xi,
  \]
  we can shift the contour of integration through the pole at $\xi=-i$ to $\Im\xi=-2+\eps$; the residue theorem thus gives $u_\pa(\rho)\in c_{(0)}\rho+\cA^{2-}$ and therefore
  \begin{equation}
  \label{EqPL0invAsy0}
    u(\rho) = c_{(0)}\rho + \tilde u_0,\quad \tilde u_0\in\cA^{2-}.
  \end{equation}

  The constant $c_{(0)}$ can be evaluated as follows:\footnote{This is an instance of the proof of the relative index formula in \cite[\S6]{MelroseAPS}. Roughly speaking, the lack of invertibility of $\wh{L_0}(\xi)$ on $\cY_0$ for $\xi=-i$ is due to its lack of injectivity (the kernel producing the $c_{(0)}\rho$ leading order term) or, equivalently, due to its lack of surjectivity which manifests itself in the existence of a cokernel, which here is the kernel of $L_0^*$ on $\cA^0$; the constant $c_{(0)}$ then measures the failure of the right hand side $f_\pa$ in~\eqref{EqPL0inv4L0} to be orthogonal to the cokernel.} letting $\chi_\eps(\rho)=\chi(\rho/\eps)$ where $\chi\in\CI([0,\infty))$ vanishes near $0$ and is identically $1$ on $[1,\infty)$, we have
  \begin{align}
    0 &= \la u,\wh\Box(0)^*u_{(0)}^*\ra = \lim_{\eps\to 0} \la \chi_\eps u,\wh\Box(0)^*u_{(0)}^*\ra = \lim_{\eps\to 0} \la \wh\Box(0)\chi_\eps u,u_{(0)}^*\ra \nonumber\\
  \label{EqPL0invLot}
      &= \la f,u_{(0)}^*\ra + \lim_{\eps\to 0} \la [\wh\Box(0),\chi_\eps]u,u_{(0)}^*\ra.
  \end{align}
  In the second summand, note that $[\wh\Box(0),\chi_\eps]\in\rho^2\Diffb^1$ converges to $0$ strongly as an operator $\cA^\beta\to\cA^{\beta+2}$ for any $\beta\in\R$; hence $[\wh\Box(0),\chi_\eps]\tilde u_0\to 0$ in $\cA^{4-}$, and therefore $\la[\wh\Box(0),\chi_\eps]\tilde u_0,u_{(0)}^*\ra\to 0$. (Note that $|d g_X|$ is a smooth positive multiple of $r^2 |d r\,d\slg|=\rho^{-3}|\frac{d\rho}{\rho}d\slg|$ by Lemma~\ref{LemmaABox}, hence convergence to $0$ of the pointwise product of the two slots of the pairing in $\cA^{3+\delta}$ suffices for convergence of the inner product to $0$.) Similarly, all subleading terms of $\wh\Box(0)$ (i.e.\ terms in $\rho^3\Diffb^2$) do not contribute in the limit. Therefore, using the fact that the leading order term of $u_{(0)}^*$ at $\pa X$ is $1$, and using the explicit form of the term $L_0$ from~\eqref{EqABoxPieces}, the second term in~\eqref{EqPL0invLot} is equal to
  \begin{align*}
    &c_{(0)}\lim_{\eps\to 0}\la[\wh\Box(0),\chi_\eps]\rho,1\ra \\
    &\qquad = c_{(0)}\vol(\Sph^2)\lim_{\eps\to 0} \int_0^\infty \bigl[\rho^2\bigl(-\rho\pa_\rho [\rho\pa_\rho,\chi_\eps]-[\rho\pa_\rho,\chi_\eps](\rho\pa_\rho-1)\bigr)\rho\bigr)\,\rho^{-3}\frac{d\rho}{\rho} \\
      &\qquad = 4\pi c_{(0)} \int_0^\infty \bigl(-[\rho\pa_\rho,\chi_\eps]\bigr)\frac{d\rho}{\rho} \\
      &\qquad = -4\pi c_{(0)}.
  \end{align*}
  upon substituting $x=\rho/\eps$. Plugging this into~\eqref{EqPL0invLot} gives $c_{(0)}=(4\pi)^{-1}\la f,u_{(0)}^*\ra$.

  We sharpen the asymptotics of $u$ further by plugging the partial expansion~\eqref{EqPL0invAsy0} into equation~\eqref{EqPL0invL0}: using $L_0\rho\equiv 0$ and the explicit expression for $L_1$ in~\eqref{EqABoxPieces}, we obtain
  \[
    L_0\tilde u_0 = \rho^{-2}f - \rho L_1(c_{(0)}\rho) - (\rho L_1\tilde u_0 + \tilde L u) \in -2\bhm\rho^2 c_{(0)} + \cA^{2+\alpha},
  \]
  with the a priori information $\tilde u_0\in\cA^{2-}$. Localizing near $\pa X$ and using the Mellin transform as before, we now get a contribution to $\tilde u_0$ from the pole of $(\wh{L_0}(\xi)|_{\cY_1})^{-1}$ at $\xi=-2 i$, and an additional contribution from the single pole of $-2\bhm c_{(0)}\wh{\rho^2\chi_\pa}(\xi)$ at $\xi=-2 i$ (where $\wh{L_0}(\xi)$ acting on $\cY_0$ does \emph{not} have a pole). More directly, we can solve away $-2\bhm\rho^2 c_{(0)}$ by hand using $L_0(\bhm c_{(0)}\rho^2) = -2\bhm c_{(0)}\rho^2$, thus
  \[
    L_0 \tilde u_0' \in \cA^{2+\alpha},\qquad \tilde u_0':=\tilde u_0-\bhm c_{(0)}\rho^2;
  \]
  and then $\tilde u_0'=\rho^2 Y_{(1)}+\tilde u$ for some $Y_{(1)}\in\cY_1$ and $\tilde u\in\cA^{2+\alpha-}$. The proof is complete.
\end{proof}

Denote the output of Lemma~\ref{LemmaPL0inv4} by
\begin{equation}
\label{EqPu0}
  u_0 := \wh\Box(0)^{-1}f = c_{(0)}\rho+\bhm c_{(0)}\rho^2+\rho^2 Y_{(1)} + \tilde u.
\end{equation}
By~\eqref{EqASBoxFam} and in the notation of Lemma~\ref{LemmaABox}, we have
\begin{equation}
\label{EqPBoxDiff}
  {-}\sigma^{-1}\bigl(\wh\Box(\sigma)-\wh\Box(0)\bigr)= -2 i\rho(Q_0+\tilde Q) + g^{0 0}\sigma;
\end{equation}
and therefore, since $Q_0\rho=0$,
\begin{equation}
\label{EqPf1}
  f_1(\sigma) := -\sigma^{-1}\bigl(\wh\Box(\sigma)-\wh\Box(0)\bigr)u_0 = -2 i\bhm c_{(0)}\rho^3-2 i\rho^3 Y_{(1)} + \tilde f_1(\sigma)
\end{equation}
where $\tilde f_1(\sigma)\in\cA^{3+\alpha-}(X)+\sigma\cA^3(X)$.

%%%%%%%%%%%%%%%%%%%%%%%%%%%%%%%%%%%%%%%%%%%%%%%%%%
\subsubsection{Second iteration; model problem and logarithmic singularity}
\label{SssP2}

The calculation~\eqref{EqPRewrite1} can now iterated: with $f_1(\sigma)$ given by~\eqref{EqPf1}, we have
\begin{equation}
\label{EqPExp30}
\begin{split}
  \wh\Box(\sigma)^{-1}f &= u_0 + \sigma\wh\Box(\sigma)^{-1}f_1(\sigma) \\
    &= u_0 + \sigma\wh\Box(0)^{-1}f_1(\sigma) - \sigma\wh\Box(\sigma)^{-1}(\wh\Box(\sigma)-\wh\Box(0))\wh\Box(0)^{-1}f_1(\sigma).
\end{split}
\end{equation}
Let us define
\begin{equation}
\label{EqPu1}
  u_1 := \wh\Box(0)^{-1}f_1(0)\in \cA^{1-}(X).
\end{equation}

\begin{lemma}
\label{LemmaPL0inv3}
  We have $u_1=2 i\bhm c_{(0)}\rho\log\rho+\tilde u_1$ with $\tilde u_1\in\cA^{((1,0),1+\alpha-)}(X)$.
\end{lemma}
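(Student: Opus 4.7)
The plan is to follow the outline of Lemma~\ref{LemmaPL0inv4} very closely: extract the a priori regularity of $u_1$, reduce to the normal operator $L_0$, and use a Mellin contour shift to identify the leading asymptotic terms, with the crucial new feature being a resonance that produces the $\rho\log\rho$ term.

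First, since $f_1(0)\in\cA^3(X)$ by~\eqref{EqPf1}, the mapping property $\wh\Box(0)^{-1}\colon\cA^{2+\beta}(X)\to\cA^{\beta-}(X)$ for $\beta\in(0,1)$ from~\eqref{EqAZeroMapping} applied with $\beta$ arbitrarily close to $1$ gives the a priori regularity $u_1\in\cA^{1-}(X)$. Using Lemma~\ref{LemmaABox} and this regularity exactly as in the beginning of the proof of Lemma~\ref{LemmaPL0inv4}, the equation $\wh\Box(0)u_1=f_1(0)$ rearranges to
\[
  L_0 u_1 = \rho^{-2}f_1(0) - (\rho L_1+\tilde L)u_1 = -2 i\bhm c_{(0)}\rho - 2 i\rho Y_{(1)} + h, \qquad h\in\cA^{1+\alpha-}(X),
\]
since $(\rho L_1+\tilde L)u_1\in\cA^{2-}$ and $\rho^{-2}\tilde f_1(0)\in\cA^{1+\alpha-}$.

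The key algebraic observation that explains the logarithm is the direct computation, using $\rho\pa_\rho(\rho\log\rho)=\rho+\rho\log\rho$ and $(\rho\pa_\rho)^2(\rho\log\rho)=2\rho+\rho\log\rho$, that
\[
  L_0(\rho\log\rho) = -\rho, \qquad\text{hence}\qquad L_0\bigl(2 i\bhm c_{(0)}\rho\log\rho\bigr) = -2 i\bhm c_{(0)}\rho,
\]
matching the spherically symmetric leading piece of the right hand side. Equivalently, on the Mellin transform side, $\wh{L_0}(\xi)|_{\cY_0}=-(i\xi)^2+i\xi$ vanishes simply at $\xi=-i$, while the Mellin transform of $-2 i\bhm c_{(0)}\chi_\pa\rho$ also has a simple pole at $\xi=-i$, so inverting $\wh{L_0}(\xi)|_{\cY_0}$ produces a double pole, whose residue under the inverse Mellin transform is a $\rho\log\rho$ term. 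By contrast, the $\cY_1$-forcing $-2 i\rho Y_{(1)}$ is non-resonant: $\wh{L_0}(\xi)|_{\cY_1}=-(i\xi)^2+i\xi+2$ equals $2\neq 0$ at $\xi=-i$, so it contributes only a constant multiple of $\rho Y_{(1)}$ (no log).

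To make this rigorous I would localize by setting $u_{1,\pa}:=\chi_\pa u_1\in\cA^{1-}$ for a cutoff $\chi_\pa$ identically $1$ near $\pa X$, note that the localization error $[L_0,\chi_\pa]u_1$ is supported away from $\pa X$ and hence Schwartz on $X^\circ$, and apply the Mellin transform in $\rho$ exactly as in the proof of Lemma~\ref{LemmaPL0inv4}. Shifting the contour from $\Im\xi=-1+\eps$ down to $\Im\xi=-(1+\alpha)+\eps$, the residue theorem picks up contributions only at $\xi=-i$: a double pole on $\cY_0$ (producing $2 i\bhm c_{(0)}\rho\log\rho$ together with a constant multiple of $\rho$), and a simple pole on $\cY_1$ (producing a multiple of $\rho Y_{(1)}$). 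No other poles of $\wh{L_0}(\xi)^{-1}$ lie in the shifted region, since the zeros of $\wh{L_0}(\xi)|_{\cY_l}$ occur at $\xi=i l$ (in $\Im\xi>0$, hence irrelevant) and at $\xi=-i(l+1)$ (with $\Im\xi\leq-2$ for $l\geq 1$, hence outside the shifted region for $\alpha<1$). This yields $u_1=2 i\bhm c_{(0)}\rho\log\rho+\tilde u_1$ with $\tilde u_1\in\cA^{((1,0),1+\alpha-)}(X)$, as claimed.

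The main delicate point is confirming the sign and coefficient $2 i\bhm c_{(0)}$ of the $\rho\log\rho$ term through the Mellin residue bookkeeping; this is most cleanly done by the direct verification $L_0(\rho\log\rho)=-\rho$ highlighted above, which also serves as a consistency check on the Mellin computation.
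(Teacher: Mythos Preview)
Your proof is correct and follows essentially the same approach as the paper's: reduce to the normal operator equation $L_0 u_1 = -2 i\bhm c_{(0)}\rho - 2 i\rho Y_{(1)} + \cA^{1+\alpha-}$, then use the Mellin transform and a contour shift through $\xi=-i$, where the double pole on $\cY_0$ (resonant forcing) produces the $\rho\log\rho$ term while the simple pole on $\cY_1$ (non-resonant forcing) produces only a $\rho Y_{(1)}$ term. Your explicit verification $L_0(\rho\log\rho)=-\rho$ is exactly the concrete check the paper performs when it writes $L_0\tilde u_1'\in\cA^{1+\alpha-}$ for $\tilde u_1'=u_1-2 i\bhm c_{(0)}\rho\log\rho+i\rho Y_{(1)}$.
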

\begin{proof}
  Writing $L_0 u_1\in\rho^{-2}f_1(0) + \cA^{2-}=-2 i\bhm c_{(0)}\rho-2 i\rho Y_{(1)}+\cA^{1+\alpha-}$ and passing to the Mellin transform, the logarithmic term of $u_1$ arises from a double pole at $\xi=-i$ due to \begin{enumerate*} \item the simple pole of $\wh{L_0}(\xi)$ acting on spherically symmetric functions and \item the simple pole of $\wh{\rho\chi_\pa}(\xi)$.\end{enumerate*} On the other hand, $\rho Y_{(1)}$ does not create a logarithmic term since $(\wh{L_0}(\xi)|_{\cY_1})^{-1}$ does not have a pole at $\xi=-i$. Concretely, we have
  \[
    L_0 \tilde u_1' \in \cA^{1+\alpha-},\qquad \tilde u_1' := u_1 - 2 i\bhm c_{(0)}\rho\log\rho + i\rho Y_{(1)}.
  \]
  By a normal operator argument as in the proof of Lemma~\ref{LemmaPL0inv4}, we conclude that $u_1'\in\rho Y_{(1)}'+\cA^{1+\alpha-}$ for some $Y_{(1)}'\in\cY_1$.
\end{proof}

We plug this into the expansion~\eqref{EqPExp30}. If we let $u_1'=\wh\Box(0)^{-1}(f_1')\in\cA^{1-}(X)$ where $f_1'=\sigma^{-1}(f_1(\sigma)-f_1(0))\in\cA^3(X)$, we have
\begin{equation}
\label{EqPExp3}
\begin{split}
  \wh\Box(\sigma)^{-1}f &= u_0 + \sigma u_1 + \sigma^2 u_1' - \sigma\wh\Box(\sigma)^{-1}\bigl(\wh\Box(\sigma)-\wh\Box(0)\bigr)u_1 \\
    &\qquad - \sigma^2\wh\Box(\sigma)^{-1}\bigl(\wh\Box(\sigma)-\wh\Box(0)\bigr)u_1'.
\end{split}
\end{equation}

For the fourth term on the right, which is the main term at this step, we compute
\begin{equation}
\label{EqPf2}
\begin{split}
  f_2(\sigma) &:= -\sigma^{-1}\bigl(\wh\Box(\sigma)-\wh\Box(0)\bigr)u_1 = f_{2,0} + \tilde f_2(\sigma), \\
  &\qquad
  f_{2,0}=4 \bhm c_{(0)}\rho^2,\quad
  \tilde f_2(\sigma) \in \cA^{2+\alpha-} + \sigma\cA^{3-},
\end{split}
\end{equation}
using~\eqref{EqPBoxDiff} again; note here that $\rho Q_0\tilde u_1\in\cA^{2+\alpha-}$ due to $Q_0\rho=0$. Therefore, we can apply Proposition~\ref{PropARhoSq} to $f_{2,0}=4\bhm c_{(0)}\rho^2$ to deduce
\begin{equation}
\label{EqPBoxInvf2}
  \wh\Box(\sigma)^{-1}f_{2,0} \in \cA^{1-,((0,0),1-),((0,1),1-)}(X^+_{\rm res}),
\end{equation}
with leading order term at $\zface$ equal to $-(\log\frac{\sigma}{\rho})4\bhm c_{(0)}u_{(0)}$, and leading order term at $\tface$ equal to $4\bhm c_{(0)}\tilde u^{(2)}$ (in the notation of the proposition); these are our main terms.

The remaining terms are error terms: Proposition~\ref{PropALoSmall} and Lemma~\ref{LemmaALoLargeInput} give, a fortiori,
\begin{gather}
\label{EqPBoxInvf20}
  \sigma^2\wh\Box(\sigma)^{-1}\tilde f_2(0) \in \sigma^2\cA^{\alpha-,\alpha-,((0,0),\alpha-)}(X^+_{\rm res}) = \cA^{\alpha-,2+\alpha-,((2,0),2+\alpha-)}(X^+_{\rm res}), \\
\label{EqPBoxtildef2}
  \sigma^2\wh\Box(\sigma)^{-1}(\tilde f_2(\sigma)-\tilde f_2(0)) \in \sigma^3\cA^{1-,0-,0-}(X^+_{\rm res}) = \cA^{1-,3-,3-}(X^+_{\rm res});
\end{gather}
in the last term in~\eqref{EqPExp3} finally, $\sigma^2\wh\Box(\sigma)^{-1}$ acts on an element of $\sigma\cA^{2-}+\sigma^2\cA^{3-}$, hence lies in the space~\eqref{EqPBoxtildef2} as well.

In summary, the expansion~\eqref{EqPExp3}, with $u_0$ and $u_1$ given by~\eqref{EqPu0} and~\eqref{EqPu1}, and using~\eqref{EqPf2}--\eqref{EqPBoxInvf2}, gives
\begin{align*}
  \wh\Box(\sigma)^{-1}f & \in \cA^{((1,0),2+\alpha-)}(X) + \sigma \cA^{((1,1),1+\alpha-)}(X) + \sigma^2\cA^{1-}(X) \\
    &\qquad + \sigma^2\cA^{1-,((0,0),1-),((0,1),1-)}(X^+_{\rm res}) + \cA^{\alpha-,2+\alpha-,((2,0),2+\alpha-)}(X^+_{\rm res}).
\end{align*}
Combining terms proves Theorem~\ref{ThmPRes}.

%%%%%%%%%%%%%%%%%%%%%%%%%%%%%%%%%%%%%%%%%%%%%%%%%%
\subsection{Asymptotic behavior of waves}
\label{SsPW}

For simplicity, we first restrict our attention to the long time asymptotics in compact spatial sets in~\S\ref{SssPWCpt} before describing the global asymptotics in~\S\ref{SssPWGlobal}.

%%%%%%%%%%%%%%%%%%%%%%%%%%%%%%
\subsubsection{Asymptotics in spatially compact sets}
\label{SssPWCpt}

\begin{thm}
\label{ThmPW}
  Let $\alpha\in(0,1)$, and let $f=f(t_*,x)\in\CIc(\R_{t_*};\cA^{4+\alpha}(X))$. Then the unique forward solution $\phi$ of $\Box\phi=f$ satisfies
  \begin{equation}
  \label{EqPWConst}
    |\phi(t_*,x) - c_M(f)u_{(0)}t_*^{-3}| \leq C t_*^{-3-\alpha+},\qquad
    c_M(f)=-\frac{2\bhm}{\pi}\int_{\R_{t_*}\times X} f(t_*,x)u_{(0)}^*(x)\,|d g|.
  \end{equation}
  for $x\in X$ lying in a fixed compact set $K\Subset X^\circ$. Derivatives of $\phi$ decay accordingly to
  \begin{equation}
  \label{EqPWDecay}
    |\pa_{t_*}^j\pa_x^\beta(\phi-c_M(f)u_{(0)}t_*^{-3})|\leq C_{j\beta} t_*^{-3-\alpha-j+}\qquad \forall\ j\in\N_0,\ \beta\in\N_0^3.
  \end{equation}
\end{thm}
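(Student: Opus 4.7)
The plan is to reduce the statement to the spectral information of Theorem~\ref{ThmPRes} via the Fourier transform in $t_*$ and to extract the leading $t_*^{-3}$ behavior by an explicit contour computation. Since $f$ is compactly supported in $t_*$, Paley--Wiener gives $\hat f(\sigma,\cdot) = \int e^{i\sigma t_*}f(t_*,\cdot)\,dt_*$ entire in $\sigma$ with values in $\cA^{4+\alpha}(X)$ and Schwartz-type decay on any horizontal line. Starting from the high-contour representation
\[
  \phi(t_*,x) = \frac{1}{2\pi}\int_{\Im\sigma=C}e^{-i\sigma t_*}\bigl(\wh\Box(\sigma)^{-1}\hat f(\sigma,\cdot)\bigr)(x)\,d\sigma,\quad C\gg 1,
\]
I shift the contour down to $\R$. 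The shift is licensed by mode stability (item~\eqref{ItASAssmMS} of Definition~\ref{DefASAssm}), which rules out poles in the closed upper half plane; by the uniform control down to $\sigma=0$ from Proposition~\ref{PropALoSmall}; and by the polynomial high-energy bounds of item~\eqref{ItASAssmHi} of Definition~\ref{DefASAssm} together with Lemma~\ref{LemmaASHiReg}, which ensure that the boundary terms at $|\Re\sigma|\to\infty$ vanish against the Schwartz decay of $\hat f$.

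Next I pick $\chi\in\CIc((-\sigma_0,\sigma_0))$ with $\chi\equiv 1$ near $0$ (for $\sigma_0$ small as in Theorem~\ref{ThmALo}) and split $\phi = \phi_{\rm lo} + \phi_{\rm hi}$ according to the integrands $\chi\wh\Box(\sigma)^{-1}\hat f$ and $(1-\chi)\wh\Box(\sigma)^{-1}\hat f$. For $\phi_{\rm hi}$, repeated integration by parts in $\sigma$ converts each factor of $t_*$ into a $\pa_\sigma$-derivative; combining the high-energy estimate of Lemma~\ref{LemmaASHiReg} and the medium-frequency estimate~\eqref{EqASAssmMed}, and using that all $\sigma$-derivatives of $\hat f$ remain Schwartz in $\sigma$ with values in a fixed weighted b-Sobolev space, yields $\phi_{\rm hi}(t_*,x) = \cO(t_*^{-N})$ for every $N$ uniformly on $K$ (after Sobolev embedding to convert b-Sobolev norms into pointwise bounds on $K\Subset X^\circ$).

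For $\phi_{\rm lo}$, I Taylor expand the input as $\hat f(\sigma,x) = \hat f(0,x) + \sigma G(\sigma,x)$ with $\hat f(0,\cdot)\in\CIc(X^\circ)\subset\cA^{4+\alpha}(X)$ and $G\in\CI([0,\sigma_0);\CIc(X^\circ))$. To the first summand I apply Theorem~\ref{ThmPRes}, which writes $\wh\Box(\sigma)^{-1}\hat f(0,\cdot) = u_{\rm sing}(\sigma) + u_{\rm reg}(\sigma)$ with leading singular part $u_{\rm sing}(\sigma) = -\sigma^2\log(\sigma/\rho)\,c_X(\hat f(0,\cdot))\,u_{(0)} + \cdots$. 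Restricting to $K\Subset X^\circ$ where $\rho$ is bounded below, $\log(\sigma/\rho) = \log(\sigma+i0) - \log\rho$ separates into a distributional piece singular at $\sigma=0$ (which drives the leading asymptotics) and a piece smooth in $\sigma$ (which contributes $\cO(t_*^{-\infty})$ after cutoff and Fourier transform); moreover $u_{\rm reg}(\sigma)$ restricts to $\sigma^{2+\alpha-}$-conormal behavior in $\sigma$ with values smooth in $x\in K$. For the $\sigma G(\sigma,x)$ summand, the same scheme---whose arguments are linear and admit a direct $\sigma$-dependent extension---adds an extra $\sigma$ factor, so that the singular contribution at worst becomes $\sigma^3\log(\sigma+i0)$, which transforms to $\cO(t_*^{-4})$.

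It then remains to evaluate the inverse Fourier transform of the leading singular piece. Using $\pa_\sigma\log(\sigma+i0) = (\sigma+i0)^{-1}$ together with the identity $\frac{1}{2\pi}\int_\R e^{-i\sigma t}(\sigma+i0)^{-1}\,d\sigma = -iH(t)$, I obtain
\[
  \frac{1}{2\pi}\int_\R e^{-i\sigma t_*}\bigl(-\sigma^2\log(\sigma+i0)\bigr)\,d\sigma = -\frac{2}{t_*^3},\quad t_*>0,
\]
modulo distributions supported at $t_* = 0$; multiplication by $c_X(\hat f(0,\cdot))\,u_{(0)}(x) = \frac{\bhm}{\pi}\bigl(\int f\,u_{(0)}^*\,|d g|\bigr)u_{(0)}(x)$ then produces exactly $c_M(f)\,u_{(0)}(x)\,t_*^{-3}$. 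All other contributions---from $\log\rho$, from $u_{\rm reg}$, from the $\sigma G$ summand, and from $\phi_{\rm hi}$---contribute only $\cO(t_*^{-3-\alpha+})$, furnishing the error in~\eqref{EqPWConst}. For the derivative bounds~\eqref{EqPWDecay}, $\pa_{t_*}^j$ acts in the integral by multiplication by $(-i\sigma)^j$, promoting the $\sigma^2\log$ singularity to $\sigma^{2+j}\log$ and thereby improving the error by $t_*^{-j}$ while reproducing the $t_*$-derivative of the leading term; $\pa_x^\beta$ passes directly to the smooth $x$-dependent coefficients on $K$. The main obstacle I anticipate is the bookkeeping needed to justify the contour shift in full generality and to extend Theorem~\ref{ThmPRes} to $\sigma$-dependent inputs (and to track remainder spaces carefully at compact spatial sets); the identification of the $t_*^{-3}$ coefficient itself is then the explicit Fourier computation above matched with the explicit formula for $c_X$ from Theorem~\ref{ThmPRes}.
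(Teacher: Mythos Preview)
Your approach is essentially that of the paper: Fourier representation, contour shift to the real axis, low/high frequency split, application of Theorem~\ref{ThmPRes} to the $\sigma$-independent part of $\hat f$, and explicit inverse Fourier transform of $\sigma^2\log(\sigma+i0)$. Two points deserve attention.

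First, a minor slip: you write $\hat f(0,\cdot)\in\CIc(X^\circ)$, but $f$ is only assumed compactly supported in $t_*$, so $\hat f(0,\cdot)\in\cA^{4+\alpha}(X)$; this is harmless since Theorem~\ref{ThmPRes} applies to precisely such inputs.

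Second, a genuine gap: you assert $\log(\sigma/\rho)=\log(\sigma+i0)-\log\rho$ without justification. Theorem~\ref{ThmPRes} is formulated for positive frequencies $\sigma>0$, where $\log\sigma$ is real and no $+i0$ prescription arises. To integrate over all of $\R_\sigma$ you must determine how the expansions for $\sigma>0$ and $\sigma<0$ glue together at $\sigma=0$. The paper does this by combining the reality relation $\wh\Box(\sigma)^{-1}f=\overline{\wh\Box(-\sigma)^{-1}\bar f}$ with the explicit value $\tfrac{i\pi}{2}$ of the imaginary part of the constant ($\hat r^0$) term in $u_{\rm sing}$, computed in Remark~\ref{RmkAModelExpl}: one then checks that $\overline{\log(-\sigma)-\tfrac{i\pi}{2}}=\log(\sigma+i0)-\tfrac{i\pi}{2}$ for $\sigma<0$, so the two half-line expansions combine to a $\sigma^2\log(\sigma+i0)$ singularity (the constant $-\tfrac{i\pi}{2}$ being absorbed into the smooth part). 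Alternatively, as noted in Remark~\ref{RmkPWConst}, the $+i0$ prescription is forced a posteriori by causality: any other choice would produce a contribution not rapidly decaying as $t_*\to-\infty$, contradicting the forward support of $\phi$. Without one of these arguments, your identification of the singularity---and hence of the leading coefficient $c_M(f)$---is not justified.
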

\begin{proof}
  Using the Fourier transform $\hat\phi(\sigma,x)=\int_\R e^{i\sigma t_*}\phi(t_*,x)\,d t_*$, we express $\phi$ as
  \[
    \phi(t_*,x) = \cF^{-1}\bigl(\wh\Box(\sigma)^{-1}\hat f\bigr) = \frac{1}{2\pi} \int_\R e^{-i\sigma t_*}\wh\Box(\sigma)^{-1}\hat f(\sigma,x)\,d\sigma.
  \]
  The fact that this gives the (unique) \emph{forward} solution follows from the Paley--Wiener theorem upon deforming the integration contour to $i C+\R$ and letting $C\to\infty$; this uses the mode stability assumption~\eqref{ItASAssmMS} in Definition~\ref{DefASAssm}.
  
  Let $\chi\in\CIc((-1,1))$ be identically $1$ near $0$; we then split $\phi=\phi_0+\phi_1+\phi_2$,
  \begin{equation}
  \label{EqPWSplit}
  \begin{split}
    \wh{\phi_0}(\sigma) &:= \chi(\sigma)\wh\Box(\sigma)^{-1}\hat f(0), \\
    \wh{\phi_1}(\sigma) &:= \sigma\chi(\sigma)\wh\Box(\sigma)^{-1}\bigl(\sigma^{-1}(\hat f(\sigma)-\hat f(0))\bigr), \\
    \wh{\phi_2}(\sigma) &:= (1-\chi(\sigma))\wh\Box(\sigma)^{-1}\hat f(\sigma).
  \end{split}
  \end{equation}
  The high energy piece $\phi_2$ has strong decay: since we are not counting derivatives, we simply observe that by Lemma~\ref{LemmaASHiReg}, $\wh{\phi_2}(\sigma)$ is Schwartz in $\sigma$ with values in $\cA^{1-}(X)$. Therefore,
  \begin{equation}
  \label{EqPWu1}
    \phi_2(t_*,x)\in\sS(\R_{t_*};\cA^{1-}(X)).
  \end{equation}

  Consider next the low energy piece $\phi_0$. Since we are restricting to $\rho>0$, Theorem~\ref{ThmPRes} shows that $\phi_0$ is of class
  \begin{equation}
  \label{EqPWExpPos}
    \CI([0,1)_\sigma;\CI(X^\circ)) + \cA^{2+\alpha-}([0,1)_\sigma;\CI(X^\circ)) - c_X(f)\sigma^2\bigl((\log(\sigma)-\tfrac{i\pi}{2})u_{(0)}+ u'\bigr)
  \end{equation}
  for some \emph{real-valued} $u'\in\CI(X^\circ)$ (from the $\cO(\hat r^0)$ term of Proposition~\usref{PropARhoSq}); the constant $\frac{i\pi}{2}$ comes from Remark~\ref{RmkAModelExpl} (see also Remark~\ref{RmkPWConst} below). An inspection of the explicit expansion~\eqref{EqPExp3} as well as of the regular piece~\eqref{EqPBoxInvf20} show that the smooth pieces for $\sigma>0$ and for $\sigma<0$ fit together in a smooth fashion at $\sigma=0$. (See~\S\ref{SssPWGlobal} for a `resolved space' picture of this.) For real $\sigma$, we have $\wh\Box(\sigma)^{-1}f=\overline{\wh\Box(-\sigma)^{-1}\bar f}$; but note that for $\sigma<0$, and with the branch cut of $\log$ along $-i[0,\infty)$,
  \[
    \ol{\log(-\sigma)-\tfrac{i\pi}{2}} = \log(\sigma+i 0)-\tfrac{i\pi}{2},
  \]
  hence the logarithmic terms from $\pm\sigma>0$ combine to a $\sigma^2(\log(\sigma+i 0)-\frac{i\pi}{2})$ term. Absorbing the constant $-\frac{i\pi}{2}$ into the smooth parts, we thus obtain
  \begin{equation}
  \label{EqPWLo}
    \wh{\phi_0}(\sigma) \in \CIc((-1,1);\CI(X^\circ)) - c_X(f)u_{(0)}\sigma^2\log(\sigma+i 0) + \sum_\pm \cA^{2+\alpha-}(\pm[0,1);\CI(X^\circ)).
  \end{equation}
  The term $\wh{\phi_1}(\sigma)$ has an extra factor of $\sigma$, hence lies in the remainder space in~\eqref{EqPWLo}.

  Now, the smooth first term in~\eqref{EqPWLo} has rapidly vanishing (as $t_*\to\infty$, in compact subsets of $X^\circ$) inverse Fourier transform. Either of the conormal terms has Fourier transform which are bounded by $t_*^{-3-\alpha+}$ together with all their derivatives along $t_*\pa_{t_*}$ and $\pa_x$; see Lemma~\ref{LemmaPWConormal} below. The main contribution to $\phi_0$ and thus to $\phi$ comes from the logarithmic singularity.
  
  Recall then that the inverse Fourier transform of $(\sigma+i 0)^z$ (with the sign convention used to pass to the spectral family) is $\cF^{-1}\bigl((\sigma+i 0)^z\bigr)=e^{i\pi z/2}\chi_+^{-z-1}(t_*)$, where $\chi_+^z(t_*)$ is the holomorphic continuation, from $\Re z>-1$ to $z\in\C$, of $(t_*)_+^z/\Gamma(z+1)$ with $(t_*)_+=\max(t_*,0)$. For integer $z=k\geq 0$, $\chi_+^{-k-1}(t_*)=\pa_{t_*}^{k+1}\chi_+^0(t_*)=\delta^{(k)}(t_*)$ is supported at $t_*=0$; therefore, in $t_*>0$,
  \begin{equation}
  \label{EqPWFT}
    \cF^{-1}\bigl(\sigma^k\log(\sigma+i 0)\bigr) = \pa_z\cF^{-1}\bigl((\sigma+i 0)^z\bigr)|_{z=k} = -e^{i\pi k/2} \pa_z\bigl(\chi_+^z(t_*)\bigr)|_{z=-k-1}.
  \end{equation}
  To evaluate this, we note that in $t_*>0$,
  \begin{align*}
    \pa_z\chi_+^z(t_*)|_{z=-k-1} &= \pa_z\Bigl(\frac{(z+1)\cdots(z+k+1)}{\Gamma(z+k+2)}t_*^z\Bigr)|_{z=-k-1} \\
    & \qquad = \frac{(z+1)\cdots(z+k)}{\Gamma(z+k+2)}\Bigr|_{z=-k-1} t_*^{-k-1}
    = (-1)^k k! t_*^{-k-1}.
  \end{align*}
  In summary,
  \begin{equation}
  \label{EqPWi0}
    \cF^{-1}(\sigma^2\log(\sigma+i 0))=2 t_*^{-3},\qquad t_*>0,
  \end{equation}
  and this vanishes in $t_*<0$. The logarithmic term in~\eqref{EqPWLo} thus gives the $t_*^{-3}$ leading order term with the stated constant. This proves~\eqref{EqPWDecay}.
\end{proof}

\begin{rmk}
\label{RmkPWConst}
  The constant arising in~\eqref{EqPWExpPos} is \emph{necessarily} $\frac{i\pi}{2}$ by causality considerations. Indeed, in the derivation of the asymptotics above, only the imaginary component of this constant matters; let us write it as $i(\frac{\pi}{2}+c)$ for some $c\in\R$. The asymptotics of $\phi$ would then have an additional term $c_X(f)u_{(0)}\cF^{-1}(i c(\sigma_+^2 - \sigma_-^2))=-c_X(f)u_{(0)}c\pi^{-1}((t_*-i 0)^{-3}+(t_*+i 0)^{-3})$; this would be the \emph{only} contribution to $\phi$ which is not rapidly decaying as $t_*\to-\infty$. But since $\phi$ vanishes for large negative $t_*$, this is impossible unless $c=0$.
\end{rmk}

In the proof, we used the following standard result on inverse Fourier transforms of conormal distributions, whose proof we include for completeness:
\begin{lemma}
\label{LemmaPWConormal}
  Let $\beta>-1$. Let $\phi\in\sS'(\R_{t_*})$ with $\supp\hat\phi\subset[0,1)$ and $\hat\phi(\sigma)\in\cA^\beta([0,1)_\sigma)$. Then $\phi\in\CI(\R)$ is a symbol of order $-1-\beta$, i.e.\ $|(t_*\pa_{t_*})^j\phi(t_*)|\leq C_j\la t_*\ra^{-1-\beta}$ for all $j\in\N_0$.
\end{lemma}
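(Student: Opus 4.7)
The smoothness of $\phi$ on $\mathbb{R}$ follows from the compact support of $\hat\phi$ in $[0,1)$ via Paley--Wiener: $\phi$ extends to an entire function. So the content of the statement is the symbolic decay. The plan is to reduce everything to the base bound $|\phi(t_*)|\lesssim\langle t_*\rangle^{-1-\beta}$ and prove it by dyadic decomposition near $\sigma=0$.

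First I would observe that the b-derivative $\sigma\pa_\sigma$ preserves the space $\cA^\beta([0,1)_\sigma)$ by definition, and that under our Fourier convention one has $\widehat{t_*\pa_{t_*}\phi}=-(\sigma\pa_\sigma+1)\hat\phi$; the right-hand side is again a compactly supported element of $\cA^\beta([0,1))$ (possibly after absorbing smooth cutoffs near $\sigma=1$, which only contribute Schwartz terms to the inverse Fourier transform). Iterating, it suffices to establish $|\phi(t_*)|\leq C\langle t_*\rangle^{-1-\beta}$ for every $\phi$ satisfying the hypotheses, with $C$ depending only on conormal seminorms of $\hat\phi$.

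For $|t_*|\leq 1$ this is immediate from the continuity of $\phi$ together with the bound $\|\phi\|_{L^\infty}\lesssim\|\hat\phi\|_{L^1}$, which is finite because $\beta>-1$. For $|t_*|\geq 1$, fix $\chi\in\CIc((\tfrac12,2))$ with $\sum_{k\geq 0}\chi(2^k\sigma)\equiv 1$ on $(0,\tfrac12)$ and decompose $\hat\phi=\hat\phi_\infty+\sum_{k\geq 0}\hat\phi_k$, where $\hat\phi_\infty\in\CIc((0,1))$ is supported away from the singular point and $\hat\phi_k(\sigma):=\chi(2^k\sigma)\hat\phi(\sigma)$ is supported in $[2^{-k-1},2^{-k+1}]$. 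The term $\hat\phi_\infty$ produces a Schwartz contribution and can be ignored. Conormality of $\hat\phi$ gives uniform estimates $|\pa_\sigma^N\hat\phi_k(\sigma)|\leq C_N\,2^{k(N-\beta)}$ on the support of $\hat\phi_k$ for every $N\in\N_0$.

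Standard nonstationary phase (i.e.\ integrating by parts $N$ times using $\pa_\sigma e^{-i\sigma t_*}=-it_*e^{-i\sigma t_*}$) then yields
\[
  |\phi_k(t_*)|\leq C_N\,2^{-k(1+\beta)}\bigl(1+2^{-k}|t_*|\bigr)^{-N}.
\]
Summing over $k$, split the sum at $k_0$ defined by $2^{k_0}\sim|t_*|$: for $k\leq k_0$ the factor $(1+2^{-k}|t_*|)^{-N}\lesssim (2^{-k}|t_*|)^{-N}$ produces $\sum 2^{k(N-1-\beta)}|t_*|^{-N}\lesssim|t_*|^{-1-\beta}$ (choosing $N>1+\beta$), while for $k\geq k_0$ one simply sums the geometric series $\sum 2^{-k(1+\beta)}\lesssim|t_*|^{-1-\beta}$. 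Combining these gives the desired bound on $\phi$, and applying the same argument to $(t_*\pa_{t_*})^j\phi$ (whose Fourier transform satisfies the same hypotheses) finishes the proof. The only mildly delicate point is verifying uniformity of the conormal seminorms under the map $\hat\phi\mapsto-(\sigma\pa_\sigma+1)\hat\phi$, which is routine since $\sigma\pa_\sigma\colon\cA^\beta\to\cA^\beta$ is continuous.
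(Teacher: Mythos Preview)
Your proof is correct and follows the same standard argument as the paper: split the integral according to whether $\sigma t_*$ is small or large, use the conormal bound $|\hat\phi|\lesssim\sigma^\beta$ directly on the former, and integrate by parts on the latter. The only cosmetic difference is that the paper implements the split with a single cutoff $\chi(\sigma t_*)$ (giving two integrals $I$ and $II$) rather than a dyadic partition $\sum_k\chi(2^k\sigma)$; summing your dyadic pieces at $k_0\sim\log_2|t_*|$ reproduces exactly the paper's two regimes. Your explicit reduction of the $j\geq 1$ case to $j=0$ via $\widehat{t_*\pa_{t_*}\phi}=-(\sigma\pa_\sigma+1)\hat\phi$ is a nice touch that the paper leaves implicit.
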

\begin{proof}
  We work in $t_*>0$. Let $\chi\in\CIc([0,1))$ be $1$ near $0$. Then $2\pi\phi(t_*)=I+II$, where
  \[
    I = \int_0^1 e^{-i\sigma t_*}\chi(\sigma t_*)\hat\phi(\sigma)\,d\sigma, \qquad
    II = \int_0^1 e^{-i\sigma t_*}(1-\chi(\sigma t_*))\hat\phi(\sigma)\,d\sigma.
  \]
  We estimate $|I|\lesssim\int_0^{t_*^{-1}}|\sigma|^{-\beta}\,d\sigma\lesssim t_*^{-1-\beta}$. In $II$, we fix $k\in\N_0$, $k>\beta+1$, and write
  \[
    II = (i t_*)^{-k}\int_0^1 e^{-i\sigma t_*}\sigma^{-k}\cdot\sigma^k\pa_\sigma^k\bigl((1-\chi(\sigma t_*))\hat\phi(\sigma)\bigr)\,d\sigma.
  \]
  Expanding the derivative and substituting $\tilde\sigma=\sigma t_*$, we can estimate $|II|$ by the sum of two types of terms: the first arises from having all $\sigma$-derivatives fall on $\hat\phi$, giving
  \[
    t_*^{-k} t_*^{-1}\int_1^{t_*} (\tilde\sigma t_*^{-1})^{-k}(\tilde\sigma t_*^{-1})^\beta\,d\tilde\sigma \lesssim t_*^{-\beta-1};
  \]
  in the second type of terms, $j=1,\ldots,k$ derivatives fall on $\chi$, similarly giving
  \[
    t_*^{-k}\int \sigma^{-k}\cdot|(t_*\sigma)^j\chi^{(j)}(\sigma t_*)| |\sigma^{k-j}\pa_\sigma^{k-j}\hat\phi(\sigma)|\,d\sigma \lesssim t_*^{-k}t_*^{-1}\int_1^{t_*} (\tilde\sigma t_*^{-1})^{-k+\beta}\,d\tilde\sigma \lesssim t_*^{-\beta-1}.\qedhere
  \]
\end{proof}

%%%%%%%%%%%%%%%%%%%%%%%%%%%%%%
\subsubsection{Global asymptotics}
\label{SssPWGlobal}

To cleanly describe the asymptotic behavior of $\phi$ in Theorem~\ref{ThmPW} in the full forward light cone, we pass to a compactification of the spacetime manifold in $t_*>0$.

\begin{definition}
\label{DefPWComp}
  Let $X=\ol{\R^3}$ as in Definition~\ref{DefACompact}. Compactify the nonnegative half line via $[0,\infty]_{t_*}:=([0,\infty)_{t_*}\sqcup[0,\infty)_\tau)/\sim$ where $\sim$ identifies $t_*>0$ and $\tau=t_*^{-1}$. Then the compactification of the causal future of $t_*=0$ inside of $M^\circ=(\R_{t_*}\times X^\circ,g)$ is defined by
  \[
    M_+ := \bigl[ [0,\infty]_{t_*} \times X; \{\infty\}\times\pa X \bigr],
  \]
  which contains $[0,\infty)\times X^\circ\subset M^\circ$ as an open dense subset. Thus, $M_+$ has four boundary hypersurfaces:
  \begin{enumerate}
  \item the \emph{Cauchy surface} $\Sigma=t_*^{-1}(0)\cong X$;
  \item \emph{null infinity} $\scri^+$: the lift of $[0,\infty]_{t_*}\times\pa X$;
  \item \emph{Minkowski future timelike infinity} $I^+$: the front face;
  \item \emph{spatially compact future infinity} $\cK^+$: the lift of $\tau^{-1}(0)$.
  \end{enumerate}
\end{definition}

See Figure~\ref{FigPWCpt}. Here, $\Sigma$ is an interior hypersurface, whereas the remaining three are boundary hypersurfaces `at infinity'. Correspondingly, the correct notion of regularity at $\Sigma$ is smoothness in the usual sense:

\begin{definition}
\label{DefPWSpaces}
  Denoting by $\rho_{\scri^+},\rho_{I^+},\rho_{\cK^+}\in\CI(M_+)$ boundary defining functions of the respective boundary hypersurfaces, we define the space of conormal functions
  \[
    \cA^{\alpha,\beta,\gamma}(M_+) := \rho_{\scri^+}^\alpha\rho_{I^+}^\beta\rho_{\cK^+}^\gamma\cA^{0,0,0}(M_+)
  \]
  which are smooth at $\Sigma$. That is, $\phi\in\cA^{0,0,0}(M_+)$ if and only if $\phi$ remains bounded upon application of any finite number of smooth vector fields on $M_+$ which are tangent to $\scri^+$, $I^+$, and $\cK^+$ (but not necessarily at $\Sigma$). Function spaces with partial polyhomogeneous expansions at some of the boundary hypersurfaces are denoted $\cA^{(\cE,\alpha),\beta,\gamma}(M_+)$ etc.\ as in Definition~\ref{DefAPhg}.
\end{definition}

\begin{figure}[!ht]
\centering
\includegraphics{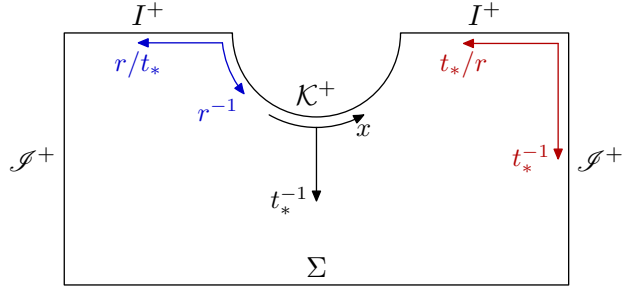}
\caption{The compactification $M_+$ from Definition~\ref{DefPWComp}, and some useful local coordinates: those in~\eqref{EqPWCompBdf1} in red, and those in~\eqref{EqPWCompBdf0} in blue. See also Figure~\ref{FigI2}.}
\label{FigPWCpt}
\end{figure}

Away from $\cK^+$,
\begin{equation}
\label{EqPWCompBdf1}
  \rho_{\scri^+,1} := \rho/\tau = \rho t_* = t_*/r,\quad
  \rho_{I^+,1} := \tau = t_*^{-1}
\end{equation}
are defining functions of $\scri^+$ and $I^+$, respectively; away from $\scri^+$ on the other hand,
\begin{equation}
\label{EqPWCompBdf0}
  \rho_{\cK^+,0} := \tau/\rho = (\rho t_*)^{-1} = r/t_*,\quad
  \rho_{I^+,0} := \rho = r^{-1}
\end{equation}
are defining functions of $\cK^+$ and $I^+$, respectively. In particular,
\begin{equation}
\label{EqPWIplus}
  I^+=[0,\infty]_{\rho_{\scri^+,1}} \times \pa X,
\end{equation}
with $I^+\cap\rho_{\scri^+,1}^{-1}(0)$ being the future boundary of $\scri^+$.

\begin{thm}
\label{ThmPWGlobal}
  Let $\alpha\in(0,1)$, $f\in\CIc((0,\infty)_{t_*};\cA^{4+\alpha}(X))$, and denote by $\phi$ the unique forward solution of $\Box\phi=f$. Then
  \[
    \phi \in \cA^{((1,0),3+\alpha),((3,0),3+\alpha-),((3,0),3+\alpha-)}(M_+).
  \]
  Setting $c_M(f)=-\frac{2\bhm}{\pi}\la f,u_{(0)}^*\ra$, the leading order terms of $\phi$ are:
  \begin{enumerate}
  \item\label{ItPWGlobalK} at $\cK^+$: $c_M(f)u_{(0)}t_*^{-3}$, with $c_M(f)$ given by~\eqref{EqPWConst};
  \item\label{ItPWGlobalI} at $I^+$: $c_M(f)u^+(\rho t_*)t_*^{-3}$, where
  \[
    u^+(v):=\frac{v(v+1)}{(v+2)^2}.
  \]
  \item\label{ItPWGlobalScri} at $\scri^+$: $\rho_{\scri^+,1}\phi_{\rm rad}(t_*,\omega)$, where\footnote{The exponent refers to decay at $\scri^+\cap I^+$, as measured by inverse powers of $t_*$.} $\phi_{\rm rad}\in\cA^{((3,0),3+\alpha-)}(\scri^+)$ satisfies
  \[
    \phi_{\rm rad}(t_*,\omega)-\tfrac14 c_M(f)t_*^{-3}\in\cA^{3+\alpha-}(\scri^+).
  \]
  \end{enumerate}
\end{thm}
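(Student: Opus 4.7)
The approach extends the proof of Theorem~\ref{ThmPW} from asymptotics at $\cK^+$ to the full compactification $M_+$, exploiting a natural Fourier-transform duality: under $\cF^{-1}_{\sigma\to t_*}$, the three boundary hypersurfaces $\zface,\tface,\bface\subset X^+_{\rm res}$ correspond to $\cK^+,I^+,\scri^+\subset M_+$ respectively, via the identifications $\sigma\leftrightarrow t_*^{-1}$, $\sigma r\leftrightarrow t_*/r=:v$, and $\rho\leftrightarrow r^{-1}$. As in Theorem~\ref{ThmPW}, I split $\phi=\phi_{\rm hi}+\phi_{\rm lo}$ via a frequency cutoff; by Lemma~\ref{LemmaASHiReg} and Paley--Wiener, $\phi_{\rm hi}\in\sS(\R_{t_*};\cA^{1-}(X))$ contributes only to the remainder on $M_+$. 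For $\phi_{\rm lo}$, I apply Theorem~\ref{ThmPRes} to the $\hat f(0)$ piece of $\hat f(\sigma)$; the $\sigma$-dependent difference $\hat f(\sigma)-\hat f(0)=\sigma g(\sigma)$ is treated by a parallel argument (with one fewer iteration) using Proposition~\ref{PropALoSmall} and Lemma~\ref{LemmaALoResolvedInput}, yielding an extra order of $t_*$-decay after inversion. Together with $u_{\rm reg}$, these contributions lie in $\cA^{\alpha-,3+\alpha-,3+\alpha-}(M_+)$ by a uniform conormal Fourier inversion estimate extending Lemma~\ref{LemmaPWConormal} to the three-regime setting.

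The three leading terms all originate from the singular piece $u_{\rm sing}=c_X\sigma^2 v_{\rm sing}(\sigma)$ (with $c_X:=c_X(\hat f(0))$), which, as in Remark~\ref{RmkPWConst}, reassembles for $\sigma\in\R$ as $-c_X u_{(0)}\sigma^2\log(\sigma+i0)$ (the $\zface$ singularity, with the imaginary constant $i\pi/2$ supplied by Remark~\ref{RmkAModelExpl}) plus the $\tface$ profile $c_X\sigma^2\tilde u^{(2)}(\sigma r)$, plus less singular terms. The $\cK^+$ asymptotic follows from~\eqref{EqPWi0}: $\cF^{-1}[-c_X u_{(0)}\sigma^2\log(\sigma+i0)]=-2 c_X u_{(0)}t_*^{-3}=c_M(f)u_{(0)}t_*^{-3}$ for $t_*>0$, establishing~\eqref{ItPWGlobalK}. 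For the $I^+$ profile, the substitution $\tilde\sigma=\sigma r$ yields
\[
  \cF^{-1}_\sigma\bigl[c_X\sigma^2\tilde u^{(2)}(\sigma r)\bigr](t_*)=c_X r^{-3}H(v),\quad H(v):=\frac{1}{2\pi}\int e^{-iv\tilde\sigma}\tilde\sigma^2\tilde u^{(2)}(\tilde\sigma)\,d\tilde\sigma,
\]
a function of $v$ alone. The identity $\sigma^2\wt\Box(1)[\tilde u^{(2)}(\sigma r)]=r^{-2}$ from Proposition~\ref{PropARhoSq}, combined with the fact (verifiable directly from~\eqref{EqABoxPieces}) that $\sigma^2\wt\Box(1)$ coincides with the leading Minkowski spectral family, identifies $r^{-3}H(v)$ (up to normalization) with the outgoing Minkowski solution of $\Box\phi=\delta(t_*)r^{-2}$ in $t_*>0$; matching against the exact Minkowski wave $(t_*+r)/[t_*^2(t_*+2r)^2]=u^+(v)t_*^{-3}$ and using $c_M(f)=-2 c_X$ then yields~\eqref{ItPWGlobalI}. (Alternatively, $H(v)$ can be evaluated directly by contour integration using the closed-form expression in Remark~\ref{RmkAModelExpl}.) Part~\eqref{ItPWGlobalScri} follows by restriction: since $u^+(v)=v/4+\cO(v^2)$ as $v\to 0$, the coefficient of $\rho_{\scri^+,1}$ in $\phi$ at $\scri^+$ equals $\tfrac14 c_M(f)t_*^{-3}$ modulo $\cA^{3+\alpha-}(\scri^+)$; consistency at $\cK^+\cap I^+$ is automatic via $u^+(v)\to 1$ as $v\to\infty$.

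The main obstacle is the first step: the uniform conormal regularity statement on $M_+$. One must show that the inverse Fourier transform of a function with the precise partial-polyhomogeneous structure of Theorem~\ref{ThmPRes} on $X^+_{\rm res}$ lies in the claimed conormal class on $M_+$ with matching leading orders at $\scri^+,I^+,\cK^+$. The extreme regimes $\sigma r\ll 1$ (corresponding to $v\gg 1$ near $\cK^+$) and $\sigma r\gg 1$ (corresponding to $v\ll 1$ near $\scri^+$) reduce essentially to Lemma~\ref{LemmaPWConormal} and a dual version; but the transition region $\sigma r\sim 1$, equivalently $v\sim 1$ on $I^+$, requires a single uniform stationary-phase-type estimate that treats $\tilde\sigma=\sigma r$ as integration variable and exploits the conormality of $v_{\rm sing}$ restricted to $\tface$ in the coordinate $\hat r=\tilde\sigma$ down to both ends $\hat r\to 0$ and $\hat r\to\infty$. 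Once this uniform conormal Fourier lemma is in place, the identification of all three leading terms drops out of the explicit computations above.
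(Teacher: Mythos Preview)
Your outline captures the correct overall architecture, and your identification of the $\tface\leftrightarrow I^+$ correspondence via the substitution $\tilde\sigma=\sigma r$ is exactly right (this is the paper's equation~\eqref{EqPWGlobalHatrFT}). Your proposed PDE-based identification of the $I^+$ profile as a Minkowski fundamental solution is a legitimate alternative to the paper's direct evaluation of the oscillatory integral~\eqref{EqPWOscInt}, though it requires care with the gluing of the $\sigma>0$ and $\sigma<0$ pieces (only the real part survives, as in~\eqref{EqPWOscInt}).

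There are, however, two genuine gaps. First, the ``uniform conormal Fourier lemma'' you flag as the main obstacle is not merely a bookkeeping extension of Lemma~\ref{LemmaPWConormal}: the paper resolves it by concrete devices you have not identified, namely (i) a cutoff $\psi(\sigma/\rho)$ separating the regimes, with the localized piece handled by the substitution $\hat r=\sigma/\rho$ so that the inverse Fourier transform becomes a transform in $\hat r$ evaluated at $\rho t_*$, and (ii) for $u_{\rm sing}$, the observation that $\sigma\pa_\sigma+\rho\pa_\rho-2$ kills the $\tface$ leading term while its Fourier dual $-t_*\pa_{t_*}+\rho\pa_\rho-3$ annihilates precisely $\rho_{I^+}^3$; this converts the problem into integrating a regular-singular ODE at $I^+$.

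Second, and more seriously, your treatment of $\scri^+$ by ``restriction'' from the $I^+$ profile does not work. The Fourier-side analysis (even with a sharp version of your uniform lemma) only produces $\phi\in\cA^{0,((3,0),3+\alpha-),((3,0),3+\alpha-)}(M_+)$: the order at $\scri^+$ coming from the resolvent is governed by the $\bface$ weight in Theorem~\ref{ThmPRes}, which is $\alpha-$ (or at best $1-$), not the claimed index set $((1,0),3+\alpha)$. Reading off $u^+(v)\sim v/4$ tells you what the $\rho_{\scri^+}^1$ coefficient \emph{would} be, but does not establish that $\phi$ has such a leading term. The paper closes this gap by a separate physical-space argument: integration along approximate characteristics near $\scri^+$, using that $\Box\in 2v\tau^2(A+R)$ with $A=(\tau\pa_\tau-v\pa_v)(v\pa_v-1)$ and $R\in v\Diffb^2$, to iteratively upgrade $\cA^{0,\ldots}$ to $\cA^{((1,0),3+\alpha),\ldots}$ and simultaneously identify the radiation field. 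Without this step your argument does not reach the stated conclusion at $\scri^+$.
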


The leading order terms match up, as they should: the leading order term at $I^+$ in part~\eqref{ItPWGlobalI} has asymptotics at $\scri^+$ and $\cK^+$ matching~\eqref{ItPWGlobalK} and \eqref{ItPWGlobalScri}.

\begin{rmk}
\label{RmkPWGlobalCompact}
  As long as $u_{(0)}$ is constant (which is the case here, but not in the more general context of~\S\ref{SsPV} below), we can capture the leading order behavior of $\phi$ in a more condensed form by writing $v=t_*/r$ and noting that $t_*(t_*+r)^{-1}$ is a global defining function for $\scri^+$; thus,
  \[
    \Bigl|\phi - c_M(f)\frac{t_*+r}{t_*^2(t_*+2 r)^2}\Bigr| \leq C t_*^{-2-\alpha+}(t_*+r)^{-1}.
  \]
  In terms of $\ft:=t_*+r$, the leading order term is $c_M(f)\ft/(\ft^2-r^2)^2$.
\end{rmk}

For the proof, it is convenient to glue together the resolved spaces for positive and negative frequencies, thus forming
\[
  X_{\rm res} := \bigl[ (-1,1)_\sigma \times X; \{0\}\times\pa X \bigr],
\]
which contains
\[
  X^\pm_{\rm res} := \bigl[ \pm[0,1)_\sigma\times X; \{0\}\times\pa X \bigr]
\]
as submanifolds with corners; see Figure~\ref{FigPWXres}. The point is that $X_{\rm res}$ allows us to track smoothness \emph{across} $\sigma=0$ (see the discussion leading up to~\eqref{EqPWLo}) while at the same time resolving the delicate zero energy behavior.

\begin{figure}[!ht]
\centering
\includegraphics{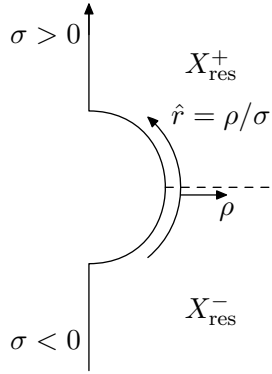}
\caption{The total resolved space $X_{\rm res}$ in which $X_{\rm res}^\pm$ are glued together along (the lift of) $\sigma=0$ (dashed line).}
\label{FigPWXres}
\end{figure}

\begin{proof}[Proof of Theorem~\usref{ThmPWGlobal}]
  At first, we shall only keep very rough track of the decay at $\scri^+$; we will recover sharp asymptotics there at the end of the proof. We revisit the proof of Theorem~\ref{ThmPW} and use the splitting
  \[
    \phi=\phi_0+\phi_1+\phi_2
  \]
  as in~\eqref{EqPWSplit}. By~\eqref{EqPWu1}, we have
  \begin{equation}
  \label{EqPWGlobalPhi2}
    \phi_2\in\cA^{1-,\infty,\infty}(M_+)
  \end{equation}

  %%%%%%%%%%%%%%%%%%%%
  \pfstep{Low energy contribution: regular part.} Consider next $\phi_0$, starting with the contribution from the regular part $u_{\rm reg}(\sigma)$ in the notation of Theorem~\ref{ThmPRes}. The contribution $\phi_{0,\rm reg,1}$ from the smooth first piece of $u_{\rm reg}(\sigma)$ is rapidly vanishing, that is,
  \begin{equation}
  \label{EqPWGlobalPhi0reg1}
    \phi_{0,\rm reg,1}\in\cA^{1-,\infty,\infty}(M).
  \end{equation}
  The second piece of $u_{\rm reg}(\sigma)$, $\sigma>0$, glues together with its negative frequency analogue to give an element
  \begin{align*}
    &u_{\rm reg,2} = u_{\rm reg,2,0} + \sum_\pm u_{\rm reg,2,\pm}, \\
    &\qquad u_{\rm reg,2,0}\in \cA^{\alpha-,2+\alpha-}(X_{\rm res}),\quad
 u_{\rm reg,2,\pm}\in \cA^{\alpha-,2+\alpha-,2+\alpha-}(X_{\rm res}^\pm).
  \end{align*}
  Since $u_{\rm reg,2,\pm}\in\cA^{2+\alpha-}(\pm[0,1);\cA^0(X))$, Lemma~\ref{LemmaPWConormal} implies that
  \begin{equation}
  \label{EqPWGlobalPhi0Reg2pm}
    \phi_{0,\rm reg,2,\pm}:=\cF^{-1}u_{\rm reg,2,\pm} \in \cA^{3+\alpha-}([0,1)_\tau;\cA^0(X)) \subset \cA^{0,3+\alpha-,3+\alpha-}(M_+).
  \end{equation}
  As for $\phi_{0,\rm reg,2,0}=\cF^{-1}u_{\rm reg,2,0}$, we used in the proof of Theorem~\ref{ThmPW} that $\phi_{0,\rm reg,2,0}$ vanishes rapidly in compact spatial sets due to the smoothness of $u_{\rm reg,2,0}$ is smooth across $\sigma=0$. Here, we need to refine this argument and take the degeneration at $\rho=\sigma=0$ into account. If
  \begin{equation}
  \label{EqPWGlobalPsi}
    \psi=\psi(\sigma/\rho)\in\CIc(\R),\quad \psi\equiv 1\ \text{near}\ 0,
  \end{equation}
  is a cutoff, then $(1-\psi)u_{\rm reg,2}\in\sum_\pm\cA^{\alpha-,2+\alpha-,\infty}(X_{\rm res}^\pm)$ lies a fortiori in the same space as $u_{\rm reg,2,\pm}$, hence has inverse Fourier transform contained in~\eqref{EqPWGlobalPhi0Reg2pm}. On the other hand, we can regard $\psi u_{\rm reg,2}\in\cA^{\infty,2+\alpha-}(X_{\rm res})$ as a function
  \begin{equation}
  \label{EqPWGlobalPhi0Reg2Space}
    \psi u_{\rm reg,2}(\rho,\hat r,\omega)\in\cA^{2+\alpha-}([0,1)_\rho;\CIc(\R_{\hat r}\times\pa X_\omega)),\qquad \hat r=\sigma/\rho.
  \end{equation}
  Its inverse Fourier transform is thus
  \begin{equation}
  \label{EqPWGlobalHatrFT}
  \begin{split}
    \frac{1}{2\pi}\int e^{-i\sigma t_*}\psi u_{\rm reg,2}\bigl(\rho,\tfrac{\sigma}{\rho},\omega\bigr)\,d\sigma &= \frac{\rho}{2\pi} \int e^{-i\hat r\cdot\rho t_*}\psi u_{\rm reg,2}(\rho,\hat r,\omega)\,d\hat r \\
      &= \rho\cF_{v\to\hat r}^{-1}(\psi u_{\rm reg,2})(\rho,\rho t_*,\omega).
  \end{split}
  \end{equation}
  By~\eqref{EqPWGlobalPhi0Reg2Space}, this is bounded by $\rho\rho^{2+\alpha-}\la\rho t_*\ra^{-N}$ for all $N$, and in fact lies in $\cA^{3+\alpha-,3+\alpha-,\infty}(M_+)$ (cf.\ $\rho_{\cK^+,0}$ in~\eqref{EqPWCompBdf0}). In summary,
  \begin{equation}
  \label{EqPWGlobalPhi0Reg2}
    \phi_{0,\rm reg,2}=\cF^{-1}u_{\rm reg,2} \in \cA^{0,3+\alpha-,3+\alpha-}(M_+).
  \end{equation}

  %%%%%%%%%%%%%%%%%%%%
  \pfstep{Low energy contribution: singular part.} The main contribution to $\phi_0$ at $I^+$ and $\cK^+$ comes from the singular part $u_{\rm sing}(\sigma)$ in Theorem~\ref{ThmPRes}. Note that $\sigma\pa_\sigma+\rho\pa_\rho-2$ annihilates the leading order term of $u_{\rm sing}(\sigma)$ at $\tface$, hence
  \[
    \tilde u_{\rm sing} := (\sigma\pa_\sigma+\rho\pa_\rho-2)u_{\rm sing} \in \sum_\pm\cA^{1-,3-,((2,1),3-)}(X_{\rm res}^\pm).
  \]
  With $\psi$ as in~\eqref{EqPWGlobalPsi}, the pieces $(1-\psi)\tilde u_{\rm sing}\in\cA^{1-,3-,\infty}(X_{\rm res}^\pm)\subset\cA^{3-}([0,1)_\sigma;\cA^0(X))$ have inverse Fourier transform in $\cA^{0,4-,4-}(X)$. The piece $\psi\tilde u_{\rm sing}\in\sum_\pm\cA^{\infty,3-,((2,1),3-)}(X_{\rm res}^\pm)$ on the other hand is the sum of two terms: the first is the logarithmic term
  \[
    \psi a(\rho,\omega)\hat r^2\log(\hat r+i 0),\qquad a\in\cA^{3-}(X);
  \]
  its inverse Fourier transform is bounded by $\rho\rho^{3-}(\rho t_*)^{-3}$ by the calculation~\eqref{EqPWGlobalHatrFT}, and in fact lies in $\cA^{4-,4-,((3,0),4-)}(M_+)$ (thus gives part of the $t_*^{-3}$ leading order term of $\phi$ at $\cK^+$). The other piece lies in $\cA^{\infty,3-,3-}(X_{\rm res}^\pm)\subset\cA^{3-}(\pm[0,1);\cA^0(X))$ for $\pm\sigma>0$, which contributes $\cA^{0,4-,4-}(M_+)$. Overall,
  \[
    \tilde\phi_{\rm sing} := \cF^{-1}\tilde u_{\rm sing} = (-t_*\pa_{t_*}+\rho\pa_\rho-3)\phi_{\rm sing} \in \cA^{0,4-,((3,0),4-)}(M_+),
  \]
  where we set $\phi_{\rm sing}=\cF^{-1}u_{\rm sing}$, with $u_{\rm sing}$ denoting the sum of the singular pieces for positive and negative frequencies.

  But $-t_*\pa_{t_*}-r\pa_r-3$ is the radial differential operator which precisely annihilates $\rho_{I^+}^3$ leading order terms at $I^+$; in fact, we have
  \[
    -t_*\pa_{t_*}+\rho\pa_\rho-3 = \rho_{I^+,1}\pa_{\rho_{I^+,1}}-3=\rho_{I^+,0}\pa_{\rho_{I^+,0}}-3,
  \]
  the second expression being valid in the coordinates $\rho_{I^+,1}=t_*^{-1}$, $\rho_{\scri^+,1}=\rho t_*$ from~\eqref{EqPWCompBdf1} away from $\cK^+$, and the third expression being valid away from $\scri^+$ in the coordinates $\rho_{\cK^+,0}$, $\rho_{I^+,0}$ from~\eqref{EqPWCompBdf0}.
  
  The conclusion is that the $\rho_{I^+}^3$ leading order term of $\phi$ at $I^+$ is equal to that of the inverse Fourier transform of the leading order term of $u_{\rm sing}$ at $\tface$, extended by (degree $-2$) homogeneity along $(\rho,\sigma)\mapsto(\lambda\rho,\lambda\sigma)$, $\lambda>0$, i.e.\ to the inverse Fourier transform of
  \[
    \sigma^2 c_X(\hat f(0))\bigl(\tilde u^{(2)}(\tfrac{\sigma}{\rho}) H(\tfrac{\sigma}{\rho}) + \ol{\tilde u^{(2)}}(-\tfrac{\sigma}{\rho})H(-\tfrac{\sigma}{\rho})\bigr)
  \]
  in $\sigma$; here, $H(x)=x_+^0$ is the Heaviside function. Thus, the two leading order terms at $\tface$ from $X_{\rm res}^\pm$ get glued together at the front face of $X_{\rm res}$, with a resulting mild logarithmic singularity at the `seam' $\sigma=0$ of the form $\hat r^2\log(\hat r+i 0)$; cf.\ Figure~\ref{FigPWXres}. Writing $\hat r=\sigma/\rho$, and factoring $\sigma^2=\rho^2\hat r^2$, this is equal to
  \begin{align}
    &c_X(\hat f(0))\rho^3 \frac{1}{2\pi}\Bigl(\int_0^\infty e^{-i\hat r \rho t_*}\hat r^2\tilde u^{(2)}(\hat r)+e^{i\hat r \rho t_*}\hat r^2\ol{\tilde u^{(2)}}(\hat r)\,d\hat r\Bigr) \nonumber\\
    &\qquad= \pi^{-1}c_X(\hat f(0))\rho^3\Re\Bigl(\int_0^\infty e^{-i\hat r v}\hat r^2\tilde u^{(2)}(\hat r)\,d\hat r\Bigr)\Bigr|_{v=\rho t_*} \nonumber\\
  \label{EqPWOscInt}
    &\qquad = -\pi^{-1}c_X(\hat f(0))(\rho t_*)^{-2}\rho^3\Re\Bigl(\int_0^\infty e^{-i\hat r v}\pa_{\hat r}^2\hat r^2\tilde u^{(2)}(\hat r)\,d\hat r\Bigr)\Bigr|_{v=\rho t_*};
  \end{align}
  here we integrated by parts twice using $i v^{-1}\pa_{\hat r}e^{-i\hat r v}=e^{-i\hat r v}$. The integral can be evaluated explicitly.\footnote{This is not surprising: in the context of works by Baskin--Vasy--Wunsch \cite{BaskinVasyWunschRadMink,BaskinVasyWunschRadMink2} and, more directly, Baskin--Marzuola \cite{BaskinMarzuolaCone}, the leading order behavior at $I^+$ is directly related to resonances of exact hyperbolic space with a cone point at the origin (i.e.\ forgetting that hyperbolic space is smooth across the origin), this being the model of Minkowski space with a line of conic points along $r=0$.} Indeed, since $\pa_{\hat r}^2\hat r^2=(\hat r\pa_{\hat r}+2)\pa_{\hat r}\hat r$, the arguments in Remark~\ref{RmkAModelExpl} show that
  \[
    (\hat r\pa_{\hat r}+2)\pa_{\hat r}\hat r\tilde u^{(2)} = (\hat r\pa_{\hat r}+2)\int_0^\infty e^{-2 t\hat r}(t-i)^{-1}\,d t = 2 \int_0^\infty e^{-2 t\hat r}(1-t\hat r)(t-i)^{-1}\,d t.
  \]
  We then regularize the integral in~\eqref{EqPWOscInt} by inserting a factor $e^{-\eps\hat r}$ and letting $\eps\searrow 0$, giving
  \[
    2\int_0^\infty e^{-i\hat r(v-i 0)}\Bigl(\int_0^\infty e^{-2 t\hat r}(1-t\hat r)(t-i)^{-1}\,d t\Bigr)d\hat r = \int_0^\infty\frac{2(t+i v)}{(2 t+i v)^2(t-i)}\,d t.
  \]
  This integral is now easily evaluated, and its real part is $\frac{2\pi(v+1)}{(v+2)^2}$. Plugging this into~\eqref{EqPWOscInt} and using that $c_X(\hat f(0))=-\half c_M(f)$, this gives the desired leading order term at $I^+$.

  Putting this together with~\eqref{EqPWGlobalPhi2}, \eqref{EqPWGlobalPhi0reg1}, \eqref{EqPWGlobalPhi0Reg2pm}, and \eqref{EqPWGlobalPhi0Reg2}, and noting that the piece $\phi_1$ has an extra order of $t_*$-decay relative to $\phi_0$, we have now shown
  \begin{equation}
  \label{EqPWPhiAlmost}
    \phi \in \cA^{0,((3,0),3+\alpha-),((3,0),3+\alpha-)}(M_+),
  \end{equation}
  with the claimed leading order terms at $I^+$ and $\cK^+$.

  %%%%%%%%%%%%%%%%%%%%
  \pfstep{Decay at null infinity.} The asymptotic behavior can be determined by integration along approximate characteristics as in \cite[\S5]{HintzVasyMink4}. Let us use more compact notation,
  \[
    v:=\rho_{\scri^+,1}=\rho t_*,\quad
    \tau=\rho_{I^+,1} = t_*^{-1},
  \]
  near $\scri^+\subset M_+$, so that $\pa_{t_*}=\tau(-\tau\pa_\tau+v\pa_v)$ and $\rho\pa_\rho=v\pa_v$; we work in the neighborhood $M':=[0,1)_\tau\times[0,1)_v\times\pa X$ of $\scri^+\cap I^+\subset M_+$. Then Lemma~\ref{LemmaABox} implies
  \[
    \Box = 2 v\tau^2\bigl(A + R\bigr),\quad A:=(\tau\pa_\tau-v\pa_v)(v\pa_v-1)\in\Diffb^2(M'),\ R\in v\Diffb^2(M').
  \]
  By~\eqref{EqPWPhiAlmost}, $\phi\in\cA^{0,3}(M')=v^0\tau^3\cA^{0,0}(M')$ is a solution of
  \[
    (A+R)\phi = f' :=(2 v\tau^2)^{-1}f\in\cA^{3+\alpha,\infty}(M').
  \]

  We rewrite this equation as $A\phi=f'-R\phi\in\cA^{1,3}(M')$. Integrating $\tau\pa_\tau-v\pa_v$ with initial data at $v=\half$ of class $\cA^3([0,1)_\tau)$, this gives $(v\pa_v-1)\phi\in\cA^{1,3}(M')$ and thus $\phi\in\cA^{1-,3}(M')$, the (really: logarithmic) loss at $v=0$ due to $v^1$ being an indicial solution of $v\pa_v-1$. Iterating this argument once more, we find that $A\phi\in\cA^{2-,3}(M')$, and the inversion of the regular singular operator $v\pa_v-1$ gives the $v^1$ leading order term of $\phi$ at $\scri^+$, so $\phi\in\cA^{((1,0),2-),3}(M')$.

  More precisely, note that $\phi|_{v=1/2}\in\cA^{((3,0),3+\alpha-)}([0,1)_\tau)$ has a $\tau^3$ leading order term at $\tau=0$, and $R\phi\in\cA^{1,((3,0),3+\alpha-}(M_+)$, by~\eqref{EqPWPhiAlmost}. Integration gives $\phi\in\cA^{1-,((3,0),3+\alpha-)}(M')$. Hence,
  \begin{equation}
  \label{EqPWInt}
    A\phi = f'-R\phi \in \cA^{2-,((3,0),3+\alpha-)}.
  \end{equation}
  Integrating this shows that the $v^1$ leading order term (the radiation field) $\phi_{\rm rad}$ of $\phi$ at $\scri^+$ itself has a leading order term
  \[
    \phi_{\rm rad} - c_{\rm rad}(f)\tau^3 \in \cA^{3+\alpha-}([0,1)_\tau),\quad c_{\rm rad}(f)\in\CI(\pa X);
  \]
  moreover, $c_{\rm rad}(f)$ matches the leading order term of $\phi$ at $I^+$ as one approaches $\scri^+$. That is, we have an equality of leading order terms at $v=\tau=0$ of $c_M(f)t_*^{-3}u^+(\rho t_*)=c_M(f)\tau^3 u^+(v)$ (from $I^+$) and of $c_{\rm rad}(f)v\tau^3$ (from $\scri^+$), hence $c_{\rm rad}(f)$ is a constant given by
  \[
    c_{\rm rad}(f) = c_M(f) \lim_{v\to 0}v^{-1} u^+(v) = \tfrac14 c_M(f).
  \]
  
  We finally note that we can iterate the normal operator argument at $\scri^+$ based on~\eqref{EqPWInt}, until the $v^{3+\alpha}$ decay of $f'$ prevents getting further terms in the expansion; the result is that $\phi\in\cA^{((1,0),3+\alpha),((3,0),3+\alpha)}(M')$. The proof is complete.
\end{proof}

Initial value problems can be reduced to forcing problems:
\begin{cor}
\label{CorPWIVP}
  Let $\phi_0,\phi_1\in\CIc(X^\circ)$.\footnote{It suffices to assume $\phi_0\in\cA^{4+\alpha}(X)$, and sufficient decay (depending on the precise decay of $|d t_*|^2$ as $r\to\infty$) of $\phi_1$.} Then the solution $\phi$ of the initial value problem
  \[
    \Box\phi = 0,\quad
    \phi(0,x) = \phi_0(x),\quad
    \pa_{t_*}\phi(0,x) = \phi_1(x),
  \]
  has the asymptotic behavior stated in Theorem~\usref{ThmPWGlobal} for any $\alpha<1$, with the constant $c_M(f)$ replaced by
  \[
    c_M(\phi_0,\phi_1) := \frac{2\bhm}{\pi}\big\la{-}([\Box,t_*]\phi_0)|_{t_*=0}+|d t_*|^2\phi_1,u_{(0)}^*\big\ra.
  \]
\end{cor}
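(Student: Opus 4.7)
The strategy is to reduce the initial value problem to the forcing problem already covered by Theorem~\ref{ThmPWGlobal}, by means of a time cutoff. Fix $\chi\in C^\infty(\mathbb{R})$ that vanishes for $t_*\leq 0$ and equals $1$ for $t_*\geq\epsilon$, with $\epsilon>0$ small. By finite speed of propagation, $\phi(t_*,\cdot)\in C_c^\infty(X^\circ)$ uniformly for $t_*\in[0,\epsilon]$, so $\tilde\phi:=\chi\phi$ is the unique forward solution of $\Box\tilde\phi=f$ with $f:=[\Box,\chi]\phi\in C_c^\infty((0,\infty);C_c^\infty(X^\circ))$. Since $\tilde\phi\equiv\phi$ for $t_*\geq\epsilon$, Theorem~\ref{ThmPWGlobal} applied to $\tilde\phi$ immediately gives the claimed asymptotics for $\phi$ with constant $c_M(f)=-\tfrac{2\bhm}{\pi}\langle f,u_{(0)}^*\rangle$; the task reduces to identifying this pairing with the expression in the statement.

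Before computing, I would verify that the pairing $\langle f,u_{(0)}^*\rangle$ is independent of the choice of $\chi$: for two such cutoffs $\chi_0,\chi_1$, the difference $(\chi_1-\chi_0)\phi$ has compact spacetime support (compact $t_*$-support by construction, compact $x$-support by finite speed of propagation on the bounded $t_*$-support). Self-adjointness of $\Box$ with respect to $|dg|$ together with $\Box u_{(0)}^*=\wh\Box(0)u_{(0)}^*=0$---valid here since $u_{(0)}^*=1=u_{(0)}$---then forces $\langle\Box((\chi_1-\chi_0)\phi),u_{(0)}^*\rangle=0$. This invariance licenses passing to the distributional limit $\chi\to H(t_*)$, the Heaviside step function.

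In that limit, expand $[\Box,\chi]\phi=-2g^{0\nu}\chi'\partial_\nu\phi+(\Box\chi)\phi$, and use stationarity of $g$ to write $\Box\chi=-g^{00}\chi''-|g|^{-1/2}\partial_i(|g|^{1/2}g^{i0})\chi'$. The distributional identities $\int\chi'\phi\,dt_*\to\phi_0$, $\int\chi'\partial_{t_*}\phi\,dt_*\to\phi_1$, $\int\chi''\phi\,dt_*\to -\phi_1$ collapse $\langle f,u_{(0)}^*\rangle$ into a pairing over $X$ of $u_{(0)}^*$ against a fixed linear combination of $\phi_0$, its first spatial derivatives, and $\phi_1$. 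Using $\Box t_*=-|g|^{-1/2}\partial_i(|g|^{1/2}g^{i0})$ (again by stationarity) and $|dt_*|^2=g^{00}$, one sees that $-[\Box,t_*]\phi_0=2g^{0i}\partial_i\phi_0+|g|^{-1/2}\partial_i(|g|^{1/2}g^{i0})\phi_0$, so the collapsed pairing is exactly $-\langle -[\Box,t_*]\phi_0|_{t_*=0}+|dt_*|^2\phi_1,u_{(0)}^*\rangle$; the overall sign then converts the $-\tfrac{2\bhm}{\pi}$ in $c_M(f)$ into $+\tfrac{2\bhm}{\pi}$, matching $c_M(\phi_0,\phi_1)$. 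The only real hurdle is the careful sign bookkeeping in the distributional integration by parts in $t_*$.
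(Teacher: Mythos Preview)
Your proposal is correct and follows essentially the same approach as the paper: reduce to a forcing problem via a time cutoff, observe that the resulting constant is cutoff-independent, and evaluate in the Heaviside limit. The only differences are cosmetic: the paper justifies cutoff-independence simply by noting that the large-$t_*$ asymptotics of $\chi\phi$ do not depend on $\chi$ (hence neither does the constant they determine), whereas you argue directly via self-adjointness and $\Box u_{(0)}^*=0$; and the paper carries out the Heaviside computation using the structured decomposition $\Box=-2\rho\pa_{t_*}Q+\wh\Box(0)-g^{00}\pa_{t_*}^2$ of Lemma~\ref{LemmaABox} rather than the coordinate expansion $[\Box,\chi]\phi=-2g^{0\nu}\chi'\pa_\nu\phi+(\Box\chi)\phi$ that you use. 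Your independence argument is a touch less portable (in the Kerr setting of \S\ref{SK}, $u_{(0)}^*=H(r-r_{\bhm,\bha})$ is merely a supported distribution, so the paper's asymptotics-based reasoning generalizes more cleanly), but in the present setting with $u_{(0)}^*=1$ it is perfectly fine. One tiny adjustment: to land squarely in the hypotheses of Theorem~\ref{ThmPWGlobal} (which asks for $f\in\CIc((0,\infty);\cdot)$), take $\chi$ to vanish for $t_*\leq\eps$ rather than just $t_*\leq 0$; this costs nothing since finite speed of propagation keeps $\phi(t_*,\cdot)$ compactly supported on $[0,\eps]$.
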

\begin{proof}
  Fix $R>1$ so that $\supp\phi_0$ and $\supp\phi_1$ are contained in the ball $B(0,R)\subset X^\circ$ of radius $R$. Then there exists $\eps>0$ such that $\phi(t_*,x)=0$ for $|x|\geq 2 R$ and $t_*\in[0,3\eps]$. Fix a cutoff $\chi(t_*)$ which is identically $0$ for $t_*\leq 0$ and equal to $1$ for $t_*\geq 2\eps$. Then $\phi_+=\chi\phi$ vanishes in $t_*\leq\eps$ and satisfies
  \begin{equation}
  \label{EqKWIVPRedux}
   \Box\phi_+ = [\Box,\chi]\phi \in \CIc(\R_{t_*};\CIc(X^\circ));
  \end{equation}
  its asymptotic behavior as $t_*\to\infty$ can thus be computed using Theorem~\ref{ThmPWGlobal}. The constant $c_M([\Box,\chi]\phi)$ must be independent of $\chi$. One can thus evaluate it by taking $\chi$ to be the Heaviside function $H(t_*)$, which by Lemma~\ref{LemmaABox} gives
  \[
    [\Box,H]\phi = -2\rho Q\delta(t_*)\phi_0(x) - g^{0 0}(x)\bigl(\delta(t_*)\phi_1(x)+\delta'(t_*)\phi_0(x)\bigr),
  \]
  which pairs against $u_{(0)}^*$ to $-\la 2\rho Q\phi_0+g^{0 0}\phi_1,u_{(0)}^*\ra_{L^2(X)}$, as claimed.
\end{proof}

%%%%%%%%%%%%%%%%%%%%%%%%%%%%%%%%%%%%%%%%%%%%%%%%%%
\subsection{Wave equations with stationary potentials}
\label{SsPV}

As a simple generalization, we briefly consider wave equations with a stationary potential $V$ decaying like $r^{-4}$, or more precisely
\begin{equation}
\label{EqPVPotential}
  V \in \rho^4\CI(X);
\end{equation}
we can allow $V$ to be complex-valued. With $\Box=\Box_g$ the wave operator of a stationary and asymptotically flat (with mass $\bhm$) metric $g$, we consider the operator
\[
  P_V := \Box_g+V.
\]
We assume that $P_V$ spectrally admissible, that is, $P_V$ has no nontrivial zero energy bound states, no nontrivial mode solutions with frequency $0\neq\sigma\in\C$, $\Im\sigma\geq 0$, and that high energy estimates hold---these assumptions are precisely those in Definition~\ref{DefASAssm} but for $P_V$ in place of $\Box$. Following the proof of Lemma~\ref{LemmaAExtState}, there exist extended zero energy (dual) states
\begin{equation}
\label{EqPVStates}
  \cA^0(X)\cap\ker\wh{P_V}(0) = \C u_{(0)},\qquad
  \cA^0(X)\cap\ker\wh{P_V}(0)^* = \C u_{(0)}^*,
\end{equation}
where $u_{(0)}$ and $u_{(0)}^*$ are uniquely determined by $u_{(0)},u_{(0)}^*\in 1+\cA^{1-}(X)$. (The two are related by $u_{(0)}^*=\ol{u_{(0)}}$, and thus equal for real-valued potentials.)

\begin{thm}
\label{ThmPV}
  For $P_V$ as above, the unique forward solution of
  \[
    P_V\phi = f \in \CIc(\R_{t_*};\cA^{4+\alpha}(X))
  \]
  satisfies the asymptotics stated in Theorems~\usref{ThmPW} and \usref{ThmPWGlobal}, with $u_{(0)}$ and $u_{(0)}^*$ given by~\eqref{EqPVStates}. (In particular, the shape $u^+$ of the leading order term at $I^+$ does not depend on $V$.)
\end{thm}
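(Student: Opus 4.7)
The strategy is to repeat the argument of Theorems~\ref{ThmPW} and \ref{ThmPWGlobal} verbatim with $\wh\Box(\sigma)$ replaced by $\wh{P_V}(\sigma) := \wh\Box(\sigma) + V$. The key point that makes the replacement transparent is that $V \in \rho^4\CI(X)$ is lower order at \emph{every} boundary face of the resolved space $X^+_{\rm res}$: as a b-differential operator on $X$ it lies in $\rho^4\CI(X) \subset \rho^3\Diffb^2(X)$, so it is two orders subleading at $\pa X$ relative to $\wh\Box(0) \in \rho^2\Diffb^2(X)$; and on $X^+_{\rm res}$ it lies in $\rho_\bface^2\rho_\tface^4\Diffb^2(X^+_{\rm res})$, hence is subleading relative to $\wh\Box(\sigma) \in \rho_\bface\rho_\tface^2\Diffb^2$ at both $\bface$ and $\tface$. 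Consequently, the b-normal operators computed in Lemma~\ref{LemmaAResolvedOp} are unchanged, and the crucial rescaled operator at the front face is still $\sigma^2\wt\Box(1)$ of Definition~\ref{DefAModel}. Moreover, since $V$ is $\sigma$-independent, the difference $\wh{P_V}(\sigma) - \wh{P_V}(0) = \wh\Box(\sigma)-\wh\Box(0)$ is literally identical to the expression~\eqref{EqPBoxDiff} that drives the iterative expansion.

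First, I would establish the analogue of Theorem~\ref{ThmPRes} for $\wh{P_V}(\sigma)^{-1}$. The extended zero energy (dual) states $u_{(0)}, u_{(0)}^*$ are provided by the proof of Lemma~\ref{LemmaAExtState}, which, as indicated there, is written to apply to this more general setting. Next I would redo Lemma~\ref{LemmaPL0inv4}: the normal operator identity $L_0 u_\pa = f_\pa \in \cA^{2-}$ is unaffected by $V$ (since $V u_\pa \in \cA^{5-} \subset \cA^{2-}$), so the Mellin contour shift produces the same leading term $c_{(0)}\rho$. The constant $c_{(0)} = (4\pi)^{-1}\la f, u_{(0)}^*\ra$ is then identified by the same pairing argument $\la u_0, \wh{P_V}(0)^* u_{(0)}^*\ra = 0$, since only the leading behavior of $u_0$ at $\pa X$ and the leading $1$ of $u_{(0)}^*$ contribute to the commutator limit, and $V$ again gives a subleading contribution that drops out. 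The subleading $\bhm c_{(0)}\rho^2$ term is produced by $L_1$ acting on $c_{(0)}\rho$, with $V$ only contributing at order $\rho^5$. The second-iteration lemma~\ref{LemmaPL0inv3} and the calculation of $f_1, f_2$ via~\eqref{EqPf1} and~\eqref{EqPf2} are then word-for-word identical.

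The key model step, Proposition~\ref{PropARhoSq}, generalizes as follows: since $\wt\Box(1)$ is unchanged, the solution $\tilde u^{(2)} = \wt\Box(1)^{-1}(\hat\rho^2)$ of Lemma~\ref{LemmaAModel} is unchanged. In the error decomposition~\eqref{EqARhoSqErr}--\eqref{EqARhoSqErrExp}, one now inverts $\wh{P_V}(0)^{-1}$ on an input whose $\log \hat r$ coefficient at $\zface$ is $\wh{P_V}(0)(\chi_\pa)$; the unique element of $\cA^{1-}(X)$ mapping to this under $\wh{P_V}(0)$ is $\chi_\pa - u_{(0)}$, where now $u_{(0)}$ is the extended state for $P_V$. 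The cancellation~\eqref{EqARhoSqLog} then yields the logarithmic leading term $-(\log\tfrac{\sigma}{\rho})u_{(0)}$ at $\zface$ with the \emph{new} $u_{(0)}$. Consequently, the resolvent expansion of Theorem~\ref{ThmPRes} holds verbatim with $c_X(f) = (\bhm/\pi)\la f, u_{(0)}^*\ra$ using the new dual state.

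Given the resolvent expansion, the inverse Fourier transform arguments of~\S\ref{SssPWCpt} and~\S\ref{SssPWGlobal} go through without any modification, since these depend only on the form of the expansion on $X_{\rm res}$, not on the precise identity of $u_{(0)}$. In particular, the profile $u^+(v) = v(v+1)/(v+2)^2$ at $I^+$ is determined entirely by the oscillatory integral~\eqref{EqPWOscInt} involving $\tilde u^{(2)}$, which is $V$-independent. The leading term at $\cK^+$ becomes $c_M(f)u_{(0)}t_*^{-3}$ with the new $u_{(0)}$, and the radiation field asymptotic is obtained as before by the matching/integration argument along approximate characteristics at $\scri^+$. The main technical point that requires genuine (if brief) verification is the behavior at $\pa X$: one must confirm that the Mellin contour shift analysis of Lemmas~\ref{LemmaPL0inv4}--\ref{LemmaPL0inv3} only uses the normal operator $L_0$ and the fact that $V$ produces strictly lower order remainders; everything else is bookkeeping.
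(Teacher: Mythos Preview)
Your proposal is correct and takes the same approach as the paper. The paper's own proof is a one-liner: since $V\in\rho^4\CI(X)$ means $\rho^{-2}V\in\rho^2\CI(X)$ enters at the level of the error term $\tilde L$ in~\eqref{EqABoxPieces0}, and that term was never used in any of the arguments of~\S\ref{SA} or~\S\ref{SsPR}, the entire analysis of Theorem~\ref{ThmPRes} (and hence Theorems~\ref{ThmPW} and~\ref{ThmPWGlobal}) carries over verbatim; your more detailed walk-through of the individual lemmas simply makes this explicit.
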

\begin{proof}
  The analysis of the resolvent $\wh{P_V}(\sigma)^{-1}$ acting on $\cA^{4+\alpha}(X)$ in Theorem~\ref{ThmPRes} goes through \emph{verbatim}. Indeed, the decay assumption~\eqref{EqPVPotential} ensures that in the zero energy operator $\rho^{-2}\wh{P_V}(0)=\rho^{-2}\wh\Box(0)+\rho^{-2}V$, the potential enters at the same level as the error term $\tilde L$ in~\eqref{EqABoxPieces0}; this term did not play any role in the arguments above.
\end{proof}

Long range potentials
\begin{equation}
\label{EqPVLongRange}
  V\in\rho^3\CI(X)
\end{equation}
can be handled as well. Denote by $V_0\in\CI(\pa X)$ the leading order term, i.e.\ $V-V_0\rho^3\in\rho^4\CI(X),\quad V_0\in\CI(\pa X)$. (We do not require $V_0$ to be spherically symmetric, which removes a requirement made in \cite{TataruDecayAsympFlat}.) Then the spherically symmetric part $\bar V_0$ of $V_0$ enters at the same level as the mass parameter $\bhm$; Theorem~\ref{ThmPV} thus remains valid upon changing the mass $\bhm$ in the definition of the constant $c_M(f)$ in~\eqref{EqPWConst} by the effective mass
\[
  \bhm(V):=\bhm+\half\bar V_0,\qquad \bar V_0:=\frac{1}{4\pi}\int_{\pa X}V_0\,|d\slg|.
\]
Indeed, $\rho^{-2}\wh{P_V}(0)(\rho+\bhm(V)\rho^2)\in\rho^3\CI(X)$; thus, for $f\in\cA^{4+\alpha}(X)$, we have $\wh{P_V}(0)^{-1}f\equiv c_{(0)}(\rho+\bhm(V)\rho^2)$ up to terms in $\rho^2\CI(X)$ with vanishing spherical average (just like $Y_{(1)}$ in Lemma~\ref{LemmaPL0inv4}---the vanishing of the spherical average of $Y_{(1)}$, i.e.\ the orthogonality to spherically symmetric functions, is all that was used in subsequent arguments). This gives the claimed correction to $u_0$ in~\eqref{EqPu0}, and the remainder of the calculation is unaffected.

As a concrete example, one can take $g$ to be the Minkowski metric $g=-d t^2+d r^2+r^2\slg$, put $t_*=t-\la r\ra$ (thus, $g$ satisfies Definition~\ref{DefAF} with $\bhm=0$), and take $V$ to be a smooth potential of the form $V(r,\omega)=r^{-3}V_0(\omega)+r^{-4}\tilde V(r^{-1},\omega)$ in $r>1$, with $V_0,\tilde V$ smooth; for instance, $V=\la r\ra^{-3}$. Thus, if $\int_{\Sph^2}V_0\,|d\slg|\neq 0$, solutions of $(\Box_g+V)\phi=f$ generically have precisely $t_*^{-3}$ decay as $t_*\to\infty$; the constant prefactor $c_M(\phi_0,\phi_1)$ of the $u_{(0)}t_*^{-3}$ leading order term can be computed as in Corollary~\ref{CorPWIVP} with $\bhm(V)=\half\bar V_0$ in place of $\bhm$. In particular, modifying $t_*$ to be equal to $t$ near $\supp\phi_0\cup\supp\phi_1$, this gives $c_M(\phi_0,\phi_1)=-\frac{\bar V_0}{\pi}\la\phi_1,u_{(0)}^*\ra$ and thus proves Theorem~\ref{ThmIV}.\footnote{As a rough check of this result (following a suggestion by Tataru), one can compute the solution $\psi(t,r,\omega)$ of $\Box_g\psi=(-D_t^2+\Delta_{\R^3})\psi=0$ with initial data $(\psi,\pa_t\psi)|_{t=0}=(0,\phi_1)$, $\phi_1\in\CIc(B(0,R_0))$, and then couple this with a radial potential $V=V(r)$ by solving the forced equation $\Box_g\phi=-V\psi$. The result is
\[
  \phi(T,0) = -\int_{\half(T-R_0)}^{\half(T-R_0)} \frac{1}{4\pi s}V(s)\int_{\Sph^2}\frac{1}{4\pi(T-s)}\Bigl(\int_{x\cdot\omega=T-2 s}\phi_1(x)\,d x\Bigr)s^2\,d s.
\]
Approximating $\frac{V(s)s^2}{4\pi s}\approx\frac{\bar V_0}{4\pi s^2}\approx\frac{\bar V_0}{\pi T^2}$ and similarly replacing $\frac{1}{T-s}$ by $\frac{2}{T}$, one obtains
\[
  \phi(T,0)\approx-\frac{2\bar V_0}{\pi T^3}\int\int_{x\cdot\omega=T-2 s}\phi_1(x)\,d x\,d s = -\frac{\bar V_0}{\pi}\la\phi_1,1\ra T^{-3}.
\]
}

\begin{rmk}
\label{RmkPVConormal}
  Similarly to Remark~\ref{RmkAConormal}, one can relax~\eqref{EqPVLongRange} to $\rho^3\CI(X)+\cA^4(X)$ without causing any changes in the argument, and to $\rho^3\CI(X)+\cA^{3+\beta}(X)$, $\beta>0$, with extra work.
\end{rmk}

%%%%%%%%%%%%%%%%%%%%%%%%%%%%%%%%%%%%%%%%%%%%%%%%%%%%%%%%%%%%%%%%%%%%%%
\section{Asymptotics on subextremal Kerr spacetimes}
\label{SK}

We recall the definition of subextremal Kerr spacetimes in~\S\ref{SsKD} and describe the (minimal) adaptations of the setup of~\S\ref{SA} and the main theorems on wave decay in~\S\S\ref{SsKS}--\ref{SsKW}.

%%%%%%%%%%%%%%%%%%%%%%%%%%%%%%%%%%%%%%%%%%%%%%%%%%
\subsection{Definition of the metric; asymptotics}
\label{SsKD}

The Kerr metric \cite{KerrKerr} with mass $\bhm>0$ and angular momentum $\bha\in(-\bhm,\bhm)$ (this is the \emph{subextremal range} of angular momenta) is the Ricci flat metric given by
\begin{align*}
  g_{\bhm,\bha} &= -\frac{\Delta_r}{r_\bha^2}(d t-\bha\,\sin^2\theta\,d\varphi)^2 + r_\bha^2\Bigl(\frac{d r^2}{\Delta_r}+d\theta^2\Bigr) + \frac{\sin^2\theta}{r_\bha^2}\bigl(\bha\,d t-(r^2+\bha^2)d\varphi\bigr)^2, \\
  &\qquad \Delta_r:=r^2-2\bhm r+\bha^2,\quad
  r_\bha^2:=r^2+\bha^2\cos^2\theta,
\end{align*}
in Boyer--Lindquist coordinates $t\in\R$, $r\in(r_{\bhm,\bha},\infty)$, $\theta\in(0,\pi)$, $\varphi\in(0,2\pi)$, where
\[
  r_{\bhm,\bha} := \bhm+\sqrt{\bhm^2-\bha^2};
\]
the dual metric $g_{\bhm,\bha}^{-1}$ is
\begin{equation}
\label{EqKDDual0}
  r_\bha^2 g_{\bhm,\bha}^{-1} = -\Delta_r^{-1}\bigl((r^2+\bha^2)\pa_t+\bha\pa_\varphi\bigr)^2 + \Delta_r\pa_r^2+\pa_\theta^2+\sin^{-2}\theta\bigl(\pa_\varphi+\bha\sin^2\theta\,\pa_t\bigr)^2.
\end{equation}
In order to extend $g_{\bhm,\bha}$ across $r=r_{\bhm,\bha}$ on the one hand, and place it into the setting of Definition~\ref{DefAF} for large $r$ on the other hand, we define new coordinates
\begin{equation}
\label{EqKDtstar}
\begin{split}
  t_* &:= t + \int_{3\bhm}^r \frac{r^2+\bha^2}{\Delta_r}(\chi_+(r)-\chi_\pa(r))\,d r + F(r), \\
  \varphi_* &:= \varphi+\int_{3\bhm}^r \frac{\bha}{\Delta_r}\chi_+(r)\,d r,
\end{split}
\end{equation}
where $\chi_+,\chi_\pa\in\CI(\R)$ with $\chi_+\equiv 1$ on $[0,3\bhm)$, $\chi_+\equiv 0$ on $[4\bhm,\infty)$, and $\chi_\pa\equiv 0$ on $[0,5\bhm]$, $\chi_\pa\equiv 1$ on $[6\bhm,\infty)$, and $F(r)\in\CI(\R)$ will be chosen below. The dual metric then is
\begin{equation}
\label{EqKDDual}
\begin{split}
  r_\bha^2 g_{\bhm,\bha}^{-1} &= \Delta_r\pa_r^2 + \pa_\theta^2 + \sin^{-2}\theta\bigl(\pa_{\varphi_*}+\bha\sin^2\theta\,\pa_{t_*}\bigr)^2 - \frac{1-\chi_+^2}{\Delta_r}\bigl((r^2+\bha^2)\pa_{t_*}+\bha\pa_{\varphi_*}\bigr)^2 \\
    &\quad + \Delta_r\Bigl(-\frac{r^2+\bha^2}{\Delta_r}\chi_\pa+F'\Bigr)^2\pa_{t_*}^2 + 2\bigl[ ((r^2+\bha^2)(\chi_+-\chi_\pa)+F'\Delta_r)\pa_{t_*}+\bha\chi_+\pa_{\varphi_*}\bigr]\pa_r \\
    &\quad + 2\chi_+\bigl((r^2+\bha^2)\pa_{t_*}+\bha\pa_{\varphi_*}\bigr)\Bigl(-\frac{r^2+\bha^2}{\Delta_r}\chi_\pa+F'\Bigr)\pa_{t_*}.
\end{split}
\end{equation}
For $F\equiv 0$, this can be extended analytically from $r_{\bhm,\bha}<r<3\bhm$ to $r>r_-=\bhm-\sqrt{\bhm^2-\bha^2}$. In $r\geq 6\bhm$, we want the inner product
\[
  r_\bha^2 g_{\bhm,\bha}^{-1}(d t_*,d t_*) = \bha^2\sin^2\theta - 2(r^2+\bha^2)F' + (F')^2\Delta_r
\]
to be negative; this holds for example for $F'=\frac{\bhm^2}{r^2+\bha^2}$, for which $r_\bha^2 g_{\bhm,\bha}^{-1}(d t_*,d t_*)=-2\bhm^2+\bha^2\sin^2\theta+\cO(r^{-2})$ is indeed negative for $r\geq r_0\gg 6\bhm$. Choosing $F$ in $r\leq r_0$ suitably, we can then ensure that $d t_*$ is future timelike everywhere in $r\geq\bhm$. Define the spatial manifold
\[
  X^\circ := [\bhm,\infty)_r \times \Sph^2_{\theta\varphi_*};
\]
we then state the asymptotic behavior of the metric on the closure
\begin{equation}
\label{EqKDCompact}
  X:=[\bhm,\infty]\times\Sph^2\subset\ol{\R^3}
\end{equation}
of $X^\circ$ inside the radial compactification $\ol{\R^3}$ of $\R^3$ (see Definition~\usref{DefACompact}); by a slight abuse of notation, we write
\begin{equation}
\label{EqKDBdy}
  \pa X := X \cap \pa\ol{\R^3},\quad
  \Sigma_{X,\rm f} := X \cap r^{-1}(\bhm)
\end{equation}
for the boundary of $X$ at infinity, resp.\ the artificial boundary inside the black hole. Let $\rho=r^{-1}$ denote a defining function of $\pa X$. Then:

\begin{lemma}
\label{LemmaKDAsy}
  For a suitable choice of $F\in\CI(\R_r)$, with $F'(r)=\frac{\bhm^2}{r^2+\bha^2}$ for $r\gg 1$, the dual metric $g_{\bhm,\bha}^{-1}$ in the coordinates $t_*,r,\theta,\varphi_*$ defined in~\eqref{EqKDtstar}, is a smooth, nondegenerate Lorentzian dual metric on $M^\circ=\R_{t_*}\times X^\circ$ so that $d t_*$ is everywhere future timelike, and so that $g_{\bhm,\bha}^{-1}$ satisfies the assumptions in part~\eqref{ItAFAsy} of Definition~\usref{DefAF} near $\pa X$.
\end{lemma}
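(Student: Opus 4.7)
The plan is to analyze the dual metric \eqref{EqKDDual} in three overlapping regions---the asymptotic region $r\geq 6\bhm$, the near-horizon region $r\leq 3\bhm$, and the intermediate region---and verify each of the three assertions (smoothness/nondegeneracy, future timelikeness of $dt_*$, and the asymptotic form near $\pa X$) separately. The choice of $F$ will be made in two steps: first the constraint $F'=\bhm^2/(r^2+\bha^2)$ is imposed for $r$ large to secure the $g^{00}<0$ condition at infinity (and thereby the asymptotic form of $g^{X X}$), and then $F$ is extended freely across the intermediate and near-horizon regions, subject to pointwise timelikeness inequalities that will be verified below.

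First I would address smoothness and nondegeneracy on $M^\circ=\R_{t_*}\times X^\circ$. In the near-horizon region $r<3\bhm$ we have $\chi_+\equiv 1$, $\chi_\pa\equiv 0$, so the factor $1-\chi_+^2$ in \eqref{EqKDDual} vanishes and the explicit pole of the Boyer--Lindquist dual metric \eqref{EqKDDual0} at $\Delta_r=0$ is cancelled; the remaining terms depend smoothly on $(r,\theta,\varphi_*,F,F')$ and on $\chi_+,\chi_\pa$, so $g_{\bhm,\bha}^{-1}$ extends analytically from $r>r_{\bhm,\bha}$ through $r=r_{\bhm,\bha}$ down to any $r\geq\bhm$, as is well known for the Kerr-star construction. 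In the transition and far regions, smoothness follows directly from the smoothness of $\chi_+,\chi_\pa$ and of $F$. Nondegeneracy of the Lorentzian signature is preserved under the diffeomorphism \eqref{EqKDtstar} (whose Jacobian on the $(t,\varphi)$-variables is the identity), so is inherited from the exterior Kerr region and the Kerr-star extension.

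Next I would verify the asymptotic statement near $\pa X$, working in $r\geq 6\bhm$ where $\chi_+=0,\chi_\pa=1$ and $F'=\bhm^2/(r^2+\bha^2)$. Reading off \eqref{EqKDDual} and dividing by $r_\bha^2=r^2+\bha^2\cos^2\theta$, the coefficient of $\pa_{t_*}^2$ in $g^{-1}$ is
\[
  g^{0 0} = \frac{\bha^2\sin^2\theta - 2(r^2+\bha^2)F' + \Delta_r(F')^2}{r_\bha^2} = \frac{\bha^2\sin^2\theta-2\bhm^2}{r_\bha^2}+\cO(\rho^4),
\]
which lies in $\rho^2\CI(X)$; the coefficient of $\pa_{t_*}\pa_r$ computes to $\{-(r^2+\bha^2)+F'\Delta_r\}/r_\bha^2=-1+\rho^2\CI(X)$, while the coefficient of $\pa_{t_*}\pa_{\varphi_*}$ equals $-4\bhm\bha r/(r_\bha^2\Delta_r)\in\rho^3\CI$, which in the scattering frame $\rho\pa_{\varphi_*}$ produces an $\rho^2\CI$ coefficient; thus $g^{0 X}\in-\pa_r+\rho^2\CI(X;\Tsc X)$. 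For the spatial block, $\Delta_r/r_\bha^2$ expands as $1-2\bhm\rho+\bha^2\sin^2\theta\,\rho^2+\cO(\rho^3)$, giving the $(1-2\bhm\rho)\pa_r^2$ leading term; the angular coefficients $1/r_\bha^2$ and $(\sin^{-2}\theta-\bha^2/\Delta_r)/r_\bha^2$ equal $r^{-2}$ and $r^{-2}\sin^{-2}\theta$ modulo $\rho^4$, which in the scattering frame produces the remainder in $\rho^2\CI(X;S^2\Tsc X)$. This verifies part \eqref{ItAFAsy} of Definition~\ref{DefAF}.

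The final and most delicate step is to extend $F$ to $r\leq r_0$ so that $g^{0 0}<0$ everywhere. At infinity the displayed formula gives $r_\bha^2 g^{0 0}\to\bha^2\sin^2\theta-2\bhm^2<0$, using $|\bha|<\bhm$. In the transition region $3\bhm\leq r\leq r_0$, compactness and continuity of the expression $r_\bha^2 g^{0 0}$ as a function of $F,F',\chi_+,\chi_\pa$ and $(r,\theta)$ show that any smooth interpolation of $F'$ that stays sufficiently close to $\bhm^2/(r^2+\bha^2)$ preserves the negative sign. In the near-horizon region $r\leq 3\bhm$, setting $\chi_+=1,\chi_\pa=0$ in \eqref{EqKDDual}, the coefficient of $\pa_{t_*}^2$ reduces to $r_\bha^2 g^{0 0}=\bha^2\sin^2\theta+2(r^2+\bha^2)F'+\Delta_r (F')^2$, which at $r=r_{\bhm,\bha}$ (where $\Delta_r=0$) is negative precisely when $F'<-\bha^2/\bigl(2(r_{\bhm,\bha}^2+\bha^2)\bigr)$. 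Choosing $F'$ to be a sufficiently negative constant on $[\bhm,3\bhm]$ and then patching to the prescribed $F'=\bhm^2/(r^2+\bha^2)$ at $r\geq r_0$ through a smooth interpolation in $[3\bhm,r_0]$ (arranged so that on this compact set the quadratic $\bha^2\sin^2\theta+2(r^2+\bha^2)F'(1-\chi_\pa)+\cdots$ remains strictly negative) accomplishes the task. Once $g^{0 0}<0$ throughout $M^\circ$, the time-orientation is fixed by declaring $dt_*$ future-directed; this is consistent with the time-orientation of the exterior Kerr region since $\pa t_*/\pa t=1$. The main obstacle is precisely this intermediate interpolation: one must verify that a single smooth $F$ can simultaneously satisfy the opposite-sign constraints near the horizon and at infinity, which is why the free parameter regime $3\bhm\leq r\leq r_0$ is essential.
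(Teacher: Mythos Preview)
Your approach matches the paper's, which is in fact a one-sentence proof (the choice of $F$ ensures $g_{\bhm,\bha}^{-1}(dt_*,dt_*)\in r_\bha^{-2}\CI(X)\subset\rho^2\CI(X)$, and the remaining asymptotics follow by inspection of~\eqref{EqKDDual}); your detailed expansion of the asymptotic coefficients is correct and considerably more explicit than what the paper supplies.

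There is one slip in the near-horizon step. With $\chi_+=1$, $\chi_\pa=0$ you correctly obtain
\[
  r_\bha^2\,g^{00}=\bha^2\sin^2\theta+2(r^2+\bha^2)F'+\Delta_r(F')^2,
\]
but the prescription ``$F'$ a sufficiently negative constant on $[\bhm,3\bhm]$'' fails on the portion $(r_{\bhm,\bha},3\bhm]$, where $\Delta_r>0$: there the quadratic in $F'$ opens \emph{upward}, so making $F'$ very negative drives the expression to $+\infty$ rather than $-\infty$. The correct requirement is that $F'(r)$ lie strictly between the two (negative, real) roots of this quadratic, an open interval varying smoothly in $r$ and always nonempty since the discriminant $(r^2+\bha^2)^2-\bha^2\Delta_r=r^2(r^2+\bha^2)+2\bha^2\bhm r>0$. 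Choosing any smooth $F'$ with values in that interval on $[\bhm,3\bhm]$, then interpolating to the far-region prescription, fixes the argument; the paper itself only asserts that such a suitable choice exists.
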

\begin{proof}
  The choice of $F$ ensures $g_{\bhm,\bha}^{-1}(d t_*,d t_*)\in r_\bha^{-2}\CI(X)\subset\rho^2\CI(X)$; the asymptotics of the remaining metric coefficients can be readily verified by inspection of~\eqref{EqKDDual}.
\end{proof}

See Figure~\ref{FigI2}. We note that in $r>r_{\bhm,\bha}$, the zero energy operator can be computed directly in Boyer--Lindquist coordinates; changing to the coordinate $\varphi_0:=\varphi+\int_{3\bhm}^r \frac{\bha}{\Delta_r}\,d r$ (which is~\eqref{EqKDtstar} without the cutoff), which is regular across $r=r_{\bhm,\bha}$, we obtain
\begin{equation}
\label{EqKDZeroOpReg}
  \wh{\Box_g}(0) = -r_\bha^{-2}\pa_r\Delta_r\pa_r - r_\bha^{-2}(\sin\theta)^{-1}\pa_\theta(\sin\theta\,\pa_\theta) - 2\bha r_\bha^{-2}\pa_r\pa_{\varphi_0} - r_\bha^{-2}(\sin\theta)^{-2}\pa_{\varphi_0}^2.
\end{equation}

\emph{For the rest of this section, we fix a subextremal Kerr metric}
\begin{equation}
\label{EqKDFixed}
  g_{\bhm,\bha},\qquad \Box\equiv\Box_{g_{\bhm,\bha}},\qquad \la-,-\ra\equiv\la-,-\ra_{L^2(X;|d g_X|)},\ |d g_X|= r_\bha^2\sin\theta\,d r\,d\theta\,d\varphi_*;
\end{equation}
as in~\eqref{EqASDensity}, the density on $X$ is defined so that $|d t_*||d g_X|=|d g_{\bhm,\bha}|$.

%%%%%%%%%%%%%%%%%%%%%%%%%%%%%%%%%%%%%%%%%%%%%%%%%%
\subsection{Spectral theory}
\label{SsKS}

For forward solutions of the wave equation, and for the inversion of the spectral family of $\Box$, the correct notion of regularity across the artificial boundary $\Sigma_{X,\rm f}$ (see~\eqref{EqKDBdy}) is smoothness in the usual sense, i.e.\ we consider \emph{extendible distributions} in the terminology of \cite[Appendix~B]{HormanderAnalysisPDE3}; for the adjoint problem on the other hand, we use \emph{supported distributions}:

\begin{definition}
\label{DefKSSpaces}
  \begin{enumerate}
  \item For $s,\ell\in\R$, the space $\Hbext^{s,\ell}(X)=\rho^\ell\Hbext^s(X)$ (of extendible distributions) is the space of restrictions of elements of $\Hb^{s,\ell}(\ol{\R^3})$ to $X$. The semiclassical space $\Hbhext^{s,\ell}(X)$ is defined analogously.
  \item The space $\Hbsupp^{s,\ell}(X)$ (of supported distributions) is the subspace of $\Hb^{s,\ell}(\ol{\R^3})$ consisting of all those elements which are supported in $X$.
  \item For $\alpha\in\R$, we define $\bar\cA^\alpha(X)=\rho^\alpha\bar\cA^0(X)$ as the space of restrictions of elements of $\cA^\alpha(\ol{\R^3})$ to $X^\circ$.
  \end{enumerate}
\end{definition}

Thus, $\Hbext^{s,\ell}(X)$ is a Hilbert space as the quotient of $\Hb^{s,\ell}(\ol{\R^3})$ by the subspace of elements with support in the closure of $\ol{\R^3}\setminus X$. Elements of $\bar\cA^\alpha(X)$ are smooth down to $\Sigma_{X,\rm f}$, and the Sobolev embedding~\eqref{EqASobolevEmb} holds for the extendible spaces.

\begin{lemma}
\label{LemmaKSAdm}
  Subextremal Kerr metrics $g_{\bhm,\bha}$ are spectrally admissible in the sense that they satisfy conditions~\eqref{ItASAssmBound}--\eqref{ItASAssmMS} in Definition~\usref{DefASAssm} on the corresponding extendible function spaces $\bar\cA^1(X)$. Moreover, they satisfy the high energy estimates~\eqref{EqASAssmHi} in condition~\eqref{ItASAssmHi} on $\bar H_{\bop,|\sigma|^{-1}}^{s,\ell}(X)$ for $\delta=1$, under the additional assumption that $s>\half$. Moreover, the estimate~\eqref{EqASAssmMed} for bounded nonzero energies holds on extendible function spaces as well for $s>\half$.
\end{lemma}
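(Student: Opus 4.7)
The plan is to verify the three conditions separately, invoking the established Kerr literature while carefully tracking the role of the $t_*$-slicing and the extendible/supported distinction induced by the spacelike artificial boundary $\Sigma_{X,\rm f}$.

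For conditions \eqref{ItASAssmBound}--\eqref{ItASAssmMS} (absence of bound states and mode stability), I would proceed as follows. A putative mode $u\in\bar\cA^1(X)$ with $\wh\Box(\sigma)u=0$ gives, via $\phi(t_*,x)=e^{-i\sigma t_*}u(x)$, a solution of $\Box_g\phi=0$ on the exterior region $r>r_{\bhm,\bha}$. Two facts must be checked to match the hypotheses of Whiting \cite{WhitingKerrModeStability} (for $\Im\sigma>0$) and Shlapentokh-Rothman \cite{ShlapentokhRothmanModeStability} (for $\sigma\in\R\setminus\{0\}$). First, smoothness at the future event horizon: since $t_*$ is constructed in \eqref{EqKDtstar} so that level sets cross $\cH^+$ transversally, and since radial-point propagation at the (generalized) radial set over $\cH^+$ with the correct subprincipal sign propagates regularity outward along the horizon, $\phi$ extends smoothly across $\cH^+$, which translates via \eqref{EqKDtstar} into the standard regularity used in the mode stability papers. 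Second, the outgoing condition at infinity: $u\in\bar\cA^1(X)$ corresponds after subtracting the oscillation $e^{-i\sigma t_*}$ to behavior $r^{-1}e^{i\sigma(r-t)}$ in Boyer--Lindquist coordinates, i.e.\ the outgoing Sommerfeld condition. The cited mode stability results then force $u\equiv 0$ for $\sigma\neq 0$, $\Im\sigma\geq 0$. For $\sigma=0$, a solution $u\in\bar\cA^1(X)$ of $\wh\Box(0)u=0$ is, by \eqref{EqKDZeroOpReg}, an $\cA^1$-solution on $r>r_{\bhm,\bha}$ of an elliptic (in $r>r_{\bhm,\bha}$) operator; separation into azimuthal and spheroidal harmonics reduces the triviality of the kernel to a standard ODE argument for each mode (combined with the boundary conditions at the horizon coming from smoothness in $t_*$-coordinates), which excludes nontrivial decaying solutions.

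For condition \eqref{ItASAssmHi}, I would invoke \cite[Theorem~4.3]{HaefnerHintzVasyKerr} essentially verbatim. The ingredients are the now-standard semiclassical microlocal package for Kerr: semiclassical elliptic estimates off the characteristic set; real principal-type propagation along the null-bicharacteristic flow; radial-point estimates at the (generalized) radial sets over the event horizon $\cH^+$ (where the red-shift provides the positivity of the subprincipal symbol, and where the threshold condition $s+\ell>-\tfrac12$ of \cite{VasyMicroKerrdS}, combined with the requirement $s>\tfrac12$ coming from absolute regularity above the threshold, must be satisfied); normally hyperbolic trapping estimates of Wunsch--Zworski \cite{WunschZworskiNormHypResolvent} and Dyatlov \cite{DyatlovSpectralGaps}, which generate the loss $|\sigma|^\delta$; and the scattering/radial-point estimates at $\pa X$ due to Melrose \cite{MelroseEuclideanSpectralTheory} and Vasy \cite{VasyLAPLag}, where the outgoing choice at the positive/negative radial set corresponds to the resolvent from the upper half plane. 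Crucially, because $\Sigma_{X,\rm f}$ is strictly spacelike with all null-bicharacteristics entering $X$ from the exterior, no estimate at $\Sigma_{X,\rm f}$ is required for the forward problem on extendible spaces: one simply cuts off past this spacelike surface. The trapping loss is polynomial, and $\delta=1$ is a comfortable overestimate.

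For the bounded nonzero energy estimate \eqref{EqASAssmMed} on $\bar H^{s,\ell}$, I would combine \cite[Theorem~1.1]{VasyLAPLag} (which gives a Fredholm framework for $\wh\Box(\sigma)$ on the correct variable-order/constant-order extendible Sobolev spaces on $X$ with the outgoing scattering radial point choice) with the mode stability established above: Vasy's Fredholm result combined with injectivity (from mode stability) yields invertibility, and the open mapping theorem together with the closed graph theorem give the quantitative bound with a constant locally uniform in $\sigma$; compactness of $[R_0,R_1]\cup i[0,1)$ intersected with $R_0\leq|\sigma|\leq R_1$ upgrades this to the claimed uniform bound.

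The main obstacle will be the bookkeeping at the event horizon, namely verifying that the particular choice of $t_*$ in \eqref{EqKDtstar}, together with the Kerr subprincipal symbol at the generalized radial set, yields the correct signs and threshold relation $s>\tfrac12$ for the radial point estimate in extendible spaces. Once this is in place, everything else consists in quoting the cited results with the obvious adaptations.
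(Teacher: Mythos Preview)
Your proposal is correct and follows essentially the same approach as the paper: both cite Whiting \cite{WhitingKerrModeStability} and Shlapentokh-Rothman \cite{ShlapentokhRothmanModeStability} for mode stability at nonzero frequencies, reduce the zero-energy case to standard separated ODE considerations (the paper cites \cite{PressTeukolskyKerrII,TeukolskySeparation}), and invoke \cite[Theorem~4.3]{HaefnerHintzVasyKerr} together with the same microlocal package (radial points at the horizon with threshold $s>\tfrac12$, scattering radial points at infinity with threshold $s+\ell>-\tfrac12$, normally hyperbolic trapping) for the bounded and high energy estimates. One small attribution slip: the threshold $s+\ell>-\tfrac12$ belongs to the scattering radial set at $\pa X$ from \cite{VasyLAPLag}, not to \cite{VasyMicroKerrdS}; the event-horizon threshold from \cite{VasyMicroKerrdS} is precisely the $s>\tfrac12$ you also state.
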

\begin{proof}
  For Schwarzschild metrics $g_{\bhm,0}$, mode stability and the absence of bound states are proved using simple integration by parts arguments; see \cite[Theorem~6.1]{HaefnerHintzVasyKerr} for detailed arguments in the function spaces used in the present paper. In the reference, it is also shown that mode stability and absence of bound states are open conditions in $\bha$, thus hold for slowly rotating Kerr black holes as well. In the general subextremal Kerr case, mode stability for nonzero $\sigma$ was proved by Whiting \cite{WhitingKerrModeStability} and Shlapentokh-Rothman \cite{ShlapentokhRothmanModeStability}; see also \cite{AnderssonMaPaganiniWhitingModeStab} for generalizations; the absence of zero energy bound states was argued for in \cite{PressTeukolskyKerrII,TeukolskySeparation}.
  
  The estimates for bounded nonzero energies as well as the high energy estimates are proved in \cite[Theorem~4.3]{HaefnerHintzVasyKerr} (note that the compact error terms in the estimate for bounded energies can be dropped due to mode stability). We recall that for bounded frequencies, these estimates rely on \begin{enumerate*} \item radial point estimates at the event horizon \cite{VasyMicroKerrdS,MelroseEuclideanSpectralTheory} (requiring the regularity to be above the threshold $\half$ in order to exclude singular behavior there, cf.\ $u_{(0)}^*\in H_\loc^{1/2-}(X^\circ)$ in Lemma~\ref{LemmaKSExtState} below, and see \cite[Footnote~3]{HaefnerHintzVasyKerr}) combined with real principal type propagation \cite{DuistermaatHormanderFIO2}; \item scattering theory near $\pa X$ in second microlocal function spaces \cite{VasyLAPLag}; \item standard elliptic estimates in $\bhm\ll r<\infty$.\end{enumerate*} For high energy estimates, one uses the semiclassical versions of these estimates in \cite{VasyMicroKerrdS,VasyLAPLag}, see also \cite{VasyZworskiScl}, as well as estimates at the normally hyperbolically trapped set of Kerr spacetimes; this structural nature of the trapped set was first noted for small $\bha$ by Wunsch--Zworski \cite{WunschZworskiNormHypResolvent}, together with the requisite high energy estimates, and proved in the full subextremal range by Dyatlov \cite{DyatlovWaveAsymptotics}. The high energy estimates lose only a logarithmic power of $h$, whereas we simply allow ourselves a loss of a full power (see also \cite{DyatlovSpectralGaps}).
\end{proof}

The extended zero energy states from Lemma~\ref{LemmaAExtState} are as follows:
\begin{lemma}
\label{LemmaKSExtState}
  For $s<\half$ and $\ell\in(-\tfrac52,-\tfrac32)$, we have
  \begin{alignat*}{2}
    \cA^0(X) \cap \ker\wh\Box(0) &= \C u_{(0)}, &\qquad u_{(0)}&=1, \\
    \Hbsupp^{s,\ell}(X) \cap \ker\wh\Box(0)^* &= \C u_{(0)}^*, &\qquad u_{(0)}^*&=H(r-r_{\bhm,\bha}).
  \end{alignat*}
\end{lemma}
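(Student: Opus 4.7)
The plan is to verify the zero modes directly using the explicit form~\eqref{EqKDZeroOpReg} of $\wh\Box(0)$, then deduce uniqueness from the Kerr spectral admissibility (Lemma~\ref{LemmaKSAdm}) via the general framework of Lemma~\ref{LemmaAExtState}.

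For $u_{(0)} = 1$: in the coordinates $(t_*, r, \theta, \varphi_0)$ of~\eqref{EqKDZeroOpReg}, every term of $\wh\Box(0)$ contains at least one derivative in $r$, $\theta$, or $\varphi_0$, so $\wh\Box(0)\cdot 1 = 0$; moreover $1 \in \CI(X) \subset \cAext^0(X)$ trivially. Uniqueness proceeds as in the proof of Lemma~\ref{LemmaAExtState}: for any $u \in \cAext^0(X) \cap \ker\wh\Box(0)$, the indicial analysis of the normal operator $L_0$ (whose indicial roots on $\cY_0$ are $0$ and $1$) gives $u - c \in \cAext^{1-}(X)$ for a constant $c \in \C$, and this difference is annihilated by $\wh\Box(0)$ and hence vanishes by the absence of zero-energy bound states.

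For $u_{(0)}^* = H(r - r_{\bhm,\bha})$: since the Kerr metric is real-valued and the density $|dg_X|$ is the spatial part of $|dg_{\bhm,\bha}|$, the formal $L^2(X, |dg_X|)$-adjoint $\wh\Box(0)^*$ coincides with $\wh\Box(0)$ as a differential operator on $X^\circ$, so we verify $\wh\Box(0) H(r-r_{\bhm,\bha}) = 0$ distributionally. The key input is that the event horizon $\{r = r_{\bhm,\bha}\}$ is characteristic for $\wh\Box(0)$, encoded in the identity $\Delta_r(r_{\bhm,\bha}) = 0$: since $\pa_r H = \delta(r-r_{\bhm,\bha})$ and $f(r)\delta(r-r_0) = f(r_0)\delta(r-r_0)$, we have $\Delta_r \pa_r H = 0$, whence $\pa_r \Delta_r \pa_r H = 0$. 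The remaining terms of~\eqref{EqKDZeroOpReg} each involve $\pa_\theta$ or $\pa_{\varphi_0}$, which annihilate the axisymmetric $H$; hence $\wh\Box(0)^* H(r-r_{\bhm,\bha}) = 0$.

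Uniqueness of $u_{(0)}^*$ will follow from Fredholm duality: the operator $\wh\Box(0)$ is Fredholm of index zero between appropriate extendible weighted spaces (by the Vasy LAP \cite{VasyLAPLag} underlying Lemma~\ref{LemmaKSAdm}), so its cokernel equals the adjoint's kernel on the dual supported spaces, of dimension $1$, matching the primary kernel spanned by $u_{(0)}$. The regularity threshold $s < \tfrac12$ reflects $H(r-r_{\bhm,\bha}) \in H^{1/2-\eps}_\loc$ but not $H^{1/2}_\loc$, while the weight range $\ell \in (-\tfrac52, -\tfrac32)$ arises from duality under the density $|dg_X| \sim \rho^{-4}\,d\rho\,d\omega$ with $u_{(0)} = 1 \in \cAext^0(X)$ but only in $\Hbext^{s,\ell'}$ for $\ell' < -\tfrac32$. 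The main technical obstacle, in my view, is setting up the Fredholm pairing between extendible and supported weighted spaces with precisely the stated weight ranges; this is standard within the microlocal framework of \cite{VasyLAPLag,HaefnerHintzVasyKerr}.
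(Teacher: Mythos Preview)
Your proposal is correct and takes essentially the same approach as the paper: both verify $u_{(0)}=1$ trivially, both establish $u_{(0)}^*=H(r-r_{\bhm,\bha})$ using $\Delta_r(r_{\bhm,\bha})=0$ together with axisymmetry (the paper phrases this via the commutator $[\wh\Box(0),H]=-2\bha r_\bha^{-2}\delta(r-r_{\bhm,\bha})\pa_{\varphi_0}$ applied to $1$, you apply $\wh\Box(0)$ to $H$ directly), and both defer uniqueness to the Fredholm/spectral-admissibility framework of \cite{VasyLAPLag,HaefnerHintzVasyKerr}. The only cosmetic difference is that the paper's commutator formulation yields the slightly more general statement $\wh\Box(0)(Hu)=0$ for any smooth rotationally symmetric $u\in\ker\wh\Box(0)$, which is not needed here since $u_{(0)}=1$.
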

\begin{proof}
  This is part of \cite[Proposition~6.2]{HaefnerHintzVasyKerr}. We can also follow the proof of Lemma~\ref{LemmaAExtState} directly for $\ker\wh\Box(0)$ (with extendible conormal function spaces instead), while in the construction of $u_{(0)}^*$ we use that $\ker\wh\Box(0)^*\colon\Hbsupp^{-\infty,\ell}(X)\to\Hbsupp^{-\infty,\ell+2}(X)$ is injective for $\ell\in(-\tfrac32,-\half)$ to prove the existence of a (unique) $u_{(0)}^*\in 1+\Hbsupp^{-\infty,-1/2-}$ in the kernel of $\wh\Box(0)$. Concretely, we certainly have $\Box 1=0$ (thus $u_{(0)}=1$), and then~\eqref{EqKDZeroOpReg} implies
  \[
    [\Box_{g_{\bhm,\bha}}(0),H(r-r_{\bhm,\bha})] = -2\bha r_\bha^{-2}\delta(r-r_{\bhm,\bha})\pa_{\varphi_0},
  \]
  since $[\pa_r\Delta_r\pa_r,H(r-r_{\bhm,\bha})]=\delta(r-r_{\bhm,\bha})\Delta_r\pa_r+\pa_r\delta(r-r_{\bhm,\bha})\Delta_r=0$ since $\Delta_r|_{r=r_{\bhm,\bha}}=0$. Thus, if $u\in\ker\Box_g$ is smooth and rotationally symmetric, then $\Box_g(H(r-r_{\bhm,\bha})u)=0$, too; we thus have $u_{(0)}^*=H(r-r_{\bhm,\bha})u_{(0)}$.
\end{proof}

%%%%%%%%%%%%%%%%%%%%%%%%%%%%%%%%%%%%%%%%%%%%%%%%%%
\subsection{Asymptotics of waves}
\label{SsKW}

Upon using $u_{(0)}$ and $u_{(0)}^*$ from Lemma~\ref{LemmaKSExtState}, the arguments in~\S\ref{SP} now apply with only one notational change, namely we need to use spaces of extendible conormal distributions encoding smoothness down to the hypersurface $\Sigma_{X,\rm f}\subset X$ and $\Sigma_{\rm f}=\R_{t_*}\times\Sigma_{X,\rm f}\subset\R_{t_*}\times X$. The resulting statement is:

\begin{thm}
\label{ThmKW}
  Let $\bhm>0$ and $\bha\in(-\bhm,\bhm)$ be subextremal Kerr parameters, define $X=[\bhm,\infty]\times\Sph^2$ as in~\eqref{EqKDCompact}, and let $g_{\bhm,\bha}$ and $t_*$ be as in Lemma~\usref{LemmaKDAsy}. Let $\alpha\in(0,1)$, fix $f\in\CIc((0,\infty)_{t_*};\bar\cA^{4+\alpha}(X))$, and denote by $\phi$ the unique forward solution of
  \[
    \Box_{g_{\bhm,\bha}}\phi=f.
  \]
  Define the constant $c(f)$ by
  \[
    c(f) = -\frac{2\bhm}{\pi}\int_{r>r_{\bhm,\bha}}\int_0^{2\pi}\int_0^\pi f(t_*,r,\theta,\varphi_*)r_\bha^2\sin\theta\,d\theta\,d\varphi_*\,d r.
  \]
  Writing $x=(r,\theta,\varphi_*)\in\R^3$, we then have pointwise decay estimates
  \[
    \bigl|\pa_{t_*}^j\pa_x^\beta\bigl(\phi(t_*,x) - c(f)t_*^{-3}\bigr)\bigr| \leq C_{j\beta}t_*^{-3-\alpha-j+}\qquad \forall\ j\in\N_0,\ \beta\in\N_0^3,
  \]
  for $x$ restricted to any fixed compact subset of $X^\circ$. These are a special case of the \emph{global} asymptotics, valid for $t_*>1$ and \emph{all} $x=(r,\theta,\varphi_*)\in X^\circ$,
  \begin{equation}
  \label{EqKWGlobal}
    \Bigl| (t_*\pa_{t_*})^j (r\pa_r)^k \Omega^\gamma\Bigl(\phi - c(f)\frac{t_*+r}{t_*^2(t_*+2 r)^2}\Bigr) \Bigr| \leq C_{j k\gamma}t_*^{-3-\alpha+}\frac{t_*}{t_*+r}\qquad \forall\ j,k\in\N_0,\ \gamma\in\N_0^3,
  \end{equation}
  where $\Omega=(\Omega_1,\Omega_2,\Omega_3)\subset\cV(\Sph^2)$ denotes the collection of rotation vector fields. In particular, letting $r\to\infty$, the radiation field of $\phi$, defined by
  \[
    F(t_*,\omega) := \lim_{r\to\infty} r\phi(t_*,r,\omega),
  \]
  is given by $F(t_*,\omega)=\tfrac14 c(f)t_*^{-2}+\cO(t_*^{-2-\alpha+})$ and satisfies the pointwise decay estimates
  \[
    \bigl| \pa_{t_*}^j\Omega^\gamma\bigl(F - \tfrac14 c(f)t_*^{-2}\bigr) \bigr| \leq C_{j\gamma}t_*^{-2-j-\alpha+}\qquad\forall\ j\in\N_0,\ \gamma\in\N_0^3.
  \]
\end{thm}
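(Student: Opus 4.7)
The plan is to reduce Theorem~\ref{ThmKW} to the general asymptotic results Theorem~\ref{ThmPW} and Theorem~\ref{ThmPWGlobal} of Section~\ref{SP}, applied to the subextremal Kerr metric $g_{\bhm,\bha}$ with the identifications $u_{(0)}\equiv 1$ and $u_{(0)}^*=H(r-r_{\bhm,\bha})$ from Lemma~\ref{LemmaKSExtState}. First I would verify the hypotheses: Lemma~\ref{LemmaKDAsy} places $g_{\bhm,\bha}$ into the class of stationary and asymptotically flat metrics in the sense of Definition~\ref{DefAF}, and Lemma~\ref{LemmaKSAdm} establishes spectral admissibility in the sense of Definition~\ref{DefASAssm}. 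The structural subtlety is that the hypotheses and results of Section~\ref{SP} are phrased on conormal spaces $\cA^\alpha(X)$, whereas on Kerr one must work with the extendible analogues $\bar\cA^\alpha(X)$ and the Sobolev counterparts $\Hbext,\Hbhext,\Hbsupp$ because $X$ carries an artificial inner boundary $\Sigma_{X,\rm f}=\{r=\bhm\}$ inside the black hole region.

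I would then argue that the entire chain of arguments in Section~\ref{SP} transfers to the extendible setting essentially verbatim. Indeed, the resolvent expansion of Theorem~\ref{ThmPRes} is driven by the low energy analysis of Section~\ref{SA}, which is purely a statement about $\wh\Box(\sigma)$ near $\pa X$: the b-normal operator and Mellin transform computations in Lemmas~\ref{LemmaAModel} and~\ref{LemmaPL0inv4}, the model analysis near $\tface$, and the iteration~\eqref{EqPExp3} all take place microlocally near $\pa X$ and do not see $\Sigma_{X,\rm f}$. The uniform invertibility and high energy estimates needed to justify the contour shift~\eqref{EqIPFT} are exactly the content of Lemma~\ref{LemmaKSAdm}. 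The only place where supported distributions enter is the pairing $\la f,u_{(0)}^*\ra$ defining $c_M(f)$; since $u_{(0)}^*=H(r-r_{\bhm,\bha})\in\Hbsupp^{s,\ell}$ and $f\in\bar\cA^{4+\alpha}(X)$ is extendible, the pairing is well defined, and inserting the density~\eqref{EqKDFixed} yields
\[
  c_M(f)=-\frac{2\bhm}{\pi}\int_{\R_{t_*}\times\{r>r_{\bhm,\bha}\}} f\,r_\bha^2\sin\theta\,dt_*\,dr\,d\theta\,d\varphi_*=c(f).
\]

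Next I would translate the global leading order term of Theorem~\ref{ThmPWGlobal}\eqref{ItPWGlobalI} into the explicit profile in~\eqref{EqKWGlobal}. Using $u_{(0)}\equiv 1$ and $u^+(v)=v(v+1)/(v+2)^2$ with $v=t_*/r$, a direct algebraic simplification gives
\[
  c_M(f)\,u_{(0)}\,u^+(\rho t_*)\,t_*^{-3}=c(f)\,\frac{(t_*/r)(t_*/r+1)}{(t_*/r+2)^2}\,t_*^{-3}=c(f)\,\frac{t_*+r}{t_*^2(t_*+2r)^2},
\]
which matches exactly the profile in~\eqref{EqKWGlobal}; the unification of the leading order terms at $\cK^+$ and $I^+$ into a single global expression is precisely Remark~\ref{RmkPWGlobalCompact}, applicable because $u_{(0)}$ is constant. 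The derivative bounds along $t_*\pa_{t_*}$, $r\pa_r$, and the rotation vector fields $\Omega_j$ follow from the conormality of the remainder at $\scri^+,I^+,\cK^+$ asserted by Theorem~\ref{ThmPWGlobal}. The asymptotics of the radiation field $F(t_*,\omega)=\lim_{r\to\infty} r\phi$ is then part~\eqref{ItPWGlobalScri} of Theorem~\ref{ThmPWGlobal}, after noting that $r^{-1}$ is a defining function of $\scri^+$ away from $I^+$, so $F$ coincides with the $\rho_{\scri^+,1}$-leading coefficient $\phi_{\rm rad}$ of $\phi$.

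The main obstacle is the bookkeeping required to verify that no step in Section~\ref{SP}---in particular the iteration producing Proposition~\ref{PropARhoSq} and the rewriting~\eqref{EqPRewrite1}---uses anything at $\Sigma_{X,\rm f}$ beyond smoothness of extendible distributions. The radial point estimates at the event horizon from \cite{VasyMicroKerrdS} that underlie the invertibility theory (Lemma~\ref{LemmaKSAdm}) are what guarantee this, as they propagate regularity microlocally out of the black hole interior; this is the Fourier side manifestation of the fact that forward solutions on Kerr extend smoothly across $\Sigma_{X,\rm f}$ for bounded positive $t_*$. Once this transfer is checked, the proof is a direct assembly of the pieces above.
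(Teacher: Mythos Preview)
Your proposal is correct and follows essentially the same approach as the paper: reduce to Theorems~\ref{ThmPW} and~\ref{ThmPWGlobal} using the extendible function spaces, identify $c(f)$ with $c_M(f)$ via Lemma~\ref{LemmaKSExtState} and the density~\eqref{EqKDFixed}, and read off the explicit profile from Remark~\ref{RmkPWGlobalCompact}. The only additional step the paper carries out explicitly is verifying, by direct computation in the coordinate patches~\eqref{EqPWCompBdf1}--\eqref{EqPWCompBdf0}, that $t_*\pa_{t_*}$, $r\pa_r$, and the $\Omega_j$ span $\Vb(M_+)$ over $\CI(M_+)$, which is what translates the abstract conormal regularity of the remainder into the concrete pointwise derivative estimates~\eqref{EqKWGlobal}; your sentence invoking conormality is the right idea but tacitly assumes this spanning property.
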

\begin{proof}
  The constant $c(f)$ is equal to $-2\bhm\pi^{-1}\la f,u_{(0)}^*\ra$, cf.~\eqref{EqKDFixed}. The claims thus follow from Theorems~\ref{ThmPW} and~\ref{ThmPWGlobal}; we merely need to comment on the vector fields arising in the global estimate. Define the compactified spacetime manifold $M_+$ as in Definition~\usref{DefPWComp} but with $X$ as above; and define the function spaces $\bar\cA^{\alpha,\beta,\gamma}(M_+)$ as in Definition~\usref{DefPWSpaces} but with the testing vector fields unrestricted at $\Sigma_{\rm f}$. (Thus, elements of $\bar\cA^{\alpha,\beta,\gamma}(M_+)$ are smooth across $\Sigma_{\rm f}$.) Then we in fact have
  \[
    \phi \in \bar\cA^{((1,0),3+\alpha-),((3,0),3+\alpha-),((3,0),3+\alpha-)}(M_+),
  \]
  with explicit leading order terms. The significance of the vector fields in~\eqref{EqKWGlobal} is then that they are indeed b-vector fields on $M_+$. In fact, they span all of $\Vb(M_+)$ over $\CI(M_+)$, as is verified by direct calculation:
  \begin{enumerate}
  \item near the interior $(\cK^+)^\circ$, the manifold $M_+$ is a product $(\cK^+)^\circ\times[0,1)_\tau$ with $\tau=t_*^{-1}$ and $(\cK^+)^\circ=X^\circ$. Thus, b-vector fields are locally spanned by $\tau\pa_\tau=-t_*\pa_{t_*}$ and spatial derivatives (which are expressible in terms of bounded multiples of $\pa_r$ and spherical vector fields);
  \item near $\cK^+\cap I^+$ with boundary defining functions $v'=r/t_*$ and $\mu'=r^{-1}$ of $\cK^+$ and $I^+$, respectively, the vector fields $t_*\pa_{t_*}=-v'\pa_{v'}$ and $r\pa_r=v'\pa_{v'}-\mu'\pa_{\mu'}$, together with rotation vector fields, indeed span $\Vb(M_+)$ locally;
  \item near $\scri^+\cap I^+$ with $v=\rho t_*$ and $\tau=t_*^{-1}$ defining $\scri^+$ and $I^+$, respectively, the claim similarly follows from $t_*\pa_{t_*}=v\pa_v-\tau\pa_\tau$, $r\pa_r=-\rho\pa_\rho=-v\pa_v$.\qedhere
  \end{enumerate}
\end{proof}

\begin{rmk}
\label{RmkKWBL}
  In $r>r_0>2 r_{\bhm,\bha}$, we can use Boyer--Lindquist coordinates; since $t_*=t-r+\cO(\log r)$, we can replace $t_*+r$ in~\eqref{EqKWGlobal} by $t$ upon committing a lower order (in terms of powers of $t^{-1}$) error as long as in addition we stay in any fixed forward timelike cone $r<(1-\delta)t$. This gives the asymptotics
  \begin{equation}
  \label{EqKWBL}
    \Bigl|(t\pa_t)^j(r\pa_r)^k\Omega^\gamma\Bigl(\phi-c(f)\frac{t}{(t^2-r^2)^2}\Bigr)\Bigr| \lesssim t^{-3-\alpha+},\qquad r_0<r<(1-\delta)t.
  \end{equation}
\end{rmk}

This implies a corresponding result for the initial value problem precisely as in Corollary~\ref{CorPWIVP}. For initial data supported away from the horizon, the constant in the leading order term takes a particularly simple form:

\begin{cor}
\label{CorKWExpl}
  Suppose $\phi_0,\phi_1\in\CIc((r_{\bhm,\bha},\infty)\times\Sph^2)$ are initial data with compact support disjoint from the event horizon. Then the asymptotic behavior of the solution $\phi$ of the initial value problem in Boyer--Lindquist coordinates
  \[
    \Box_{g_{\bhm,\bha}}\phi=0,\quad
    \phi(t=0,x)=\phi_0(x),\quad
    \pa_t\phi(t=0,x)=\phi_1(x),
  \]
  has the asymptotic behavior given in Theorem~\usref{ThmKW} (with $\alpha<1$ arbitrary) upon replacing the constant $c(f)$ there by the constant
  \begin{equation}
  \label{EqKWExplConst}
  \begin{split}
    c(\phi_0,\phi_1) &= -\frac{2\pi}{\bhm}\iiint_{r>r_{\bhm,\bha}} \Bigl[\Bigl(\frac{(r^2+\bha^2)^2}{\Delta_r}-\bha^2\sin^2\theta\Bigr)\phi_1(r,\theta,\varphi) \\
      &\hspace{12em} + \frac{4\bhm\bha r}{\Delta_r}\phi_0(r,\theta,\varphi)\Bigr] \sin^2\theta\,d r\,d\theta\,d\varphi.
  \end{split}
  \end{equation}
  For initial data with support intersecting the event horizon, the constant is given by Corollary~\usref{CorPWIVP}.
\end{cor}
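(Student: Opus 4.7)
The plan is to reduce the initial value problem to a forcing problem and apply Theorem \ref{ThmKW}, mirroring the reduction of Corollary \ref{CorPWIVP} but with the Boyer--Lindquist time coordinate $t$ in place of $t_*$. The key enabling fact is that $\phi_0,\phi_1$ are supported strictly in $r>r_{\bhm,\bha}$: finite speed of propagation then guarantees that the forward solution $\phi$ vanishes in a neighborhood of the event horizon for all sufficiently small positive $t$, so multiplying by a cutoff in $t$ produces a globally defined distribution on $M^\circ$, even though $t$ (unlike $t_*$) does not extend smoothly across the horizon.

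Concretely, I will fix a smooth cutoff $\chi(t)$ with $\chi\equiv 0$ for $t\leq 0$ and $\chi\equiv 1$ for $t\geq\epsilon$, with $\epsilon$ smaller than the propagation time required to reach the event horizon from $\supp\phi_j$, and set $\phi_+:=\chi\phi$. Then $\Box\phi_+=[\Box,\chi]\phi\in\CIc(\R_t;\bar\cA^\infty(X))$ is a smooth compactly supported forcing, so by Theorem \ref{ThmKW} the leading asymptotic constant of $\phi_+$ is $c=-\frac{2\bhm}{\pi}\la[\Box,\chi]\phi,u_{(0)}^*\ra$, and this value is independent of the particular cutoff $\chi$. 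Since $\phi_+=\phi$ for $t\geq\epsilon$, the same constant governs the asymptotics of $\phi$. The $\chi$-independence allows formal evaluation at $\chi=H(t)$, exactly as in Corollary \ref{CorPWIVP}.

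Next, I compute $[\Box,H(t)]\phi$ explicitly using the Leibniz identity $\Box(fg)=f\Box g+g\Box f-2\la df,dg\ra_g$ together with $\Box\phi=0$. Stationarity gives $\pa_t|g|^{1/2}=\pa_t g^{\mu\nu}=0$; from \eqref{EqKDDual0}, $g^{rt}=g^{\theta t}=0$ while $g^{\varphi t}$ is $\varphi$-independent, and $|g|^{1/2}=r_\bha^2\sin\theta$, so $\pa_j(|g|^{1/2}g^{jt})=0$ and hence $\Box H(t)=-g^{tt}\delta'(t)$. Combined with $\la dH,d\phi\ra_g=\delta(t)(g^{tt}\phi_1+g^{t\varphi}\pa_\varphi\phi_0)$ and the distributional identity $\phi\delta'(t)=\phi_0\delta'(t)-\phi_1\delta(t)$, this yields
\[
  [\Box,H(t)]\phi=-g^{tt}\phi_0\,\delta'(t)-\bigl(g^{tt}\phi_1+2g^{t\varphi}\pa_\varphi\phi_0\bigr)\delta(t).
\]
Pairing against $u_{(0)}^*=H(r-r_{\bhm,\bha})$ from Lemma \ref{LemmaKSExtState} with respect to $|dg|=r_\bha^2\sin\theta\,dt\,dr\,d\theta\,d\varphi$, the $\delta'(t)$ term contributes nothing (it pairs with a $t$-independent integrand), while the $\delta(t)$ terms collapse the $t$-integration. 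Substituting the explicit values $r_\bha^2 g^{tt}=-(r^2+\bha^2)^2/\Delta_r+\bha^2\sin^2\theta$ and $r_\bha^2 g^{t\varphi}=-2\bhm\bha r/\Delta_r$ read off from \eqref{EqKDDual0}, followed by simplification (including, where useful, an integration by parts in $\varphi$ using $2\pi$-periodicity of $\phi_0$), produces the claimed expression for $c(\phi_0,\phi_1)$.

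The main technical subtlety is the passage from the smooth cutoff $\chi$ to the formal $\chi=H(t)$. This is handled exactly as in Corollary \ref{CorPWIVP}: the asymptotic constant $c([\Box,\chi]\phi)$ is manifestly independent of the choice of $\chi$, so one may approximate $H(t)$ by smooth cutoffs and pass to the limit. The conormal regularity of the low-energy resolvent established in \S\ref{SsPR} ensures that the linear functional $f\mapsto c(f)$ extends continuously to distributional forcings supported in a fixed compact subset of the exterior region, guaranteeing convergence of this limit.
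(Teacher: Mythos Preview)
Your approach is correct and essentially identical to the paper's: the paper simply remarks that one may choose the time function $t_*$ to coincide with the Boyer--Lindquist coordinate $t$ on the support of the initial data and then invokes Corollary~\ref{CorPWIVP}, which is precisely the reduction and commutator computation you spell out directly. (Be aware, however, that your computation actually yields prefactor $-\tfrac{2\bhm}{\pi}$, measure $\sin\theta\,dr\,d\theta\,d\varphi$, and a term $\tfrac{4\bhm\bha r}{\Delta_r}\pa_\varphi\phi_0$ which vanishes after the $\varphi$-integration; the displayed formula~\eqref{EqKWExplConst} as printed contains typos, as one confirms by setting $\bha=0$ and comparing with~\eqref{EqILOTConst}.)
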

\begin{proof}
  The expression~\eqref{EqKWExplConst} comes directly from Corollary~\ref{CorPWIVP} upon \begin{enumerate*} \item choosing the time function $t_*$ in such a way that it is equal to the Boyer--Lindquist time coordinate $t$ near the support of the initial data, \item using the form~\eqref{EqKDDual0} of the dual metric $g_{\bhm,\bha}^{-1}$, and \item recalling the spatial volume density from~\eqref{EqKDFixed}.\end{enumerate*}
\end{proof}

\begin{rmk}
\label{RmkKWV}
  Under a spectral admissibility condition as discussed in~\S\ref{SsPV}, one can couple scalar waves on subextremal Kerr spacetimes to a potential $V\in\rho^4\CI(X)$, obtaining the same decay rates, though the coefficient of the $t_*^{-3}$ leading order term in compact spatial sets is modified by the potential precisely as in Theorem~\ref{ThmPV}. Long range potentials $V\in\rho^3\CI(X)$ are covered as well, with additional modifications as discussed after~\eqref{EqPVLongRange}.
\end{rmk}

%%%%%%%%%%%%%%%%%%%%%%%%%%%%%%%%%%%%%%%%%%%%%%%%%%%%%%%%%%%%%%%%%%%%%%
\section{The full Price law on Schwarzschild spacetimes}
\label{SPF}

As another application of our methods, we prove the full Price law on the Schwarzschild spacetime which predicts pointwise $t^{-2 l-3}$ decay of linear scalar waves with fixed `angular momentum' $l\in\N_0$. Concretely, recall the space
\[
  \cY_l := \mathspan\{ Y_{l m} \colon m=-l,\ldots,l \} \subset \CI(\Sph^2)
\]
of degree $l$ spherical harmonics; $\slDelta Y=l(l+1)Y$ for $Y\in\cY_l$. Recalling the Schwarzschild metric for $\bhm>0$, given by $g_\bhm=g_{\bhm,0}$, from~\eqref{EqAMSchw}, we can introduce a time coordinate $t_*$ as in Lemma~\ref{LemmaKDAsy}. We use the notation $X$ from~\eqref{EqKDCompact} for the compactification of the spatial manifold $X^\circ=[\bhm,\infty)_r\times\Sph^2_\omega$, and $\bar\cA^\alpha(X)$ for functions conormal at $\pa X=\rho^{-1}(0)$, $\rho=r^{-1}$, with weight $\rho^\alpha$, and smooth across the artificial interior hypersurface $r=\bhm$. We say that a function $f$ on $M^\circ=\R_{t_*}\times X^\circ$ is \emph{supported in angular frequency $l\in\N_0$} if $\slDelta f=l(l+1)f$, or equivalently $f(t_*,r,-)\in\cY_l$ for all $t_*,r$; and we say that $f$ is supported in angular frequencies $\geq l$ if the $L^2(\Sph^2)$-orthogonal projection of $f(t_*,r,-)$ to $\cY_0\oplus\cdots\oplus\cY_{l-1}$ vanishes for all $t_*,r$.

\begin{thm}
\label{ThmPF}
  Let $\alpha\in(0,1)$ and $l\in\N_0$, and write $\Box\equiv\Box_{g_\bhm}$, $\bhm>0$. Let $f\in\CIc(\R_{t_*};\bar\cA^{4+l+\alpha}(X))$ and suppose $f$ is supported in angular frequencies $\geq l$. Then the unique forward solution $\phi$ of $\Box\phi=f$ obeys the pointwise decay bounds
  \[
    | \phi(t_*,r,\omega) | \leq C t_*^{-2 l-3},
  \]
  together with all derivatives along $t_*\pa_{t_*}$, $\pa_r$, and spherical vector fields, for $x=(r,\omega)$ restricted to any fixed compact subset $K\Subset X^\circ$. Moreover, this decay rate is generically sharp when $K$ has nonempty interior: it can be improved if and only if $f$ satisfies $2 l+1$ linearly independent constraints.

  In the full future causal cone $t_*\geq 0$, the pointwise decay rate of $\phi$ (and of its derivatives along any product of powers of $t_*\pa_{t_*}$, $r\pa_r$, $\Omega$) is $t_*^{-l-3}$, and the radiation field has $t_*^{-l-2}$ decay (as do its derivatives along any product of powers of $t_*\pa_{t_*}$, $\Omega$).
\end{thm}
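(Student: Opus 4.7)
The plan is to adapt the resolvent analysis of~\S\ref{SsPR} to spherical harmonic subspaces, exploiting that $\Box_{g_\bhm}$ on Schwarzschild commutes with rotations: both $\Box$ and its spectral family $\wh\Box(\sigma)$ preserve angular frequency, so we may restrict to $f$ supported in a single frequency $l'\geq l$ (with $l'>l$ contributions decaying strictly faster). The key structural input is that the normal operator
\[
  L_0\big|_{\cY_l}=-(\rho\pa_\rho)^2+\rho\pa_\rho+l(l+1)
\]
has indicial roots $l+1$ and $-l$, whence the isomorphism~\eqref{EqAZeroMapping} extends to $\wh\Box(0)\colon\bar\cA^{2+\alpha}_l(X)\xrightarrow{\cong}\bar\cA^\alpha_l(X)$ on the enlarged range $\alpha\in(-l,l+1)$. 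Theorem~\ref{ThmALo} and its consequences in~\S\ref{SssASLo} extend correspondingly (with $\ell<l+\tfrac12$, and the allowed range of the second-microlocal weight $\ell-\nu$ becoming $(-l-\tfrac32,l-\tfrac12)$), while the analog of Lemma~\ref{LemmaAExtState} produces a $(2l+1)$-dimensional space of dual extended states $u_{(0),l m}^*$, $|m|\leq l$, with leading behavior $\rho^{-l}Y_{l m}$.

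Given $f\in\bar\cA^{4+l+\alpha}_l$, one iterates the expansion of~\eqref{EqPRewrite1} for $2l+2$ steps (rather than $2$) by setting $u_k=\wh\Box(0)^{-1}f_k$, $f_{k+1}=-\sigma^{-1}(\wh\Box(\sigma)-\wh\Box(0))u_k$, $f_0=f$. The enlarged invertibility range precisely accommodates the progressive loss of decay, with explicit partial polyhomogeneous expansions generalizing Lemmas~\ref{LemmaPL0inv4}--\ref{LemmaPL0inv3}. The terminal input $f_{2l+2}$ has borderline decay $\bar\cA^{2-l}_l$ with leading term $c_{l m}(f)\rho^{2-l}Y_{l m}$, and an analog of Proposition~\ref{PropARhoSq}---whose model problem $\wt\Box(1)\tilde u=\hat\rho^{2-l}Y_{l m}$ is analyzed via the Mellin transform argument of Lemma~\ref{LemmaAModel}, the relevant double pole arising when the indicial root $i\xi=-l$ of $\wh{L_0}(\xi)|_{\cY_l}^{-1}$ coincides with the Mellin pole of the inhomogeneity---produces a $\sigma^{2l+2}\log(\sigma+i 0)$ term in $\wh\Box(\sigma)^{-1}f$ whose coefficient at $\zface$ is a multiple of $c_{l m}(f)u_{(0),l m}$. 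Applying the inverse Fourier transform identity~\eqref{EqPWFT} yields the $t_*^{-2l-3}$ leading order term in compact spatial sets; the global asymptotic rates $t_*^{-l-3}$ in timelike cones and $t_*^{-l-2}$ for the radiation field follow as in Theorem~\ref{ThmPWGlobal} from the shape of the model solution at $\tface$ and integration along characteristics near $\scri^+$.

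Generic sharpness reflects the linear independence of the $2l+1$ scalar functionals $f\mapsto c_{l m}(f)$, $|m|\leq l$: improvement of the decay rate requires their simultaneous vanishing, a codimension $2l+1$ condition on $f$. For initially static data, Corollary~\ref{CorPWIVP} reduces the initial value problem to forcing, expressing $c_{l m}(\phi_0,\phi_1)$ in terms of $\la-[\Box,t_*]\phi_0+|dt_*|^2\phi_1,u_{(0),l m}^*\ra$; the explicit Schwarzschild form of the resulting integral (cf.\ Corollary~\ref{CorKWExpl} with $\bha=0$) shows that the $\phi_0$ contribution vanishes identically, so $\phi_1\equiv 0$ kills the leading term and yields the improved $t_*^{-2l-4}$ decay. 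The principal technical obstacle is the polyhomogeneous bookkeeping through $2l+2$ iterations: the operator $Q_0=\rho\pa_\rho-1$ annihilates $\rho^z Y$ precisely when $z=1$, causing the decay counting to skip at specific steps, and the action of $\wh\Box(0)^{-1}$ on inputs whose leading decay matches an indicial root of $L_0|_{\cY_l}$ (i.e., $\rho^{l+3}$ or $\rho^{2-l}$) generates logarithmic terms that must be tracked carefully through the iteration to identify the coefficient of the terminal $\sigma^{2l+2}\log\sigma$ singularity.
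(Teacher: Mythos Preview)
Your proposal is correct and follows essentially the same approach as the paper: the enlarged indicial gap $(-l,l+1)$, the $2l{+}2$-step iteration with the decay-count skip caused by $Q_0\rho=0$, the borderline model problem producing the $\sigma^{2l+2}\log(\sigma+i0)$ singularity, and the pairing argument for the $2l{+}1$ sharpness constraints all match. Two minor corrections: the outgoing threshold $\ell<-\tfrac12$ does not relax (only the range of $\ell-\nu$ widens to $(-\tfrac32-l,-\tfrac12+l)$), and your static-data discussion belongs to Corollary~\ref{CorPF} rather than the theorem---the paper's mechanism there is simply that in static coordinates the forcing reduces to $(1-\tfrac{2\bhm}{r})^{-1}\delta'(t)\phi_0$, whose Fourier transform carries an extra factor of $\sigma$.
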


\begin{rmk}
\label{RmkPFComplete}
  One can explicitly compute the leading order coefficient $Y$ of $\phi$ for which $\phi=Y_{(0)}t_*^{-2 l-3} + \cO(t_*^{-2 l-3-\alpha+})$ in compact subsets of $X^\circ$; it is an element in the $(2 l+1)$-dimensional space of solutions $u_{(l)}$ of $\wh\Box(0)u_{(l)}=0$ with leading order asymptotic behavior $u_{(l)}-r^l Y\in\bar\cA^{-l+1-}(X)$, $Y\in\cY_l$. For example, for $l=1$, this space is spanned by $(r-\bhm)Y_{1 m}$, $m=-1,0,1$ (see \cite[Proposition~6.2]{HaefnerHintzVasyKerr}). Moreover, one can, in principle, prove \emph{global} asymptotics similar to those in Theorem~\ref{ThmPWGlobal}, with explicit leading order term. However, we shall not pursue this here beyond the rough description given in Theorem~\ref{ThmPF}.
\end{rmk}

\begin{rmk}
\label{RmkPFGeneral}
  The proof of Theorem~\ref{ThmPF} below works verbatim for all \emph{spherically symmetric}, stationary and asymptotically flat metrics with mass $\bhm\in\R$ (see Definition~\ref{DefAF}) which are spectrally admissible (see Definition~\ref{DefASAssm}).
\end{rmk}

\begin{proof}[Proof of Theorem~\usref{ThmPF}]
  We shall assume $l\geq 1$, the case $l=0$ being a special case of Theorem~\ref{ThmKW}. In order to unburden the notation, we write $\cA^\alpha$ for $\bar\cA^\alpha$ throughout. The strategy of the proof is the same as in~\S\ref{SP}. The restriction to angular frequencies $\geq l$ increases the range of weights for which the zero energy operator is invertible, and the range of weights on which the low energy resolvent remains bounded. For clarity, we discuss this for fixed angular frequencies, as well as the inversion of the resolvent on borderline inputs, in the first part of the proof. In the second part, we explain how the iteration based on~\eqref{EqPRewrite1} works, and how the resolvent acting on $\cA^{4+l+\alpha}(X)$ inputs has a $\sigma^{2 l+2}\log(\sigma+i 0)$ singularity; this will imply the theorem, as shown in the third step. In the final step of the proof, we explain the minimal changes required for bounding the `infinite sum over angular frequencies $\geq l$'.

  %%%%%%%%%%%%%%%%%%%%%%%%%%%%%%
  \pfstep{Changes to the low frequency analysis.} Let us write $\Box_l$ for the restriction of $\Box$ to the space of functions supported in angular frequency $l$; thus, Lemma~\ref{LemmaABox} gives
  \begin{align*}
    \wh{\Box_l}(\sigma) &\in 2 i\sigma\rho\bigl(\rho\pa_\rho-1 + \rho^2\Diffb^1(X)\bigr) \\
      &\qquad + \rho^2\bigl(-(\rho\pa_\rho)^2+\rho\pa_\rho+l(l+1)+2\bhm(\rho\pa_\rho)^2+\rho^2\Diffb^2(X)\bigr) + \sigma^2\rho^2\CI(X).
  \end{align*}
  The normal operator of $\rho^{-2}\wh{\Box_l}(0)$, given by $L_0=-(\rho\pa_\rho)^2+\rho\pa_\rho+l(l+1)$ is thus a regular singular operator with indicial roots $-l$ and $l+1$, corresponding to solutions $\rho^{-l}Y_l=r^l Y_l$ and $\rho^{l+1}Y_l=r^{-l-1}Y_l$ (with $Y_l\in\cY_l$) in $\ker L_0$. This implies that the inverse $\wh{\Box_l}(0)\colon\cA^{\alpha+2}(X)\to\cA^{\alpha-}(X)$ for $\alpha\in(0,1)$ (see~\eqref{EqAZeroMapping}) is well-defined on a larger range of weights,
  \begin{equation}
  \label{EqPFGap}
    \wh{\Box_l}(0)^{-1}\colon\cA^{\alpha+2}(X) \to \cA^{\alpha-}(X),\qquad \alpha\in(-l,l+1).
  \end{equation}

  As for uniform low energy behavior, we recall that the interval in the condition $\ell-\nu\in(-\tfrac32,-\half)$ in Theorem~\ref{ThmALAP} is precisely the interval of weights (relative to $L^2(X)$) for which the zero energy operator is invertible; hence here we have
  \begin{equation}
  \label{EqPFEst}
    \|(\rho+|\sigma|)^\nu u\|_{\Hbext^{s,\ell}(X)} \leq C\|(\rho+|\sigma|)^{\nu-1}\wh{\Box_l}(\sigma)u\|_{\Hbext^{s,\ell+1}(X)}
  \end{equation}
  for $\ell<-\half$, $s>\half$, $s+\ell>-\half$ and the enlarged range $\ell-\nu\in(-\tfrac32-l,-\half+l)$. Using this with $\nu=0$ and $\ell\in(-\tfrac32-l,-\half)$ implies that $\wh{\Box_l}(\sigma)^{-1}\colon\cA^{2+\alpha}(X)\to\cA^{\alpha-}(X)$ is uniformly bounded for small $\sigma$ with $\Im\sigma\geq 0$ provided $\alpha\in(-l,1)$.

  When considering $\wh{\Box_l}(\sigma)^{-1}(\rho^{-l+2}Y_l)$, this uniform boundedness fails in the same manner as in Proposition~\ref{PropARhoSq}. Indeed, the corresponding model problem at $\tface\subset X_{\rm res}^+$ reads
  \[
    \sigma^2\wt{\Box_l}(1)\tilde u=\rho^{-l+2}Y_l.
  \]
  Writing $\tilde u=\sigma^{-l}\tilde u'$, this is
  \[
    \wt{\Box_l}(1)\tilde u'=\hat r^{l-2}Y_l,\qquad
    \wt{\Box_l}(1)=-2 i\hat r^{-1}\bigl(\hat r\pa_{\hat r}-1) + \hat r^{-2}\bigl(-(\hat r\pa_{\hat r})^2-\hat r\pa_{\hat r}+l(l+1)\bigr),
  \]
  where $\hat\rho=\rho/\sigma=\hat r^{-1}$, cf.\ Lemma~\ref{LemmaAModel}, with unique solution in $\cA^{1,-l-}(\tface)$. (At $\hat\rho=0$, the model operator is $2 i\hat\rho(\hat\rho\pa_{\hat\rho}-1)$, which produces $\hat\rho^1$ leading order terms there. At $\hat r=0$, uniqueness in the stated space is again due to the enlarged indicial gap arising already in~\eqref{EqPFGap}.) The leading order logarithmic term of $\tilde u'$ at $\hat r=0$ is $-(2 l+1)^{-1}\hat r^l(\log\hat r)Y_l$, and therefore $\tilde u$ has the claimed logarithmic singularity at $\zface=\hat r^{-1}(0)$, with leading order term
  \begin{equation}
  \label{EqPFModelSol}
    {-}(2 l+1)^{-1}\rho^{-l}(\log\hat r)Y_l.
  \end{equation}
  The conclusion is that
  \begin{equation}
  \label{EqPFBorderline}
    \wh{\Box_l}(\sigma)^{-1}(\rho^{-l+2}Y_l) \in \cA^{-l+1-,((-l,0),-l+1-),((0,1),1-)}(X_{\rm res}^+),
  \end{equation}
  which is the analogue of Proposition~\ref{PropARhoSq} but with spatial decay orders decreased by $l$.

  %%%%%%%%%%%%%%%%%%%%
  \pfstep{Expansion of the low energy resolvent.} We put $f_0:=\hat f(0)$ and $u_0:=\wh{\Box_l}(0)^{-1}f_0\in\cA^{l+1-}(X)$ using~\eqref{EqPFGap}. We claim that there exists $Y_{(0)}\in Y_l$ such that
  \begin{equation}
  \label{EqPFu0}
    u_0\in\rho^{l+1}Y_{(0)}+\bhm(l+1)\rho^{l+2}Y_{(0)}+\cA^{l+2+\alpha-}(X).
  \end{equation}
  As in the proof of Lemma~\ref{LemmaPL0inv4}, the leading order term arises from the pole of the inverse $\wh{L_0}(\xi)|_{\cY_l}^{-1}=(-(i\xi)^2+i\xi+l(l+1))^{-1}$ of the Mellin transformed normal operator of $\wh{\Box_l}(0)$ at $\xi=-i(l+1)$, while the subleading term arises from solving $L_0(c\rho^{l+2}Y_{(0)})=-\rho L_1(\rho^{l+1}Y_{(0)})=2\bhm(l+1)^2\rho^{l+2}$ for the constant $c$. (There are no other $\cO(\rho^{l+2})$ contributions since we are restricting to angular frequency $l$, and $\wh{L_0}(\xi)|_{\cY_l}^{-1}$ has no poles other than $\xi=-i(l+1),i l$.)

  To reduce the number of terms one needs to keep track of, let us modify $t_*$ so that it is equal to the null coordinate $t-r_*$ for large $r$; therefore, $|d t_*|^2\in\CIc(X^\circ)$. We then set
  \[
    f_1 := -\sigma^{-1}\bigl(\wh{\Box_l}(\sigma)-\wh{\Box_l}(0)\bigr)u_0 \in -2 i l\rho^{l+2}Y_{(0)} - 2 i\bhm(l+1)^2\rho^{l+3}Y_{(0)} + \cA^{l+3+\alpha-}(X);
  \]
  here and below we absorb the $\sigma\CIc(X^\circ)$ contribution from $\sigma^{-1}(\wh{\Box_l}(\sigma)-\wh{\Box_l}(0))$ arising from $g^{0 0}=|d t_*|^2$ into the remainder space upon allowing smooth $\sigma$-dependence. But then (just as in Lemma~\ref{LemmaPL0inv3}) we obtain a logarithmic term in
  \[
    u_1 := \wh{\Box_l}(0)^{-1}f_1 \in c_1 Y_{(0)}\rho^l + c'_1 Y_{(0)}\rho^{l+1}\log\rho + c''_1 Y_{(1)}\rho^{l+1}+ \cA^{l+1+\alpha-}(X),\quad Y_{(1)}\in\cY_l,
  \]
  where $c_1=i$ and $c'_1=\frac{2 i\bhm(l+1)^2}{2 l+1}$, and $c''_1\in\C$ arises as before from the $\rho^{l+1}\cY_l$ type asymptotics produced by the pole of $\wh{L_0}(\xi)|_{\cY_l}^{-1}$ at $\xi=-i(l+1)$. We now proceed iteratively in four steps.
  
  %%%%%%%%%%
  \pfsubstep{Step 1. Terms in the expansion with spatial decay.} For $k=1,\ldots,l-1$, suppose that
  \begin{equation}
  \label{EqPFIndHi}
    u_k \in c_k Y_{(0)}\rho^{l+1-k} + c'_k Y_{(0)}\rho^{l+2-k}\log\rho + c''_k Y_{(1),k}\rho^{l+2-k} + \cA^{l+2-k+\alpha-}(X)
  \end{equation}
  for some $c_k,c'_k,c''_k\in\C$ and $Y_{(1),k}\in\cY_l$; we shall see that the main term we are interested in the logarithmic one. Then
  \begin{align*}
    f_k &:= -\sigma^{-1}\bigl(\wh{\Box_l}(\sigma)-\wh{\Box_0}(\sigma)\bigr)u_k \\
      &\in d_k Y_{(0)}\rho^{l+2-k}+d'_k Y_{(0)}\rho^{l+3-k}\log\rho + d''_k Y_{(1),k}\rho^{l+3-k} + \cA^{l+3+\alpha-k-}(X).
  \end{align*}
  Using~\eqref{EqPFGap} and the absence of indicial roots in $(-l,l+1)$, the next iterate $u_{k+1}:=\wh{\Box_l}(0)^{-1}f_k$ lies in the space~\eqref{EqPFIndHi} with $k+1$ in place of $k$.\footnote{Note that the resolvent expansion is, schematically, $u_0+\sigma u_1+\dots+\sigma^k u_k+\dots$, thus~\eqref{EqPFIndHi} showcases the usual gain of regularity in $\sigma$ at the cost of loss of decay in $\rho$.} This establishes~\eqref{EqPFIndHi} all the way up to $u_l\in c_l Y_{(0)}\rho+c'_l Y_{(0)}\rho^2\log\rho+c''_l Y_{(1),l}\rho^2+\cA^{\alpha+2-}(X)$.

  %%%%%%%%%%
  \pfsubstep{Step 2. The $(l+1)$-st term.} The next one and a half iterations require special attention: the normal operator of $\sigma^{-1}\bigl(\wh{\Box_l}(\sigma)-\wh{\Box_0}(\sigma))$, namely $2 i\rho(\rho\pa_\rho-1)$, annihilates the leading order term of $u_l$, thus
  \[
    f_{l+1} := -\sigma^{-1}\bigl(\wh{\Box_l}(\sigma)-\wh{\Box_l}(0)\bigr)u_l \in d'_{l+1} Y_{(0)}\rho^3\log\rho + d''_{l+1}Y_{(1),l+1}\rho^3 + \cA^{\alpha+3-}(X)
  \]
  gains almost $2$ orders of decay relative to $u_l$, rather than just $1$. (This is a key observation giving us an extra order of regularity for the resolvent; it was already used for $l=0$ in~\eqref{EqPf2}.) Therefore, we have
  \[
    u_{l+1} := \wh{\Box_l}(0)^{-1}f_{l+1} \in c'_{l+1}Y_{(0)}\rho\log\rho + c''_{l+1}Y_{(0)}\rho + \cA^{\alpha+1-}(X),
  \]
  and subsequently $\rho\pa_\rho-1$ gets rid of the logarithm, to wit
  \[
    f_{l+2} := -\sigma^{-1}\bigl(\wh{\Box_l}(\sigma)-\wh{\Box_l}(0)\bigr)u_l \in d'_{l+2}Y_{(0)}\rho^2 + \cA^{\alpha+2-}(X).
  \]
  
  %%%%%%%%%%
  \pfsubstep{Step 3. Terms in the expansion with spatial growth.} An induction on $k=l+2,\ldots,2 l+1$ now shows that
  \begin{alignat*}{2}
    u_k &:= \wh{\Box_l}(0)^{-1}f_k &&\in c'_k Y_{(0)}\rho^{l+2-k} + \cA^{l+2-k+\alpha-}(X), \\
    f_{k+1} &:= -\sigma^{-1}\bigl(\wh{\Box_l}(\sigma)-\wh{\Box_l}(0)\bigr)u_k &&\in d'_{k+1}Y_{(0)}\rho^{l+3-k}+\cA^{l+3-k+\alpha-}(X)
  \end{alignat*}

  %%%%%%%%%%
  \pfsubstep{Step 4. The appearance of the logarithm.} Finally, we can no longer apply $\wh{\Box_l}(0)^{-1}$ to $f_{2 l+2}\in d'_{2 l+2}Y_{(0)}\rho^{-l+2}+\cA^{-l+2+\alpha-}(X)$; rather, we stop the expansion at this step and put
  \[
    u_{2 l+2}(\sigma) := \wh{\Box_l}(\sigma)^{-1}f_{2 l+2}.
  \]
  In view of~\eqref{EqPFBorderline}, this has a logarithmic singularity at $\sigma=0$.

  %%%%%%%%%%%%%%%%%%%%
  \pfstep{Sharp asymptotics in compact spatial sets.} Restricting to compact spatial sets, we now have established the following analogue of~\eqref{EqPWLo}:
  \begin{equation}
  \label{EqPFExp}
    \wh{\Box_l}(\sigma)^{-1}\hat f(0) \in \sum_{k=0}^{2 l+1} \sigma^k u_k - u_{(l)}\sigma^{2 l+2}\log(\sigma+i 0) + \sum_\pm \sigma^{2 l+2}\cA^{((0,0),\alpha-)}(\pm[0,1);\CI(X^\circ)).
  \end{equation}
  Here, $u_{(l)}$ can be determined by following the calculations \eqref{EqARhoSqErr} and \eqref{EqARhoSqLog} mutatis mutandis: in view of~\eqref{EqPFModelSol} for $Y_l=(2 l+1)^{-1}d'_{2 l+2}Y_{(0)}$, $u_{(l)}$ can be expressed using a cutoff $\chi_\pa(\rho)$, identically $1$ for small $\rho$, as
  \[
    u_{(l)} = \rho^{-l}Y_l\chi_\pa - \wh{\Box_l}(0)^{-1}\bigl(\wh{\Box_l}(0)(\rho^{-l}Y_l\chi_\pa)\bigr),
  \]
  where $\wh{\Box_l}(0)^{-1}\colon\cA^{-l+3}(X)\to\cA^{-l+1-}(X)$ is the inverse~\eqref{EqPFGap}. Therefore, $u_{(l)}$ is the (unique) element of $\ker\wh{\Box_l}(0)$ with asymptotics $\rho^{-l}Y_l$, that is,
  \begin{equation}
  \label{EqPFul}
    \wh{\Box_l}(0)u_{(l)}=0,\qquad u_{(l)}-(2 l+1)^{-1}d'_{2 l+2}\rho^{-l}Y_{(0)}\in\cA^{-l+1-}(X).
  \end{equation}

  In the expansion~\eqref{EqPFExp}, the $+i 0$ nature of the logarithmic singularity is forced by causality considerations as in Remark~\ref{RmkPWConst}. Moreover, the $\sigma$-regularity of the remainder term arises from the fact that $\wh{\Box_l}(\sigma)^{-1}$ acts on the conormal remainder term of $f_{2 l+2}$ by
  \[
    \wh{\Box_l}(\sigma)^{-1}\cA^{-l+2+\alpha-}(X)\subset\cA^{((0,0),\alpha-)}\bigl(\pm[0,1)_\sigma;\cA^{-l+\alpha-}(X)\bigr),
  \]
  as can be proved by direct differentiation in $\sigma$ exactly like in Proposition~\ref{PropALoSmall}, now using~\eqref{EqPFEst} for $\ell=-3/2-l+\alpha-\delta$, $\nu=0$, with $0<\delta\leq\alpha$ arbitrary.

  The term $\wh{\Box_l}(\sigma)^{-1}(\hat f(\sigma)-\hat f(0))$ has an additional order of vanishing in $\sigma$, hence can be absorbed into the error term in~\eqref{EqPFExp}. Upon taking the inverse Fourier transform, the resolvent expansion~\eqref{EqPFExp} thus implies the desired $t^{-2 l-3}$ decay of the forward solution $\phi$ of $\Box\phi=f$; and the explicit description~\eqref{EqPFul} justifies Remark~\ref{RmkPFComplete}.
  
  We argue that this decay rate is generically sharp. It follows from the argument that all constants $c'_k$ and $d'_k$ are nonzero for $k\geq 1$, hence so is $u_{(l)}$ in view of~\eqref{EqPFul}, provided only that $Y_{(0)}\neq 0$. But a pairing argument completely analogous to~\eqref{EqPL0invLot} (with the role of $u_{(0)}^*$ now played by $u_{(l)}^*(Y)\in\ker\wh{\Box_l}(0)$, $Y\in\cY_l$, with leading order behavior $\rho^{-l}Y$) produces a formula for $Y_{(0)}$: identifying $\cY_l\cong\C^{2 l+1}$ via the basis $Y_{l m}$, it reads $Y_{(0)}=c\la f,u_{(l)}^*(Y_{l m})\ra$, where $c\neq 0$ is an explicit constant. Therefore, $\phi$ has a \emph{nontrivial} $t^{-2 l-3}$ leading order term plus a $\cO(t^{-2 l-3-\alpha+})$ remainder, \emph{unless} $f$ satisfies the $2 l+1$ linearly independent constraints $\la f,u_{(l)}^*(Y_{l m})\ra=0$. Since the leading order term $u_{(l)}$ has radial dependence given by the solution of an ODE, it cannot vanish on a nonempty open subset of $X^\circ$ unless it vanishes identically.

  %%%%%%%%%%%%%%%%%%%%
  \pfstep{Decay in the full future causal cone.} With $M_+$ defined as in Definition~\ref{DefPWComp} (but with $X$ as in~\eqref{EqKDCompact}), we proved that the leading order term at $\cK^+$ is a constant multiple of $t_*^{-2 l-3}u_{(l)}$, which is of size $(\rho_{\cK^+}\rho_{I^+})^{2 l+3} \rho_{I^+}^{-l}=\rho_{\cK^+}^{2 l+3}\rho_{I^+}^{l+3}$, thus suggesting $\rho_{I^+}^{l+3}$ decay at $I^+$. This is confirmed by the following more precise argument. The contribution of the model solution at $\tface$ to the resolvent expansion has leading order term at $\tfrac{\sigma}{\rho}=0$ given by $\sigma^{2 l+2}\cdot\rho^{-l}\log(\tfrac{\sigma}{\rho}+i 0)=\rho^{l+3}\cdot(\tfrac{\sigma}{\rho})^{2 l+2}\log(\tfrac{\sigma}{\rho}+i 0)$ in view of~\eqref{EqPFBorderline}; via the calculation~\eqref{EqPWGlobalHatrFT}, this gives the leading order term $\rho^{l+3}\cdot f(\rho t_*)$ at $I^+$ with $|f(\rho t_*)|\lesssim\la\rho t_*\ra^{-2 l-3}$, as claimed.

  This in particular implies $t_*^{-l-3}$ decay of $\phi$ as $t_*\to\infty$ for $\rho t_*$ restricted to compact subsets of $(0,\infty)$; the claimed $t_*^{-l-2}$ decay of the radiation field, or equivalently the $t_*^{-l-3}$ decay of $\lim_{r\to\infty} (\rho t_*)^{-1}\phi$, follows from the $t_*^{-l-3}$ decay of $\phi$ towards $(I^+)^\circ$ as in the last step of the proof of Theorem~\ref{ThmPWGlobal}.

  %%%%%%%%%%%%%%%%%%%%
  \pfstep{Modifications for forcing supported in angular frequencies $\geq l$.} One can analyze $\wh\Box(\sigma)$ directly on the subspace of $\CIc(X^\circ)$ consisting of functions supported in angular frequencies $\geq l$ and follow the above arguments. The only modification is that $u_0$ in~\eqref{EqPFu0} attains an additional contribution $\rho^{l+2}Y'_{(0)}$ with $Y'_{(0)}\in\cY_{l+1}$; one can keep track of the effect of this term simply by using the above arguments for angular frequency equal to $l+1$. In particular, it does not contribute to the most singular $\sigma^{2 l+2}\log\sigma$ term of the resolvent.
\end{proof}

\begin{cor}
\label{CorPF}
  Let $l\in\N_0$, and consider initial data $\phi_0,\phi_1\in\CIc((2\bhm,\infty)\times\Sph^2)$ which are supported in angular frequency $l$. Then the solution $\phi$ of the initial value problem
  \begin{equation}
  \label{EqPFIVP}
    \Box\phi = 0,\quad
    \phi(t=0,x) = \phi_0(x),\quad
    \pa_t \phi(t=0,x) = \phi_1(x),
  \end{equation}
  obeys pointwise decay bounds $|\phi(t_*,r,\omega)|\leq C t_*^{-2 l-3}$ for $x=(r,\omega)$ restricted to any fixed compact subset of $X^\circ$. The same decay holds for derivatives of $\phi$ of any order along $t_*\pa_{t_*}$, $\pa_r$, and spherical vector fields. If $\phi$ is initially static, i.e.\ $\phi_1\equiv 0$, then the stronger decay
  \begin{equation}
  \label{EqPFStatic}
    |\phi(t_*,r,\omega)|\leq C t_*^{-2 l-4}\qquad (\phi_1\equiv 0)
  \end{equation}
  holds for $\phi$ and all its derivatives as above. These decay rates are generically sharp.
\end{cor}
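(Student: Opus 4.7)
The plan is to reduce the initial value problem~\eqref{EqPFIVP} to a forward forcing problem and apply Theorem~\usref{ThmPF}, and then exploit a time-reversal structure in the static case. Following the argument of Corollary~\usref{CorPWIVP}, we fix a smooth cutoff $\chi(t_*)$ that vanishes for $t_* \leq 0$ and equals $1$ for $t_* \geq 2\eps$, with $\eps > 0$ chosen via finite speed of propagation so that $\phi_+ := \chi(t_*)\phi$ vanishes in $t_* \leq 0$; then $\phi_+$ is the forward solution of $\Box\phi_+ = f$ with $f := [\Box, \chi]\phi \in \CIc(\R_{t_*}; \CIc(X^\circ))$. Since $\chi$ is rotationally invariant, $f$ inherits the angular frequency support of $\phi$, so Theorem~\usref{ThmPF} directly gives $|\phi(t_*, x)| \leq C t_*^{-2l-3}$ together with the claimed derivative bounds on any fixed compact subset of $X^\circ$. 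Generic sharpness is inherited from Theorem~\usref{ThmPF}: the $2l+1$ linear functionals on initial data $(\phi_0,\phi_1) \mapsto \la f, u_{(l)}^*(Y_{lm}) \ra$ can only simultaneously vanish on a finite codimension subspace.

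For the static case $\phi_1 \equiv 0$, the key step will be to show that $\hat f(0) = 0$, with $\hat f$ the Fourier transform of $f$ in $t_*$. Given this, $\hat f(\sigma) = \sigma\hat g(\sigma)$ for some $\hat g \in \CI(\R_\sigma; \CIc(X^\circ))$, so $\wh\Box(\sigma)^{-1}\hat f(\sigma) = \sigma\,\wh\Box(\sigma)^{-1}\hat g(\sigma)$; the expansion from the proof of Theorem~\usref{ThmPF} applied to $\hat g(0)$ exhibits a leading $\sigma^{2l+2}\log(\sigma + i0)$ singularity which the prefactor $\sigma$ shifts to $\sigma^{2l+3}\log(\sigma + i0)$. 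The calculation~\eqref{EqPWFT}--\eqref{EqPWi0} with $k = 2l+3$ then produces a $t_*^{-2l-4}$ leading contribution to $\phi$. To verify $\hat f(0) = 0$, we arrange $t_*$ to coincide with the Schwarzschild time $t$ on a bounded annulus containing $\supp\phi_0$ (possible by adjusting the correction $F$ in~\eqref{EqKDtstar}, since $\supp\phi_0 \Subset \{r > 2\bhm\}$). In these coordinates the Schwarzschild metric is diagonal, so $\Box$ has no cross terms between $\pa_t$ and spatial derivatives; taking $\chi = H(t_*) = H(t)$ locally and using $\phi_1 \equiv 0$ gives, as distributions,
\[
  f = [\Box, H(t)]\phi = -g^{tt}\bigl(2\delta(t)\pa_t\phi + \delta'(t)\phi\bigr) = -g^{tt}(x)\,\delta'(t)\phi_0(x),
\]
whose Fourier transform $\hat f(\sigma, x) = i\sigma\, g^{tt}(x)\phi_0(x)$ vanishes at $\sigma = 0$. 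For a smooth cutoff, the equivalent conclusion $\int f\,dt = 0$ will follow by integrating $\Box\phi = 0$ in $t$ over $\R$ and using $\pa_t\phi \to 0$ as $t \to \pm\infty$, itself a consequence of the forward decay combined with the time-reversal symmetry $\phi(-t, x) = \phi(t, x)$ implied by $\phi_1 \equiv 0$.

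The main technical obstacle will be rigorously reconciling the global time function $t_*$ (designed for timelikeness and regularity across the event horizon) with the Schwarzschild time $t$, in which the static structure is manifest. The resolution is that the local modification of $F$ in~\eqref{EqKDtstar} needed to enforce $t_* = t$ on a fixed compact annulus does not disturb the asymptotic properties captured by Definition~\usref{DefAF}, so the low-energy resolvent analysis goes through unchanged. Sharpness of the improved $t_*^{-2l-4}$ rate then reduces to the generic non-vanishing of the coefficient of the $\sigma^{2l+3}\log(\sigma + i0)$ singularity, which is proportional to the linear functionals $\la g^{tt}\phi_0, u_{(l)}^*(Y_{lm}) \ra$ of $\phi_0$ alone and vanishes only on a subspace of codimension $2l+1$. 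Extending the decay to derivatives along $t_*\pa_{t_*}$ and rotation vector fields is immediate since these are Killing for the Schwarzschild metric, while the $\pa_r$ derivative bounds are furnished directly by the corresponding statements in Theorem~\usref{ThmPF}.
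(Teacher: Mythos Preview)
Your proof is correct and follows essentially the same route as the paper: reduce the initial value problem to a forcing problem, arrange $t_*=t$ on an annulus containing the support of the data, apply Theorem~\ref{ThmPF}, and in the static case compute $[\Box,H(t)]\phi=(1-\tfrac{2\bhm}{r})^{-1}\delta'(t)\phi_0$ so that $\hat f(\sigma)$ acquires an extra factor of $\sigma$, shifting the singularity to $\sigma^{2l+3}\log(\sigma+i0)$.

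Two small remarks. First, your alternative argument for $\int f\,dt=0$ via time-reversal and integrating $\Box\phi=0$ is incomplete: writing $\int f\,dt=\int\Box(\chi\phi)\,dt$ and splitting $\Box=-g^{tt}\pa_t^2+L_X$, the $\pa_t^2$ piece does vanish by your decay argument, but the spatial piece $L_X\int\chi\phi\,dt$ does not obviously vanish. This detour is unnecessary anyway, since (as in Corollary~\ref{CorPWIVP}) the Fourier transform at $\sigma=0$ of $[\Box,\chi]\phi$ is independent of $\chi$, so the Heaviside computation suffices for all cutoffs. Second, $t_*\pa_{t_*}$ is not Killing (only $\pa_{t_*}$ is); but the derivative bounds you want are already contained in Theorem~\ref{ThmPF}, so no separate argument is needed.
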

\begin{proof}
  We only need to prove~\eqref{EqPFStatic}. We can choose $R_0<R_1$ so that $R_0<r<R_1$ on $\supp\phi_j$, $j=0,1$, and then define $t_*$ to be equal to the static time coordinate $t$ for $r\in[R_0,R_1]$; thus, Theorem~\ref{ThmPF} is applicable. The reduction of~\eqref{EqPFIVP} to a forcing problem in static coordinates reads
  \begin{equation}
  \label{EqPFForcing}
    \Box(H(t) \phi) = [\Box,H(t)]\phi = \bigl[\bigl(1-\tfrac{2\bhm}{r}\bigr)^{-1}\pa_t^2,H(t)]\phi = \bigl(1-\tfrac{2\bhm}{r}\bigr)^{-1}\bigl(\delta(t)\phi_1(x) + \delta'(t)\phi_0(x)\bigr);
  \end{equation}
  there are no spatial derivatives falling on $\phi_0,\phi_1$ since mixed time-space derivatives are absent in the Schwarzschild wave operator in static coordinates. For initially static perturbations, the right hand side is $(1-\tfrac{2\bhm}{r})^{-1}\phi_0(x)\delta'(t)$, and the point is that its Fourier transform vanishes simply at $\sigma=0$. The extra factor of $\sigma$ of the forcing on the spectral side renders the main singularity of the resolvent a multiple of $\sigma\cdot\sigma^{2 l+2}\log(\sigma+i 0)$, thus giving an extra order of time decay upon taking the inverse Fourier transform.
\end{proof}

%%%%%%%%%%%%%%%%%%%%%%%%%%%%%%%%%%%%%%%%%%%%%%%%%%%%%%%%%%%%%%%%%%%%%%
\bibliographystyle{alpha}

%\bibliography{/home/peter/research/bib/math,/home/peter/research/bib/mathcheck,/home/peter/research/bib/phys}

\begin{thebibliography}{AMPW17}

\bibitem[AAG18a]{AngelopoulosAretakisGajicLate}
Yannis Angelopoulos, Stefanos Aretakis, and Dejan Gajic.
\newblock Late-time asymptotics for the wave equation on spherically symmetric,
  stationary spacetimes.
\newblock {\em Advances in Mathematics}, 323:529--621, 2018.

\bibitem[AAG18b]{AngelopoulosAretakisGajicVF}
Yannis Angelopoulos, Stefanos Aretakis, and Dejan Gajic.
\newblock A vector field approach to almost-sharp decay for the wave equation
  on spherically symmetric, stationary spacetimes.
\newblock {\em Annals of PDE}, 4(2):15, 2018.

\bibitem[AAG19]{AngelopoulosAretakisGajicLog}
Yannis Angelopoulos, Stefanos Aretakis, and Dejan Gajic.
\newblock Logarithmic corrections in the asymptotic expansion for the radiation
  field along null infinity.
\newblock {\em Journal of Hyperbolic Differential Equations}, 16(01):1--34,
  2019.

\bibitem[AB15a]{AnderssonBlueHiddenKerr}
Lars Andersson and Pieter Blue.
\newblock {H}idden symmetries and decay for the wave equation on the {K}err
  spacetime.
\newblock {\em Annals of Mathematics}, 182:787--853, 2015.

\bibitem[AB15b]{AnderssonBlueMaxwellKerr}
Lars Andersson and Pieter Blue.
\newblock {U}niform energy bound and asymptotics for the {M}axwell field on a
  slowly rotating {K}err black hole exterior.
\newblock {\em Journal of Hyperbolic Differential Equations}, 12(04):689--743,
  2015.

\bibitem[ABBM19]{AnderssonBackdahlBlueMaKerr}
Lars Andersson, Thomas B{\"a}ckdahl, Pieter Blue, and Siyuan Ma.
\newblock Stability for linearized gravity on the {K}err spacetime.
\newblock {\em Preprint, arXiv:1903.03859}, 2019.

\bibitem[AMPW17]{AnderssonMaPaganiniWhitingModeStab}
Lars Andersson, Siyuan Ma, Claudio Paganini, and Bernard~F. Whiting.
\newblock Mode stability on the real axis.
\newblock {\em Journal of Mathematical Physics}, 58(7):072501, 2017.

\bibitem[Are12]{AretakisExtremalKerr}
Stefanos Aretakis.
\newblock {D}ecay of axisymmetric solutions of the wave equation on extreme
  {K}err backgrounds.
\newblock {\em Journal of Functional Analysis}, 263(9):2770--2831, 2012.

\bibitem[BH10]{BonyHaefnerResolvent}
Jean-Fran\c{c}ois Bony and Dietrich H\"{a}fner.
\newblock Low frequency resolvent estimates for long range perturbations of the
  {E}uclidean {L}aplacian.
\newblock {\em Math. Res. Lett.}, 17(2):303--308, 2010.

\bibitem[BK14]{BurkoKhannaKerrTails}
Lior~M. Burko and Gaurav Khanna.
\newblock {M}ode coupling mechanism for late-time {K}err tails.
\newblock {\em Physical Review D}, 89(4):044037, 2014.

\bibitem[Blu08]{BlueMaxwellSchwarzschild}
Pieter Blue.
\newblock Decay of the {M}axwell field on the {S}chwarzschild manifold.
\newblock {\em J. Hyperbolic Differ. Equ.}, 5(4):807--856, 2008.

\bibitem[BM19a]{BaskinMarzuolaComp}
Dean Baskin and Jeremy Marzuola.
\newblock {L}ocating the resonances on hyperbolic cones.
\newblock {\em Math. Res. Lett.}, 26(2):365--381, 2019.

\bibitem[BM19b]{BaskinMarzuolaCone}
Dean Baskin and Jeremy Marzuola.
\newblock The radiation field on product cones.
\newblock {\em Preprint, arXiv:1906.04769}, 2019.

\bibitem[BS03]{BlueSofferSchwarzschildDecay}
Pieter Blue and Avi Soffer.
\newblock {S}emilinear wave equations on the {S}chwarzschild manifold. {I}.
  {L}ocal decay estimates.
\newblock {\em Adv. Differential Equations}, 8(5):595--614, 2003.

\bibitem[BS05]{BlueSofferSchwarzschildSpin2}
Pieter Blue and Avi Soffer.
\newblock The wave equation on the {S}chwarzschild metric {I}{I}. {L}ocal decay
  for the spin-2 {R}egge--{W}heeler equation.
\newblock {\em Journal of mathematical physics}, 46(1):012502, 2005.

\bibitem[BVW15]{BaskinVasyWunschRadMink}
Dean Baskin, Andr{\'a}s Vasy, and Jared Wunsch.
\newblock Asymptotics of radiation fields in asymptotically {M}inkowski space.
\newblock {\em Amer. J. Math.}, 137(5):1293--1364, 2015.

\bibitem[BVW18]{BaskinVasyWunschRadMink2}
Dean Baskin, Andr{\'a}s Vasy, and Jared Wunsch.
\newblock {A}symptotics of scalar waves on long-range asymptotically
  {M}inkowski spaces.
\newblock {\em Advances in Mathematics}, 328:160--216, 2018.

\bibitem[Chr86]{ChristodoulouGlobalSolutionsSmallData}
Demetrios Christodoulou.
\newblock Global solutions of nonlinear hyperbolic equations for small initial
  data.
\newblock {\em Communications on Pure and Applied Mathematics}, 39(2):267--282,
  1986.

\bibitem[CK93]{ChristodoulouKlainermanStability}
Demetrios Christodoulou and Sergiu Klainerman.
\newblock {\em The global nonlinear stability of the {M}inkowski space},
  volume~41 of {\em Princeton Mathematical Series}.
\newblock Princeton University Press, Princeton, NJ, 1993.

\bibitem[DH72]{DuistermaatHormanderFIO2}
Johannes~J. Duistermaat and Lars H\"ormander.
\newblock Fourier integral operators. {II}.
\newblock {\em Acta Mathematica}, 128(1):183--269, 1972.

\bibitem[DHR19a]{DafermosHolzegelRodnianskiTeukolsky}
Mihalis Dafermos, Gustav Holzegel, and Igor Rodnianski.
\newblock {B}oundedness and decay for the {T}eukolsky equation on {K}err
  spacetimes {I}: the case $|a|\ll{M}$.
\newblock {\em Annals of PDE}, 5(1):2, 2019.

\bibitem[DHR19b]{DafermosHolzegelRodnianskiSchwarzschildStability}
Mihalis Dafermos, Gustav Holzegel, and Igor Rodnianski.
\newblock The linear stability of the {S}chwarzschild solution to gravitational
  perturbations.
\newblock {\em Acta Mathematica}, 222:1--214, 2019.

\bibitem[DR05]{DafermosRodnianskiPrice}
Mihalis Dafermos and Igor Rodnianski.
\newblock A proof of {P}rice's law for the collapse of a self-gravitating
  scalar field.
\newblock {\em Inventiones mathematicae}, 162(2):381--457, 2005.

\bibitem[DR09]{DafermosRodnianskiRedShift}
Mihalis Dafermos and Igor Rodnianski.
\newblock The red-shift effect and radiation decay on black hole spacetimes.
\newblock {\em Communications on Pure and Applied Mathematics}, 62(7):859--919,
  2009.

\bibitem[DR10]{DafermosRodnianskiKerrDecaySmall}
Mihalis Dafermos and Igor Rodnianski.
\newblock Decay for solutions of the wave equation on {K}err exterior
  spacetimes {I}-{II}: {T}he cases $|a|\ll m$ or axisymmetry.
\newblock {\em Preprint, arXiv:1010.5132}, 2010.

\bibitem[DRSR16]{DafermosRodnianskiShlapentokhRothmanDecay}
Mihalis Dafermos, Igor Rodnianski, and Yakov Shlapentokh-Rothman.
\newblock Decay for solutions of the wave equation on {K}err exterior
  spacetimes {III}: {T}he full subextremal case {$\vert a\vert <M$}.
\newblock {\em Ann. of Math. (2)}, 183(3):787--913, 2016.

\bibitem[DSS11]{DonningerSchlagSofferPrice}
Roland Donninger, Wilhelm Schlag, and Avy Soffer.
\newblock A proof of {P}rice's law on {S}chwarzschild black hole manifolds for
  all angular momenta.
\newblock {\em Advances in Mathematics}, 226(1):484--540, 2011.

\bibitem[DSS12]{DonningerSchlagSofferSchwarzschild}
Roland Donninger, Wilhelm Schlag, and Avy Soffer.
\newblock On pointwise decay of linear waves on a {S}chwarzschild black hole
  background.
\newblock {\em Communications in Mathematical Physics}, 309(1):51--86, 2012.

\bibitem[Dya11]{DyatlovQNM}
Semyon Dyatlov.
\newblock Quasi-normal modes and exponential energy decay for the {K}err--de
  {S}itter black hole.
\newblock {\em Comm. Math. Phys.}, 306(1):119--163, 2011.

\bibitem[Dya15]{DyatlovWaveAsymptotics}
Semyon Dyatlov.
\newblock Asymptotics of {L}inear {W}aves and {R}esonances with {A}pplications
  to {B}lack {H}oles.
\newblock {\em Comm. Math. Phys.}, 335(3):1445--1485, 2015.

\bibitem[Dya16]{DyatlovSpectralGaps}
Semyon Dyatlov.
\newblock Spectral gaps for normally hyperbolic trapping.
\newblock {\em Ann. Inst. Fourier (Grenoble)}, 66(1):55--82, 2016.

\bibitem[DZ19]{DyatlovZworskiBook}
Semyon Dyatlov and Maciej Zworski.
\newblock {\em Mathematical theory of scattering resonances}, volume 200 of
  {\em Graduate Studies in Mathematics}.
\newblock American Mathematical Society, 2019.

\bibitem[FKSY03]{FinsterKamranSmollerYauDiracKerrNewman}
Felix Finster, Niky Kamran, Joel Smoller, and Shing-Tung Yau.
\newblock {T}he long-time dynamics of {D}irac particles in the {K}err--{N}ewman
  black hole geometry.
\newblock {\em Advances in Theoretical and Mathematical Physics}, 7(1):25--52,
  2003.

\bibitem[FKSY06]{FinsterKamranSmollerYauKerr}
Felix Finster, Niky Kamran, Joel Smoller, and Shing-Tung Yau.
\newblock {D}ecay of {S}olutions of the {W}ave {E}quation in the {K}err
  {G}eometry.
\newblock {\em Communications in Mathematical Physics}, 264(2):465--503, 2006.

\bibitem[Fri62]{FriedlanderRadiationOrig}
F.~Gerard Friedlander.
\newblock {O}n the radiation field of pulse solutions of the wave equation.
\newblock {\em Proceedings of the Royal Society of London. Series A.
  Mathematical and Physical Sciences}, 269(1336):53--65, 1962.

\bibitem[GH08]{GuillarmouHassellResI}
Colin Guillarmou and Andrew Hassell.
\newblock Resolvent at low energy and {R}iesz transform for {S}chr\"{o}dinger
  operators on asymptotically conic manifolds. {I}.
\newblock {\em Math. Ann.}, 341(4):859--896, 2008.

\bibitem[GH09]{GuillarmouHassellResII}
Colin Guillarmou and Andrew Hassell.
\newblock Resolvent at low energy and {R}iesz transform for {S}chr\"{o}dinger
  operators on asymptotically conic manifolds. {II}.
\newblock {\em Ann. Inst. Fourier (Grenoble)}, 59(4):1553--1610, 2009.

\bibitem[GHS13]{GuillarmouHassellSikoraResIII}
Colin Guillarmou, Andrew Hassell, and Adam Sikora.
\newblock Resolvent at low energy {III}: {T}he spectral measure.
\newblock {\em Trans. Amer. Math. Soc.}, 365(11):6103--6148, 2013.

\bibitem[GPP94]{GundlachPricePullin}
Carsten Gundlach, Richard~H. Price, and Jorge Pullin.
\newblock {L}ate-time behavior of stellar collapse and explosions. {I}.
  {L}inearized perturbations.
\newblock {\em Physical Review D}, 49(2):883, 1994.

\bibitem[GPP08]{GleiserPricePullinKerrTails}
Reinaldo~J. Gleiser, Richard~H. Price, and Jorge Pullin.
\newblock {L}ate-time tails in the {K}err spacetime.
\newblock {\em Classical and Quantum Gravity}, 25(7):072001, 2008.

\bibitem[GWS94]{GomezWinicourSchmidtTails}
Roberto G{\'o}mez, Jeffrey Winicour, and Bernd Schmidt.
\newblock {N}ewman--{P}enrose constants and the tails of self-gravitating
  waves.
\newblock {\em Physical Review D}, 49(6):2828, 1994.

\bibitem[HHV19]{HaefnerHintzVasyKerr}
Dietrich H{\"a}fner, Peter Hintz, and Andr{\'a}s Vasy.
\newblock Linear stability of slowly rotating {K}err black holes.
\newblock {\em Preprint, arXiv:1906.00860}, 2019.

\bibitem[Hin16]{HintzQuasilinearDS}
Peter Hintz.
\newblock Global analysis of quasilinear wave equations on asymptotically de
  {S}itter spaces.
\newblock {\em Annales de l'Institut Fourier}, 66(4):1285--1408, 2016.

\bibitem[Hin17]{HintzPsdoInner}
Peter Hintz.
\newblock Resonance expansions for tensor-valued waves on asymptotically
  {K}err--de {S}itter spaces.
\newblock {\em J. Spectr. Theory}, 7:519--557, 2017.

\bibitem[H{\"o}r07]{HormanderAnalysisPDE3}
Lars H{\"o}rmander.
\newblock {\em The analysis of linear partial differential operators. {III}}.
\newblock Classics in Mathematics. Springer, Berlin, 2007.

\bibitem[Hun18]{HungSchwarzschildOdd}
Pei-Ken Hung.
\newblock {T}he linear stability of the {S}chwarzschild spacetime in the
  harmonic gauge: odd part.
\newblock {\em Preprint, arXiv:1803.03881}, 2018.

\bibitem[Hun19]{HungSchwarzschildEven}
Pei-Ken Hung.
\newblock {T}he linear stability of the {S}chwarzschild spacetime in the
  harmonic gauge: even part.
\newblock {\em Preprint, arXiv:1909.06733}, 2019.

\bibitem[HV01]{HassellVasyResolvent}
Andrew Hassell and Andr{\'a}s Vasy.
\newblock {T}he resolvent for {L}aplace-type operators on asymptotically conic
  spaces.
\newblock In {\em Annales de l'institut Fourier}, volume~51, pages 1299--1346,
  2001.

\bibitem[HV15]{HintzVasySemilinear}
Peter Hintz and Andr{\'a}s Vasy.
\newblock Semilinear wave equations on asymptotically de {S}itter, {K}err--de
  {S}itter and {M}inkowski spacetimes.
\newblock {\em Anal. PDE}, 8(8):1807--1890, 2015.

\bibitem[HV16]{HintzVasyQuasilinearKdS}
Peter Hintz and Andr{\'a}s Vasy.
\newblock {G}lobal {A}nalysis of {Q}uasilinear {W}ave {E}quations on
  {A}symptotically {K}err--de {S}itter {S}paces.
\newblock {\em International Mathematics Research Notices},
  2016(17):5355--5426, 2016.

\bibitem[HV18]{HintzVasyKdSStability}
Peter Hintz and Andr{\'a}s Vasy.
\newblock {T}he global non-linear stability of the {K}err--de {S}itter family
  of black holes.
\newblock {\em Acta mathematica}, 220:1--206, 2018.

\bibitem[HV20]{HintzVasyMink4}
Peter Hintz and Andr{\'a}s Vasy.
\newblock {S}tability of {M}inkowski space and polyhomogeneity of the metric.
\newblock {\em Annals of PDE, to appear}, 2020.

\bibitem[JK79]{JensenKatoResolvent}
Arne Jensen and Tosio Kato.
\newblock {S}pectral properties of {S}chr{\"o}dinger operators and time-decay
  of the wave functions.
\newblock {\em Duke mathematical journal}, 46(3):583--611, 1979.

\bibitem[Joh19]{JohnsonSchwarzschild}
Thomas Johnson.
\newblock The linear stability of the schwarzschild solution to gravitational
  perturbations in the generalised wave gauge.
\newblock {\em Annals of PDE}, 5(2):13, 2019.

\bibitem[Ker63]{KerrKerr}
Roy~P. Kerr.
\newblock Gravitational field of a spinning mass as an example of algebraically
  special metrics.
\newblock {\em Physical Review Letters}, 11(5):237, 1963.

\bibitem[Kla80]{KlainermanGlobal}
Sergiu Klainerman.
\newblock Global existence for nonlinear wave equations.
\newblock {\em Communications on Pure and Applied Mathematics}, 33(1):43--101,
  1980.

\bibitem[KS17]{KlainermanSzeftelPolarized}
Sergiu Klainerman and J{\'e}r{\'e}mie Szeftel.
\newblock {G}lobal nonlinear stability of {S}chwarzschild spacetime under
  polarized perturbations.
\newblock {\em Preprint, arXiv:1711.07597}, 2017.

\bibitem[KW87]{KayWaldSchwarzschild}
Bernard~S. Kay and Robert~M. Wald.
\newblock Linear stability of {S}chwarzschild under perturbations which are
  non-vanishing on the bifurcation 2-sphere.
\newblock {\em Classical and Quantum Gravity}, 4(4):893, 1987.

\bibitem[Lea86]{LeaverSchwarzschild}
Edward~W. Leaver.
\newblock {S}pectral decomposition of the perturbation response of the
  {S}chwarzschild geometry.
\newblock {\em Physical Review D}, 34(2):384, 1986.

\bibitem[LR10]{LindbladRodnianskiGlobalStability}
Hans Lindblad and Igor Rodnianski.
\newblock The global stability of {M}inkowski space-time in harmonic gauge.
\newblock {\em Ann. of Math. (2)}, 171(3):1401--1477, 2010.

\bibitem[LS16]{LukSbierskiKerr}
Jonathan Luk and Jan Sbierski.
\newblock Instability results for the wave equation in the interior of {K}err
  black holes.
\newblock {\em J. Funct. Anal.}, 271(7):1948--1995, 2016.

\bibitem[LT18]{LindbladTohaneanuSchwarzschildQuasi}
Hans Lindblad and Mihai Tohaneanu.
\newblock {G}lobal existence for quasilinear wave equations close to
  {S}chwarzschild.
\newblock {\em Communications in Partial Differential Equations},
  43(6):893--944, 2018.

\bibitem[LT20]{LindbladTohaneanuKerrQuasi}
Hans Lindblad and Mihai Tohaneanu.
\newblock A local energy estimate for wave equations on metrics asymptotically
  close to kerr.
\newblock {\em Preprint, arXiv:2004.05664}, 2020.

\bibitem[Luk10]{LukSchwarzschild}
Jonathan Luk.
\newblock Improved decay for solutions to the linear wave equation on a
  schwarzschild black hole.
\newblock 11(5):805--880, 2010.

\bibitem[Luk13]{LukKerrNonlinear}
Jonathan Luk.
\newblock The null condition and global existence for nonlinear wave equations
  on slowly rotating {K}err spacetimes.
\newblock {\em Journal of the European Mathematical Society}, 15(5):1629--1700,
  2013.

\bibitem[Mel93]{MelroseAPS}
Richard~B. Melrose.
\newblock {\em The {A}tiyah--{P}atodi--{S}inger index theorem}, volume~4 of
  {\em Research Notes in Mathematics}.
\newblock A K Peters, Ltd., Wellesley, MA, 1993.

\bibitem[Mel94]{MelroseEuclideanSpectralTheory}
Richard~B. Melrose.
\newblock Spectral and scattering theory for the {L}aplacian on asymptotically
  {E}uclidian spaces.
\newblock In {\em Spectral and scattering theory ({S}anda, 1992)}, volume 161
  of {\em Lecture Notes in Pure and Appl. Math.}, pages 85--130. Dekker, New
  York, 1994.

\bibitem[MMTT10]{MarzuolaMetcalfeTataruTohaneanuStrichartz}
Jeremy Marzuola, Jason Metcalfe, Daniel Tataru, and Mihai Tohaneanu.
\newblock Strichartz estimates on {S}chwarzschild black hole backgrounds.
\newblock {\em Communications in Mathematical Physics}, 293(1):37--83, 2010.

\bibitem[Mor]{MorganDecay}
Katrina Morgan.
\newblock {T}he role of metric behavior at spatial infinity on pointwise wave
  decay in the asymptotically flat setting.
\newblock {\em In preparation}.

\bibitem[Mor72]{MorawetzExteriorDecay}
Cathleen~Synge Morawetz.
\newblock On the modes of decay for the wave equation in the exterior of a
  reflecting body.
\newblock pages 113--120, 1972.

\bibitem[Mos16]{MoschidisRp}
Georgios Moschidis.
\newblock {T}he $r^p$-{W}eighted {E}nergy {M}ethod of {D}afermos and
  {R}odnianski in {G}eneral {A}symptotically {F}lat {S}pacetimes and
  {A}pplications.
\newblock {\em Annals of PDE}, 2(1):1--194, 2016.

\bibitem[MS14]{MullerStrohmaierResolvent}
J{\"o}rn M{\"u}ller and Alexander Strohmaier.
\newblock {T}he theory of {H}ahn-meromorphic functions, a holomorphic
  {F}redholm theorem, and its applications.
\newblock {\em Analysis \& PDE}, 7(3):745--770, 2014.

\bibitem[MSBV14]{MelroseSaBarretoVasySdS}
Richard~B. Melrose, Ant{\^o}nio S{\'a}~Barreto, and Andr{\'a}s Vasy.
\newblock Asymptotics of solutions of the wave equation on de
  {S}itter--{S}chwarzschild space.
\newblock {\em Communications in Partial Differential Equations},
  39(3):512--529, 2014.

\bibitem[MST20]{MetcalfeSterbenzTataru}
Jason Metcalfe, Jacob Sterbenz, and Daniel Tataru.
\newblock {L}ocal energy decay for scalar fields on time dependent non-trapping
  backgrounds.
\newblock {\em American Journal of Mathematics}, 142(3), 2020.

\bibitem[MTT12]{MetcalfeTataruTohaneanuPriceNonstationary}
Jason Metcalfe, Daniel Tataru, and Mihai Tohaneanu.
\newblock Price's law on nonstationary space-times.
\newblock {\em Adv. Math.}, 230(3):995--1028, 2012.

\bibitem[MTT17]{MetcalfeTataruTohaneanuMaxwellSchwarzschild}
Jason Metcalfe, Daniel Tataru, and Mihai Tohaneanu.
\newblock {P}ointwise decay for the {M}axwell field on black hole spacetimes.
\newblock {\em Advances in Mathematics}, 316:53--93, 2017.

\bibitem[Pas19]{PasqualottoMaxwell}
Federico Pasqualotto.
\newblock The spin {$\pm 1$} {T}eukolsky {E}quations and the {M}axwell system
  on {S}chwarzschild.
\newblock {\em Ann. Henri Poincar\'{e}}, 20(4):1263--1323, 2019.

\bibitem[PB04]{PriceBurkoLaw}
Richard~H. Price and Lior~M. Burko.
\newblock {L}ate time tails from momentarily stationary, compact initial data
  in {S}chwarzschild spacetimes.
\newblock {\em Physical Review D}, 70(8):084039, 2004.

\bibitem[Pri72a]{PriceLawI}
Richard~H. Price.
\newblock {N}onspherical perturbations of relativistic gravitational collapse.
  {I}. {S}calar and gravitational perturbations.
\newblock {\em Physical Review D}, 5(10):2419, 1972.

\bibitem[Pri72b]{PriceLawII}
Richard~H Price.
\newblock {N}onspherical perturbations of relativistic gravitational collapse.
  {I}{I}. {I}nteger-spin, zero-rest-mass fields.
\newblock {\em Physical Review D}, 5(10):2439, 1972.

\bibitem[PT73]{PressTeukolskyKerrII}
William~H. Press and Saul~A. Teukolsky.
\newblock {P}erturbations of a rotating black hole. {II}. {D}ynamical stability
  of the {K}err metric.
\newblock {\em The Astrophysical Journal}, 185:649--674, 1973.

\bibitem[RT15]{RodnianskiTaoResolvent}
Igor Rodnianski and Terence Tao.
\newblock {E}ffective limiting absorption principles, and applications.
\newblock {\em Communications in Mathematical Physics}, 333(1):1--95, 2015.

\bibitem[SBZ97]{SaBarretoZworskiResonances}
Ant{\^o}nio S{\'a}~Barreto and Maciej Zworski.
\newblock Distribution of resonances for spherical black holes.
\newblock {\em Mathematical Research Letters}, 4:103--122, 1997.

\bibitem[Sch16]{SchwarzschildPaper}
Karl Schwarzschild.
\newblock {\"U}ber das {G}ravitationsfeld eines {M}assenpunktes nach der
  {E}insteinschen {T}heorie.
\newblock {\em Sitzungsberichte der K{\"o}niglich Preu{\ss}ischen Akademie der
  Wissenschaften (Berlin)}, pages 189--196, 1916.

\bibitem[SR15]{ShlapentokhRothmanModeStability}
Yakov Shlapentokh-Rothman.
\newblock Quantitative mode stability for the wave equation on the {K}err
  spacetime.
\newblock {\em Ann. Henri Poincar\'e}, 16(1):289--345, 2015.

\bibitem[ST15]{SterbenzTataruMaxwellSchwarzschild}
Jacob Sterbenz and Daniel Tataru.
\newblock {L}ocal energy decay for {M}axwell fields part {I}: {S}pherically
  symmetric black-hole backgrounds.
\newblock {\em International Mathematics Research Notices},
  2015(11):3298--3342, 2015.

\bibitem[SW19]{StrohmaierWatersHodge}
Alexander Strohmaier and Alden Waters.
\newblock {G}eometric and {O}bstacle {S}cattering at {L}ow {E}nergy.
\newblock {\em Preprint, arXiv:1907.01444}, 2019.

\bibitem[Tat13]{TataruDecayAsympFlat}
Daniel Tataru.
\newblock Local decay of waves on asymptotically flat stationary space-times.
\newblock {\em American Journal of Mathematics}, 135(2):361--401, 2013.

\bibitem[Teu72]{TeukolskySeparation}
Saul~A. Teukolsky.
\newblock {R}otating black holes: {S}eparable wave equations for gravitational
  and electromagnetic perturbations.
\newblock {\em Physical Review Letters}, 29(16):1114, 1972.

\bibitem[Toh12]{TohaneanuKerrStrichartz}
Mihai Tohaneanu.
\newblock Strichartz estimates on {K}err black hole backgrounds.
\newblock {\em Trans. Amer. Math. Soc.}, 364(2):689--702, 2012.

\bibitem[TT11]{TataruTohaneanuKerrLocalEnergy}
Daniel Tataru and Mihai Tohaneanu.
\newblock A local energy estimate on {K}err black hole backgrounds.
\newblock {\em Int. Math. Res. Not.}, 2011(2):248--292, 2011.

\bibitem[Vas13]{VasyMicroKerrdS}
Andr{\'a}s Vasy.
\newblock Microlocal analysis of asymptotically hyperbolic and {K}err--de
  {S}itter spaces (with an appendix by {S}emyon {D}yatlov).
\newblock {\em Invent. Math.}, 194(2):381--513, 2013.

\bibitem[Vas18]{VasyLowEnergy}
Andr{\'a}s Vasy.
\newblock {R}esolvent near zero energy on {R}iemannian scattering
  (asymptotically conic) spaces.
\newblock {\em Preprint, arXiv:1808.06123}, 2018.

\bibitem[Vas19a]{VasyLAPLag}
Andr{\'a}s Vasy.
\newblock Limiting absorption principle on {R}iemannian scattering
  (asymptotically conic) spaces, a {L}agrangian approach.
\newblock {\em Preprint, arXiv:1905.12587}, 2019.

\bibitem[Vas19b]{VasyLowEnergyLag}
Andr{\'a}s Vasy.
\newblock {R}esolvent near zero energy on {R}iemannian scattering
  (asymptotically conic) spaces, a {L}agrangian approach.
\newblock {\em Preprint, arXiv:1905.12809}, 2019.

\bibitem[VW13]{VasyWunschMorawetz}
Andr\'{a}s Vasy and Jared Wunsch.
\newblock Morawetz estimates for the wave equation at low frequency.
\newblock {\em Math. Ann.}, 355(4):1221--1254, 2013.

\bibitem[VZ00]{VasyZworskiScl}
Andr{\'a}s Vasy and Maciej Zworski.
\newblock {S}emiclassical {E}stimates in {A}symptotically {E}uclidean
  {S}cattering.
\newblock {\em Communications in Mathematical Physics}, 212(1):205--217, 2000.

\bibitem[Wal79]{WaldSchwarzschild}
Robert~M. Wald.
\newblock Note on the stability of the {S}chwarzschild metric.
\newblock {\em Journal of Mathematical Physics}, 20(6):1056--1058, 1979.

\bibitem[Whi89]{WhitingKerrModeStability}
Bernard~F. Whiting.
\newblock Mode stability of the {K}err black hole.
\newblock {\em Journal of Mathematical Physics}, 30(6):1301--1305, 1989.

\bibitem[WZ11]{WunschZworskiNormHypResolvent}
Jared Wunsch and Maciej Zworski.
\newblock Resolvent estimates for normally hyperbolic trapped sets.
\newblock {\em Annales Henri Poincar{\'e}}, 12(7):1349--1385, 2011.

\bibitem[Zwo16]{ZworskiRevisitVasy}
Maciej Zworski.
\newblock Resonances for asymptotically hyperbolic manifolds: {V}asy's method
  revisited.
\newblock {\em J. Spectr. Theory}, 2016(6):1087--1114, 2016.

\end{thebibliography}

\end{document}